\documentclass[10pt]{article}
\usepackage{amsfonts,latexsym}
\usepackage{amsmath}
\usepackage{amsthm}
\usepackage{amssymb}
\usepackage{mathrsfs}
\usepackage{hyperref}
\usepackage{color}
\usepackage[]{cite}
\usepackage{graphicx}
\usepackage{appendix}
\allowdisplaybreaks[4]
\usepackage{indentfirst}
\usepackage{geometry}
\newtheorem{theorem}{Theorem}[section]
\newtheorem{proposition}[theorem]{Proposition}
\newtheorem{definition}[theorem]{Definition}
\newtheorem{hypothesis}[theorem]{Hypothesis}
\newtheorem{condition}[theorem]{Condition}
\newtheorem{lemma}[theorem]{Lemma}
\newtheorem{remark}[theorem]{Remark}

\begin{document}
\title{\bf Pullback Measure Attractors, Zero-Noise Limits, and Moderate Deviations for 2D Stochastic Primitive Equations with Multiplicative L\'{e}vy Noise \footnote{The research is supported by the China Postdoctoral Science Foundation under Grant Number 2026M794838.}}

\author{{Jiangwei Zhang$^\text{a}\footnote{Corresponding author.}$,\quad Boling Guo$^\text{a}$, \quad  Juntao Wu$^\text{b}$
	}\\
	{ \small\textsl{$^\text{a}$ Institute of Applied Physics and Computational Mathematics, }}\\
	{ \small \textsl{Beijing 100088,  P.R. China}}\\
	{ \small\textsl{$^\text{b}$ School of Mathematics and Statistics, Wuhan University, }}\\ 
	{ \small \textsl{Wuhan, Hubei 430072, P.R. China}}
}
\footnotetext{
	\emph{E-mail addresses}: zjwmath@163.com (J. Zhang), gbl@iapcm.ac.cn (B. Guo), 00036371@whu.edu.cn (J. Wu).
}
\date{}

%%%%%%%%%%%%%%%%%%%%%%%%

\renewcommand{\theequation}{\arabic{section}.\arabic{equation}}
\numberwithin{equation}{section}

\maketitle

%%%%%%%%%%%%%%%%%%%%%
\begin{abstract} 
We study the long-term distributional dynamics and small-noise asymptotics of two-dimensional nonautonomous stochastic primitive equations driven by Gaussian and multiplicative L\'evy noise, together with moderate deviations for the purely jump model. For sufficiently small noise, uniform moment bounds and an exponentially weighted terminal estimate yield a pullback absorbing family and tightness, while a lower semicontinuous vertical-moment functional preserves admissibility under weak limits. We prove the existence and uniqueness of a pullback measure attractor in the weak topology of probability measures and establish its upper semicontinuity as both noise components vanish. For the jump-driven equation, we establish a moderate deviation principle in
$\mathcal D([0,T];H)\cap L^2(0,T;V)$
with speed $a^2(\epsilon)/\epsilon$. The proof combines continuity of the skeleton map with controlled stochastic convergence based on entropy bounds, truncation, martingale estimates, and direct vertical estimates, avoiding Lipschitz continuity of the vertical derivative of the jump coefficient.

	\medskip
	
\noindent \textbf{Keywords}: {Stochastic primitive equation, L\'{e}vy noise, Pullback measure attractor, Zero-noise limit, Moderate deviation.}

\noindent  \textbf{MSC 2020}: {Primary 35B40, 37L55, 60H15; Secondary 35B41, 35R60, 60F10.}
\end{abstract}

	\tableofcontents

\section{Introduction}
The primitive equations are the fundamental hydrostatic model for large-scale atmospheric and oceanic motion. They arise from the Navier--Stokes equations by exploiting the small aspect ratio of geophysical flows and replacing the vertical momentum equation by the hydrostatic balance. This reduction preserves the principal transport and diffusion mechanisms while producing a highly anisotropic system in which the vertical velocity is not an independent unknown.
 In the present paper, we consider the following nonautonomous 2D stochastic primitive equations on the rectangle $\mathcal M=[0,L]\times[-l,0]$:
\begin{equation}\label{PEeq-1}
	\begin{cases}
		 dv-\gamma\Delta v\,dt+v\partial_xv\,dt+w\partial_zv\,dt+\partial_xp\,dt
		=g(t)\,dt\\
		\qquad \qquad +\sqrt\epsilon\,\sigma(t,v)\,dW(t)
		+\epsilon\displaystyle\int_E h(v(t-),\xi)\widetilde N^{\epsilon^{-1}}(dt,d\xi),\quad t>\tau,\\
		\partial_zp=0,\\
		\partial_xv+\partial_zw=0.
	\end{cases}
\end{equation}
Here, $v=v(t,x,z)$ and $w=w(t,x,z)$ denote the horizontal and vertical velocities, respectively, and $p$ is the pressure. The term $g=g(t,x,z)$ is a time-dependent deterministic forcing, and $\epsilon\in(0,1)$ denotes the noise intensity. We write $\Delta=\partial^{2}_{x}+\partial^{2}_{z}$ for the Laplacian. 
The noise coefficients $\sigma$ and $h$ are measurable functions. $W$ is a two-sided cylindrical Wiener process, and $\widetilde{N}^{\epsilon^{-1}}$ is a compensated Poisson random measure. The precise definitions of these stochastic objects are given in the next section. We assume throughout that $W$ and $\widetilde{N}^{\epsilon^{-1}}$ are independent.

The hydrostatic relation implies that the pressure is independent of the vertical variable. More importantly, the incompressibility condition and the boundary conditions determine $w$ diagnostically by
\[
w(t,x,z)=-\int_{-l}^{z}\partial_xv(t,x,z')\,dz'.
\]
Consequently, the horizontal momentum equation contains the nonlocal anisotropic term
\[
-\left(\int_{-l}^{z}\partial_xv(t,x,z')\,dz'\right)\partial_zv(t,x,z).
\]
This term is the main structural distinction between the primitive equations and the standard 2D Navier--Stokes equation. In particular, the basic $H$-energy estimate is not sufficient for the continuity, compactness, and controlled convergence arguments needed below. Estimates for the vertical derivative are therefore an intrinsic part of the analysis, rather than an auxiliary regularity improvement that can be omitted without changing the proof.

The deterministic primitive equations have been studied extensively since the foundational formulations of Lions, Temam, and Wang; see \cite{Pedlosky-1987,Lions-Non-1992(1),Lions-Non-1992(2),Lions-Non-1992(3),Lions-Non-1992(4)}. Global well-posedness, regularity, and long-time dynamics were subsequently developed in, among many other works, \cite{Guill-DIE-2001,Hu-DCDS-2003,Petcu-CPAA-2004,Ju-DCDS-2007,Kobelkov-JMFM-2007,Cao-Titi-Anna-2007,Cao-CPAM-2016,Ju-JNS-2015,You-ZAMP-2019,WGH-3M-2023,You-JDE-2025}. Random forcing is natural in geophysical fluid models because unresolved scales, rapidly varying external inputs, and model uncertainty cannot be represented completely by a deterministic source. For Gaussian perturbations, the stochastic primitive equations have been investigated from the viewpoints of well-posedness, regularity, random attractors, invariant measures, and deviation asymptotics; see \cite{Glatt-DCDSB-2008,Guo-CMP-2009,Glatt-Holtz-AMO-2011,Debussche-Non-2012,Gao-CMS-2012,Glatt-JMP-2014,Dong-JDE-2017,ZRR-CMA-2019,Zhou-PhysicaD-2019,Brze-JDE-2021,Dong-Z-JDDE-2022,Slav-JTP-2022,Zhou-SD-2025}. In comparison, results for jump perturbations are more limited. Sun and Gao \cite{SunGao-2013} established well-posedness for 2D primitive equations driven by L\'{e}vy noise, and Ma and Zhang \cite{ZRR-2021} obtained a large deviation principle (LDP) for multiplicative L\'{e}vy perturbations. These results provide the finite-time stochastic foundation for the present work, but they do not by themselves yield the nonautonomous distributional dynamics, its zero-noise stability, or the moderate deviation asymptotics considered here.

Our first objective is to describe the long-time behavior of probability distributions generated by \eqref{PEeq-1}. Since $g$ and $\sigma$ may depend explicitly on time, the resulting Markov family is time-inhomogeneous and is described by a two-parameter evolution rather than an autonomous semigroup. Pullback measure attractors are designed precisely for this situation: they describe the compact family of asymptotic probability laws observed at a fixed terminal time as the initial time is sent to the remote past. The measure attractor framework originates in \cite{schmalfuss1991,schmalfuss1999,Mar-EJP-1998} and has recently been developed for nonautonomous stochastic partial differential equations in \cite{LDS-JDE-2024,LYR-QTDS-2025,LDS-QTDS-2024,LZH-AML,LDS-CNSNS-2025,WRH-CSF-2025,LZWH-BMMS-2025,BCS-JDE-2025,Zhang-arx-2026}. Applying this theory to \eqref{PEeq-1} requires substantially more than a direct invocation of an abstract result. One must establish a continuous measure evolution process, a pullback absorbing family, and pullback asymptotic compactness in a probability space with a fourth-moment constraint. Each of these steps depends on estimates compatible with the anisotropic convection and with the simultaneous presence of Gaussian and jump noise.

The analysis uses the existing well-posedness theorem for $\mathcal H$-valued initial data having finite fourth moments of the state and its vertical derivative. Accordingly, the transition family acts on the admissible law class $\mathcal P_{4,z}(H)$, whose elements are supported on $\mathcal H$ and satisfy the corresponding fourth-moment condition. This class provides the integrability needed to restart the equation, while compactness, attraction, and upper semicontinuity are considered in the ambient weak topology of the Polish space $\mathcal P(H)$.

The second objective concerns the stability of the long-time statistical dynamics when the stochastic perturbations disappear. Zero-noise limits for attractors are more delicate than finite-time convergence of individual solutions. Even when the stochastic and deterministic trajectories are close on a fixed interval, upper semicontinuity of attractors also requires estimates that are uniform with respect to the noise intensity and compatible with the pullback limit. Our proof follows a direct dynamical argument. We first derive convergence of the stochastic evolution to the deterministic evolution on every fixed time interval. We then use a common pullback absorbing family, invariance of the stochastic attractors, and the pullback attraction of the deterministic attractor. A triangle estimate at a suitably chosen pullback time separates the finite-time perturbation error from the deterministic attraction error. This yields upper semicontinuity of the stochastic pullback measure attractors toward the deterministic pullback measure attractor as the Gaussian and jump-noise intensities vanish simultaneously. The structure of this argument is related to the measure attractor method in \cite{MiLiZeng2024,WRH-CSF-2025}, but its implementation here depends on the primitive equation estimates developed in the preceding sections.

Our third objective is a moderate deviation principle (MDP) for the purely jump-driven primitive equation. Moderate deviations quantify fluctuations on a scale lying between the central limit regime and the large deviation regime. If $v^\epsilon$ denotes the jump-driven stochastic solution and $v^0$ the deterministic solution, we study
\begin{equation}\label{PEeq-MDPdetdec4.3}
\mathcal Y^\epsilon=\frac{v^\epsilon-v^0}{a(\epsilon)},
\end{equation}
where
\begin{equation}\label{PEeq-MDPdetdec4.4}
a(\epsilon)\to0,
\qquad
\frac{\epsilon}{a^2(\epsilon)}\to0
\quad\text{as }\epsilon\to0.
\end{equation}
Thus $\sqrt\epsilon\ll a(\epsilon)\ll1$. The weak convergence approach of Budhiraja, Dupuis, and Ganguly \cite{Budhiraja-AOP-2016} converts the exponential asymptotic problem into the convergence of equations driven by controlled Poisson random measures. This approach has been used for several stochastic fluid and jump systems; see \cite{DXZZ-2017-JFA,ZZZ-CIMS-2018,Liu-PoAN-2023,WZ-SIAM-2024,LIS-2025-arxiv}. For the primitive equations, however, the two key verification steps require different compactness and nonlinear estimates.

The first difficulty is the continuity of the skeleton map when the controls converge only weakly in $L^2(\nu_T)$. Weak convergence does not imply that the norm of the difference of the controls tends to zero, so a direct Gronwall estimate based on $\|q^\epsilon-q\|_{L^2(\nu_T)}$ is unavailable. We instead establish uniform bounds for the skeleton solutions, obtain strong compactness in $L^2(0,T;H)$ from spatial and temporal estimates, and identify every limit by pairing the weakly convergent controls with fixed test functions. The strong $L^2(0,T;H)$ convergence then makes the remaining control--solution pairing vanish in the difference energy identity, which upgrades the convergence to $C([0,T];H)\cap L^2(0,T;V)$.

The second difficulty is the convergence of the controlled stochastic fluctuations to the skeleton dynamics. The controlled equation contains a compensated jump martingale, a nonlinear state-dependent coefficient $h$, intensity perturbations controlled only through an entropy bound, and the anisotropic convection inherited from the primitive equations.  The proof requires uniform state and vertical derivative estimates, a bounded/tail decomposition of the controls, entropy estimates on the tail region, tightness in the strong path space, and identification of all nonlinear and controlled drift terms.  A key step is to test the small control remainder directly against $-\partial_{zz}\mathscr S^\epsilon$.  The resulting estimate relies only on the Lipschitz continuity of $h$ on $H$ and the deterministic $L^2$ bound for the truncated control; no Lipschitz assumption on $\partial_z h$ is needed.

It is useful to clarify the relation between the present work and the closest existing results. The Gaussian CLT and MDP established in \cite{ZRR-CMA-2019} do not involve controlled Poisson intensities or the entropy tail estimates needed for jump controls. The L\'{e}vy noise LDP in \cite{ZRR-2021} is developed at the large deviation scale and therefore does not yield the linearized skeleton that governs moderate deviations. A recent MDP for stochastic 2D primitive equations with only horizontal viscosity \cite{Sun-Wang-Liu-2026} treats a different anisotropic-viscosity model; in the present multiplicative L\'{e}vy setting, the weak convergence argument requires controlled Poisson intensities, entropy estimates for control truncation, and identification of the corresponding jump martingales. The MDP framework for abstract hydrodynamic systems driven by L\'evy noise, proposed in \cite{LIS-2025-arxiv}, does not eliminate these model-specific tasks, as the diagnostic vertical velocity necessitates additional estimates for both the state and its vertical derivative. Similarly, the measure-attractor framework in \cite{Zhang-arx-2026} does not directly establish the primitive-equation convection estimates, the terminal $V$-tightness, or the zero-noise comparison employed in the present work. In particular, the nonlocal term $W(v)\partial_zv$ must be controlled through the vertical regularity mechanism developed below rather than by the basic $H$-energy estimate alone.

The main results and the technical contributions of the paper are summarized below.

\medskip
\noindent\textbf{$(i)$ Uniform estimates and pullback measure attractors.}
We derive uniform second- and fourth-moment estimates for the stochastic solution, fourth-moment estimates for its vertical derivative, and an exponentially weighted estimate in $V$. The vertical derivative estimate controls the diagnostic convection term, while the weighted $V$-estimate supplies compactness at the terminal time. We prove continuity of the solution map in the relative $H$-topology of $\mathcal H$ and the corresponding Feller property for the induced two-parameter evolution of admissible laws. We then construct a closed $\mathfrak D$-pullback absorbing family, establish pullback asymptotic compactness through the compact embedding $V\hookrightarrow H$, and obtain a unique $\mathfrak D$-pullback measure attractor contained in $\mathcal P_{4,z}(H)$ and compact in the ambient topology $(\mathcal P(H),d_{\mathcal P(H)})$. These conclusions hold for every sufficiently small noise intensity $0<\epsilon<\epsilon_0$.

\medskip
\noindent\textbf{$(ii)$ Zero-noise upper semicontinuity.}
We prove finite-time convergence of the stochastic solutions to the deterministic solutions uniformly on bounded classes of admissible initial data. The estimates used to construct the pullback absorbing family are uniform for sufficiently small noise intensities. Combining these facts with the invariance of the attractors and the pullback attraction of the deterministic system gives
\[
\operatorname{dist}_{w,H}\bigl(\mathscr A_\epsilon(\tau),\mathscr A_0(\tau)\bigr)\longrightarrow 0
\quad\text{for every }\tau\in\mathbb R.
\]
The proof is carried out directly at the level of the measure evolution by combining invariance with the common pullback absorbing family.

\medskip
\noindent\textbf{$(iii)$ Moderate deviations.}
For the purely jump-driven equation, we first construct the controlled equation by stopped Galerkin approximation, prove nonexplosion and identify the martingale limit, and then establish pathwise uniqueness by a localized stochastic Gronwall argument.  This yields Borel solution maps for both the controlled fluctuation equation and the residual equation used after the Skorokhod representation.  We next prove compactness of the skeleton dynamics and continuity of the skeleton map under weak convergence of the controls.  Finally, a direct vertical energy estimate for the small control remainder closes the controlled stochastic convergence without imposing a Lipschitz condition on $\partial_z h$.  Consequently, the rescaled fluctuations satisfy a MDP in $\mathcal D([0,T];H)\cap L^2(0,T;V)$ with conventional speed $a^2(\epsilon)/\epsilon$. The good rate function is
\[
I(\eta)=\inf\left\{\frac12\|\psi\|_{L^2(\nu_T)}^2:\ (\eta,\psi)\text{ solves the skeleton equation}\right\}.
 \]
The argument treats separately weak control compactness, controlled well-posedness, measurable reconstruction, and the model-specific vertical convergence estimate.

Taken together, these results connect three levels of asymptotic analysis for a single geophysical fluid model: the pullback statistical dynamics at each sufficiently small fixed noise intensity, the stability of that dynamics in the zero-noise limit, and the probabilities of fluctuations on intermediate scales around the deterministic trajectory. The common analytical mechanism is the combination of vertical estimates specific to the primitive equations with measure evolution and weak convergence methods.

The remainder of the paper is organized as follows. Section~\ref{Pre-PEs2} introduces the functional framework, the stochastic setting, the assumptions on the coefficients, and the well-posedness results used later. Section~\ref{PMA-PEs3} establishes the uniform pullback estimates, the Feller property, the pullback absorbing family, pullback asymptotic compactness, and the existence and uniqueness of the pullback measure attractors. Section~\ref{USC-PEs4} proves the zero-noise upper semicontinuity of the pullback measure attractors. Section~\ref{MDP-PEs4} develops the skeleton equation, verifies the two weak convergence requirements for controlled Poisson random measures, and proves the MDP.

\section{Preliminaries}\label{Pre-PEs2}

For the stochastic primitive equations \eqref{PEeq-1}, the boundary $\partial \mathcal{M}$ is partitioned into the top boundary $\Gamma_u$, the bottom boundary $\Gamma_b$, and the lateral boundary $\Gamma_l$, defined by
$$
\Gamma_u:=\{(x,z)\in {\mathcal{M}}: z=0\},~~
\Gamma_b:=\{(x,z)\in {\mathcal{M}}: z=-l\},~~
\Gamma_l:=\{(x,z)\in {\mathcal{M}}: x\in \{0,L\}\}.
$$
The stochastic PEs \eqref{PEeq-1} are supplemented by the following boundary conditions
\begin{align}
	&\text{on }\Gamma_u:\,\,\,\, \partial_{z}v=0, \ w=0, \label{PEeq-2}\\
	&\text{on }\Gamma_b:\,\,\,\, \partial_{z}v=0,\  w=0, \label{PEeq-3}\\
	&\text{on }\Gamma_l:\,\,\,\,  v=0, \label{PEeq-4}
\end{align}
and the initial condition
\begin{equation}\label{PEeqin-5}
	v(\tau)=\zeta.
\end{equation}
Integrating the incompressibility relation in \eqref{PEeq-1} from $-l$ to $z$ and using $w=0$ on $\Gamma_b$, we obtain
\begin{equation}\label{PEeq-5}
	w(t,x,z)=-\int^{z}_{-l}\partial_x v (t,x,z')\,dz':=\mathcal{W}(v)(t,x,z).
\end{equation}
We normalize the viscosity coefficient by setting $\gamma=1$. The hydrostatic compatibility is built into the phase space and coefficient ranges: $\int_{-l}^{0}v(x,z)\,dz=0$ for a.e.\ $x\in(0,L)$, while $g(t)\in H$, $\sigma(t,v)\in\mathcal L_2(U;H)$, and $h(v,\xi)\in H$. Thus the vertical mean of $g(t)$ and $h(v,\xi)$, and that of $\sigma(t,v)u$ for every $u\in U$, vanishes.

With the above notation, equation \eqref{PEeq-1} with the initial and boundary conditions \eqref{PEeq-2}-\eqref{PEeqin-5} is equivalently written as follows:
\begin{equation}\label{PEeq-6}
	\begin{cases}
			\begin{aligned}
		dv-\Delta vdt+v\partial_{x} vdt&+\mathcal{W}(v)\partial_{z} vdt+\partial_{x} pdt
		 =g(t)dt\\
		& +\sqrt{\epsilon}\sigma(t,v)dW(t)+\epsilon\int_{E}h(v(t-),\xi)\widetilde{N}^{\epsilon^{-1}}(dt,d\xi),
			\end{aligned}\\
		\partial_{z}v|_{\Gamma_{u}}=0,~~
		\partial_{z}v|_{\Gamma_{b}}=0,~~
		v|_{\Gamma_{l}}=0,\\
		v(\tau)=\zeta,
	\end{cases}	
\end{equation}
%}

\subsection{Functional spaces}
For two separable Hilbert spaces $X$ and $Y$, let $\mathcal{L}(X,Y)$  denote the space of all bounded linear operators from $X$ to $Y$, and
$\mathcal{L}_2(X,Y)$ the space of Hilbert-Schmidt operators
from $X$ to $Y$. These spaces are endowed with the norms $\|\cdot\|_{\mathcal{L}(X,Y)}$ and $\|\cdot\|_{\mathcal{L}_2(X,Y)}$, respectively. 
For $1\leq p<\infty$, let $L^p(\mathcal M)$ denote the usual Lebesgue space with norm
\[
|\phi|_p=\left(\int_{\mathcal{M}}|\phi(x,z)|^p\,dx\,dz\right)^{1/p},
\qquad \phi\in L^p(\mathcal{M}).
\]
For $m\in\mathbb N$, let $(W^{m,p}(\mathcal M),\|\cdot\|_{m,p})$ be the usual Sobolev space. When $p=2$, we write $H^m(\mathcal M)=W^{m,2}(\mathcal M)$ and set
\[
H^m(\mathcal M)=\left\{\phi\in L^2(\mathcal M):\partial^\alpha\phi\in L^2(\mathcal M)\text{ for every }|\alpha|\le m\right\},
\qquad
\|\phi\|_m^2=\sum_{0\le |\alpha|\le m}|\partial^\alpha\phi|^2.
\]
Then $(H^{m}(\mathcal M),\|\cdot\|_m)$ is a Hilbert space. We denote by $(\cdot,\cdot)$ and $|\cdot|$ the inner product and norm of $L^2(\mathcal M)$, respectively.
Define the working spaces as follows:
\[
H = \left\{ v \in L^2(\mathcal{M}) : \int_{-l}^0 v \, dz = 0 \right\}
\]
and
\[
V = \left\{ v \in H^1(\mathcal{M}) : \int_{-l}^0 v \, dz = 0,\, v = 0 \text{ on } \Gamma_l \right\}.
\]
The inner product and norm on $H$ are those inherited from $L^2(\mathcal M)$.
For $v_1,v_2\in V$, we equip $V$ with the inner product
$
{\langle v_1,v_2 \rangle_1}=\int_{\mathcal{M}} \partial_{x} v_1 \partial_{x} v_2+ \partial_{z}v_1\partial_{z}v_2dxdz$
and the norm $\|\cdot\|=\sqrt{\langle \cdot,\cdot\rangle_1}$.
By \cite{Alessandrini-2008}, there exists $\textbf{c}_0>0$ such that the following Poincar\'{e} inequality holds:
$$
\sqrt{\textbf{c}_0}|v|\leq \|v\|, ~~~~\forall v\in V.
$$ 
Let $V'$, endowed with the norm $\|\cdot\|_{V'}$, be the dual space of $V$, so that the following embeddings are dense and continuous:
$
V\hookrightarrow H \equiv H'\hookrightarrow V'$. We
denote by $\langle \cdot,\cdot\rangle$ the duality between $V$ and $ V'$. 
Define the intermediate space
\[
\mathcal{H}=\{v\in H:\ \partial_z v\in H\},
\qquad |v|_{\mathcal H}=\sqrt{|v|^2+|\partial_zv|^2}.
\]

\begin{remark}
	The space $\mathcal H$ plays a dual role: it is the admissible initial state class supplied by the well-posedness theorem and it provides the vertical estimates needed for the anisotropic convection. Accordingly, the measure evolution and every attractor section lie in $\mathcal P_{4,z}(H)$, not in all of $\mathcal P_4(H)$ and not in a separately metrized space $\mathcal P_4(\mathcal H)$. The topology used for convergence, compactness, and attraction is nevertheless the weak topology of probability measures on the coarser state space $H$, namely the topology of $\mathcal P(H)$.
\end{remark}

Unless stated otherwise, the symbols $\left(\cdot,\cdot\right)$, $\langle \cdot,\cdot\rangle$, and $\langle\cdot,\cdot\rangle_{1}$ are interpreted according to context.
We write $\mathbb R^+:=[0,\infty)$ and $\mathbb R^\tau:=[\tau,\infty)$. 
Throughout the paper, $C$ denotes a generic positive constant whose value may change from line to line. When we need to emphasize dependence on certain parameters, we write $C_{c_1,c_2,\ldots}$, which may also change from line to line but depends only on $c_1,c_2,\ldots$.

\subsection{Reformulation of the problem}
We denote by the hydrostatic Helmholtz projection $P_H$ the orthogonal projection of $L^{2}(\mathcal{M})$ onto $H$. It is given explicitly by
$$
P_Hv=v-\frac{1}{l}\int_{-l}^0 vdz.
$$
Let
\begin{align*}
	\mathcal{V}:=\left\{v\in C^{\infty}(\overline{\mathcal{M}}):\partial_{z}v|_{\Gamma_u\cup \Gamma_b}=0, v|_{\Gamma_{l}}=0, \int^{0}_{-l} v dz=0 \right\},
\end{align*}
which is a dense subset of \( H \), \( \mathcal{H} \) and \( V \). We now interpret \eqref{PEeq-6} as an equation in $V'$. To this end, we define a Stokes-type operator $A:V\to V'$ by
$\langle Av_1,v_2\rangle = \langle v_1,v_2\rangle_1$.
Viewed as an unbounded operator on $H$, $A=-P_H\Delta$ has domain
\[
D(A)=\left\{v\in H^2(\mathcal M): \int_{-l}^0v\,dz=0,\ v=0\text{ on }\Gamma_l,\ \partial_zv=0\text{ on }\Gamma_u\cup\Gamma_b\right\};
\]
see \cite{Glatt-DCDSB-2008} for details. This realization is positive and self-adjoint and has compact inverse. Let $\{e_j\}_{j=1}^\infty$ be an orthonormal basis of $H$ consisting of eigenfunctions of $A$, with eigenvalues
$0<\lambda_1\leq\lambda_2\leq\cdots$ satisfying $\lambda_j\to\infty$ as $j\to\infty$. For every $v\in D(A)$,
\[
\int_{-l}^0 \partial_{xx} v \, dz = \partial_{xx} \int_{-l}^0 v \, dz = 0, \qquad 
\int_{-l}^0 \partial_{zz} v \, dz = \partial_z v \Big|_{z=-l}^{z=0} = 0.
\]
Consequently, for $v\in D(A)$,
\[
P_H \Delta v = P_H \partial_{xx} v + P_H \partial_{zz} v = \partial_{xx} v + \partial_{zz} v = \Delta v.
\]

For $v,\widetilde{v}\in V$, we define the nonlinear term $B(v, \widetilde{v}):V\times V \rightarrow V'$ as
\begin{eqnarray*}
B(v, \widetilde{v})=P_H(v\partial_x\widetilde{v}+\mathcal{W}(v)\partial_z \widetilde{v}),
\end{eqnarray*}
and define the associated trilinear form $b$ by 
$$
b(v,\widetilde{v}, \widehat{v}):=\langle B(v,\widetilde{v}), \widehat{v} \rangle = \int_{\mathcal{M}} \left(v\partial_x\widetilde{v}\widehat{v}+\mathcal{W}(v)\partial_z \widetilde{v}\widehat{v}\right) dxdz.
$$
We use the following standard estimates; see \cite{Glatt-DCDSB-2008} for details.
\begin{lemma}\label{PEeq-lem2.1}
 For every $v, \widetilde{v}, \widehat{v}\in V$, the trilinear form $b$ is continuous on $V\times V\times V$, and there exists a constant $C>0$ such that
 \begin{align}
&|b(v,\widetilde{v}, \widehat{v})|\leq C\left(|v|^{\frac{1}{2}}\|v\|^{\frac{1}{2}}\|\widetilde{v}\||\widehat{v}|^{\frac{1}{2}}\|\widehat{v}\|^{\frac{1}{2}}+|\partial_xv||\partial_z \widetilde{v}||\widehat{v}|^{\frac{1}{2}}\|\widehat{v}\|^{\frac{1}{2}}\right),\label{PEeq-2.11}\\
&\left| \langle B(\widetilde{v}, \widetilde{v}) - B(\widehat{v}, \widehat{v}), \widetilde{v} - \widehat{v} \rangle \right|
\leq C \|\widetilde{v}\| |\widetilde{v} - \widehat{v}| \|\widetilde{v} - \widehat{v}\|
+ C |\partial_z \widetilde{v}|  |\widetilde{v} - \widehat{v}|^{1/2} \|\widetilde{v} - \widehat{v}\|^{3/2}.\label{PEeq-2.103}
\end{align}
 Moreover, for all $v,\widetilde v,\widehat v\in V$,
 \[
 b(v,\widetilde v,\widetilde v)=0,
 \qquad b(v,\widetilde v,\widehat v)=-b(v,\widehat v,\widetilde v).
 \]
 If, in addition, $v\in D(A)$, then
\begin{align}\label{PEeq-2.13}
	\langle v\partial_xv+\mathcal W(v)\partial_zv,\partial_{zz}v\rangle=0.
\end{align}
The last identity is used at the Galerkin level before passage to the limit.
If \( v, \widetilde{v} \in V \) and \( \partial_z \widetilde{v} \in V \), then
\begin{equation}\label{PEeq-2.0013}
	\|B(v, \widetilde{v})\|_{V'} \leq C\bigl( (|v| + |\partial_z v|)|\partial_{x} \widetilde{v}| + |v| \cdot \|\partial_z \widetilde{v}\| + |v| \cdot |\partial_z \widetilde{v}|^{1/2}\|\partial_z \widetilde{v}\|^{1/2} \bigr).
\end{equation}

\end{lemma}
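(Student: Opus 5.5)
The plan is to split $b$ into its horizontal part $b_1(v,\widetilde v,\widehat v)=\int_{\mathcal M} v\,\partial_x\widetilde v\,\widehat v$ and its vertical part $b_2(v,\widetilde v,\widehat v)=\int_{\mathcal M}\mathcal W(v)\,\partial_z\widetilde v\,\widehat v$, and to bound each with the anisotropic Hölder inequality on $\mathcal M=(0,L)\times(-l,0)$.

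\emph{Bound \eqref{PEeq-2.11}.} For $b_1$ I would apply Hölder with exponents $(4,2,4)$ and the 2D Ladyzhenskaya inequality $|\phi|_4\le C|\phi|^{1/2}\|\phi\|^{1/2}$, which is valid on $V$ by Poincaré. For $b_2$, the key fact is that $\mathcal W(v)$ does not depend on the upper limit beyond an integral in $z$, so $\sup_z|\mathcal W(v)(x,z)|\le \int_{-l}^0|\partial_xv|\,dz$. Hence
\[
|b_2|\le\int_0^L\Bigl(\int_{-l}^0|\partial_x v|dz\Bigr)\Bigl(\int_{-l}^0|\partial_z\widetilde v||\widehat v|dz\Bigr)dx
\le C|\partial_xv|\,|\partial_z\widetilde v|\,\sup_x\Bigl(\int_{-l}^0|\widehat v|^2dz\Bigr)^{1/2}.
\]
The last factor is controlled by $C|\widehat v|^{1/2}\|\widehat v\|^{1/2}$, using the 1D Agmon inequality in $x$ applied to $\int|\widehat v|^2dz$ together with $\widehat v=0$ on $\Gamma_l$. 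Continuity of $b$ on $V^3$ then follows from trilinearity and \eqref{PEeq-2.11}.

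\emph{Antisymmetry.} For smooth fields in $\mathcal V$ I would integrate by parts: $b(v,\widetilde v,\widetilde v)=\tfrac12\int(v\partial_x+\mathcal W(v)\partial_z)|\widetilde v|^2=-\tfrac12\int(\partial_xv+\partial_z\mathcal W(v))|\widetilde v|^2$. This vanishes because $\partial_z\mathcal W(v)=-\partial_xv$, and the boundary terms drop since $v=0$ on $\Gamma_l$ and $\mathcal W(v)=0$ at $z=0,-l$ (the latter by the zero vertical mean). The identity extends to $V$ by density and continuity, and polarization gives $b(v,\widetilde v,\widehat v)=-b(v,\widehat v,\widetilde v)$.

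\emph{Bound \eqref{PEeq-2.103}.} Put $u=\widetilde v-\widehat v$. Writing $B(\widetilde v,\widetilde v)-B(\widehat v,\widehat v)=B(u,\widetilde v)+B(\widehat v,u)$ and using antisymmetry, we get $\langle\cdot,u\rangle=b(u,\widetilde v,u)$. The horizontal part obeys $|b_1(u,\widetilde v,u)|\le|u|_4^2\|\widetilde v\|\le C|u|\|u\|\|\widetilde v\|$. For the vertical part I would first integrate by parts in $z$ so that the $x$-derivative is not paired with $\partial_x u$ alone, and then apply the same sup-in-$z$ / Agmon argument with the roles arranged as $|\partial_z\widetilde v|\cdot\|u\|\cdot|u|^{1/2}\|u\|^{1/2}$, which gives the exponent $3/2$. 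This bookkeeping of where the $\partial_x$ falls is the step I expect to be delicate.

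\emph{Identity \eqref{PEeq-2.13}.} For $v\in D(A)$ I would integrate by parts in $z$ (boundary terms vanish since $\partial_zv=0$ on $\Gamma_u\cup\Gamma_b$), which gives $-\int\partial_z(v\partial_xv+\mathcal W\partial_zv)\,\partial_zv$. Expanding, the terms $\partial_zv\,\partial_xv\,\partial_zv$ and $\partial_z\mathcal W\,(\partial_zv)^2=-\partial_xv(\partial_zv)^2$ cancel. The remaining term is $b(v,\partial_zv,\partial_zv)=0$.

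\emph{Bound \eqref{PEeq-2.0013}.} I would estimate $\sup_{|\phi|_V\le1}b(v,\widetilde v,\phi)$. For $b_1$, use $|v|_4|\partial_x\widetilde v||\phi|_4$, with $|v|_4$ bounded through the anisotropic estimate $\|v\|_{L^\infty_zL^2_x}\le C(|v|+|\partial_zv|)$. For $b_2$, integrate by parts in $x$ to move $\partial_x$ off $v$, obtaining $\int(\int_{-l}^z v)\,(\partial_z\partial_x\widetilde v\,\phi+\partial_z\widetilde v\,\partial_x\phi)$. Then $\sup_z|\int_{-l}^z v\,dz'|$ is bounded by the vertical $L^1$ norm, and $\partial_z\widetilde v$ is bounded in $L^2_xL^\infty_z$ via Agmon in $z$, giving $|\partial_z\widetilde v|^{1/2}\|\partial_z\widetilde v\|^{1/2}$. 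I expect this to be the main obstacle, since each term must be matched exactly to the stated right-hand side.
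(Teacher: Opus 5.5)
The paper gives no proof of this lemma; it only refers to the literature. Your argument therefore has to stand on its own, and it does not fully succeed.

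\textbf{What works.} Your proofs of \eqref{PEeq-2.11}, of the cancellation and antisymmetry, and of \eqref{PEeq-2.13} are sound. One precision is needed for \eqref{PEeq-2.13}. In general $\partial_z v\notin V$, since its vertical mean is $v(x,0)-v(x,-l)$. You should therefore note that your integration-by-parts proof of $b(v,w,w)=0$ uses only the boundary behaviour of the first argument, and so holds for every $w\in H^1(\mathcal M)$.

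For \eqref{PEeq-2.103}, drop the integration by parts in $z$: it would remove $\partial_z\widetilde v$ from the vertical term altogether. The bound follows at once from \eqref{PEeq-2.11} with $(v,\widetilde v,\widehat v)=(u,\widetilde v,u)$ and $|\partial_x u|\le\|u\|$.

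\textbf{The gap: \eqref{PEeq-2.0013}.} Here both anisotropic pairings go the wrong way.

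First, $|v|_4$ is not bounded by $C(|v|+|\partial_z v|)$. Take $v_n=f_n(x)\cos(\pi z/l)\in V$ with $f_n\in H^1_0(0,L)$, $|f_n|_{L^2}=1$ and $|f_n|_{L^4}\to\infty$. The right-hand side stays bounded while $|v_n|_4\to\infty$. Any route through $L^4$ brings in $\|v\|$, which \eqref{PEeq-2.0013} is designed to avoid. Your bound $\sup_z\|v(\cdot,z)\|_{L^2_x}\le C(|v|+|\partial_z v|)$ is correct. However, it must be paired with $\int_{-l}^0\|\phi(\cdot,z)\|_{L^\infty_x}^2dz\le 2|\phi||\partial_x\phi|$, which uses $\phi=0$ on $\Gamma_l$, and not with $|\phi|_4$.

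Second, after integrating by parts in $x$, the function $\Psi=\int_{-l}^z v\,dz'$ is controlled only through $\int_{-l}^0|v|\,dz$. This is merely $L^2$ in $x$, so the $L^\infty$ in $x$ must sit on the other factors.

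In $\int\Psi\,\partial_z\widetilde v\,\partial_x\phi$, your choice $\partial_z\widetilde v\in L^2_xL^\infty_z$ (Agmon in $z$) leaves a product of three $L^2(0,L)$ functions of $x$ and does not close. You need Agmon in $x$ instead: $\sup_x\int_{-l}^0|\partial_z\widetilde v|^2dz\le 2|\partial_z\widetilde v||\partial_{xz}\widetilde v|$. This holds because $\partial_z\widetilde v\in V$ vanishes on $\Gamma_l$, and it gives the bound $C|v||\partial_z\widetilde v|^{1/2}\|\partial_z\widetilde v\|^{1/2}\|\phi\|$.

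In $\int\Psi\,\partial_{xz}\widetilde v\,\phi$, put $\sup_x\int_{-l}^0|\phi|^2dz\le 2|\phi||\partial_x\phi|$ on $\phi$. This gives $C|v|\|\partial_z\widetilde v\|\|\phi\|$.

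With these two corrections every term matches the right-hand side of \eqref{PEeq-2.0013}.
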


With this notation, the stochastic primitive equations \eqref{PEeq-6} take the abstract form
\begin{equation}\label{PEeq-2.15}
	\begin{cases}
    dv(t)+Av(t)dt+B(v(t),v(t))dt=g(t)dt+\sqrt{\epsilon}\sigma(t,v(t)) dW(t)\\
    \qquad \qquad  \qquad \qquad  \qquad \qquad  \qquad \quad +{\epsilon}\int_{E}h(v(t-),\xi)\widetilde{N}^{\epsilon^{-1}}(dt,d\xi),\\
     v(\tau)=\zeta,
  \end{cases}
\end{equation}

\subsection{Stochastic setting and assumptions}\label{PEeq-Pre2.3}

$\bullet~\textbf{Notation.}$ Let $X$ be a metric space and $E$ a locally compact Polish space. Denote by $\mathcal B(X)$ the Borel $\sigma$-field on $X$, and by $C_c(X)$ the space of continuous compactly supported functions. We equip $C([a,b];X)$ with the uniform topology and $\mathcal D([a,b];X)$ with the Skorokhod topology; the latter is Polish under the usual Skorokhod metric, see \cite[Chapter 2]{Mtivier-1988}. We use $L^2(a,b;X)$ for the space of square-integrable $X$-valued functions.

Let $\mathcal M_{FC}(E)$ denote the space of Borel measures $\vartheta$ on $E$ that are finite on compact sets. We equip $\mathcal M_{FC}(E)$ with the weakest topology for which
$$
\vartheta\longmapsto\int_E f(v)\,d\vartheta(v)
$$
is continuous for every $f\in C_c(E)$. With a compatible metric, $\mathcal M_{FC}(E)$ is Polish; see, for example, \cite{Budhiraja-Poinca-2011}.

$\bullet~\textbf{Stochastic basis for the long-time dynamics.}$
For Sections~\ref{PMA-PEs3} and \ref{USC-PEs4}, let
$(\Omega,\mathscr F,\{\mathscr F_t\}_{t\in\mathbb R},\mathbb P)$ be a filtered probability space satisfying the usual conditions. It carries a two-sided cylindrical Wiener process
\[
W(t)=\sum_{k\geq1}r_k w_k(t),\qquad t\in\mathbb R,
\]
where $\{r_k\}_{k\geq1}$ is an orthonormal basis of the separable Hilbert space $U$ and $\{w_k\}_{k\geq1}$ is a family of independent two-sided standard Brownian motions. It also carries, for every $\epsilon>0$, a Poisson random measure $N^{\epsilon^{-1}}$ on $\mathbb R\times E$ with intensity $\epsilon^{-1}dt\,\nu(d\xi)$, and
\[
\widetilde N^{\epsilon^{-1}}(dt,d\xi)
=N^{\epsilon^{-1}}(dt,d\xi)-\epsilon^{-1}dt\nu(d\xi).
\]
The Wiener process and the Poisson random measure are assumed to be independent. All expectations in the long-time and zero-noise sections are taken on this stochastic basis.

$\bullet~\textbf{Canonical controlled Poisson random measures on a finite interval.}$
The following construction is used only in Section~\ref{MDP-PEs4}. Fix $T\in (0,\infty)$, set $E_T=[0,T]\times E$, \( Y=E \times \mathbb{R}^+ \) and \( Y_T := [0, T] \times Y \), and let $\nu_T=\lambda_T\otimes\nu$. Put
$\mathfrak M=\mathcal M_{FC}(E_T)$ and
$\overline{\mathfrak M}=\mathcal M_{FC}(Y_T)$. A Poisson random measure $\mathcal N$ on $E_T$ with intensity $\nu_T$ is an $\mathfrak M$-valued random variable such that, for every $K\in\mathcal B(E_T)$ with $\nu_T(K)<\infty$, the random variable $\mathcal N(K)$ has a Poisson distribution with mean \(\nu_{T}(K)\), and
for pairwise disjoint sets \(K_1, \cdots, K_k \in \mathcal{B}(E_T)\), the random variables \(\mathcal{N}(K_1), \dots, \mathcal{N}(K_k)\) are independent (see e.g. \cite{Ikeda-1981}).
Let \(\mathbb{P}\) denote the unique probability measure induced by \(\mathcal{N}\) on \(\bigl(\mathfrak{M}, \mathcal{B}(\mathfrak{M})\bigr)\) under which the canonical map \(N: \mathfrak{M} \to \mathfrak{M}\) defined by \(N(m) := m\) is a Poisson random measure with intensity measure \(\nu_{T}\).
For $\kappa>0$, let $\mathbb P_\kappa$ denote the probability measure on \((\mathfrak{M}, \mathcal{B}(\mathfrak{M}))\) under which \(N\) is a Poisson random measure with intensity \(\kappa \nu_{T}\). The corresponding expectation operators are denoted by \(\mathbb{E}\) and \(\mathbb{E}_\kappa\), respectively.

Let $\overline{\mathbb P}$ be the unique probability measure on \( (\overline{\mathfrak{M}}, \mathcal{B}(\overline{\mathfrak{M}})) \) under which the canonical map  
\( \overline{N} : \overline{\mathfrak{M}} \to \overline{\mathfrak{M}} \), defined by \( \overline{N}(m) := m \), is a Poisson random measure with intensity measure  
\( \bar{\nu}_T = \lambda_T \otimes \nu \otimes \lambda_\infty \), where \( \lambda_\infty \) is the Lebesgue measure on \(\mathbb{R}^+ \).  
We denote the corresponding expectation by $\overline{\mathbb E}$.  
We denote the corresponding compensated Poisson random measure by $\widetilde N$. For every \(t \in [0, T]\), we define  
\[\mathscr{F}_t := \sigma \left( \{ \widetilde{N}((0, s] \times A) : 0 \leq s \leq t, A \in \mathcal{B}(Y) \} \right),\]
and denote by $\overline{\mathscr{F}}_t$ the completion of \(\mathscr{F}_t\) under \(\overline{\mathbb{P}}\). Let $\overline{\mathfrak P}$ be the predictable $\sigma$-field on $[0,T]\times\overline{\mathfrak M}$ associated with the filtration \(\{\overline{\mathscr{F}}_t\}_{t\in [0,T]}\) on $(\overline{\mathfrak M},\mathcal B(\overline{\mathfrak M}))$. Denote by $\overline{\mathfrak R}_+$ (resp. $\overline{\mathfrak R}$) the class of $(\mathcal B(E)\otimes\overline{\mathfrak P})/\mathcal B(\mathbb R^+)$-measurable mappings $E_T\times\overline{\mathfrak M}\to\mathbb R^+$ (resp. $(\mathcal B(E)\otimes\overline{\mathfrak P})/\mathcal B(\mathbb R)$-measurable mappings $E_T\times\overline{\mathfrak M}\to\mathbb R$). For \(\phi \in \overline{\mathfrak{R}}_+\), define a counting process \(N^\phi\) on \(E_T\) by  
\begin{align}\label{Possine}
	N^\phi((0, t] \times \mho) := \int_{[0, t] \times \mho \times \mathbb{R}^+} 1_{[0, \phi(s, x)]}(r) \, \overline{N}(ds, dx, dr), \quad t\in [0,T], ~~ \mho \in \mathcal{B}(E).
\end{align}
The random measure $N^\phi$ has predictable compensator $\phi(s,x)ds\nu(dx)$.  Its compensated version is therefore defined by
\[
\widetilde N^\phi(ds,dx):=N^\phi(ds,dx)-\phi(s,x)ds\nu(dx),
\]
in the sense of integration against predictable integrands.  This definition, rather than a second thinning of the compensated canonical measure, is used throughout Section~\ref{MDP-PEs4}.  When $\phi\equiv\epsilon^{-1}$, we write $N^{\epsilon^{-1}}$ and $\widetilde N^{\epsilon^{-1}}$.  Then $N^{\epsilon^{-1}}$ is a Poisson random measure on $E_T$ with intensity $\epsilon^{-1}\nu_T$ and
\[
\widetilde N^{\epsilon^{-1}}((0,t]\times\mho)
=N^{\epsilon^{-1}}((0,t]\times\mho)-\epsilon^{-1}t\nu(\mho),
\qquad \nu(\mho)<\infty.
\]
Moreover, $N^{\epsilon^{-1}}$ under $\overline{\mathbb P}$ has the same law as the canonical measure $N$ under $\mathbb P_{\epsilon^{-1}}$.

For a Banach space $X$, $L^2(\Omega;X)$ denotes the space of strongly measurable square-integrable $X$-valued random variables, equipped with the norm
\[
\|\phi\|_{L^2(\Omega;X)}=\left(\mathbb E\|\phi(\omega)\|_X^2\right)^{1/2}.
\]

Throughout the paper, we impose the following standing assumptions on the nonlinear terms $\sigma$ and $h$.
\begin{hypothesis}\label{PEeqAssum-2.2}
Suppose that $\sigma:\mathbb R\times H\to\mathcal L_2(U;H)$ is Borel measurable and, for every $t\in\mathbb R$, satisfies
	\begin{description}
		\item[(A.1)] $\|\sigma(t,v)\|^2_{\mathcal{L}_2({U}; H)}\leq C_{\sigma}(1+|v|^2),\quad \forall v \in {H}$
		\item[(A.2)] $\|\sigma(t,v_1)-\sigma(t,v_2)\|^2_{\mathcal{L}_2({U}; H)}\leq L_{\sigma}|v_1-v_2|^2,\quad \forall v_1, v_2 \in H$
		\item[(A.3)] for every $v\in\mathcal H$, $\sigma(t,v)\in\mathcal L_2(U;\mathcal H)$ and $\|\partial_z\sigma(t,v)\|^2_{\mathcal L_2(U;H)}\leq\widetilde C_\sigma(1+|\partial_zv|^2)$, where $[\partial_z\sigma(t,v)]u:=\partial_z[\sigma(t,v)u]$ for $u\in U$.
	\end{description}
where $C_{\sigma}$, $L_{\sigma}$, and $\widetilde{C}_{\sigma}$ are positive constants.
\end{hypothesis}

%{\color{red}
Let \( L^2(\nu_T) \) be the space of all \( (\mathcal{B}([0, T]) \otimes \mathcal{B}(E)) / \mathcal{B}(\mathbb{R}) \)-measurable functions \( q \) satisfying 
\[\|q\|_{L^2(\nu_T)}^2 := \int_0^T \int_{E} |q(t, \xi)|^2 \nu(d\xi ) dt < +\infty.\]
Then \( (L^2(\nu_T), \|\cdot\|_{L^2(\nu_T)}) \) is a Hilbert space. For $p\geq2$, $L^p(\nu_T)$ is defined analogously.
For $r>0$, let $\mathbb B_\nu(r)$ be the closed ball of radius \( r \) centered at zero in \( L^2(\nu_T) \). We always endow $\mathbb B_\nu(r)$ with the weak topology of $L^2(\nu_T)$; hence it is weakly compact.
%}
\begin{hypothesis}\label{PEeqAssum-2.3}
		Suppose that $h: H \times E\rightarrow H$ is measurable and satisfies
			 \begin{description}
			 	
				\item[(B.1)] $|h(v,\xi)|\leq C_{h}(\xi)(1+|v|),\quad \forall v \in {H}, \xi \in E,$
				
				\item[(B.2)] $|h( v_1,\xi)-h(v_2,\xi)| \leq L_{h}(\xi)|v_1-v_2|,\quad \forall v_1, v_2 \in H, \xi \in E,$
				
				\item[(B.3)] for every $v\in\mathcal H$, $h(v,\xi)\in\mathcal H$ and $|\partial_z h(v,\xi)|\leq\widetilde C_h(\xi)(1+|\partial_zv|)$ for $\nu$-a.e.\ $\xi\in E$.
			\end{description}
	where $C_h,\widetilde C_h\in L^2(\nu)\cap L^4(\nu)$ and $L_h\in L^2(\nu)$. The fourth-order integrability of the growth coefficients is used in the pullback fourth-moment estimates and in the Burkholder-Davis-Gundy (BDG) estimates.
%	where $C_{h}\in L^{p}(\nu)$, $L_{h}\in L^2(\nu)$ and $\widetilde{C}_{h} \in L^{p}(\nu)$ $(p\geq 2)$.
\end{hypothesis}

\subsection{Large deviation principle and compactness theorem}
We recall the definition of an LDP. Let $\mathfrak U$ be a Polish space.
\begin{definition}[Rate function]
A function $I:\mathfrak U\to[0,\infty]$ is called a \emph{rate function} if it is lower semicontinuous. It is called a \emph{good rate function} if, in addition, every level set
\[
\{x\in\mathfrak U:I(x)\le M\},\qquad M<\infty,
\]
is compact in $\mathfrak U$.
\end{definition}

\begin{definition}[Large deviation principle]
	 Let \( I \) be a good rate function on \( \mathfrak{U} \). Given a collection \( \{b(\epsilon)\}_{\epsilon > 0} \) of positive reals, a family \( \{\mathcal{Y}^\epsilon\}_{\epsilon > 0} \) of \( \mathfrak{U} \)-valued random elements is said to satisfy an LDP on \( \mathfrak{U} \) with logarithmic scale \( b(\epsilon) \) (equivalently, with conventional speed \(1/b(\epsilon)\)) and rate function \( I \) if the following two bounds hold:
	 
	 $(i)$ \textit{(Upper bound)} For each closed subset \( F \) of \( \mathfrak{U} \),
	 
	 \[
	 \limsup_{\epsilon \to 0} b(\epsilon) \log \mathbb{P}(\mathcal{Y}^\epsilon \in F) \leq - \inf_{x \in F} I(x).
	 \]
	 
	 $(ii)$ \textit{(Lower bound)} For each open subset \( O \) of \( \mathfrak{U} \),
	 
	 \[
	 \liminf_{\epsilon \to 0} b(\epsilon) \log \mathbb{P}(\mathcal{Y}^\epsilon \in O) \geq - \inf_{x \in O} I(x).
	 \]
	
\end{definition}

We use the following compactness criterion, which is a variant of the results in \cite[Chapter I, Section 5]{Lions69} and \cite[Section 13.3]{Temam79}; see also \cite[Theorem 2.1]{Flandoli-Gatarek-95}. 
%Let \(\mathfrak{X} \) be a Banach space with norm \( \|\cdot\|_{\mathfrak{X}} \). 
Given \( p > 1 \), \( \alpha \in (0, 1) \), let \( W^{\alpha, p}([0,T]; Y) \) be the Sobolev space of all \( v \in L^p([0,T]; Y) \) such that
\[
\int_0^T \int_0^T \frac{\|v(t) - v(s)\|_{Y}^p}{|t - s|^{1+\alpha p}} dt ds < \infty,
\]
endowed with the norm
\[
\|v\|_{W^{\alpha, p}([0,T]; Y)}^p = \int_0^T \|v(t)\|_{Y}^p  dt + \int_0^T \int_0^T \frac{\|v(t) - v(s)\|_{Y}^p}{|t - s|^{1+\alpha p}}  dt ds.
\]

\begin{lemma}\label{PEeq-MDPCpctness-4.1}
	Let \( B_0 \subset B \subset B_1 \) be Banach spaces, with \(B_0\) and \(B_1\) reflexive and \(B_0\) compactly embedded in \(B\). Let \( p \in (1, \infty) \) and \( \alpha \in (0, 1) \) be given. Let \( \mathfrak X \) be the space
	\[
	\mathfrak{X} = L^p([0,T]; B_0) \cap W^{\alpha, p}([0,T]; B_1)
	\]
	endowed with the natural norm. Then the embedding of \( \mathfrak{X} \) in \( L^p([0,T]; B) \) is compact.
\end{lemma}

\subsection{Existence and uniqueness of solutions}
We use the following notion of solution for \eqref{PEeq-2.15}; see \cite{SunGao-2013} for the detailed proof.

\begin{definition}\label{PEeqDef-2.4}
 An $H$-valued $\mathscr{F}_t$-adapted c\`{a}dl\`{a}g  stochastic process  $v$ is called a solution of (\ref{PEeq-2.15}) if the following conditions hold: 
$$
v  \in \mathcal{D}([\tau,{\tau+T}];H)\cap  L^2(\tau,{\tau+T};V),~~ \mathbb{P}\text{-a.s.;}
$$
and, for every $t\in [\tau,{\tau+T}]$ and $\psi\in D(A)$, 
\begin{align*}
	&~\langle v(t),\psi\rangle+\int^{t}_{\tau}\Big(\langle v(s), A \psi\rangle+\langle B(v(s),v(s)), \psi\rangle \Big)ds\\
	&=\langle \zeta,\psi\rangle+\int^{t}_{\tau}\langle g(s),\psi\rangle ds+\sqrt{\epsilon}\int^{t}_{\tau}\langle \sigma(s,v(s)) dW(s),\psi\rangle+\epsilon\int^{t}_{\tau}\int_{E}\langle h(v(s-),\xi),\psi\rangle\widetilde{N}^{\epsilon^{-1}}(ds,d\xi), ~~ \mathbb{P}\text{-a.s.}
\end{align*}
\end{definition}

We recall the following global well-posedness result, with further details provided in \cite[Theorem~3.2]{SunGao-2013} (or \cite[Theorem~2.3]{ZRR-2021}).
\begin{theorem}\label{PEeqthe-2.5}
Suppose Hypotheses \ref{PEeqAssum-2.2} and \ref{PEeqAssum-2.3} hold. Let $\zeta$ be an $\mathscr F_\tau$-measurable, $\mathcal H$-valued random variable such that
$\mathbb E\left[|\zeta|^4+|\partial_z\zeta|^4\right]<\infty$, and assume that
$g,\partial_zg\in L^4([\tau,\tau+T];H)$.
Then problem \eqref{PEeq-2.15} admits a unique global solution $v$ in the sense of Definition  \ref{PEeqDef-2.4}.
 Furthermore,  
 there exist $\epsilon_0\in (0,1)$ and a positive constant $C:=C(T,\|g\|_{L^4([\tau,{\tau+T}];H)},\|\partial_{z}g\|_{L^4([\tau,{\tau+T}];H)})$ such that for every $\epsilon\in (0,\epsilon_0)$,
\begin{align}\label{ee-5}
	\mathbb{E}\left[\sup_{\tau\leq t\leq \tau+T}|v(t)|^{2p}+\left(\int^{\tau+T}_{\tau}\|v(t)\|^{2}dt\right)^{p}\right]\leq C\left(1+\mathbb{E}\left[|\zeta|^{2p}\right]\right),
\end{align}
and
\begin{eqnarray}\label{ee-6}
	\mathbb{E}\left[\sup_{\tau\leq t\leq \tau+T}|\partial_zv(t)|^{2p}+\left(\int^{\tau+T}_{\tau}\|\partial_zv(t)\|^{2}dt\right)^{p}\right]\leq C\left(1+\mathbb{E}\left[|\partial_z\zeta|^{2p}\right]\right),
\end{eqnarray}	
where $p\in \{1,2\}$. 
\end{theorem}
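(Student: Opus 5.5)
We prove Theorem~\ref{PEeqthe-2.5} with the standard Galerkin--stopping-time scheme of \cite{SunGao-2013}, and we track constants so that the moment bounds \eqref{ee-5}--\eqref{ee-6} are uniform in $\epsilon\in(0,\epsilon_0)$.

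\textbf{Galerkin approximation and the $H$-estimate.} Let $P_n$ be the projection onto $\mathrm{span}\{e_1,\dots,e_n\}$. The projected system
\[
dv_n+Av_n\,dt+P_nB(v_n,v_n)\,dt=P_ng\,dt+\sqrt\epsilon P_n\sigma(t,v_n)\,dW+\epsilon\int_E P_nh(v_n(t-),\xi)\widetilde N^{\epsilon^{-1}}(dt,d\xi)
\]
is a finite-dimensional SDE with locally Lipschitz coefficients. It therefore has a unique c\`adl\`ag solution up to explosion. Apply It\^o's formula to $|v_n|^{2}$ and then to $(|v_n|^2)^{p}$ for $p=1,2$; the term $b(v_n,v_n,v_n)$ vanishes by Lemma~\ref{PEeq-lem2.1}.

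For the Gaussian part, (A.1) gives a contribution $\epsilon C_\sigma(1+|v_n|^2)$. For the jump part, the compensator of $|v_n+\epsilon h|^{2p}-|v_n|^{2p}-2p|v_n|^{2p-2}(v_n,\epsilon h)$ carries the factor $\epsilon^{-1}\nu(d\xi)$. The net contribution is therefore bounded by
\[
C\epsilon\int_E\bigl(C_h^2+\epsilon^2C_h^4\bigr)\nu(d\xi)\,(1+|v_n|^{2p}),
\]
which is where $C_h\in L^2\cap L^4(\nu)$ enters. The supremum of the martingale terms is controlled by BDG, using the quadratic variation $\epsilon\int_E|h|^2\,N^{\epsilon^{-1}}$ for the jump part. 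Each such term is absorbed into the left-hand side by Young's inequality once $\epsilon<\epsilon_0$. Gronwall's inequality then gives \eqref{ee-5} for $v_n$, uniformly in $n$ and $\epsilon$, which also rules out explosion.

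\textbf{Vertical estimate, the main obstacle.} Apply It\^o's formula to $|\partial_zv_n|^2$, i.e.\ test with $-\partial_{zz}v_n=A_zv_n$. The eigenfunctions lie in $D(A)$, and the operator $-\partial_{zz}$ with Neumann conditions in $z$ commutes with $P_n$ on the chosen basis. One checks this via the separated product structure of the eigenfunctions; if it fails, one works instead with a basis adapted to both operators. With this in hand, identity \eqref{PEeq-2.13} removes the entire nonlinear contribution, including the nonlocal term $\mathcal W(v)\partial_zv$. The forcing is handled with $\partial_zg\in L^4(H)$, and the noise with (A.3) and (B.3) exactly as in the $H$-estimate. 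This yields \eqref{ee-6} uniformly.

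\textbf{Passage to the limit and uniqueness.} The bounds above, together with a fractional time-regularity estimate in $W^{\alpha,2}(0,T;V')$ (using \eqref{PEeq-2.0013} and the vertical bound to control $B(v_n,v_n)$), give tightness of the laws in $L^2(0,T;H)\cap\mathcal D([0,T];V')$ via Lemma~\ref{PEeq-MDPCpctness-4.1}. Skorokhod's representation then produces a martingale solution. The nonlinear term converges because $v_n\to v$ strongly in $L^2(H)$ while remaining bounded in $L^2(V)$ with $\partial_zv_n$ bounded in $L^\infty(H)$.

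Pathwise uniqueness comes from \eqref{PEeq-2.103}. For two solutions $v,\hat v$ with difference $\delta=v-\hat v$, It\^o's formula for $|\delta|^2$ gives a drift bounded by
\[
\tfrac12\|\delta\|^2+C\bigl(\|v\|^2+|\partial_zv|^4\bigr)|\delta|^2 ,
\]
plus Lipschitz noise terms from (A.2) and (B.2). A stochastic Gronwall argument, localized with the stopping time $\inf\{t:\int_\tau^t(\|v\|^2+|\partial_zv|^4)\,ds>R\}$, gives $\delta=0$, and $R\to\infty$ is justified by \eqref{ee-5}--\eqref{ee-6}. The Yamada--Watanabe theorem then provides a unique strong solution. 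Finally, Fatou's lemma transfers the uniform Galerkin bounds to $v$, giving \eqref{ee-5}--\eqref{ee-6}.
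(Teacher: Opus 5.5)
The paper gives no proof of Theorem~\ref{PEeqthe-2.5}---it is imported from \cite[Theorem~3.2]{SunGao-2013} (see also \cite[Theorem~2.3]{ZRR-2021}). Your reconstruction is correct and follows the same scheme the paper itself carries out for the controlled equation in Proposition~\ref{prop-controlled-wp}: uniform Galerkin bounds in $H$ and for $\partial_zv$, then tightness, Skorokhod representation, pathwise uniqueness from \eqref{PEeq-2.103}, and a Yamada--Watanabe theorem for Poisson-driven equations such as \cite{Kurtz-ECP-2014}. For the vertical bound, \eqref{PEeq-2.13} does remove the whole convection term, provided you use the product eigenbasis $\sin(k\pi x/L)\cos(m\pi z/l)$, $k,m\ge1$, including inside degenerate eigenspaces of $A$.

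The one point to state precisely is the uniqueness class. The variational It\^o formula for $|v-\hat v|^2$ needs the drift in $L^2(\tau,\tau+T;V')$. By \eqref{PEeq-2.0013} this holds only if both solutions also satisfy $\partial_zv\in L^\infty(\tau,\tau+T;H)\cap L^2(\tau,\tau+T;V)$; for a solution merely as in Definition~\ref{PEeqDef-2.4} one has in general only $B(\hat v,\hat v)\in L^{4/3}(\tau,\tau+T;V')$. So run pathwise uniqueness and Yamada--Watanabe in that regular class, which contains your limit solutions by lower semicontinuity. Uniqueness in the full class of Definition~\ref{PEeqDef-2.4} would require an additional argument that your sketch does not supply.
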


\begin{remark}
	In the pullback and zero-noise sections, Theorem~\ref{PEeqthe-2.5} is applied to $\mathcal H$-valued initial variables satisfying
	$\mathbb E\left[|\zeta|^4+|\partial_z\zeta|^4\right]<\infty$, with $0<\epsilon<\epsilon_0$. The resulting evolution is therefore considered on $\mathcal P_{4,z}(H)$.
\end{remark}

\begin{remark}
	Hypotheses~\ref{PEeqAssum-2.2} and \ref{PEeqAssum-2.3} impose global Lipschitz conditions on the noise coefficients $\sigma$ and $h$. The same arguments can be extended to locally Lipschitz coefficients by truncation; see \cite{Bre-JEMS-2023}.
\end{remark}

\section{Pullback measure attractors}\label{PMA-PEs3}
We now establish the existence and uniqueness of pullback measure attractors for \eqref{PEeq-2.15}. We first introduce the measure-theoretic notation used below.
Let $X$ be a separable Banach space with norm $\|\cdot\|_X$, and let $C_b(X)$ be the space of all bounded continuous real-valued functions on $X$, equipped with the supremum norm $\|\phi\|_{C_b} = \sup_{x \in X} |\phi(x)|$. 
Denote by $L_b(X)$ the space of bounded Lipschitz functions on $X$, i.e., those $\phi \in C_b(X)$ satisfying
\begin{align*}
	\text{Lip}(\phi) := \sup_{\substack{x_1, x_2 \in X \\ x_1 \neq x_2}} \frac{|\phi(x_1) - \phi(x_2)|}{\|x_1 - x_2\|_X} < \infty.
\end{align*}
The space $L_b(X)$ is equipped with the norm $\|\phi\|_{L_b} = \|\phi\|_{C_b} + \text{Lip}(\phi)$. Let $\mathcal{P}(X)$ denote the set of probability measures on $(X,\mathcal{B}(X))$, and define the metric on $\mathcal{P}(X)$ by
\begin{align*}
	d_{\mathcal{P}(X)}(\mu_1, \mu_2) = \sup_{\substack{\phi \in L_b(X) \\ \|\phi\|_{L_b} \leq 1}} |(\phi, \mu_1) - (\phi, \mu_2)|, \quad \forall \mu_1, \mu_2 \in \mathcal{P}(X),
\end{align*}
where $(\phi, \mu) = \int_X \phi(x) \, \mu({d}x)$ for any $\mu \in \mathcal{P}(X)$ and $\phi \in C_b(X)$.
Recall that a sequence of probability measures $\{\mu_n\}_{n=1}^\infty \subset \mathcal{P}(X)$ is weakly convergent to $\mu \in \mathcal{P}(X)$ if for every $\phi \in C_b(X)$,
\begin{align*}
	\lim_{n \to \infty} (\phi, \mu_n) = (\phi, \mu).
\end{align*}
Note that $(\mathcal{P}(X), d_{\mathcal{P}(X)})$ forms a Polish space. In particular, a sequence $\{\mu_n\}_{n=1}^\infty \subset \mathcal{P}(X)$ converges to $\mu$ in $(\mathcal{P}(X), d_{\mathcal{P}(X)})$ if and only if $\{\mu_n\}_{n=1}^\infty$ converges weakly to $\mu$.

For every $p \geq 1$, we denote the subspace $\mathcal{P}_p(X)$ of $\mathcal{P}(X)$ by
\begin{align*}
	\mathcal{P}_p(X) = \left\{ \mu \in \mathcal{P}(X) : \int_X \|x\|_X^p \, \mu({d}x) < \infty \right\}.
\end{align*}
Then $(\mathcal{P}_p(X), d_{\mathcal{P}(X)})$ is a metric space but is not complete in general, because weak convergence alone does not preserve finiteness of the $p$-th moment. In the present paper we therefore regard $\mathcal P(H)$, endowed with $d_{\mathcal P(H)}$, as the ambient Polish space. The evolution is defined only on the admissible law class $\mathcal P_{4,z}(H)$ introduced below. The moment lower semicontinuity lemma and the closed absorbing family will ensure that every weak limit occurring in the pullback construction remains admissible. Thus no completeness of $\mathcal P_{4,z}(H)$ is assumed.

A  subset $\mathcal{S}\subseteq {\mathcal P}_p \left(X\right)$ is
bounded  if there exists $r>0$ such that
$\mathcal{S}\subseteq \mathbb{B}_{\mathcal{P}_{p}(X)}(r)$. If $\mathcal{S}$ is bounded in ${\mathcal P}_p(X)$,
then we set
$$
\|\mathcal{S}\|_{{\mathcal P}_p (X)}
=
\sup_{
	{\mu} \in \mathcal{S}
} \left(\int_{X} \|x\|_{X}^p {\mu}
( {dx}  )\right)^{{1}/{p}}<\infty.
$$
Given $r > 0$, we define the closed ball $\mathbb{B}_{\mathcal{P}_p(X)}(r)$ as follows:
\begin{align*}
	\mathbb{B}_{\mathcal{P}_p(X)}(r) = \left\{ \mu \in \mathcal{P}_p(X) : \left( \int_X \|x\|_X^p \, \mu({d}x) \right)^{1/p} \leq r \right\}.
\end{align*}
For any nonempty subsets $Y,Z\subset\mathcal P(X)$, we use the one-sided Hausdorff semidistance generated by weak convergence on $X$,
\begin{align}\label{weak-Hausdorff-distance}
	\operatorname{dist}_{w,X}(Y,Z)
	:=\sup_{\mu\in Y}\inf_{\nu\in Z}d_{\mathcal P(X)}(\mu,\nu).
\end{align}
For a point $\mu\in\mathcal P(X)$ we also write
\[
d_{\mathcal P(X)}(\mu,Z):=\inf_{\nu\in Z}d_{\mathcal P(X)}(\mu,\nu).
\]
The subscript $w$ emphasizes that this is a weak measure semidistance; it is not a Wasserstein distance and does not require convergence of moments.

Observe that for every $r>0$, $\mathbb{B}_{\mathcal{P}_{p}(X)}(r)$ is a closed subset of $\mathcal{P}(X)$ with respect to the metric $d_{\mathcal{P}(X)}$. Consequently, the space $\left(\mathbb{B}_{\mathcal{P}_{p}(X)}(r),d_{\mathcal{P}(X)}\right)$ is complete for every $r>0$.

\begin{hypothesis}\label{PEeqAssum-2.4}
	For every $t\in\mathbb R$ and $v\in V$, assume that
	$\sigma(t,v)\in\mathcal L_2(U;V)$ and that $h(v,\xi)\in V$ for $\nu$-a.e.\ $\xi\in E$.
	Moreover, suppose that there exists a constant $\alpha_1>0$ such that
	\begin{align*}
		\|\sigma(t,v)\|_{\mathcal L_2(U;V)}^2
		+\int_E\|h(v,\xi)\|_V^2\nu(d\xi)
		\le\alpha_1(1+\|v\|^2),
		\qquad t\in\mathbb R,\ v\in V.
	\end{align*}
\end{hypothesis}

\subsection{Uniform pullback moment estimates of solutions}\label{UL-D-unver3.1}
In this subsection, we establish uniform pullback moment estimates for solutions of problem \eqref{PEeq-2.15}; these estimates will facilitate the subsequent proof of the existence and uniqueness of pullback measure attractors. We write $\mathscr L(\vartheta)$ for the law of a random variable $\vartheta$.
Assume that there exists a constant $\kappa>0$ such that
\begin{align}\label{LSWs4.1}
	\frac{\textbf{c}_0}{8}-\kappa\geq 0,
\end{align}
and for this $\kappa$, we assume that for any $\tau \in \mathbb{R}$,
\begin{align}\label{PETNs-3.01}
	\int_{-\infty}^{\tau} e^{\kappa (s-\tau)}(|g(s)|^{4}+|\partial_{z}g(s)|^{4})ds<\infty.
\end{align}

% Let $\mathfrak{D}$ be the collection of families of bounded nonempty subsets of $\mathcal{P}_4(H)$, which is given by
%\begin{align*}
%	\mathfrak{D}=\big\{&D=\left\{D(\tau)\subseteq \mathcal{P}_4(H): D(\tau) \text{ is a bounded nonempty subset of } \mathcal{P}_4(H), \tau\in \mathbb{R}\right\}:\\
%	&\lim_{\tau\rightarrow -\infty} e^{\kappa \tau}\|D(\tau)\|^4_{\mathcal{P}_4(H)}=0\big\}.
%\end{align*} 
Define the admissible class
\[
\mathcal P_{4,z}(H):=
\left\{\mu\in\mathcal P_4(H):
\mu(\mathcal H)=1,~~
\int_H|\partial_z\zeta|^4\,\mu(d\zeta)<\infty
\right\}.
\]
Here and below the functional $\zeta\mapsto|\partial_z\zeta|^4$ is understood to be $+\infty$ outside $\mathcal H$. Let $\mathfrak D$ denote the collection of all families
$D=\{D(\tau):\tau\in\mathbb R\}$ of bounded nonempty subsets of $\mathcal P_{4,z}(H)$ such that
\[
\lim_{\tau\to-\infty}e^{\kappa\tau}
\sup_{\mu\in D(\tau)}
\int_H|\zeta|^4\,\mu(d\zeta)=0
\]
and
\[
\lim_{\tau\to-\infty}e^{\kappa\tau}
\sup_{\mu\in D(\tau)}
\int_H|\partial_z\zeta|^4\,\mu(d\zeta)=0.
\]
Thus the admissible universe contains precisely the moment information that is required by Theorem~\ref{PEeqthe-2.5} and by the pullback vertical derivative estimates.

Following the standard definition of a $\mathfrak D$-pullback
measure attractor in \cite[Definition~2.6]{LDS-JDE-2024},
we use the following adapted formulation. The evolution is defined
on the admissible class $\mathcal P_{4,z}(H)$, whereas compactness
and attraction are understood in the ambient weak topology of $\mathcal P(H)$.
	
	\begin{definition}\label{def-admissible-pullback-attractor}
		Let
		\[
		S(t,\tau):\mathcal P_{4,z}(H)\longrightarrow
		\mathcal P_{4,z}(H),\qquad t\geq\tau,
		\]
		be a measure evolution process satisfying
		\[
		S(\tau,\tau)=I,\qquad
		S(t,s)S(s,\tau)=S(t,\tau),
		\quad t\geq s\geq\tau.
		\]
		A family
		$\mathscr A=\{\mathscr A(\tau):\tau\in\mathbb R\}
		\in\mathfrak D$
		is called a $\mathfrak D$-pullback measure attractor for $S$ if:
		\begin{enumerate}
			\item[(1)] for every $\tau\in\mathbb R$, the set
			$\mathscr A(\tau)$ is nonempty and compact in
			$(\mathcal P(H),d_{\mathcal P(H)})$;
			
			\item[(2)] $\mathscr A$ is invariant, namely,
			\[
			S(t,\tau)\mathscr A(\tau)=\mathscr A(t),
			\qquad t\geq\tau;
			\]
			
			\item[(3)] $\mathscr A$ pullback attracts every $D\in\mathfrak D$,
			that is, for every $\tau\in\mathbb R$,
			\[
			\lim_{r\to\infty}
			\operatorname{dist}_{w,H}
			\bigl(
			S(\tau,\tau-r)D(\tau-r),
			\mathscr A(\tau)
			\bigr)=0.
			\]
		\end{enumerate}
		Here compactness and attraction are understood in the weak topology
		of probability measures on $H$, while membership in $\mathfrak D$
		retains the fourth-order and vertical-derivative moments required
		to restart the equation.
	\end{definition}

We derive the long-time fourth-moment estimates for problem \eqref{PEeq-2.15}.
\begin{lemma}\label{lemPETNs-3.1}
	Suppose Hypotheses \ref{PEeqAssum-2.2}, \ref{PEeqAssum-2.3} and \eqref{PETNs-3.01} hold. Then there exists $\epsilon_0\in(0,1)$ such that, for every $\epsilon\in(0,\epsilon_0)$, $\tau\in\mathbb R$, and $\zeta\in L^4(\Omega,\mathscr F_\tau;\mathcal H)$ with $\mathbb E[|\partial_z\zeta|^4]<\infty$, the solution of \eqref{PEeq-2.15} satisfies 
	\begin{align}\label{PETNs-3.2}
		\begin{split}
			&~~\mathbb{E}\left[|v(t,\tau,\zeta)|^{4}\right]+\int_{\tau}^t e^{\kappa (s-t)}\mathbb{E}\left[|v(s,\tau,\zeta)|^{2}\|v(s,\tau,\zeta)\|^2\right]ds\\
			&\leq \rho_1 \left(e^{-\kappa(t-\tau)}\mathbb{E}\left[|\zeta|^{4}\right]+ \int_\tau^te^{-\kappa(t-s)}|g(s)|^{4}ds+1\right),
		\end{split}
	\end{align}
	where $\rho_1$ is a positive constant independent of $\zeta$, $\tau$ and $t$.
\end{lemma}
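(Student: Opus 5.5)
Apply Itô's formula for jump processes to $e^{\kappa t}|v(t)|^4$, written as the square of $|v|^2$. Working at the Galerkin level, or with the Itô formula for $|v|^2$ from \cite{SunGao-2013} and a localizing stopping time $\tau_N$, the differential of $|v|^2$ is
\[
d|v|^2 = \Bigl(-2\|v\|^2+2(g,v)+\epsilon\|\sigma\|_{\mathcal L_2(U;H)}^2\Bigr)dt + dM_1 + \epsilon^2\int_E |h(v(t-),\xi)|^2 N^{\epsilon^{-1}}(dt,d\xi).
\]
Here the convection term drops out by $b(v,v,v)=0$ from Lemma~\ref{PEeq-lem2.1}, and $M_1$ collects the Wiener and compensated Poisson martingales. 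Applying Itô again to $(|v|^2)^2$, the continuous part of $d|v|^4$ is
\[
\bigl(-4|v|^2\|v\|^2+4|v|^2(g,v)+2\epsilon|v|^2\|\sigma\|^2_{\mathcal L_2}+4\epsilon|\sigma^*v|_U^2\bigr)\,dt .
\]
The jump part is $\bigl[(|v(t-)+\epsilon h|^2)^2-|v(t-)|^4\bigr]$ integrated against $N^{\epsilon^{-1}}$. We compensate it and keep only the $\widetilde N$ martingale plus its compensator
\[
\epsilon^{-1}\int_E\Bigl[(|v+\epsilon h|^2)^2-|v|^4-4\epsilon|v|^2(v,h)\Bigr]\nu(d\xi).
\]
We then take expectations; the martingales vanish after localization, and we let $N\to\infty$ by Fatou, using \eqref{ee-5} on bounded intervals.

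The key step is the bound on the drift terms. Since $\kappa\le \textbf{c}_0/8$, Poincaré gives $\kappa|v|^4\le \tfrac18|v|^2\|v\|^2$. By Young,
\[
4|v|^2|(g,v)|\le 4|v|^3|g|\le \tfrac{\textbf{c}_0}{2}|v|^4+C|g|^4\le \tfrac12|v|^2\|v\|^2+C|g|^4 .
\]
By (A.1), the Itô correction satisfies $6\epsilon|v|^2\|\sigma\|^2\le 6\epsilon C_\sigma(|v|^2+|v|^4)$, which is bounded by $C\epsilon|v|^2\|v\|^2+C$. For the compensator, expand $|v+\epsilon h|^2=|v|^2+2\epsilon(v,h)+\epsilon^2|h|^2$. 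The first-order term cancels, and the remainder is at most
\[
C\epsilon\int_E\bigl(|v|^2|h|^2+\epsilon^2|h|^4\bigr)\,d\nu .
\]
By (B.1) and $C_h\in L^2(\nu)\cap L^4(\nu)$, this is $\le C\epsilon(1+|v|^4)$. Choosing $\epsilon_0$ small absorbs all $\epsilon$-terms into $\tfrac18|v|^2\|v\|^2$, which leaves
\[
\frac{d}{dt}\mathbb E\bigl[e^{\kappa t}|v|^4\bigr]+c\,e^{\kappa t}\mathbb E\bigl[|v|^2\|v\|^2\bigr]\le C e^{\kappa t}\bigl(|g|^4+1\bigr),
\]
with $c\ge\tfrac14$. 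Integrating from $\tau$ to $t$ and multiplying by $e^{-\kappa t}$ gives \eqref{PETNs-3.2}; the constant term contributes $C/\kappa$, which is absorbed into $\rho_1$.

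The main obstacle is justifying the Itô formula for the fourth power with jumps and the vanishing of the compensated-jump martingale's expectation. For this we will use stopping times $\tau_N=\inf\{t:|v(t)|>N\}$ together with BDG and the fourth-order integrability of $C_h$. The pointwise jump bound $(a+b)^2-a^2-2ab\le b^2$ only needs to be applied to the scalar $a=|v|^2$ and the increment $b=2\epsilon(v,h)+\epsilon^2|h|^2$. We also need uniformity of the constants in $\tau$, $t$ and $\zeta$; this holds because every constant depends only on $C_\sigma$, $\|C_h\|_{L^2\cap L^4}$, $\textbf{c}_0$ and $\kappa$.
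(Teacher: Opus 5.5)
Your proposal is correct and follows essentially the paper's route: It\^o's formula with jumps for $e^{\kappa t}|v|^4$, the cancellation $b(v,v,v)=0$, and Young's inequality for the forcing. It also uses a second-order bound on the jump compensator controlled through \textbf{(B.1)} with $C_h\in L^2(\nu)\cap L^4(\nu)$, and smallness of $\epsilon$ together with Poincar\'e and $\kappa\le\textbf{c}_0/8$ to absorb the noise terms.

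The differences are cosmetic. You compute $|v|^4=(|v|^2)^2$ explicitly instead of invoking the Taylor inequality \eqref{LSWs3.meanin24} for general $p$, you absorb into $|v|^2\|v\|^2$ rather than into $|v|^4$, and you spell out the localization that the paper leaves implicit. One small point: with your $a=|v|^2$ and increment $b=2\epsilon(v,h)+\epsilon^2|h|^2$, the subtracted first-order term is $4\epsilon|v|^2(v,h)$, not $2ab$. So the remainder is $b^2+2\epsilon^2|v|^2|h|^2$, but your stated bound already covers this.
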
  
\begin{proof}
	Write $v(t):=v(t,\tau,\zeta)$ for the solution of \eqref{PEeq-2.15} with initial data $\zeta$.
	For $p\in\{2,4\}$, applying It\^{o}'s formula with jumps to $e^{\kappa t}|v(t)|^p$ and using the cancellation $b(v,v,v)=0$ gives
%	Applying It\^{o}'s formula to the process $e^{\kappa t}|v(t)|^p$, we obtain from \eqref{PEeq-2.15} that for any $p\geq 2$,
	\begin{align}\label{PETNs-3.3}
		&\,\, e^{\kappa t}|v(t)|^{p}+\int_{\tau}^{t}\left(pe^{\kappa s}|v(s)|^{p-2}\|v(s)\|^2-\kappa e^{\kappa s} |v(s)|^p\right)ds\notag \\
		&=e^{\kappa \tau}|\zeta|^{p}+p\int_{\tau}^{t}e^{\kappa s}|v(s)|^{p-2}\langle g(s),v(s)\rangle ds+p\sqrt{\epsilon} \int_{\tau}^{t}e^{\kappa s}|v(s)|^{p-2}\langle v(s),\sigma(s,v(s))\rangle dW(s)\notag \\
		&+\frac{p}{2}\epsilon \int_{\tau}^{t}e^{\kappa s} |v(s)|^{p-2}\|\sigma(s,v(s))\|^2_{\mathcal{L}_2(U;H)}ds
		+\frac{p(p-2)}{2}\epsilon \int_{\tau}^{t}e^{\kappa s} |v(s)|^{p-4}\langle v(s),\sigma (s,v(s))\rangle^2ds \notag \\
		&+p{\epsilon}\int_{\tau}^{t}e^{\kappa s}\int_{E}|v(s)|^{p-2}\langle v(s-),h(v(s-),\xi)\rangle\widetilde{N}^{\epsilon^{-1}}(ds,d\xi)\notag \\
		&+\int_{\tau}^{t}e^{\kappa s} \int_{E}\left(|v(s-)+{\epsilon}h(v(s-),\xi)|^{p}-|v(s-)|^p-p{\epsilon}\langle v(s-),h(v(s-),\xi)\rangle\right)N^{\epsilon^{-1}}(ds,d\xi). 
	\end{align}
%	where we used
%	the equality \eqref{PEeq-2.13}.
	By H\"{o}lder's inequality we get
	\begin{align}\label{PETNs-3.4}
		p\int_{\tau}^{t}e^{\kappa s}|v(s)|^{p-2}\langle g(s),v(s)\rangle ds\leq \frac{p\textbf{c}_0}{4}\int_{\tau}^{t}e^{\kappa s}|v(s)|^pds+\left(\frac{4(p-1)}{\textbf{c}_0p}\right)^{p-1}\int_{\tau}^{t}e^{\kappa s}|g(s)|^{p}ds.
	\end{align}
	Combining \eqref{PETNs-3.3} and \eqref{PETNs-3.4} and taking the expectation, we obtain
	\begin{align}\label{PETNs-3.6}
		&~~e^{\kappa t}\mathbb{E}\left[|v(t)|^{p}\right]+\frac{p}{2}\int_{\tau}^{t}e^{\kappa s}\mathbb{E}\left[|v(s)|^{p-2}\|v(s)\|^2\right]ds+(\frac{p\textbf{c}_0}{4}-\kappa)\int_{\tau}^{t}e^{\kappa s}\mathbb{E}\left[|v(s)|^{p}\right]ds \leq e^{\kappa \tau}\mathbb{E}\left[|\zeta|^{p}\right]\notag\\
		&+\left(\frac{4(p-1)}{\textbf{c}_0p}\right)^{p-1}\int_{\tau}^{t}e^{\kappa s}|g(s)|^{p}ds+\underbrace{\frac{p(p-1)}{2}\epsilon \int_{\tau}^{t}e^{\kappa s} \mathbb{E}\left[|v(s)|^{p-2}\|\sigma(s,v(s))\|^2_{\mathcal{L}_2(U,H)}\right]ds}_{\mathscr{J}_1(t)}\notag\\
		&+\underbrace{\mathbb{E}\left[\int_{\tau}^{t}e^{\kappa s} \int_{E}\left(|v(s-)+{\epsilon}h(v(s-),\xi)|^{p}-|v(s-)|^p-p{\epsilon}\langle v(s-),h(v(s-),\xi)\rangle\right)N^{\epsilon^{-1}}(ds,d\xi)\right]}_{\mathscr{J}_2(t)}.
	\end{align}
	
	Taylor's formula gives a constant $\wp_p>0$ such that for $p\geq 2$,
	\begin{align}\label{LSWs3.meanin24}
		\left||x+y|^{p}-|x|^{p}-{p}|x|^{{p}-2}\langle x,y\rangle\right|\leq \wp_p\left(|x|^{p-2}|y|^2+|y|^{p}\right), \quad \forall x,y \in H.
	\end{align}
	Since $\epsilon\in(0,1)$, the last term on the right-hand side of \eqref{PETNs-3.6} satisfies
	\begin{align*}
		\mathscr{J}_2(t)&\leq \wp_p\int_{\tau}^{t}e^{\kappa s}\mathbb{E}\left[\int_{E} \left({\epsilon}^2|v(s-)|^{p-2}|h(v(s-),\xi)|^{2}+{\epsilon}^p|h(v(s-),\xi)|^{p}\right) \epsilon^{-1}\nu(d\xi)\right] ds \\
		&\leq \underbrace{\wp_p {\epsilon}\int_{\tau}^{t}e^{\kappa s}\mathbb{E}\left[\int_{E} \left(|v(s-)|^{p-2}|h(v(s-),\xi)|^{2}+|h(v(s-),\xi)|^{p}\right) \nu(d\xi)\right] ds}_{\mathscr{J}_3(t)}.
	\end{align*}
	Define the constants
	\begin{align}\label{ep-0_0}
	\begin{split}
		C_p^{\text{noise}}:=2^{p-1} \wp_p \int_E |C_h(\xi)|^p &\nu(d\xi) + \bigl(p(p-1) + 2\wp_p\bigr) \left( C_\sigma + 2 \int_E |C_h(\xi)|^2 \nu(d\xi) \right), \quad p\in \{2,4\},\\
		&\epsilon_{0}^1:=\min_{p\in \{2,4\}} \left\{1,{\frac{p\textbf{c}_0}{8C_p^{\text{noise}}}}\right\}>0.
	\end{split}
\end{align} 	
	For $p\in\{2,4\}$, we have
	\[
	|v|^{p-2}(1+|v|^2)\le2(1+|v|^p),\qquad
	|v|^{p-2}(1+|v|)^2\le4(1+|v|^p),
	\]
	and $(1+|v|)^p\le2^{p-1}(1+|v|^p)$. Hence, Hypotheses \textbf{(A.1)} and \textbf{(B.1}), together with the definition of $C_p^{\rm noise}$, imply that for $\epsilon\in(0,\epsilon_{0}^1)$
%	By the hyptothesis \textbf{(A.1)} and \textbf{(B.1)}, we obatin that there exists 
%	\begin{align}\label{ep-0_0}
%		\epsilon_{0}^1=\min \left\{1,{\frac{p\textbf{c}_0}{8\left(2^{p-1} \wp_p \int_E |C_h(\xi)|^p \nu(d\xi) + \bigl(p(p-1) + 2\wp_p\bigr) \left( C_\sigma + 2 \int_E |C_h(\xi)|^2 \nu(d\xi) \right)\right)}}\right\}>0
%	\end{align} 
%	such that for ${\epsilon}\in (0,\epsilon_{0}^1)$,
	\begin{align}\label{PETNs-300.7}
		&~\mathscr{J}_1(t)+\mathscr{J}_3(t)\leq \wp_p\epsilon\int_{\tau}^{t}e^{\kappa s} \mathbb{E}\left[\int_{E}|h(v(s-),\xi)|^{p}\nu(d\xi)\right]ds+ \left(\frac{p(p-1)}{2}+\wp_p\right)\epsilon\\ \notag
		& \cdot\int_{\tau}^{t}e^{\kappa s} \mathbb{E}\left[|v(s)|^{p-2}\left(\|\sigma(s,v(s))\|^2_{\mathcal{L}_2(U;H)}+\int_{E}|h(v(s-),\xi)|^{2}\nu(d\xi)\right)\right]ds\\ \notag
		&\leq 2^{p-1}\wp_p\epsilon\int_{E} |C_h(\xi)|^p\nu(d\xi)\int_{\tau}^{t}e^{\kappa s}\mathbb{E}\left[(1+|v(s)|^{p})\right]ds\\ \notag
		&+\left(\frac{p(p-1)}{2}+\wp_p\right)\epsilon\left(C_{\sigma}+2\int_{E} |C_h(\xi)|^2\nu(d\xi)\right) \int_{\tau}^{t}e^{\kappa s} \mathbb{E}\left[|v(s)|^{p-2}+|v(s)|^{p}\right]ds  \\ \notag
		&\leq 2^{p-1}\wp_p\epsilon\int_{E} |C_h(\xi)|^p\nu(d\xi)\int_{\tau}^{t}e^{\kappa s}\mathbb{E}\left[(1+|v(s)|^{p})\right]ds\\ \notag
		&+\left(\frac{p(p-1)}{2}+\wp_p\right)\epsilon\left(C_{\sigma}+2\int_{E} |C_h(\xi)|^2\nu(d\xi)\right) \int_{\tau}^{t}e^{\kappa s} \mathbb{E}\left[|v(s)|^{p-2}+|v(s)|^{p}\right]ds  \\ \notag
		&\leq \epsilon_0^1 C_p^{\text{noise}} \int_{\tau}^{t}e^{\kappa s} \mathbb{E}\left[(1+|v(s)|^{p})\right]ds
		\leq \frac{p\textbf{c}_0}{8}\int_{\tau}^{t}e^{\kappa s} \mathbb{E}\left[|v(s)|^{p}\right]ds+\frac{p\textbf{c}_0}{8\kappa}e^{\kappa t},
	\end{align}
	Combining this estimate with \eqref{PETNs-3.6} and using \eqref{LSWs4.1}, we may take $\epsilon_{0}:=\epsilon_{0}^1$ so that, for 	${\epsilon}\in (0,\epsilon_0)$,
	\begin{align}\label{PETNs-3.7}
		\begin{split}
			&~~\mathbb{E}\left[|v(t)|^{p}\right]+\frac{p}{2}\int_\tau^t e^{\kappa (s-t)}	\mathbb{E}\left[|v(s)|^{p-2}\|v(s)\|^2\right]ds\\
			&\leq e^{-\kappa (t-\tau)}\mathbb{E}\left[|\zeta|^{p}\right]
			+\left(\frac{4(p-1)}{\textbf{c}_0p}\right)^{p-1}\int_{\tau}^{t}e^{\kappa (s-t)}|g(s)|^{p}ds
			+\frac{p\textbf{c}_0}{8\kappa}.
		\end{split}
	\end{align}
	Taking $p=4$ in \eqref{PETNs-3.7} yields the desired estimate. This completes the proof.
\end{proof}

\begin{lemma}\label{UPe4-lem4.1}
	Suppose Hypotheses \ref{PEeqAssum-2.2}, \ref{PEeqAssum-2.3} and \eqref{PETNs-3.01} hold. Then, for every $\tau \in \mathbb{R}$ and $D = \{D(t), t \in \mathbb{R}\} \in \mathfrak{D}$, there exist ${T} := {T}(\tau, D) > 1$ and  $\epsilon_{0}\in (0,1]$ such that for all $\epsilon\in (0,\epsilon_{0})$ and $t \geq {T}$, the solution $v$ of \eqref{PEeq-2.15} satisfies
	\begin{align}\label{UPe4-4.2}
		&~~\mathbb{E}\left[|v(\tau, \tau - t, \zeta)|^4\right] + \int_{\tau - t}^\tau e^{-\kappa(\tau - s)}\mathbb{E}\left[|v(s, \tau - t, \zeta)|^2\|v(s, \tau - t, \zeta)\|^2\right]{d}s\\ \notag
		&\leq \mathcal{R}_1 + \mathcal{R}_1\int_{-\infty}^\tau e^{-\kappa(\tau - s)}|g(s)|^{4} {d}s,
	\end{align}
	and
	\begin{align}\label{UPe4-4.3}
		\begin{split}
			\int_{\tau - 1}^\tau \mathbb{E}\left[\|v(s, \tau - t, \zeta)\|^2\right]{d}s\leq \mathcal{R}_2 + \mathcal{R}_2\int_{-\infty}^\tau e^{-\kappa(\tau - s)}|g(s)|^{4} {d}s,
		\end{split}
	\end{align}
	where $\zeta\in L^4(\Omega,\mathscr F_{\tau-t};\mathcal H)$ with $\mathscr{L}(\zeta) \in D(\tau - t)$, and $\mathcal{R}_i$ $(i=1,2)$ are positive constants independent of $\tau$, ${\epsilon}$ and $D$.
\end{lemma}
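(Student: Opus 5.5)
The plan is to apply Lemma~\ref{lemPETNs-3.1} with initial time $\tau-t$ and terminal time $\tau$, and then use the defining decay property of $\mathfrak D$ to absorb the initial-data term. Estimate \eqref{PETNs-3.2} with $(\tau,t)$ replaced by $(\tau-t,\tau)$ gives
\[
\mathbb E|v(\tau,\tau-t,\zeta)|^4+\int_{\tau-t}^\tau e^{-\kappa(\tau-s)}\mathbb E\bigl[|v|^2\|v\|^2\bigr]ds
\le \rho_1\Bigl(e^{-\kappa t}\mathbb E|\zeta|^4+\int_{\tau-t}^\tau e^{-\kappa(\tau-s)}|g(s)|^4ds+1\Bigr).
\]
Since $\mathscr L(\zeta)\in D(\tau-t)$, we have
\[
e^{-\kappa t}\mathbb E|\zeta|^4\le e^{-\kappa\tau}\,e^{\kappa(\tau-t)}\sup_{\mu\in D(\tau-t)}\int_H|\zeta|^4\mu(d\zeta),
\]
and the right-hand side tends to $0$ as $t\to\infty$ by the definition of $\mathfrak D$. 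We then choose $T=T(\tau,D)>1$ so that this quantity is at most $1$ for $t\ge T$. Extending the $g$-integral to $(-\infty,\tau]$, which is finite by \eqref{PETNs-3.01}, yields \eqref{UPe4-4.2} with $\mathcal R_1=2\rho_1$. The constant $\epsilon_0$ is the one from Lemma~\ref{lemPETNs-3.1}, and $\rho_1$ is independent of $\tau$, $\epsilon$ and $D$, so $\mathcal R_1$ is as well.

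For \eqref{UPe4-4.3}, we use the $p=2$ version of \eqref{PETNs-3.7}, which the proof of Lemma~\ref{lemPETNs-3.1} also provides:
\[
\int_{\tau-t}^\tau e^{-\kappa(\tau-s)}\mathbb E\|v(s)\|^2ds\le e^{-\kappa t}\mathbb E|\zeta|^2+\tfrac{4}{\textbf{c}_0}\int_{\tau-t}^\tau e^{-\kappa(\tau-s)}|g(s)|^2ds+\tfrac{\textbf{c}_0}{4\kappa}.
\]
On $[\tau-1,\tau]$ the weight satisfies $e^{-\kappa(\tau-s)}\ge e^{-\kappa}$, so the left side controls $e^{-\kappa}\int_{\tau-1}^\tau\mathbb E\|v\|^2ds$. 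For the initial-data term, $\mathbb E|\zeta|^2\le 1+\mathbb E|\zeta|^4$, which gives $e^{-\kappa t}\mathbb E|\zeta|^2\le e^{-\kappa t}+e^{-\kappa t}\mathbb E|\zeta|^4\le 2$ for $t\ge T$ after possibly enlarging $T$.

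The one mildly delicate point is converting the $|g|^2$ integral into the $|g|^4$ integral that appears in the statement. We use $|g|^2\le 1+|g|^4$ together with $\int_{-\infty}^\tau e^{-\kappa(\tau-s)}ds=1/\kappa$. Altogether this gives
\[
\int_{\tau-1}^\tau\mathbb E\|v\|^2ds\le e^{\kappa}\Bigl(2+\tfrac{4}{\textbf{c}_0\kappa}+\tfrac{\textbf{c}_0}{4\kappa}+\tfrac{4}{\textbf{c}_0}\int_{-\infty}^\tau e^{-\kappa(\tau-s)}|g(s)|^4ds\Bigr),
\]
which is \eqref{UPe4-4.3} with a constant $\mathcal R_2$ depending only on $\kappa$ and $\textbf{c}_0$.

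There is no real obstacle beyond keeping the constants uniform. The only requirement is that $T$ be taken to depend only on $(\tau,D)$, through the decay of the fourth moments in $D$, and not on $\epsilon$. This holds because the constant in \eqref{PETNs-3.7} is $\epsilon$-free for $\epsilon<\epsilon_0$.
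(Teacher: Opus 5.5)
Your proposal is correct and follows essentially the same route as the paper: \eqref{UPe4-4.2} comes from Lemma~\ref{lemPETNs-3.1} with $p=4$ and $\mathcal R_1=2\rho_1$, after the tempering condition of $\mathfrak D$ absorbs $e^{-\kappa t}\mathbb E|\zeta|^4$, and \eqref{UPe4-4.3} comes from the $p=2$ case of \eqref{PETNs-3.7}, the bound $e^{-\kappa(\tau-s)}\ge e^{-\kappa}$ on $[\tau-1,\tau]$, and Young's inequality to pass from $|\zeta|^2$ and $|g|^2$ to fourth powers. For $p=2$ the $g$-coefficient in \eqref{PETNs-3.7} is actually $2/\textbf{c}_0$, so your $4/\textbf{c}_0$ is only a harmless overestimate.
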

\begin{proof}
	Estimate \eqref{UPe4-4.2} follows directly from Lemma~\ref{lemPETNs-3.1} by taking $\mathcal{R}_1:=2\rho_1$. Moreover, \eqref{PETNs-3.7} gives
	\begin{align}\label{PETNs-3.02}
		\begin{split}
			\int_{\tau-t}^{\tau} e^{-\kappa (\tau-s)}\mathbb{E}\left[\|v(s, \tau - t, \zeta)\|^2\right]ds\leq \rho_2 \left(e^{-\kappa t}\mathbb{E}\left[|\zeta|^{2}\right]+ \int_{-\infty}^\tau e^{-\kappa(\tau-s)}|g(s)|^{2}ds+1\right),
		\end{split}
	\end{align}
 where $\rho_2:=\max\left\{1,\frac{2}{\textbf{c}_0},\frac{\textbf{c}_0}{4\kappa}\right\}$.
	Thus, \eqref{PETNs-3.02} and Young's inequality yield, for all $t\geq1$,
	\begin{align*}%\label{PETNs-3.002}
%		\begin{split}
			&~~\int_{\tau - 1}^\tau \mathbb{E}\left[\|v(s, \tau - t, \zeta)\|^2\right]{d}s\leq e^{\kappa}\int_{\tau-t}^{\tau} e^{-\kappa (\tau - s)}\mathbb{E}\left[\|v(s, \tau - t, \zeta)\|^2\right]ds\\
			&\leq \rho_2e^{\kappa} \left(e^{-\kappa t}\mathbb{E}\left[|\zeta|^{4}\right]+ \int_{-\infty}^\tau e^{-\kappa(\tau-s)}|g(s)|^{4}ds+\frac{1}{4\kappa}+\frac{1}{4}e^{-\kappa t}+1\right),
%		\end{split}
	\end{align*}
	Taking $\mathcal{R}_2:=\rho_2e^{\kappa}\left(\frac{1}{2\kappa}+\frac{5}{2}\right)$ yields \eqref{UPe4-4.3}. This completes the proof.
\end{proof}

We next derive uniform moment estimates for the vertical derivative of solutions to \eqref{PEeq-2.15}.
\begin{lemma}\label{UPe4-lem4.2}
	Suppose Hypotheses \ref{PEeqAssum-2.2}, \ref{PEeqAssum-2.3} and \eqref{PETNs-3.01} hold. Then, for every $\tau \in \mathbb{R}$ and $D = \{D(t), t \in \mathbb{R}\} \in \mathfrak{D}$, there exist ${T} := {T}(\tau, D) > 0$ and  $\epsilon_{0}\in (0,1)$ such that for all $\epsilon\in (0,\epsilon_{0})$ and $t \geq {T}$, the solution $v$ of \eqref{PEeq-2.15} satisfies
	\begin{align}\label{UPe4-4.5}
		&~~\mathbb{E}\left[|\partial_{z}{v}(\tau, \tau - t, \zeta)|^4\right] + \int_{\tau - t}^\tau e^{-\kappa(\tau - s)}\mathbb{E}\left[|\partial_{z}v(s, \tau - t, \zeta)|^2\|\partial_{z}v(s, \tau - t, \zeta)\|^2\right]{d}s\\ \notag
		&\leq \mathcal{R}_3 + \mathcal{R}_3\int_{-\infty}^\tau e^{-\kappa(\tau - s)}|\partial_{z}g(s)|^{4} {d}s,
	\end{align}
%	and
%	\begin{align}\label{UPe4-4.003}
%		\begin{split}
%			&~~\int_{\tau - 1}^\tau \mathbb{E}\left[\|v(s, \tau - t, \zeta)\|^2+\|{v}(s, \tau - t, {v}_{\tau-t})\|^2\right]{d}s\\
%			&\leq \mathcal{R}_2 + \mathcal{R}_2\int_{-\infty}^\tau e^{-\gamma(\tau - s)}(|G(s)|^{4}+|Q(s)|^{4}) {d}s,
%		\end{split}
%	\end{align}
	where $\zeta\in L^4(\Omega,\mathscr F_{\tau-t};\mathcal H)$ with $\mathscr{L}(\zeta) \in D(\tau - t)$, and $\mathcal{R}_3$ is a positive constant independent of $\tau$, $\epsilon$ and $D$.
\end{lemma}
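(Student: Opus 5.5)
We mimic the proof of Lemma~\ref{lemPETNs-3.1}, but for the vertical derivative $u:=\partial_z v$. Apply It\^{o}'s formula with jumps to $e^{\kappa t}|\partial_z v(t)|^p$ for $p\in\{2,4\}$. Formally this amounts to testing \eqref{PEeq-2.15} against $-\partial_{zz}v$. Rigorously, we perform the computation at the Galerkin level, where $v_n\in D(A)$, and pass to the limit using the bound \eqref{ee-6} of Theorem~\ref{PEeqthe-2.5} and lower semicontinuity.

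The key structural facts are as follows. First, the boundary condition $\partial_z v=0$ on $\Gamma_u\cup\Gamma_b$ gives $\langle Av,-\partial_{zz}v\rangle=\|\partial_z v\|^2$. Second, the nonlinear term vanishes by the cancellation \eqref{PEeq-2.13}. Third, $\langle g,-\partial_{zz}v\rangle=\langle\partial_z g,\partial_z v\rangle$. The noise terms transform accordingly: $\sigma$ is replaced by $\partial_z\sigma$, and the jump increment by $\partial_z v+\epsilon\,\partial_z h(v,\xi)$. This yields an identity exactly analogous to \eqref{PETNs-3.3}, with $(v,\zeta,g,\sigma,h)$ replaced by $(\partial_z v,\partial_z\zeta,\partial_z g,\partial_z\sigma,\partial_z h)$, and with $\|\partial_z v\|^2$ in the dissipative term.

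Next we estimate the terms. Since $\partial_z v(t)$ lies in $V$ (its vertical mean vanishes and it vanishes on $\Gamma_l$ because $v$ does), the Poincar\'e inequality $\mathbf c_0|\partial_z v|^2\le\|\partial_z v\|^2$ holds. Together with H\"older's inequality, as in \eqref{PETNs-3.4}, this controls the $g$-term by $\frac{p\mathbf c_0}{4}\int e^{\kappa s}|\partial_z v|^p+C\int e^{\kappa s}|\partial_z g|^p$.

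For the It\^o correction and jump terms we use the Taylor bound \eqref{LSWs3.meanin24} together with (A.3) and (B.3). These bound them by $\epsilon\,\widetilde C_p^{\rm noise}\int e^{\kappa s}\mathbb E(1+|\partial_z v|^p)$, with a constant built from $\widetilde C_\sigma$, $\|\widetilde C_h\|_{L^2(\nu)}$ and $\|\widetilde C_h\|_{L^4(\nu)}$ as in \eqref{ep-0_0}. We then shrink $\epsilon_0$ to
\[
\min\bigl\{\epsilon_0^1,\ p\mathbf c_0/(8\widetilde C_p^{\rm noise})\bigr\},
\]
so that these terms are absorbed using \eqref{LSWs4.1}. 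The stochastic integrals have zero mean after localization by stopping times, which is justified by \eqref{ee-6}. Taking expectations gives, for $p=4$,
\[
\mathbb E|\partial_z v(t)|^4+2\int_\tau^t e^{\kappa(s-t)}\mathbb E\bigl[|\partial_z v|^2\|\partial_z v\|^2\bigr]ds\le e^{-\kappa(t-\tau)}\mathbb E|\partial_z\zeta|^4+C\int_\tau^t e^{\kappa(s-t)}|\partial_z g|^4ds+C.
\]

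Finally we apply this with initial time $\tau-t$. Since $D\in\mathfrak D$, the quantity $e^{-\kappa t}\sup_{\mu\in D(\tau-t)}\int|\partial_z\zeta|^4\mu(d\zeta)$ tends to $0$. Hence there is $T(\tau,D)$ such that this term is at most $1$ for $t\ge T$. This gives \eqref{UPe4-4.5} with $\mathcal R_3$ depending only on $\mathbf c_0$, $\kappa$ and the noise constants. Assumption \eqref{PETNs-3.01} ensures the right-hand side is finite.

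The main obstacle is justifying the vertical It\^o formula. The cancellation \eqref{PEeq-2.13} requires $v\in D(A)$, and the pairing of $\partial_z$ with the nonlocal convection is not available for merely $V$-valued solutions. We plan to handle this at the Galerkin level with stopping times and then pass to the limit. Convergence of the jump compensator terms uses the Lipschitz bound (B.2) together with (B.3). Fatou's lemma preserves the inequality in the limit.
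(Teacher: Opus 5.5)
Your proposal is correct and follows essentially the same route as the paper: It\^o's formula for $|\partial_z v|^4$ (the paper uses the unweighted form followed by a differential inequality and Gronwall, which is equivalent to your $e^{\kappa t}$ weighting), the cancellation \eqref{PEeq-2.13}, Young's inequality for the $\partial_z g$ term, (A.3)/(B.3) with the Taylor bound and small $\epsilon$ for the noise, and the vertical tempering condition of $\mathfrak D$ to fix $T(\tau,D)$; your Galerkin/stopping-time justification is more careful than the paper's. One small correction: $\partial_z v$ does not have vanishing vertical mean (indeed $\int_{-l}^0\partial_z v\,dz=v(x,0)-v(x,-l)$), but the Poincar\'e inequality you need still holds, because at the Galerkin level $\partial_z v_n$ vanishes on all of $\partial\mathcal M$, so the Dirichlet Poincar\'e inequality on the rectangle applies with constant $\pi^2(L^{-2}+l^{-2})$, which equals the first eigenvalue of $A$ and hence dominates $\mathbf c_0$.
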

\begin{proof}
	Applying It\^{o}'s formula to $|\partial_{z}{v}(t)|^p$ for $p\in\{2,4\}$ and using \eqref{PEeq-2.15} yields, for $t\geq \tau$,
	\begin{align}\label{UPe4-4.6}
		&~~|\partial_{z}{v}(t)|^p+p\int_{\tau}^t |\partial_{z}{v}(s)|^{p-2}\|\partial_{z}{v}(s)\|^2ds \notag \\
		&\leq|\partial_{z}\zeta|^p+p\int_{\tau}^t |\partial_{z}{v}(s)|^{p-2}\left|\langle \partial_{z}{v}(s),\partial_{z}g(s) \rangle\right| ds+\underbrace{p\int_{\tau}^t |\partial_{z}{v}(s)|^{p-2} |\langle \partial_{zz}{v}(s),B(v(s),{v}(s)) \rangle|ds}_{=0 \text{ by  \eqref{PEeq-2.13}}}\notag \\
		&+ p\sqrt{\epsilon}\int_{\tau}^t |\partial_{z}{v}(s)|^{p-2}
		\langle \partial_{z}{v}(s),\partial_{z}\sigma (s,v(s)) dW(s)\rangle+\frac{p(p-1)}{2}\epsilon \int_{\tau}^t |\partial_{z}{v}(s)|^{p-2}\|\partial_{z}\sigma(s,v(s))\|_{\mathcal{L}_2(U,H)}^2ds\notag \\
		&+p{\epsilon}\int_{\tau}^t \int_{E} |\partial_{z}{v}(s)|^{p-2}\langle \partial_{z}{v}(s-),\partial_{z}h({v}(s-),\xi)\rangle \widetilde{N}^{\epsilon^{-1}}(ds,d\xi)\notag \\
		&+\int_{\tau}^{t} \int_{E}\left(|\partial_{z}{v}(s-)+{\epsilon}\partial_{z}h({v}(s-),\xi)|^{p}-|\partial_{z}{v}(s-)|^p-p{\epsilon}\langle \partial_{z}{v}(s-),\partial_{z}h({v}(s-),\xi)\rangle\right)N^{\epsilon^{-1}}(ds,d\xi).
	\end{align}
	Taking the expectation of \eqref{UPe4-4.6} and then setting $p=4$ yields
\begin{align}\label{UPe4-4.7}
	&~~\mathbb{E}\left[|\partial_{z}{v}(t)|^4\right]+4\int_{\tau}^t \mathbb{E}\left[|\partial_{z}{v}(s)|^{2}\|\partial_{z}{v}(s)\|^2\right]ds \notag \\
	&\leq \mathbb{E}\left[|\partial_{z}\zeta|^4\right]+4\int_{\tau}^t \mathbb{E}\left[|\partial_{z}{v}(s)|^{2}\left|\langle \partial_{z}{v}(s),\partial_{z}g(s) \rangle\right|\right] ds+{6\epsilon \int_{\tau}^t \mathbb{E}\left[|\partial_{z}{v}(s)|^{2}\|\partial_{z}\sigma(s,v(s))\|_{\mathcal{L}_2(U,H)}^2\right]ds}\notag \\
	&+\mathbb{E}\left[\int_{\tau}^{t} \int_{E}\left(|\partial_{z}v(s-)+{\epsilon}\partial_{z}h(v(s-),\xi)|^{4}-|\partial_{z}v(s-)|^4-4{\epsilon}\langle \partial_{z}v(s-),\partial_{z}h(v(s-),\xi)\rangle\right)N^{\epsilon^{-1}}(ds,d\xi)\right]\notag \\
	&=:\mathbb{E}\left[|\partial_{z}\zeta|^{4}\right]+\sum_{j=4}^{6}\mathscr{J}_j(t).
\end{align}
Using integration by parts, H\"{o}lder's inequality, and Young's inequality, we have
	\begin{align}\label{UPe4-4.8}
	\begin{split}
			\mathscr{J}_4(t)&\leq  4\int_{\tau}^t \mathbb{E}\left[|\partial_{z}{v}(s)|^{3}|\partial_{z}g(s)|\right] ds\leq \frac{\textbf{c}_0}{2}\int_{\tau}^{t}\mathbb E\left[|\partial_{z} {v}(s)|^{4}\right]ds+\frac{64}{\textbf{c}_0}\int_{\tau}^{t}|\partial_{z}g(s)|^{4}ds.
	\end{split}
	\end{align} 
	Set
	\[
	\mathfrak c_z:=8\wp_4\int_E|\widetilde C_h(\xi)|^4\,\nu(d\xi)
	+(12+2\wp_4)\left(\widetilde C_\sigma
	+2\int_E|\widetilde C_h(\xi)|^2\,\nu(d\xi)\right).
	\]
As in \eqref{ep-0_0}, since $\epsilon\in(0,1)$, estimate \eqref{LSWs3.meanin24} together with conditions \textbf{(A.3)} and \textbf{(B.3)} gives
\begin{align}\label{UPe4-4.9}
\mathscr{J}_5(t)+\mathscr{J}_6(t)&\leq 6\widetilde{C}_{\sigma}\epsilon \int_{\tau}^t \mathbb{E}\left[|\partial_{z}{v}(s)|^{2}\left(1+|\partial_zv(s)|^2\right)\right]ds\notag \\
&\quad+\wp_4\epsilon \int_{\tau}^{t} \mathbb{E}\left[\int_{E} \left(|\partial_{z}v(s-)|^{2}||\partial_{z}h(v(s-),\xi)|^2+|\partial_{z}h(v(s-),\xi)|^4\right)\nu(d\xi)\right]ds\notag \\
&\leq 8\wp_4\epsilon\int_{E} |\widetilde{C}_h(\xi)|^4\nu(d\xi)\int_{\tau}^{t}\mathbb{E}\left[(1+|\partial_zv(s)|^{4})\right]ds\\ \notag
&\quad+\left(6+\wp_4\right)\epsilon\left(\widetilde{C}_{\sigma}+2\int_{E} |\widetilde{C}_h(\xi)|^2\nu(d\xi)\right) \int_{\tau}^{t} \mathbb{E}\left[|\partial_zv(s)|^{2}+|\partial_zv(s)|^{4}\right]ds  \\ \notag
&\leq \mathfrak c_z
\epsilon\int_{\tau}^{t}\mathbb{E}\left[(1+|\partial_zv(s)|^{4})\right]ds.
\end{align}

Combining \eqref{UPe4-4.7}--\eqref{UPe4-4.9}, define
\begin{align}\label{ep-0_1}
	\epsilon_{0}^2=\min \left\{1,{\frac{\textbf{c}_0}{2\mathfrak c_z}}\right\}>0
\end{align}
Then, for $\epsilon\in(0,\epsilon_0^2)$,
\begin{align*}
	\frac{d}{dt}\mathbb{E}\left[|\partial_{z}{v}(t)|^4\right]+\left(
\frac{\textbf{c}_0}{8}-\kappa\right) \mathbb{E}\left[|\partial_{z}{v}(t)|^{4}\right]+\kappa \mathbb{E}\left[|\partial_{z}{v}(t)|^{4}\right] + \mathbb{E}\left[|\partial_{z}{v}(t)|^{2}\|\partial_{z}{v}(t)\|^2\right]\leq \frac{64}{\textbf{c}_0}|\partial_{z}g(t)|^{4}+\frac{\textbf{c}_0}{2},
\end{align*} 
Together with \eqref{LSWs4.1}, this yields
\begin{align}\label{UPe4-4.20}
	\frac{d}{dt}\mathbb{E}\left[|\partial_{z}{v}(t)|^4\right]+\kappa \mathbb{E}\left[|\partial_{z}{v}(t)|^{4}\right]+ \mathbb{E}\left[|\partial_{z}{v}(t)|^{2}\|\partial_{z}{v}(t)\|^2\right]\leq \frac{64}{\textbf{c}_0}|\partial_{z}g(t)|^{4}+\frac{\textbf{c}_0}{2}.
\end{align}
Integrating from $\tau-t$ to $\tau$ and applying Gronwall's inequality gives
\begin{align}\label{VE3.21}
	\begin{split}
		&~~\mathbb{E}\left[|\partial_{z}{v}(\tau,\tau-t,\zeta)|^4\right]+\int_{\tau - t}^{\tau} e^{\kappa(s-\tau)}\mathbb{E}\left[|\partial_{z}{v}(s,\tau-t,\zeta)|^{2}\|\partial_{z}{v}(s,\tau-t,\zeta)\|^2\right]ds\\
%		&\leq e^{-\kappa t}\mathbb{E}\left[|\partial_{z}\zeta|^{4}\right]+\frac{64}{\textbf{c}_0}\int_{\tau - t}^{\tau}e^{\kappa(s-\tau)}|\partial_{z}g(s)|^{4}ds+\frac{\textbf{c}_0}{2\kappa}\\
		&\leq e^{-\kappa t}\mathbb{E}\left[|\partial_{z}\zeta|^{4}\right]+\frac{64}{\textbf{c}_0}\int_{-\infty}^{\tau}e^{\kappa(s-\tau)}|\partial_{z}g(s)|^{4}ds+\frac{\textbf{c}_0}{2\kappa}.
	\end{split}
\end{align}
Let $\mathcal{R}_3=\max\left\{\frac{64}{\textbf{c}_0},\frac{\textbf{c}_0}{\kappa}\right\}$ and $\epsilon_{0}=\min\left\{\epsilon_{0}^1,\epsilon_{0}^2\right\}$, where $\epsilon_{0}^1$ is defined in \eqref{ep-0_0}. By the vertical-moment tempering condition in the definition of $\mathfrak D$, there exists $T=T(\tau,D)>0$  such that
\[
e^{-\kappa t}\sup_{\mu\in D(\tau-t)}\int_{H}|\partial_z \zeta|^4\,\mu(d\zeta)\leq \frac{c_0}{2\kappa},\qquad t\geq T.
\]
Consequently,
the estimate \eqref{UPe4-4.5} holds
for any $\mathscr L(\zeta)\in D(\tau-t)$, $t\ge T$, and $\epsilon\in(0,\epsilon_0)$. This completes the proof.
\end{proof}

We next establish a uniform higher-regularity estimate for solutions of \eqref{PEeq-2.15}.
\begin{lemma}\label{UPe4-lem3.5}
	Suppose Hypotheses \ref{PEeqAssum-2.2}, \ref{PEeqAssum-2.3}, \ref{PEeqAssum-2.4} and \eqref{PETNs-3.01} hold. For every $r\leq t$, set
	\begin{align}\label{UPe400-3.22}
		\Theta(t,r,\zeta)=e^{-\alpha_2\int_{r}^{t}(|{v}(s, r, \zeta)|^2\|{v}(s, r, \zeta)\|^2+|\partial_{z}{v}(s, r, \zeta)|^2\|\partial_{z}{v}(s, r, \zeta)\|^2)ds},
	\end{align}
	where $\alpha_2>0$ is chosen below so as to absorb the nonlinear convection terms.
	 Then, for every $\tau \in \mathbb{R}$ and $D \in \mathfrak{D}$, there exist ${T} := {T}(\tau, D) > 1$ and  $\epsilon_{0}\in (0,1)$ such that, for all $\epsilon\in (0,\epsilon_{0})$ and $t \geq {T}$, the solution $v$ of \eqref{PEeq-2.15} satisfies
	\begin{align}\label{UPe4-3.22}
		\mathbb{E}\left[\Theta(\tau,\tau-t,\zeta)\|{v}(\tau, \tau - t, \zeta)\|^2\right] \leq \mathcal{R}_4 + \mathcal{R}_4\int_{-\infty}^\tau e^{-\kappa(\tau - s)}|g(s)|^{4} {d}s,
	\end{align}
	where $\zeta\in L^4(\Omega,\mathscr F_{\tau-t};\mathcal H)$ with $\mathscr{L}(\zeta) \in D(\tau - t)$, and $\mathcal{R}_4$ is a positive constant independent of $\tau$, $\epsilon$ and $D$. 
	
\end{lemma}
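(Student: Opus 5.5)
We apply It\^{o}'s formula to the product $\Theta(t,r,\zeta)\|v(t)\|^2$ with $r=\tau-t_0$ and evaluate at the terminal time $\tau$. Since $\Theta$ is absolutely continuous in $t$ with
\[
d\Theta(t)=-\alpha_2\Theta(t)\bigl(|v|^2\|v\|^2+|\partial_zv|^2\|\partial_zv\|^2\bigr)dt,
\]
the product rule gives
\begin{align*}
d\bigl(\Theta\|v\|^2\bigr)={}&\Theta\,d\|v\|^2-\alpha_2\Theta\bigl(|v|^2\|v\|^2+|\partial_zv|^2\|\partial_zv\|^2\bigr)\|v\|^2dt.
\end{align*}
For $d\|v\|^2$ we use the $V$-energy identity obtained by testing with $Av$. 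This is rigorous at the Galerkin level and then passed to the limit, exactly as the paper does for \eqref{PEeq-2.13}. The identity contains the dissipation $-2|Av|^2$, the forcing term $2(g,Av)$, the nonlinear term $-2(B(v,v),Av)$, the It\^{o} correction $\epsilon\|\sigma\|^2_{\mathcal L_2(U;V)}$, and the jump correction $\int_E(\|v+\epsilon h\|^2-\|v\|^2-2\epsilon\langle v,h\rangle_1)\epsilon^{-1}\nu(d\xi)=\epsilon\int_E\|h\|_V^2\nu(d\xi)$, together with martingale terms. Taking expectations removes the martingales; to justify this we localize with stopping times and use $\Theta\le1$ and the bounds of Theorem~\ref{PEeqthe-2.5}.

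The key step is the nonlinear term. We split $(B(v,v),Av)$ into $(v\partial_xv,Av)$ and $(\mathcal W(v)\partial_zv,Av)$.
\begin{itemize}
\item[(i)] Using Ladyzhenskaya/Agmon-type anisotropic estimates, $|(v\partial_xv,Av)|\le C|v|^{1/2}\|v\|\,|Av|^{3/2}$. By Young's inequality this is at most $\frac14|Av|^2+C|v|^2\|v\|^4$.
\item[(ii)] For the vertical term, bound $\|\mathcal W(v)\|_{L^\infty_zL^2_x}\le C\|v\|$ and use $L^2_zL^\infty_x$ of $\partial_zv$ by the interpolation $|\partial_zv|^{1/2}\|\partial_zv\|^{1/2}$ together with anisotropic Agmon. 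This should give $|(\mathcal W(v)\partial_zv,Av)|\le C\|v\|^{1/2}|Av|^{3/2}|\partial_zv|^{1/2}\|\partial_zv\|^{1/2}$. Young's inequality then yields $\frac14|Av|^2+C|\partial_zv|^2\|\partial_zv\|^2\|v\|^2$.
\end{itemize}
Choosing $\alpha_2\ge 2C$, both terms $C(|v|^2\|v\|^2+|\partial_zv|^2\|\partial_zv\|^2)\|v\|^2$ are absorbed by the weight derivative. This is the reason for the definition \eqref{UPe400-3.22}, and we expect this step to be the main obstacle: the correct anisotropic inequality must produce exactly the product $|\partial_zv|^2\|\partial_zv\|^2\|v\|^2$ and not a higher power. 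If it does not, the fallback is to bound $|\partial_x v|\le\|v\|$ in \eqref{PEeq-2.0013} and interpolate $\|\partial_z v\|$ against $|Av|$.

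The remaining terms are handled as follows. The forcing term satisfies $2(g,Av)\le\frac12|Av|^2+2|g|^2$. The noise corrections are controlled by Hypothesis~\ref{PEeqAssum-2.4} by $\epsilon\alpha_1(1+\|v\|^2)$, and $\Theta\le1$. Using $|Av|^2\ge \mathbf c_0\|v\|^2$ and shrinking $\epsilon_0$ so that $\epsilon\alpha_1\le \mathbf c_0/4$, we obtain
\[
\frac{d}{dt}\mathbb E[\Theta\|v\|^2]+\kappa\,\mathbb E[\Theta\|v\|^2]\le 2|g|^2+\alpha_1.
\]
This alone does not handle the initial datum, which lies only in $\mathcal H$, not in $V$. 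We therefore use the uniform Gronwall device on $[\tau-1,\tau]$. For $s\in[\tau-1,\tau]$, integrating from $s$ to $\tau$ gives $\mathbb E[\Theta(\tau,\tau-t_0)\|v(\tau)\|^2]\le \mathbb E[\Theta(s,\tau-t_0)\|v(s)\|^2]+\int_{\tau-1}^\tau(2|g|^2+\alpha_1)$, since $\Theta$ is nonincreasing in its first argument. Averaging over $s\in[\tau-1,\tau]$ and bounding $\Theta\le1$, we apply \eqref{UPe4-4.3} of Lemma~\ref{UPe4-lem4.1}. Finally, $\int_{\tau-1}^\tau|g|^2\le e^{\kappa}\int_{-\infty}^\tau e^{-\kappa(\tau-s)}(1+|g|^4)ds$. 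This yields \eqref{UPe4-3.22} with $T$ taken from Lemmas~\ref{UPe4-lem4.1} and \ref{UPe4-lem4.2}, and $\epsilon_0$ the minimum of all thresholds.
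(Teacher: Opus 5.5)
Your proposal is correct and follows essentially the paper's route. The paper also applies the product formula to the weighted energy $\Theta\|v\|^2$ at the Galerkin level with stopping times. It absorbs the convection terms pathwise, before taking expectations, through the weight derivative with $\alpha_2$ larger than the convection constant. It controls the noise corrections by Hypothesis~\ref{PEeqAssum-2.4} for small $\epsilon$, and then averages over $\varsigma\in(\tau-1,\tau)$ and invokes \eqref{UPe4-4.3}.

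Your splitting for $\mathcal W(v)\partial_zv$ via $L^\infty_zL^2_x\times L^2_zL^\infty_x$ is a valid substitute for the paper's bound $\|\mathcal W(v)\|_{L^4}\le C_l\|v\|^{1/2}|Av|^{1/2}$, so the fallback is unnecessary. It gives $C\|v\|\,|\partial_zv|^{1/2}\|\partial_zv\|^{1/2}|Av|$, and $\|v\|\le \mathbf c_0^{-1/4}\|v\|^{1/2}|Av|^{1/2}$ turns this into your stated form. The one step you leave implicit is letting $m\to\infty$ in the random weight. The paper does this through convergence in probability of $\Theta_m$ and weak lower semicontinuity of $\Theta_m^{1/2}v_m(\tau)$ in $L^2(\Omega;V)$.
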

\begin{proof}
Let $P_m$ denote the spectral projection associated with $A$, and let $v_m$ be the corresponding finite-dimensional Galerkin approximation. We begin by carrying out the analysis for the stopped process $v_m(\cdot\wedge \tau_{m,R})$, where the stopping time is defined as
\[
\tau_{m,R}:=\inf\left\{s\ge \tau-t:\ \|v_m(s)\|^2+\int_{\tau-t}^s |Av_m(r)|^2\,dr\ge R\right\}\wedge \tau.
\]
The finite-dimensional jump It\^{o} formula applies directly to $\|v_m\|^2$. The unweighted identity below is included solely for the purpose of deriving estimates for the convection term and the noise terms. The terminal estimate itself, however, is obtained prior to taking expectations, by applying the product formula to the stopped weighted energy $\Theta_m\|v_m\|^2$. This order of operations is essential, since $\Theta_m$ is random and depends on the entire Galerkin trajectory. To simplify the notation in the auxiliary estimates that follow, we suppress the subscripts $m$ and $R$. Once the weighted estimate is established, we pass to the limit by first letting $R\to\infty$ and then $m\to\infty$. The limiting procedure is justified by the Galerkin convergence provided in Theorem~\ref{PEeqthe-2.5}, together with the weak lower semicontinuity of the $V$-norm and Fatou's lemma.

	Applying It\^{o}'s formula to \eqref{PEeq-2.15} yields, for any $\tau\in \mathbb{R}$, $t>1$ and $\varsigma\in (\tau-1,\tau)$,
	\begin{align}\label{UPe4-3.24}
		&~\|{v}(\tau,\tau-t,\zeta)\|^2+2\int_{\varsigma}^{\tau} |A{v}(s,\tau-t,\zeta)|^{2}ds=\|v(\varsigma,\tau-t,\zeta)\|^2 \notag \\
		&-2\int_{\varsigma}^{\tau}  \langle B(v(s,\tau-t,\zeta),{v}(s,\tau-t,\zeta)),Av(s,\tau-t,\zeta)\rangle ds+2\int_{\varsigma}^{\tau}  \langle g(s),Av(s,\tau-t,\zeta)\rangle ds \notag \\
		&+2\sqrt{\epsilon} \int_{\varsigma}^{\tau} 
		\langle A{v}(s,\tau-t,\zeta),\sigma (s,v(s,\tau-t,\zeta)) dW(s)\rangle+\epsilon \int_{\varsigma}^{\tau}  \|\sigma(s,v(s,\tau-t,\zeta))\|_{\mathcal{L}_2(U,V)}^2ds\notag \\
		&+2{\epsilon}\int_{\varsigma}^{\tau} \int_{E} \langle A{v}(s-,\tau-t,\zeta),h({v}(s-,\tau-t,\zeta),\xi)\rangle \widetilde{N}^{\epsilon^{-1}}(ds,d\xi)\notag \\
		&+{\epsilon^2}\int_{\varsigma}^{\tau} \int_{E}\|h({v}(s-,\tau-t,\zeta),\xi)\|^2N^{\epsilon^{-1}}(ds,d\xi).
	\end{align}
 A direct application of Young's inequality gives
	\begin{align}\label{UPe4-3.25}
		2\int_{\varsigma}^{\tau}  \mathbb{E}\left[\langle g(s),Av(s,\tau-t,\zeta)\rangle\right] ds
		\leq 2\int_{\varsigma}^{\tau} |g(s)|^2ds+ \frac{1}{2} \int_{\varsigma}^{\tau} \mathbb{E}\left[|A{v}(s,\tau-t,\zeta)|^{2}\right]ds.
	\end{align}
	By Hypothesis \ref{PEeqAssum-2.4} we have
	\begin{align*}
		&~~\epsilon \int_{\varsigma}^{\tau}  \mathbb{E}\left[\|\sigma(s,v(s,\tau-t,\zeta))\|_{\mathcal{L}_2(U,V)}^2\right]ds+{\epsilon^2}\mathbb{E}\left[\int_{\varsigma}^{\tau} \int_{E}\|h({v}(s-,\tau-t,\zeta),\xi)\|^2N^{\epsilon^{-1}}(ds,d\xi)\right]\notag \\
		&\leq \epsilon \int_{\varsigma}^{\tau} \mathbb{E}\left[\|\sigma(s,v(s,\tau-t,\zeta))\|_{\mathcal{L}_2(U,V)}^2+\int_{E}\|h({v}(s-,\tau-t,\zeta),\xi)\|^2\nu(d\xi)
		\right]ds\notag \\
		&\leq \epsilon\alpha_1 \int_{\varsigma}^{\tau} \mathbb{E}\left[1+\|{v}(s-,\tau-t,\zeta)\|^2\right]ds,
	\end{align*}
  Hence, with $\epsilon_0^3=\min \left\{1,\frac{\textbf{c}_0}{2\alpha_1}\right\}$, for every $\epsilon\in (0,\epsilon_{0}^3)$,
	\begin{align}\label{UPe4-3.26}
		&~~\epsilon \int_{\varsigma}^{\tau}  \mathbb{E}\left[\|\sigma(s,v(s,\tau-t,\zeta))\|_{\mathcal{L}_2(U,V)}^2\right]ds+{\epsilon^2}\mathbb{E}\left[\int_{\varsigma}^{\tau} \int_{E}\|h({v}(s-,\tau-t,\zeta),\xi)\|^2N^{\epsilon^{-1}}(ds,d\xi)\right]\notag \\
		&\leq \frac{\textbf{c}_0}{2}+\frac{1}{2}\int_{\varsigma}^{\tau} \mathbb{E}\left[|A{v}(s,\tau-t,\zeta)|^{2}\right]ds.
	\end{align}
		It remains to deal with the second term on the right-hand side of \eqref{UPe4-3.24}.
		For the diagnostic vertical velocity
		$\mathcal W(v)(x,z)=-\int_{-l}^{z}\partial_xv(x,z')\,dz'$, the hydrostatic
		constraint and the boundary conditions give
		$\mathcal W(v)|_{z=-l,0}=0$. Hence the Ladyzhenskaya and Minkowski
		inequalities yield, for $v\in D(A)$,
		\begin{equation}\label{correct-W-estimate}
		 \|\mathcal W(v)\|_{L^4(\mathcal M)}
		 \leq C_l|\partial_xv|^{1/2}
		 \bigl(|\partial_xv|+|\partial_{xx}v|\bigr)^{1/2}
		 \leq C_l\|v\|^{1/2}|Av|^{1/2}.
		\end{equation}
		Consequently, by Ladyzhenskaya's, H\"{o}lder's, and Young's inequalities we have
		{\small
		\begin{align}\label{UPe4-3.27}
		&-2\mathbb{E}\left[\int_{\varsigma}^{\tau}  \langle B(v(s,\tau-t,\zeta),{v}(s,\tau-t,\zeta)),Av(s,\tau-t,\zeta)\rangle ds\right]\notag \\
		&\leq 2\mathbb{E}\left[\int_{\varsigma}^{\tau}  \left|\langle B(v(s,\tau-t,\zeta),{v}(s,\tau-t,\zeta)),Av(s,\tau-t,\zeta)\rangle\right| ds\right]\notag \\
		&\leq  2\int_{\varsigma}^{\tau}\mathbb{E}\left[ 
		\int_{\mathcal{M}}\left|v(s,\tau-t,\zeta) \partial_{x}v(s,\tau-t,\zeta) Av(s,\tau-t,\zeta)\right|dxdz \right]ds\notag\\
		&+2\int_{\varsigma}^{\tau}\mathbb{E}\left[\int_{\mathcal{M}}\left|\left(\int_{-l}^{z}\partial_{x} v(s,x,z')dz'\right)\partial_{z}v(s,\tau-t,\zeta) Av(s,\tau-t,\zeta) \right| dxdz \right]ds\notag\\
		&\leq C\int_{\varsigma}^{\tau}\mathbb{E}\left[|v(s,\tau-t,\zeta)|_{4}|\partial_{x}v(s,\tau-t,\zeta)|_{4}|Av(s,\tau-t,\zeta)| \right]ds\notag\\
			&+C_{l}\int_{\varsigma}^{\tau}\mathbb{E}\left[\|\mathcal W(v(s,\tau-t,\zeta))\|_{L^4(\mathcal M)} |\partial_{z}v(s,\tau-t,\zeta)|_4|Av(s,\tau-t,\zeta)| \right]ds\notag\\
		&\leq C\int_{\varsigma}^{\tau}\mathbb{E}\left[|v(s,\tau-t,\zeta)|^{1/2}\|v(s,\tau-t,\zeta)\|^{1/2}|\partial_{x}v(s,\tau-t,\zeta)|^{1/2}\|\partial_{x}v(s,\tau-t,\zeta)\|^{1/2}|Av(s,\tau-t,\zeta)| \right]ds\notag\\
		&+C_{l}\int_{\varsigma}^{\tau}\mathbb{E}\left[|\partial_{x}v(s,\tau-t,\zeta)|^{1/2}\|\partial_{x}v(s,\tau-t,\zeta)\|^{1/2} |\partial_{z}v(s,\tau-t,\zeta)|^{1/2}\|\partial_{z}v(s,\tau-t,\zeta)\|^{1/2}|Av(s,\tau-t,\zeta)| \right]ds\notag\\
		&\leq C_{l}\int_{\varsigma}^{\tau}\mathbb{E}\left[|v(s,\tau-t,\zeta)|^{1/2}\|v(s,\tau-t,\zeta)\||Av(s,\tau-t,\zeta)|^{3/2} \right]ds\notag\\
		&+C_{l}\int_{\varsigma}^{\tau}\mathbb{E}\left[\|v(s,\tau-t,\zeta)\|^{1/2} |\partial_{z}v(s,\tau-t,\zeta)|^{1/2}\|\partial_{z}v(s,\tau-t,\zeta)\|^{1/2}|Av(s,\tau-t,\zeta)|^{3/2} \right]ds\notag\\
		&\leq \frac{1}{2}\int_{\varsigma}^{\tau}\mathbb{E}\left[|Av(s,\tau-t,\zeta)|^{2} \right]ds
		+C_{l}\int_{\varsigma}^{\tau}\mathbb{E}\left[|v(s,\tau-t,\zeta)|^{2}\|v(s,\tau-t,\zeta)\|^{4} \right]ds\notag\\
		&+C_{l}\int_{\varsigma}^{\tau}\mathbb{E}\left[\|v(s,\tau-t,\zeta)\|^{2}|\partial_{z}v(s,\tau-t,\zeta)|^{2}\|\partial_{z}v(s,\tau-t,\zeta)\|^{2} \right]ds.
		\end{align}
}
	We now return to the stopped Galerkin equation and set
	\[
	\Xi_m(s):=|v_m(s)|^2\|v_m(s)\|^2
	+|\partial_zv_m(s)|^2\|\partial_zv_m(s)\|^2,
	\qquad
	\Theta_m(s):=\exp\left\{-\alpha_2\int_{\tau-t}^{s}\Xi_m(r)\,dr\right\}.
	\]
	Then $\Theta_m$ is a continuous adapted process of finite variation and
	$d\Theta_m=-\alpha_2\Theta_m\Xi_m\,ds$. Hence its quadratic covariation with
	$\|v_m\|^2$ vanishes. Applying the product formula on
	$[\varsigma,\tau\wedge\tau_{m,R}]$ gives
	\begin{align}\label{UPe4-weighted-product}
		&~d\bigl(\Theta_m\|v_m\|^2\bigr)
		+2\Theta_m|Av_m|^2ds
		+\alpha_2\Theta_m\Xi_m\|v_m\|^2ds\notag\\
		&=-2\Theta_m\langle B(v_m,v_m),Av_m\rangle ds
		+2\Theta_m\langle g,Av_m\rangle ds\notag\\
		&+\Theta_m\,d\mathcal M_m(s)
		+\epsilon\Theta_m\left(\|\sigma(s,v_m)\|_{\mathcal L_2(U,V)}^2
		+\int_E\|h(v_m,\xi)\|_V^2\nu(d\xi)\right)ds,
	\end{align}
	where $\mathcal M_m$ is the sum of the stopped Wiener martingale and the stopped compensated Poisson martingales, including the compensated part of the jump quadratic-variation term. Since $0<\Theta_m\le1$, it is a bounded predictable multiplier, so the stochastic integral in \eqref{UPe4-weighted-product} has zero expectation.

	The last Young estimate in \eqref{UPe4-3.27} is obtained pointwise before expectation and gives
	\[
	2|\langle B(v_m,v_m),Av_m\rangle|
	\le \frac12|Av_m|^2+C_l\Xi_m\|v_m\|^2.
	\]
	Moreover,
	\[
	2\Theta_m|\langle g,Av_m\rangle|
	\le \frac12\Theta_m|Av_m|^2+2\Theta_m|g|^2.
	\]
	Finally, Hypothesis~\ref{PEeqAssum-2.4} and the spectral coercivity
	$\|w\|^2\le C_A|Aw|^2$ for some $C_A>0$ and every $w\in D(A)$ imply, after decreasing
	$\epsilon_0^3>0$ if necessary,
	\begin{align*}
	&\epsilon\,\mathbb E\int_{\varsigma}^{\tau\wedge\tau_{m,R}}
	\Theta_m(s)\left(\|\sigma(s,v_m)\|_{\mathcal L_2(U,V)}^2
	+\int_E\|h(v_m,\xi)\|_V^2\nu(d\xi)\right)ds\\
	&\qquad\le \frac12\mathbb E\int_{\varsigma}^{\tau\wedge\tau_{m,R}}
	\Theta_m(s)|Av_m(s)|^2ds
	+C\int_{\varsigma}^{\tau}\mathbb E[\Theta_m(s)]ds,
	\end{align*}
	uniformly in $m$ and $R$. Choose $\alpha_2>C_l$. Taking expectations in
	\eqref{UPe4-weighted-product}, using the preceding three estimates, and then letting
	$R\to\infty$ and $m\to\infty$, we obtain, for every
	$\varsigma\in(\tau-1,\tau)$,
	\begin{align}\label{UPe4-3.28}
		&~~\mathbb{E}\left[\Theta(\tau,\tau-t,\zeta)\|{v}(\tau,\tau-t,\zeta)\|^2\right]\notag\\
		& \leq \mathbb{E}\left[\Theta(\varsigma,\tau-t,\zeta)\|v(\varsigma,\tau-t,\zeta)\|^2\right]
		+2\int_{\varsigma}^{\tau} \mathbb{E}\left[\Theta(s,\tau-t,\zeta)|g(s)|^2\right]ds
		+C\int_{\varsigma}^{\tau} \mathbb{E}\left[\Theta(s,\tau-t,\zeta)\right]ds\notag\\
		& \leq \mathbb{E}\left[\Theta(\varsigma,\tau-t,\zeta)\|v(\varsigma,\tau-t,\zeta)\|^2\right]
		+2\int_{\varsigma}^{\tau}|g(s)|^4ds+C.
	\end{align}
	Integrating \eqref{UPe4-3.28} with respect to $\varsigma$ from $\tau-1$ to $\tau$ and using \eqref{UPe4-4.3}, we obtain 
	\begin{align*}
		&~~\mathbb{E}\left[\Theta(\tau,\tau-t,\zeta)\|{v}(\tau,\tau-t,\zeta)\|^2\right]\notag\\
		& \leq \int_{\tau-1}^{\tau}\mathbb{E}\left[\Theta(\varsigma,\tau-t,\zeta)\|v(\varsigma,\tau-t,\zeta)\|^2\right]d\varsigma+2\int_{\tau-1}^{\tau}|g(s)|^4ds+C\notag\\
		&\leq \int_{\tau-1}^{\tau}\mathbb{E}\left[\|v(\varsigma,\tau-t,\zeta)\|^2\right]d\varsigma+2e^{\kappa}\int_{\tau-1}^{\tau}e^{-\kappa(\tau-s)}|g(s)|^4ds+C\notag\\
		&\leq \left(2e^{\kappa}+\mathcal{R}_2\right)\int_{-\infty}^{\tau}e^{-\kappa(\tau-s)}|g(s)|^4ds+\left(C+\mathcal{R}_2\right).
	\end{align*}
Thus, let $\mathcal{R}_4=\max\left\{2e^{\kappa}+\mathcal{R}_2,C+\mathcal{R}_2\right\}$ and $\epsilon_{0}=\min \left\{\epsilon_{0}^1,\epsilon_{0}^2,\epsilon_{0}^3\right\}$. Then the desired conclusion \eqref{UPe4-3.22} holds for every $\mathscr L(\zeta)\in D(\tau-t)$, $t\ge T>1$, and $\epsilon\in(0,\epsilon_0)$. 

More precisely, let $v_m$ be the Galerkin solutions used in the construction of $v$ and let $\Theta_m$ be the corresponding exponential weights.  The preceding calculation gives
\[
\sup_m\mathbb E\!\left[\Theta_m(\tau,\tau-t,\zeta_m)\|v_m(\tau)\|^2\right]\le C(\tau).
\]
After localization, the Galerkin compactness and identification step used in the well-posedness construction yields, along a subsequence (not relabeled),
\[
v_m\to v \text{ in probability in }L^2([\tau-t,\tau];H),\qquad
\partial_zv_m\to\partial_zv \text{ in probability in }L^2([\tau-t,\tau];H),
\]
and convergence in probability in $\mathcal D([\tau-t,\tau];H)$. Since the Poisson compensator is diffuse in time, $v$ has no jump at the deterministic time $\tau$ a.s.; hence the Skorokhod convergence gives $v_m(\tau)\to v(\tau)$ in probability in $H$. Consequently $\Theta_m(\tau,\tau-t,\zeta_m)\to\Theta(\tau,\tau-t,\zeta)$ in probability. Set
\[
\hat{Y}_m:=\Theta_m(\tau,\tau-t,\zeta_m)^{1/2}v_m(\tau).
\]
The terminal estimate makes $\{\hat{Y}_m\}$ bounded in $L^2(\Omega;V)$, so a subsequence converges weakly to some $\hat{Y}$ there. Since $0<\Theta_m\le1$, the preceding convergences also give $\hat{Y}_m\to\Theta(\tau,\tau-t,\zeta)^{1/2}v(\tau)$ in probability in $H$. Hence
\[
\hat{Y}=\Theta(\tau,\tau-t,\zeta)^{1/2}v(\tau).
\]
Weak lower semicontinuity in $L^2(\Omega;V)$ proves \eqref{UPe4-3.22}.  %Since the exponential weight is strictly positive almost surely, this also selects a $V$-valued representative of the terminal state.  Thus the terminal laws are tight in $H$ through the compact embedding $V\hookrightarrow H$.
This completes the proof.
\end{proof}

\subsection{Feller property and the measure evolution process}
%This section is devoted to proving the existence and uniqueness of $\mathfrak{D}$-pullback measure attractors for \eqref{PEeq-2.15} in $\mathcal{P}_4(H)$. 
The next result provides the continuous-dependence property needed to pass from the stochastic equation to an evolution process of probability laws.
Let $\mathcal{B}_b(H)$ be the set of all bounded Borel functions from $H$ to $\mathbb{R}$. For $\tau\in\mathbb R$, $t\ge\tau$, $\varphi\in\mathcal B_b(H)$, and $\zeta\in\mathcal H$, define the transition operator $p_{\tau,t}$ by
$$
(p_{\tau,t} \varphi)(\zeta)=\mathbb{E}\left[\varphi(v(t,\tau,\zeta))\right].
$$
For $\chi\in \mathcal{B}(H)$ and $\zeta\in\mathcal H$, set  
$$
P(\tau,\zeta;t,\chi)=(p_{\tau,t} \mathbf{1}_\chi)(\zeta)=\mathbb{P}\left(\left\{\omega\in \Omega: v(t,\tau,\zeta)\in \chi\right\}\right), 
$$
where $\mathbf{1}_\chi$ is the characteristic function of $\chi$. 
%Thus $p_{\tau,t}\varphi$ is a bounded Borel function on $\mathcal H$ endowed with the relative $H$-topology; the Feller property below asserts continuity for $\varphi\in C_b(H)$.

\begin{lemma}\label{UPe400-fm-3.6}
	Suppose Hypotheses \ref{PEeqAssum-2.2}, \ref{PEeqAssum-2.3}, \ref{PEeqAssum-2.4} and \eqref{PETNs-3.01} hold. Then the family $\{p_{\tau,t}\}_{t\geq \tau}$ has the following properties: 
	
		$(i)$ $\{p_{\tau,t}\}_{t\geq \tau}$ satisfies the Feller property, i.e., for any $t\geq \tau$, if $\phi\in C_b({H})$, the map $p_{\tau,t}\phi$ is bounded and continuous on $\mathcal H$ endowed with the relative $H$-topology;
	
	$(ii)$ the family $\{v(t,\tau,\zeta): t\geq \tau, \zeta\in\mathcal H\}$ is Markov; in particular, $p_{\tau,t}=p_{\tau,r}p_{r,t}$ for any $\tau\leq  r \leq t$. 
%	$(i)$ {$\{p_{\tau,t}\}_{t\geq \tau}$ satisfies the Feller property, i.e., $\zeta^n\to\zeta$ in $H$, with $\zeta^n,\zeta\in\mathcal H$, implies $(p_{\tau,t}\phi)(\zeta^n)\to(p_{\tau,t}\phi)(\zeta)$ for every $\phi\in C_b(H)$};
%	
%	$(ii)$ the family $\{v(t,\tau,\zeta): t\geq \tau, {\zeta\in \mathcal H}\}$ is Markov, particularly, $p_{\tau,t}=p_{\tau,r}p_{r,t}$ for any $\tau\leq  r \leq t$.
\end{lemma}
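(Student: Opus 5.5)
The Feller property in $(i)$ will follow from a continuous-dependence estimate in $H$ for deterministic initial data in $\mathcal H$. The Markov property in $(ii)$ will follow from pathwise uniqueness (Theorem~\ref{PEeqthe-2.5}) together with the independence of the increments of $W$ and $\widetilde N^{\epsilon^{-1}}$ from $\mathscr F_r$.

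For $(i)$, fix $t\ge\tau$ and $\zeta^n\to\zeta$ in $H$ with $\zeta^n,\zeta\in\mathcal H$. Set $u^n:=v(\cdot,\tau,\zeta^n)-v(\cdot,\tau,\zeta)$ and $v:=v(\cdot,\tau,\zeta)$. Apply It\^o's formula to $|u^n|^2$ and bound the nonlinear difference with \eqref{PEeq-2.103}, taking $\widetilde v=v$:
\[
2|\langle B(v^n,v^n)-B(v,v),u^n\rangle|\le \|u^n\|^2+C\bigl(\|v\|^2+|\partial_zv|^4\bigr)|u^n|^2 .
\]
The bound on the second summand comes from Young's inequality with exponents $4$ and $4/3$. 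The noise terms are controlled by \textbf{(A.2)} and \textbf{(B.2)} through $\epsilon(L_\sigma+\|L_h\|_{L^2(\nu)}^2)|u^n|^2$. Because the coefficient $\Lambda(s):=\|v(s)\|^2+|\partial_zv(s)|^4$ is random and only almost surely integrable, I will use the stochastic Gronwall device. Apply It\^o's formula to $e^{-C\int_\tau^s\Lambda(r)dr}|u^n(s)|^2$ and take expectations; the local martingales are handled by localization, and the weight lies in $(0,1]$. This gives
\[
\mathbb E\Bigl[e^{-C\int_\tau^t\Lambda\,dr}|u^n(t)|^2\Bigr]\le C_T|\zeta^n-\zeta|^2\to0 .
\]
The weight does not depend on $n$, and $\int_\tau^t\Lambda\,dr<\infty$ a.s. by \eqref{ee-5}--\eqref{ee-6} applied to the fixed datum $\zeta$. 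Hence $|u^n(t)|\to0$ in probability.

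For $\phi\in C_b(H)$, convergence in probability together with boundedness and dominated convergence along subsequences gives $\mathbb E\phi(v(t,\tau,\zeta^n))\to\mathbb E\phi(v(t,\tau,\zeta))$. This is continuity of $p_{\tau,t}\phi$ on $\mathcal H$ in the relative $H$-topology, and boundedness is immediate. The point of this argument is that only the limit datum needs vertical moments. The $\zeta^n$ need not be bounded in $\mathcal H$, which is exactly why the estimate is arranged with $\widetilde v=v$.

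For $(ii)$, take $\tau\le r\le t$. By uniqueness, $v(t,\tau,\zeta)=v(t,r,v(r,\tau,\zeta))$ a.s. This uses that $v(r,\tau,\zeta)$ is $\mathscr F_r$-measurable, $\mathcal H$-valued, and has finite fourth moments of $|\cdot|$ and $|\partial_z\cdot|$ by Theorem~\ref{PEeqthe-2.5}. Next, the solution on $[r,t]$ started at a deterministic $\eta\in\mathcal H$ is a measurable functional of $\eta$ and of the noise increments after $r$. I will obtain this functional as the limit of the Galerkin/Picard construction, so it is jointly measurable in $(\eta,\omega)$. Those increments are independent of $\mathscr F_r$, so the standard freezing lemma gives
\[
\mathbb E[\varphi(v(t,\tau,\zeta))\mid\mathscr F_r]=(p_{r,t}\varphi)(v(r,\tau,\zeta)).
\]
Taking expectations yields $p_{\tau,t}=p_{\tau,r}p_{r,t}$.

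I expect the main obstacle to be the Feller estimate: the convection term is controlled only through the vertical derivative, and the Gronwall coefficient is random. The plan is to put $|\partial_z v|$ on the fixed limit solution only and to use the exponential weight instead of taking expectations of $e^{\int\Lambda}$. The measurability needed for the freezing lemma is a secondary technical point.
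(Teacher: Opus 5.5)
Your proposal is correct and follows essentially the same route as the paper: the same orientation $B(v^n,v^n)-B(v,v)=B(v^n,\bar v_n)+B(\bar v_n,v)$, with the exponential weight $e^{-C\int_\tau^t(\|v\|^2+|\partial_zv|^4)\,dr}$ built only from the fixed solution $v(\cdot,\tau,\zeta)$ and leading to convergence in probability, and the same uniqueness plus independent-increments argument for the Markov property. The only difference is in removing the weight: you use its almost sure positivity directly and conclude with almost surely convergent sub-subsequences, whereas the paper localizes with a stopping time $\tau_R$, bounds the weight from below before $\tau_R$, controls $\mathbb P(\tau_R<t_0)$ by Chebyshev's inequality, and passes to $\phi\in C_b(H)$ through tightness and uniform continuity on compact sets; your version is slightly shorter and avoids the BDG step.
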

\begin{proof}
%	Since the proof of property $(ii)$ is standard (see pp. 167-170 of \cite{Peszat-2007-Levy} for similar argument), we only prove property $(i)$. For this purpose, we proceed in three steps. Fix $t_0\geq \tau$, and let $\zeta^n \rightarrow \zeta$ in $L^2(\Omega,H)$ as $n\rightarrow \infty$.
We first verify property $(ii)$. Pathwise uniqueness and the Galerkin construction in Theorem~\ref{PEeqthe-2.5} imply that, for every $r\ge\tau$, the solution on $[r,t]$ is a measurable function of its value at time $r$ and of the increments
\[
W(s)-W(r),\qquad N^{\epsilon^{-1}}((r,s]\times\cdot),
\qquad r\le s\le t.
\]
These increments are independent of $\mathscr F_r$. Consequently, for every $\phi \in C_b(H)$,
\[
\mathbb E\!\left[\phi(v(t,\tau,\zeta))\mid\mathscr F_r\right]
=(p_{r,t}\phi)(v(r,\tau,\zeta))\quad\mathbb P\text{-a.s.}
\]
Taking expectations and using the tower property gives
\[
(p_{\tau,t}\phi)(\zeta)=(p_{\tau,r}p_{r,t}\phi)(\zeta),
\]
which proves the inhomogeneous Markov and Chapman-Kolmogorov properties. See also \cite[pp.~167--170]{Peszat-2007-Levy}.

%We now prove property $(i)$. Fix $t_0\geq \tau$, and let $\zeta^n,\zeta\in\mathcal H$ be deterministic initial values such that $\zeta^n\to\zeta$ in $H$.
We now prove property $(i)$. Fix $t_0\geq \tau$, and let $\zeta^n,\zeta\in\mathcal H$ such that $\zeta^n\to\zeta$ in $H$.
	
	\textbf{Step 1.}  Define the stopping time $\tau_{R}=\widehat{\tau}_{R}\wedge \widetilde{\tau}_{R}$, where
	$$
	\widehat{\tau}_{R}:=\inf \left\{t\in [\tau,t_0]: |v(t,\tau,\zeta^n)|^2>R \text{~~or~~} |v (t,\tau,\zeta)|^2>R \text{~~or~~} \int_{\tau}^t \|v (s,\tau,\zeta)\|^2 ds>R^2\right\},
	$$
	and
		$$
	\widetilde{\tau}_{R}:=\inf \left\{t\in [\tau,t_0]:  |\partial_{z}v (t,\tau,\zeta)|^2>R \text{~~or~~} \int_{\tau}^t \|\partial_{z}v (s,\tau,\zeta)\|^2 ds>R^2\right\}.
	$$
If either defining set is empty, the corresponding stopping time is set equal to $t_0$. Thus, for every $t\le\tau_R$,
	\begin{align}\label{UPe400-3.209}
		\begin{split}
			|v(t,\tau,\zeta^n)|^2\leq R,~~~~ |v (t,\tau,&\zeta)|^2\leq R,~~~~ \int_{\tau}^t \|v (s,\tau,\zeta)\|^2 ds\leq R^2,\\
			|\partial_{z}v (t,\tau,\zeta)|^2\leq R,&~~~~ \int_{\tau}^t \|\partial_{z}v (s,\tau,\zeta)\|^2 ds\leq R^2.
		\end{split}
	\end{align}
	We set 
	\begin{align*}%\label{UPe400f-3.22}
		y(t):=e^{-\alpha_3\int_{\tau}^{t}(\|{v}(s, \tau, \zeta)\|^2+|\partial_{z}{v}(s, \tau, \zeta)|^{4})ds}, \quad \forall t\in [\tau,t_0],
	\end{align*}
	where $\alpha_3>0$ is chosen sufficiently large.
Set $\bar{v}_n=v(\cdot,\tau,\zeta^n)-v(\cdot,\tau,\zeta)$.  The bilinear difference is oriented as
\[
B(v^n,v^n)-B(v,v)=B(v^n,\bar{v}_n)+B(\bar{v}_n,v).
\]
The first term cancels after pairing with $\bar{v}_n$.  The primitive-equation estimate \eqref{PEeq-2.103} and Young's inequality give
\[
2|\langle B(\bar{v}_n,v),\bar{v}_n\rangle|
\le \frac12\|\bar{v}_n\|^2
+C\bigl(\|v\|^2+|\partial_zv|^4\bigr)|\bar{v}_n|^2.
\]
This orientation is important: the exponential weight depends only on the fixed reference solution $v(\cdot,\tau,\zeta)$ and not on any uniform bound for $\partial_z\zeta^n$.  Applying It\^{o}'s formula with jumps to $y(t)|\bar{v}_n(t)|^2$, using \textbf{(A.2)} and \textbf{(B.2)}, and then applying the BDG and Young inequalities yields, after stopping at $\tau_R$,
\begin{align}\label{UPe400d-3.30}
	\mathbb{E}\left[\sup_{\tau\le s\le t_0}y(s\wedge\tau_R)|\bar{v}_n(s\wedge\tau_R)|^2
	+\int_\tau^{t_0\wedge\tau_R}y(s)\|\bar{v}_n(s)\|^2ds\right]
	\le C_{t_0}|\zeta^n-\zeta|^2.
\end{align}
The constant $C_{t_0}>0$ is independent of $n$ and $R$.  This is the continuous dependence estimate required for the relative $H$-Feller property.
	On $[\tau,\tau_R]$, Poincar\'{e}'s inequality and
	\eqref{UPe400-3.209} give
	\begin{align}\label{UPe-adf3.3032}
		\int_\tau^{t_0\wedge\tau_R}|\partial_zv(s)|^4\,ds
		\le \sup_{\tau\le s\le\tau_R}|\partial_zv(s)|^2
		\int_\tau^{\tau_R}|\partial_zv(s)|^2\,ds\le \frac{R}{\textbf{c}_0}\int_\tau^{\tau_R}
		\|\partial_zv(s)\|^2\,ds
		\le \frac{R^3}{\textbf{c}_0}.
	\end{align}
	
%	The stopping time $\tau_R$ is introduced here because the quantity 
%	$\sup_{\tau \leq s \leq t_0} |\partial_z v(s,\tau,\zeta)|^2$ is not 
%	known to be finite almost surely on the entire interval $[\tau, t_0]$ 
%	without an additional uniform bound. The estimates in (3.31) are 
%	therefore valid only on the stopped interval $[\tau, \tau_R]$. 
%	After removing the stopping time via Chebyshev's inequality and 
%	the uniform moment bounds in (3.34)--(3.35), the resulting probability 
%	estimates yield the desired tightness of the terminal laws. 
	
	Since $t\mapsto y(t)$ is decreasing, \eqref{UPe400-3.209} and \eqref{UPe-adf3.3032} imply
	\begin{align*}
		\inf_{\tau\leq t\leq t_0}y(t\wedge \tau_R)&=e^{-\alpha_3\int_{\tau}^{t_0\wedge {\tau_R}}(\|{v}(s, \tau, \zeta)\|^2+|\partial_{z}{v}(s, \tau, \zeta)|^{4})ds}\\
		&\geq e^{-\alpha_3\left(\int_{\tau}^{{\tau_R}}\|{v}(s, \tau, \zeta)\|^2ds+\frac{R}{\textbf{c}_0}\int_{\tau}^{{\tau_R}}\|\partial_{z}{v}(s, \tau, \zeta)\|^{2}ds\right)}\geq e^{-\frac{\alpha_3}{\textbf{c}_0}R^2(\textbf{c}_0+R)},
	\end{align*}
	Combining this with \eqref{UPe400d-3.30} yields
	\begin{align}\label{UPe-f3.3032}
		\mathbb{E}\left[\sup_{\tau\leq t\leq t_0}|v(t\wedge \tau_R,\tau,\zeta^n)-v(t\wedge \tau_R,\tau,\zeta)|^2\right]\leq C_{\textbf{c}_0,t_0,R}|\zeta^n-\zeta|^2.
	\end{align}
	
	\textbf{Step 2.}
	 We prove that the family $\{\mathscr{L}(v(t_0,\tau,\zeta^n)): n\in \mathbb{N}_0:=\mathbb{N} \cup \{0\}\}$ of distribution laws is tight in $H$.
	 It suffices to show that, for fixed $t_0\geq\tau$, $v(t_0,\tau,\zeta^n)\to v(t_0,\tau,\zeta)$ in probability.
	 For $R_1>0$,
	 \begin{align}\label{UPe-f3.33}
	 	\begin{split}
	 		&~~\mathbb{P}\left(|v(t_0,\tau, \zeta^n)-v(t_0, \tau, \zeta)|>R_1\right)
	 		\leq  \mathbb{P}\left(\sup_{\tau\leq r\leq t_0}|v(r,\tau, \zeta^n)-v(r, \tau, \zeta)|>R_1\right)\\
	 		&\leq  \mathbb{P}\left(\left\{\sup_{\tau\leq r\leq t_0}|v(r\wedge \tau_{R},\tau, \zeta^n)-v(r\wedge \tau_{R}, \tau, \zeta)|>R_1\right\}\right)+\mathbb{P}\left(\tau_{R}<t_0\right)\\
	 		&=:\text{Term 1}+\text{Term 2}.
	 	\end{split}
	 \end{align}
	 
	 Equations~\eqref{ee-5}--\eqref{ee-6} yield a constant $C>0$ such that, for all $n\in\mathbb N_0$,
	 \begin{align}\label{UPe-f34}
	 	\mathbb{E}\left[\sup_{\tau\leq r\leq t_0}{| {v(r,\tau, \zeta^n)} |^2} \right]+\mathbb{E}\left[\sup_{\tau\leq r\leq t_0}{| {v(r,\tau,\zeta)} |^2} \right]+\mathbb{E} \left[\int_{\tau}^{t_0}  {\| {v(s,\tau,\zeta)} \|^2} ds\right]\leq C,
	 \end{align} 
	 and
	 \begin{align}\label{UPe-f35}
	 	\mathbb{E}\left[\sup_{\tau\leq r\leq t_0}{| {\partial_{z}v(r,\tau, \zeta)} |^2} \right]+\mathbb{E} \left[\int_{\tau}^{t_0}  {\| {\partial_{z} v(s,\tau,\zeta)} \|^2} ds\right]\leq C.
	 \end{align} 
	  
	  Combining \eqref{UPe-f3.3032} with Chebyshev's inequality yields
	 \begin{align}\label{UPe-f36}
	 	\begin{split}
	 		\text{Term 1}
	 		&\leq \frac{1}{R_1^2}\mathbb{E}\left[\sup_{\tau\leq r\leq t_0}|v(r\wedge \tau_{R},\tau, \zeta^n)-v(r\wedge \tau_{R}, \tau, \zeta)|^2\right]\leq \frac{C_{\textbf{c}_0,t_0,R}}{R_1^2}\mathbb{E}|\zeta^n-\zeta|^2,
	 	\end{split}
	 \end{align}
	 while \eqref{UPe-f34}--\eqref{UPe-f35} give 
	 \begin{align}\label{UPe-f37}
	 		\text{Term 2}&\leq \mathbb{P}\left(\sup_{\tau\leq r\leq t_0}\int_{\tau}^r \|v(s,\tau,\zeta)\|^2ds>R^2\right)+\mathbb{P}\left(\sup_{\tau\leq r\leq t_0}|v(r,\tau,\zeta^n)|^2>R\right)\notag \\
	 		&\quad+\mathbb{P}\left(\sup_{\tau\leq r\leq t_0}|v(r,\tau,\zeta)|^2>R\right)+\mathbb{P}\left(\sup_{\tau\leq r\leq t_0}\int_\tau^r \|\partial_{z}v(s,\tau,\zeta)\|^2ds>R^2\right)\notag\\
	 		&\quad+\mathbb{P}\left(\sup_{\tau\leq r\leq t_0}|\partial_{z}v(r,\tau,\zeta)|^2>R\right)\notag \\
	 		&\leq \frac{1}{R^2}\bigg(\mathbb{E}\left[\int_\tau^{t_0} \left(\|v(s,\tau,\zeta)\|^2+\|\partial_{z}v(s,\tau,\zeta)\|^2\right)ds\right]\notag\\
	 		&\quad +\mathbb{E}\left[\sup_{\tau\leq r\leq t_0}\left(|v(r,\tau,\zeta^n)|^2+|v(r,\tau,\zeta)|^2+|\partial_{z}v(r,\tau,\zeta)|^2\right)\right]\notag \\
	 		&\leq \frac{C}{R^2}+\frac{C}{R}.
	 \end{align}
	 By \eqref{UPe-f3.33}, \eqref{UPe-f36} and \eqref{UPe-f37}, we deduce
	 \begin{align*}%\label{5.14}
	 	\mathbb{P}\left(|v(t_0,\tau, \zeta^n)-v(t_0, \tau, \zeta)|>R_1\right)
	 		\leq \frac{C_{\textbf{c}_0,t_0,R}}{R_1^2}|\zeta^n-\zeta|^2+\frac{C}{R}+\frac{C}{R^2}.
	 \end{align*}
For fixed $R_1>0$, first choose $R$ sufficiently large and then let $n\to\infty$. Thus
$v(t_0,\tau,\zeta^n)\to v(t_0,\tau,\zeta)$ in probability.
	 
	 \textbf{Step 3.}  For $\varphi\in C_b(H)$, we need to prove the following convergence
	 \begin{align}\label{lemma5.4(5.35)}
	 	\lim_{n\rightarrow\infty} \mathbb{ E}\left[\varphi(v(t_0,\tau,\zeta^n))\right]=\mathbb{ E}\left[\varphi(v(t_0,\tau,\zeta))\right].
	 \end{align}
	 By \textbf{Step 2}, $v(t_0,\tau,\zeta^n) \to v(t_0,\tau,\zeta)$ in probability in $H$. In particular, the family of laws $\{\mathscr L(v(t_0,\tau,\zeta^n)):n\in\mathbb N\}\cup\{\mathscr L(v(t_0,\tau,\zeta))\}$ is tight. Fix $\varepsilon>0$. There exists a compact set $\mathcal{K}^{\varepsilon}\subset H$ such that
	 \begin{equation}\label{PEeq-Feller-step3-tight}
	 	\sup_{n\in\mathbb N}\mathbb P(v(t_0,\tau,\zeta^n)\notin \mathcal{K}^{\varepsilon})
	 	+\mathbb P(v(t_0,\tau,\zeta)\notin \mathcal{K}^{\varepsilon})<\varepsilon.
	 \end{equation}
	 Since $\varphi\in C_b(H)$, its restriction to $\mathcal{K}^{\varepsilon}$ is uniformly continuous. Therefore, there exists $\delta=\delta(\varepsilon,\mathcal{K}^{\varepsilon})>0$ such that, for all $v_1, v_2\in \mathcal{K}^{\varepsilon}$ with $|v_1-v_2|<\delta$, 
	 \begin{equation}\label{PEeq-Feller-step3-uc}
	 	|\varphi(v_1)-\varphi(v_2)|<\varepsilon.
	 \end{equation}
	 Writing
	 $\widehat{\textbf{c}}=\|\varphi\|_{C_b(H)}$ and splitting the expectation according to the events in \eqref{PEeq-Feller-step3-tight}-\eqref{PEeq-Feller-step3-uc}, we obtain
	 \begin{align*}
	 	\mathbb E\left[|\varphi(v(t_0,\tau,\zeta^n))-\varphi(v(t_0,\tau,\zeta))|\right]
	 	&\le \varepsilon
	 	+2\widehat{\textbf{c}}\Bigl[
	 	\mathbb P(v(t_0,\tau,\zeta^n)\notin \mathcal{K}^{\varepsilon})\\
	 	&\quad +\mathbb P(v(t_0,\tau,\zeta)\notin \mathcal{K}^{\varepsilon})
	 	+\mathbb P(|v(t_0,\tau,\zeta^n)-v(t_0,\tau,\zeta)|\ge\delta)
	 	\Bigr].
	 \end{align*}
	 The last probability tends to zero because $v(t_0,\tau,\zeta^n)\to v(t_0,\tau,\zeta)$ in probability. Consequently,
	 \[
	 \limsup_{n\to\infty}\mathbb E\left[|\varphi(v(t_0,\tau,\zeta^n))-\varphi(v(t_0,\tau,\zeta))|\right]
	 \le \varepsilon+2\widehat{\textbf{c}}\varepsilon.
	 \]
	 Letting $\varepsilon\downarrow0$ gives
	 \[
	 \lim_{n\to\infty}\mathbb E\bigl[\varphi(v(t_0,\tau,\zeta^n))\bigr]
	 =\mathbb E\bigl[\varphi(v(t_0,\tau,\zeta))\bigr],
	 \]
	 which implies \eqref{lemma5.4(5.35)}. Thus $p_{\tau,t_0}\varphi$ is continuous on $\mathcal H$ endowed with the $H$-topology, and property $(i)$ follows.
	  This completes the proof.
\end{proof}

For any $t\geq \tau$, we define the dual operator $p^*_{\tau,t}$ of the transition operator $p_{\tau,t}$ by
\begin{align}\label{dual-Pes}
p^*_{\tau,t}\mu(\cdot)
=\int_{\mathcal H}P(\tau,\zeta;t,\cdot)\,\mu(d\zeta),
\qquad \mu\in\mathcal P_{4,z}(H).
\end{align}
By Theorem~\ref{PEeqthe-2.5}, $p^*_{\tau,t}\mu$ again belongs to $\mathcal P_{4,z}(H)$. We set
\begin{align}\label{SLRSSs3.19}
S_\epsilon(t,\tau)\mu=p^*_{\tau,t}\mu,
\qquad \mu\in\mathcal P_{4,z}(H).
\end{align}

The evolution is therefore not asserted on all of $\mathcal P_4(H)$. It is an evolution of admissible laws in $\mathcal P_{4,z}(H)$, while convergence, attraction and compactness are measured by $d_{\mathcal P(H)}$.
\begin{lemma}\label{UPe4-lem-NADY}
	Suppose Hypotheses \ref{PEeqAssum-2.2}, \ref{PEeqAssum-2.3}, \ref{PEeqAssum-2.4} and \eqref{PETNs-3.01} hold. Then $\{S_\epsilon(t,\tau):t\geq\tau\}$ is a continuous two-parameter evolution process on $\mathcal P_{4,z}(H)$. More precisely, for every $\tau\in\mathbb R$: 
	
	$(i)$  $S_{\epsilon}(\tau,\tau)=I_{\mathcal{P}_{4,z}({H})}$;
	
	$(ii)$ $S_{\epsilon}(t,\tau)=S_{\epsilon}(t,s)S_{\epsilon}(s,\tau)$ for any $t\geq s \geq \tau$;
	
	$(iii)$ $S_{\epsilon}(t,\tau): \mathcal{P}_{4,z}({H})\rightarrow \mathcal{P}_{4,z}({H})$ is continuous for any $t\geq \tau$. 
\end{lemma}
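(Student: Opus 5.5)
The plan is to derive all three properties from the transition-operator structure established in Lemma~\ref{UPe400-fm-3.6}, together with the moment bounds of Theorem~\ref{PEeqthe-2.5}.

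\emph{Well-definedness.} For $\mu\in\mathcal P_{4,z}(H)$, take an $\mathscr F_\tau$-measurable $\zeta$ with law $\mu$. This is possible after enlarging the stochastic basis by an independent factor, which does not change laws. By pathwise uniqueness, the law of $v(t,\tau,\zeta)$ equals $p^*_{\tau,t}\mu$. Theorem~\ref{PEeqthe-2.5}, estimates \eqref{ee-5}--\eqref{ee-6} with $p=2$, gives
\[
\mathbb E|v(t)|^4+\mathbb E|\partial_zv(t)|^4\le C\bigl(1+\mathbb E|\zeta|^4+\mathbb E|\partial_z\zeta|^4\bigr)<\infty .
\]
Hence $p^*_{\tau,t}\mu$ is supported on $\mathcal H$ and has finite fourth moments, so $S_\epsilon(t,\tau)$ maps $\mathcal P_{4,z}(H)$ into itself.

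\emph{Property (i).} Since $v(\tau,\tau,\zeta)=\zeta$, we have $P(\tau,\zeta;\tau,\cdot)=\delta_\zeta$, and therefore $p^*_{\tau,\tau}\mu=\mu$.

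\emph{Property (ii).} Use Chapman--Kolmogorov from Lemma~\ref{UPe400-fm-3.6}(ii). For $\varphi\in\mathcal B_b(H)$,
\[
(\varphi,S_\epsilon(t,\tau)\mu)=\int_{\mathcal H}(p_{\tau,t}\varphi)\,d\mu=\int_{\mathcal H}p_{\tau,s}(p_{s,t}\varphi)\,d\mu=(p_{s,t}\varphi,S_\epsilon(s,\tau)\mu)=(\varphi,S_\epsilon(t,s)S_\epsilon(s,\tau)\mu).
\]
Two points need care here. First, $p_{s,t}\varphi$ is only needed on $\mathcal H$, and $S_\epsilon(s,\tau)\mu(\mathcal H)=1$ by the step above. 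Second, $p_{s,t}\varphi$ must be Borel on $\mathcal H$. This follows from a monotone class argument starting from continuity for $\varphi\in C_b(H)$, which is the Feller property.

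\emph{Property (iii): continuity.} Let $\mu_n\to\mu$ in $d_{\mathcal P(H)}$ with $\mu_n,\mu\in\mathcal P_{4,z}(H)$. For $\varphi\in C_b(H)$ we have
\[
(\varphi,S_\epsilon(t,\tau)\mu_n)=\int_{\mathcal H}p_{\tau,t}\varphi\,d\mu_n .
\]
Here $p_{\tau,t}\varphi$ is bounded and continuous on $\mathcal H$ with the relative $H$-topology, but it is not a priori defined or continuous on all of $H$. This is the main obstacle, because weak convergence in $\mathcal P(H)$ only tests functions in $C_b(H)$.

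The approach is to represent the measures on $\mathcal H$. Each $\mu_n,\mu$ gives full mass to $\mathcal H$, which is a Borel subset of $H$. By Skorokhod's representation theorem in the Polish space $H$, there are random variables $\zeta_n\to\zeta$ a.s.\ in $H$ with the prescribed laws, and they can be modified on a null set to be $\mathcal H$-valued. Then $p_{\tau,t}\varphi(\zeta_n)\to p_{\tau,t}\varphi(\zeta)$ a.s.\ by the relative-$H$ Feller property. Dominated convergence then yields $(\varphi,S_\epsilon(t,\tau)\mu_n)\to(\varphi,S_\epsilon(t,\tau)\mu)$.

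If one insists on working with genuine $\mathcal H$-continuity instead, a fallback is available. Step~1 of Lemma~\ref{UPe400-fm-3.6} gives convergence in probability for random initial data. This needs only bounds on $\zeta$'s moments, not uniform $\partial_z\zeta^n$ bounds, because the weight $y$ depends solely on the reference solution. So one can run Steps~1--3 directly with $\zeta^n\to\zeta$ a.s.\ and $\mathbb E|\zeta^n-\zeta|^2$ replaced through localization on $\{|\zeta^n-\zeta|\le 1\}$.

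Either route proves weak continuity of $S_\epsilon(t,\tau)$ on $\mathcal P_{4,z}(H)$.
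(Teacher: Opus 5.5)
Your proposal is correct and follows the paper's proof essentially step by step. You get (i) from $v(\tau,\tau,\zeta)=\zeta$ and (ii) from the Chapman--Kolmogorov identity of Lemma~\ref{UPe400-fm-3.6}. You get invariance of $\mathcal P_{4,z}(H)$ from the finite-horizon bounds \eqref{ee-5}--\eqref{ee-6}, where the paper instead integrates Lemma~\ref{lemPETNs-3.1} and \eqref{VE3.21} via Tonelli; either works. You get (iii) from Skorokhod representation with $\mathcal H$-valued versions, the relative-$H$ Feller property, and dominated convergence, exactly as the paper does. Your check that $p_{s,t}\varphi$ is Borel and only needed on $\mathcal H$ tidies a point the paper leaves implicit, but the closing fallback paragraph is not needed.
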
   
\begin{proof}
	The identity property $(i)$ follows from $v(\tau,\tau,\zeta)=\zeta$. The process property $(ii)$ follows from the inhomogeneous Markov property of solutions of \eqref{PEeq-2.15} in Lemma \ref{UPe400-fm-3.6}. Specifically, for any $t\geq s \geq \tau$,
	$$
	S_{\epsilon}(t,s)S_{\epsilon}(s,\tau) =p^*_{s,t}p^*_{\tau,s}=\left(p_{\tau,s} p_{s,t}\right)^*=p_{\tau,t}^*=S_{\epsilon}(t,\tau).
	$$
	We now verify $(iii)$. For $\mu\in\mathcal P_{4,z}(H)$, the Tonelli's theorem, the uniform estimates in Lemma \ref{lemPETNs-3.1} and \eqref{VE3.21} give
	\begin{align*}%\label{PEeq-S-map-moment}
		&~~\int_H \left(|\zeta|^4+|\partial_{z}\zeta|^4\right)\,\left(S_\epsilon(t,\tau)\mu\right)(d\zeta)
		=\int_{H}\mathbb E\bigl[|v(t,\tau,\zeta)|^4+|\partial_{z}v(t,\tau,\zeta)|^4\bigr] \,\mu(d\zeta)\notag\\
		&\le C\left(e^{-\kappa(t-\tau)}\int_H \left(|\zeta|^4+|\partial_{z}\zeta|^4\right)\,\mu(d\zeta)+\int_\tau^te^{-\kappa(t-s)}\left(|g(s)|^{4}+|\partial_{z}g(s)|^{4}\right)ds+1\right)<\infty.
	\end{align*}
	Thus, $S_\epsilon(t,\tau)$ maps $\mathcal P_{4,z}(H)$ into itself.
	It remains to prove continuity. Let $\mu_n,\mu\in\mathcal P_{4,z}(H)$ and assume that $\mu_n\to\mu$ in $d_{\mathcal P(H)}$. By the Skorokhod representation theorem, on an auxiliary probability space there exist random variables $\zeta_n,\zeta$ with laws $\mu_n,\mu$ such that $\zeta_n\to\zeta$ almost surely in $H$. Since all these laws belong to $\mathcal P_{4,z}(H)$, the variables are $\mathcal H$-valued almost surely. For every $\varphi\in C_b(H)$, Lemma~\ref{UPe400-fm-3.6} implies, samplewise,
	\[
	(p_{\tau,t}\varphi)(\zeta_n)\longrightarrow(p_{\tau,t}\varphi)(\zeta),
	\]
	because $p_{\tau,t}\varphi$ is continuous on $\mathcal H$ with the relative $H$-topology. Since this function is bounded, dominated convergence gives
	\[
	\int_H\varphi\,d(S_\epsilon(t,\tau)\mu_n)
	=\mathbb E[\varphi(v(t,\tau,\zeta_n))]
	\rightarrow
	\mathbb E[\varphi(v(t,\tau,\zeta))]
	=\int_H\varphi\,d(S_\epsilon(t,\tau)\mu).
	\]
	Thus $S_\epsilon(t,\tau)\mu_n\to S_\epsilon(t,\tau)\mu$ in $d_{\mathcal P(H)}$. Notice that this argument uses only the relative $H$-Feller property and does not require convergence of the vertical derivative moments. This completes the proof.
\end{proof}

We refer to $\{S_\epsilon(t,\tau):t\ge\tau\}$ as the measure evolution process on the admissible law class $\mathcal P_{4,z}(H)$.

\subsection{Existence of pullback measure attractors}

This section proves existence and uniqueness of a $\mathfrak D$-pullback measure attractor for the evolution of admissible laws on $\mathcal P_{4,z}(H)$, with compactness and attraction measured in the ambient topology of $\mathcal P(H)$. We first construct a closed $\mathfrak D$-pullback absorbing family for $S_\epsilon$.

\begin{lemma}\label{lem-moment-lsc}
	Define the extended functionals on $H$ by
	\[
	\Phi_0(u)=|u|^4,
	\qquad
	\Phi_z(u)=
	\begin{cases}
		|\partial_zu|^4,&u\in\mathcal H,\\
		+\infty,&u\notin\mathcal H.
	\end{cases}
	\]
	Then $\Phi_0$ and $\Phi_z$ are lower semicontinuous on $H$. Consequently, if $\mu_n\to\mu$ weakly in $\mathcal P(H)$, then
	\[
	\int_H\Phi_i(u)\,\mu(du)
	\le\liminf_{n\to\infty}\int_H\Phi_i(u)\,\mu_n(du),
	\qquad i\in\{0,z\}.
	\]
	In particular, every weak limit of a sequence having uniform bounds for both displayed moments belongs to $\mathcal P_{4,z}(H)$.
\end{lemma}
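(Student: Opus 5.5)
The argument has two parts: lower semicontinuity of the two functionals on $H$, and then the Portmanteau-type inequality for weak convergence.

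\emph{The functional $\Phi_0$.} It is continuous on $H$, since the norm is continuous, and hence lower semicontinuous.

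\emph{The functional $\Phi_z$.} We show that every sublevel set $\{u\in H:\Phi_z(u)\le M\}$ is closed in $H$. Let $u_n\to u$ in $H$ with $\Phi_z(u_n)\le M$. Then $u_n\in\mathcal H$ and $|\partial_zu_n|\le M^{1/4}$.
\begin{itemize}
\item By weak compactness of balls in $H$, a subsequence satisfies $\partial_zu_n\rightharpoonup w$ weakly in $H$.
\item For every $\psi\in C_c^\infty(\mathcal M)$ we have $(\partial_zu_n,\psi)=-(u_n,\partial_z\psi)$. Passing to the limit gives $(w,\psi)=-(u,\partial_z\psi)$, so $\partial_zu=w$ in the distributional sense.
\item Weak lower semicontinuity of the norm gives $|\partial_z u|\le\liminf|\partial_zu_n|\le M^{1/4}$.
\item We still need $\partial_zu\in H$, i.e.\ $\int_{-l}^0\partial_zu\,dz=0$. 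This holds because $H$ is a closed subspace of $L^2(\mathcal M)$, hence weakly closed, and $w$ is a weak limit of elements of $H$.
\end{itemize}
Hence $u\in\mathcal H$ and $\Phi_z(u)\le M$, so the sublevel set is closed.

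If $u_n\to u$ with $\liminf_n\Phi_z(u_n)=+\infty$, there is nothing to prove. Otherwise we pass to a subsequence realizing the $\liminf$ and apply the argument above with $M$ equal to that $\liminf$ plus an arbitrary $\delta>0$. This gives $\Phi_z(u)\le\liminf_n\Phi_z(u_n)$.

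\emph{Integral inequality.} Each $\Phi_i$ is nonnegative and lower semicontinuous on the metric space $H$. It is therefore the increasing pointwise limit of the bounded Lipschitz functions
\[
\Phi_i^{k}(u)=\inf_{y\in H}\bigl(\min\{\Phi_i(y),k\}+k|u-y|\bigr),
\]
which are $k$-Lipschitz and bounded by $k$. For each $k$, weak convergence gives
\[
\int\Phi_i^k\,d\mu=\lim_{n\to\infty}\int\Phi_i^k\,d\mu_n\le\liminf_{n\to\infty}\int\Phi_i\,d\mu_n .
\]
Letting $k\to\infty$ and applying monotone convergence yields the claimed inequality. Alternatively, this step follows from the Skorokhod representation and Fatou's lemma.

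\emph{Consequence for $\mathcal P_{4,z}(H)$.} Suppose both moments are uniformly bounded along the sequence. Then $\int\Phi_0\,d\mu<\infty$ and $\int\Phi_z\,d\mu<\infty$. The second bound forces $\Phi_z<\infty$ $\mu$-a.e., that is, $\mu(\mathcal H)=1$. One measurability point must be checked here: $\mathcal H=\{\Phi_z<\infty\}$ is Borel, being a countable union of the closed sublevel sets. Hence $\mu\in\mathcal P_{4,z}(H)$.

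The only genuinely delicate step is the closedness of the sublevel sets, specifically identifying the weak limit as $\partial_z u$ and checking that it keeps zero vertical mean. Everything else is standard.
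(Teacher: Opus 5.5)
Your proposal is correct and follows essentially the same route as the paper: boundedness of $\partial_z u_n$ in $H$, a weakly convergent subsequence, identification of the limit through closedness of the distributional vertical derivative, weak lower semicontinuity of the norm, and then the Portmanteau inequality for nonnegative lower semicontinuous functions. Your additional checks are details the paper leaves implicit rather than a different argument: that the weak limit keeps zero vertical mean, the explicit Lipschitz approximation behind the Portmanteau step, and the Borel measurability of $\mathcal H$.
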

\begin{proof}
	Only the assertion for $\Phi_z$ needs verification. Suppose $u_n\to u$ in $H$ and $\liminf_n\Phi_z(u_n)<\infty$. Passing to a subsequence, $\{\partial_zu_n\}$ is bounded in $H$, and hence $\partial_zu_n\rightharpoonup w$ in $H$ along a further subsequence. Since the distributional vertical derivative is a closed operator, $u\in\mathcal H$ and $\partial_zu=w$. Weak lower semicontinuity of the $H$-norm gives $|\partial_zu|^4\le\liminf_n|\partial_zu_n|^4$. The integral inequalities follow from the Portmanteau theorem.
\end{proof}

\begin{lemma}\label{UPe4-lemABs}
	Suppose Hypotheses \ref{PEeqAssum-2.2}, \ref{PEeqAssum-2.3}, \ref{PEeqAssum-2.4} and \eqref{PETNs-3.01} hold. For $\tau\in\mathbb R$, set
	\[
	\mathscr M_0(\tau)
	=\mathcal R_1\left(
	1+\int_{-\infty}^{\tau}e^{-\kappa(\tau-s)}|g(s)|^4\,ds
	\right),
	\]
	and
	\[
	\mathscr M_z(\tau)
	=\mathcal R_3\left(
	1+\int_{-\infty}^{\tau}e^{-\kappa(\tau-s)}
	|\partial_zg(s)|^4\,ds
	\right).
	\]
	Define
	\[
	\mathscr K(\tau)
	=
	\left\{
	\mu\in\mathcal P_{4,z}(H):
	\int_H|\zeta|^4\,\mu(d\zeta)\le\mathscr M_0(\tau),\
	\int_H|\partial_z\zeta|^4\,\mu(d\zeta)\le\mathscr M_z(\tau)
	\right\}.
	\]
	Then $\mathscr K=\{\mathscr K(\tau):\tau\in\mathbb R\}$ belongs to $\mathfrak D$ and is a closed $\mathfrak D$-pullback absorbing family for $S_\epsilon$, uniformly for $\epsilon\in(0,\epsilon_0)$.
\end{lemma}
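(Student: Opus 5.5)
The proof will check three things in turn: membership of $\mathscr K$ in $\mathfrak D$, closedness of each $\mathscr K(\tau)$ in $(\mathcal P(H),d_{\mathcal P(H)})$, and pullback absorption. The constants $\mathcal R_1,\mathcal R_3$ and $\epsilon_0$ come from Lemmas~\ref{UPe4-lem4.1} and \ref{UPe4-lem4.2}, and these do not depend on $\epsilon$. This is what gives uniformity in $\epsilon\in(0,\epsilon_0)$.

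\textbf{Membership in $\mathfrak D$.} First, $\mathscr K(\tau)$ is nonempty, since the Dirac mass $\delta_0$ lies in it ($0\in\mathcal H$ and both moments vanish). Its fourth moments are bounded by $\mathscr M_0(\tau)$ and $\mathscr M_z(\tau)$. For the tempering condition, I will use
\[
e^{\kappa\tau}\mathscr M_0(\tau)=\mathcal R_1\Bigl(e^{\kappa\tau}+\int_{-\infty}^{\tau}e^{\kappa s}|g(s)|^4ds\Bigr).
\]
As $\tau\to-\infty$, the first term vanishes trivially. The second term is the tail of the convergent integral $\int_{-\infty}^{\tau_0}e^{\kappa s}|g(s)|^4ds$, which is finite by \eqref{PETNs-3.01}, so it also tends to $0$. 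The same argument with $\partial_zg$ handles $\mathscr M_z$.

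\textbf{Closedness.} Let $\mu_n\in\mathscr K(\tau)$ with $\mu_n\to\mu$ in $d_{\mathcal P(H)}$. Lemma~\ref{lem-moment-lsc} gives
\[
\int\Phi_0\,d\mu\le\liminf_n\int\Phi_0\,d\mu_n\le\mathscr M_0(\tau),\qquad \int\Phi_z\,d\mu\le\mathscr M_z(\tau).
\]
Since $\Phi_z=+\infty$ off $\mathcal H$, finiteness of the second integral forces $\mu(\mathcal H)=1$. Hence $\mu\in\mathcal P_{4,z}(H)$ and $\mu\in\mathscr K(\tau)$.

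\textbf{Absorption.} Fix $\tau$ and $D\in\mathfrak D$, and take $T=T(\tau,D)$ as the larger of the times in Lemmas~\ref{UPe4-lem4.1} and \ref{UPe4-lem4.2}. For $t\ge T$ and $\mu\in D(\tau-t)$, choose an $\mathscr F_{\tau-t}$-measurable $\zeta$ with law $\mu$; this is possible by independence of the noise increments, or one can work directly with the disintegration \eqref{dual-Pes}. By Tonelli and the definition of $S_\epsilon$,
\[
\int_H|u|^4\,(S_\epsilon(\tau,\tau-t)\mu)(du)=\mathbb E|v(\tau,\tau-t,\zeta)|^4\le\mathscr M_0(\tau)
\]
by \eqref{UPe4-4.2}. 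Likewise \eqref{UPe4-4.5} bounds the vertical moment by $\mathscr M_z(\tau)$. Lemma~\ref{UPe4-lem-NADY} shows that the image lies in $\mathcal P_{4,z}(H)$. Therefore $S_\epsilon(\tau,\tau-t)D(\tau-t)\subset\mathscr K(\tau)$ for all $t\ge T$.

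\textbf{Main obstacle.} The delicate step is matching the constants exactly: the bound in Lemma~\ref{UPe4-lem4.1} must be sharp enough to give the exact threshold $\mathscr M_0(\tau)$. As written, \eqref{UPe4-4.2} has the form $\mathcal R_1+\mathcal R_1\int e^{-\kappa(\tau-s)}|g|^4$, which equals $\mathscr M_0(\tau)$ once the initial-data term $e^{-\kappa t}\rho_1\mathbb E|\zeta|^4$ is absorbed for $t\ge T$. That absorption requires choosing $T$ via the tempering of $D$, namely $e^{-\kappa t}\sup_{D(\tau-t)}\int|\zeta|^4\le1$ and similarly for $\partial_z\zeta$. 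I will make this choice explicitly, so that a single $T$ works uniformly over $D(\tau-t)$ and over $\epsilon$.
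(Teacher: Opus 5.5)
Your proposal is correct and takes essentially the same route as the paper. Closedness comes from the lower semicontinuity of $\Phi_0,\Phi_z$ in Lemma~\ref{lem-moment-lsc}; absorption comes from integrating the bounds \eqref{UPe4-4.2} and \eqref{UPe4-4.5}; and membership in $\mathfrak D$ follows from \eqref{PETNs-3.01}. Your ``main obstacle'' is already resolved inside Lemmas~\ref{UPe4-lem4.1} and \ref{UPe4-lem4.2}, whose $\epsilon$-independent constants and tempering-based time $T(\tau,D)$ absorb the initial-data term. Your explicit tail computation for $e^{\kappa\tau}\mathscr M_0(\tau)$ and the nonemptiness check via $\delta_0$ are details the paper leaves implicit.
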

\begin{proof}
	By Lemma~\ref{lem-moment-lsc}, both moment constraints defining $\mathscr K(\tau)$ are closed under weak convergence in $\mathcal P(H)$. Hence $\mathscr K(\tau)$ is closed in $d_{\mathcal P(H)}$ and every one of its elements is admissible.
	
	Let $D\in\mathfrak D$. Lemma~\ref{UPe4-lem4.1} and the $H$-moment tempering condition give, for sufficiently large pullback time,
	\[
	\sup_{\mu\in D(\tau-t)}
	\int_H\mathbb E[|v(\tau,\tau-t,\zeta)|^4]\,\mu(d\zeta)
	\le\mathscr M_0(\tau).
	\]
	Similarly, Lemma~\ref{UPe4-lem4.2} together with the vertical-moment tempering condition yields
	\[
	\sup_{\mu\in D(\tau-t)}
	\int_H\mathbb E[|\partial_zv(\tau,\tau-t,\zeta)|^4]\,\mu(d\zeta)
	\le\mathscr M_z(\tau).
	\]
Consequently,
	\[
	S_\epsilon(\tau,\tau-t)D(\tau-t)\subset\mathscr K(\tau)
	\]
	for all sufficiently large $t$, uniformly in $\epsilon\in(0,\epsilon_0)$.
	
	Finally, assumption \eqref{PETNs-3.01} implies
	\[
	e^{\kappa\tau}\mathscr M_0(\tau)\to0
	\text{~ and ~}
	e^{\kappa\tau}\mathscr M_z(\tau)\to0
	\quad\text{as ~}\tau\to-\infty.
	\]
	Thus $\mathscr K\in\mathfrak D$.
\end{proof}

\begin{remark}\label{rem:common-absorbing}
	The estimates in Lemmas~\ref{UPe4-lem4.1}, \ref{UPe4-lem4.2}, and~\ref{UPe4-lem3.5} remain valid for $\epsilon=0$ after deleting the stochastic terms, with the same constants. Hence
	$\mathscr K=\{\mathscr K(\tau):\tau\in\mathbb R\}$ is a common closed $\mathfrak D$-pullback absorbing family for all $\epsilon\in[0,\epsilon_0)$. This uniformity is used in Section~\ref{USC-PEs4}.
\end{remark}

Next, we verify the $\mathfrak D$-pullback asymptotic compactness of $S_\epsilon$ in the ambient weak topology of $\mathcal P(H)$ and show that all limit laws remain in $\mathcal P_{4,z}(H)$.
\begin{lemma}\label{UPe4-lemcompatight}
	Suppose Hypotheses \ref{PEeqAssum-2.2}, \ref{PEeqAssum-2.3}, \ref{PEeqAssum-2.4} and \eqref{PETNs-3.01} hold. Then, for every $\epsilon\in(0,\epsilon_0)$, the measure evolution process $S_\epsilon$ on $\mathcal P_{4,z}(H)$ is $\mathfrak{D}$-pullback asymptotically compact in the ambient topology $(\mathcal P(H),d_{\mathcal P(H)})$. More precisely, for every $\tau\in\mathbb R$, $t_n \to +\infty$, and ${\mu}_n\in D(\tau-t_n)$ with $D\in \mathfrak{D}$, the sequence $\{S_\epsilon(\tau,\tau-t_n){\mu}_n\}_{n=1}^\infty$ has a subsequence converging in $\mathcal P(H)$, and every such limit belongs to $\mathcal P_{4,z}(H)$.
\end{lemma}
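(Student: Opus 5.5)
The plan is to prove tightness of $\{S_\epsilon(\tau,\tau-t_n)\mu_n\}$ in $H$ by Prokhorov's theorem, using the exponentially weighted terminal $V$-estimate of Lemma~\ref{UPe4-lem3.5}. Admissibility of the limit will then follow from Lemma~\ref{lem-moment-lsc}.

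For each $n$, let $\zeta_n$ be $\mathscr F_{\tau-t_n}$-measurable with law $\mu_n\in D(\tau-t_n)$, and write $v_n:=v(\tau,\tau-t_n,\zeta_n)$ and $\Theta_n:=\Theta(\tau,\tau-t_n,\zeta_n)$. For $n$ so large that $t_n\ge T(\tau,D)$, Lemmas~\ref{UPe4-lem4.1}, \ref{UPe4-lem4.2} and \ref{UPe4-lem3.5} give three uniform bounds:
\[
\mathbb E|v_n|^4\le\mathscr M_0(\tau),\qquad \mathbb E|\partial_zv_n|^4\le\mathscr M_z(\tau),\qquad \mathbb E[\Theta_n\|v_n\|^2]\le C_\tau,
\]
with $C_\tau=\mathcal R_4(1+\int_{-\infty}^\tau e^{-\kappa(\tau-s)}|g(s)|^4ds)$. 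We also need control of the weight. Writing $\Theta_n=e^{-\alpha_2 Z_n}$, where $Z_n$ is the integral of $|v|^2\|v\|^2+|\partial_zv|^2\|\partial_zv\|^2$ over $[\tau-t_n,\tau]$, the quantity $\mathbb E Z_n$ is not bounded uniformly as $t_n\to\infty$, because the estimates carry the factor $e^{-\kappa(\tau-s)}$. To handle this, I would restart at time $\tau-1$: by the Markov/cocycle property, $v_n=v(\tau,\tau-1,v(\tau-1,\tau-t_n,\zeta_n))$. Lemma~\ref{UPe4-lem3.5} is then applied with the weight taken over $[\tau-1,\tau]$ only. Its proof integrates over $\varsigma\in(\tau-1,\tau)$, so it works equally with $\Theta$ starting at $\tau-1$ in place of $\tau-t$, because $\Theta$ only decreases. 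This yields
\[
\mathbb E\Bigl[\int_{\tau-1}^\tau\bigl(|v|^2\|v\|^2+|\partial_zv|^2\|\partial_zv\|^2\bigr)ds\Bigr]\le e^{\kappa}(\mathscr M_0(\tau)+\mathscr M_z(\tau)),
\]
using the integral parts of \eqref{UPe4-4.2} and \eqref{UPe4-4.5}. Hence $Z_n$, over $[\tau-1,\tau]$, is bounded in $L^1(\Omega)$ uniformly in $n$ and $\epsilon\in(0,\epsilon_0)$.

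Tightness then follows. For $R,M>0$, set $K_R=\{u\in V:\|u\|\le R\}$, which is compact in $H$ since $V\hookrightarrow H$ compactly. Then
\[
\mathbb P(v_n\notin K_R)\le \mathbb P(Z_n>M)+\mathbb P(\Theta_n\|v_n\|^2>e^{-\alpha_2M}R^2)\le \frac{C}{M}+\frac{C_\tau e^{\alpha_2M}}{R^2}.
\]
Choosing $M$ large first and then $R$ large makes this smaller than any given $\varepsilon$, uniformly in $n$. Prokhorov's theorem in the Polish space $H$ gives a subsequence with $S_\epsilon(\tau,\tau-t_n)\mu_n\to\mu$ in $d_{\mathcal P(H)}$.

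Finally, the laws of $v_n$ satisfy the uniform moment bounds $\int|u|^4\le\mathscr M_0(\tau)$ and $\int\Phi_z\le\mathscr M_z(\tau)$. Lemma~\ref{lem-moment-lsc} shows that these bounds pass to $\mu$, so $\mu\in\mathcal P_{4,z}(H)$, and in fact $\mu\in\mathscr K(\tau)$. The main obstacle is the weight step: Lemma~\ref{UPe4-lem3.5} as stated has its weight over the whole interval $[\tau-t,\tau]$. I would therefore justify the restart at $\tau-1$ explicitly through Lemma~\ref{UPe4-lem-NADY}(ii), with $\mathscr L(v(\tau-1,\tau-t_n,\zeta_n))\in\mathscr K(\tau-1)$. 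Alternatively, I would rerun the proof of Lemma~\ref{UPe4-lem3.5} with the weight starting at $\tau-1$, noting that \eqref{UPe4-4.3} already only involves $[\tau-1,\tau]$.
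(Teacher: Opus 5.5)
Your proposal is correct and follows essentially the same route as the paper. Both arguments use Prokhorov's theorem via the exponentially weighted terminal $V$-estimate of Lemma~\ref{UPe4-lem3.5}, with the weight restricted to a fixed window before $\tau$ (the paper uses $[\tau-2,\tau]$, you use $[\tau-1,\tau]$). Both then apply Markov bounds to the weight exponent using the integral parts of \eqref{UPe4-4.2} and \eqref{UPe4-4.5}, invoke the compact embedding $V\hookrightarrow H$, and conclude admissibility of the limit from Lemma~\ref{lem-moment-lsc}. Your two-parameter split in $(M,R)$ is only a reparametrization of the paper's split into $\{\Theta\|v\|^2>R^{1/2}\}$ and $\{\Theta^{-1}>R^{1/2}\}$.
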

\begin{proof}
	By Prokhorov's theorem, it suffices to prove that the sequence of distributions 
	$\{\mathscr{L}(v(\tau,\tau-t_n,\zeta^n))\}_{n=1}^\infty$ is tight, where $\zeta^n$ is the initial data at initial time $\tau-t_n$.
	
	First, Lemma~\ref{UPe4-lem3.5} yields $\mathcal N_1=\mathcal N_1(\tau,D)\in\mathbb N$ such that, for all $n\geq\mathcal N_1$,
	\begin{align}\label{NSE-disghjmea5.8}
		\mathbb{E}\left[\Theta(\tau,\tau-2,v(\tau-2,\tau-t_n,\zeta^n))\| v(\tau,\tau-2,v(\tau-2,\tau-t_n,\zeta^n))\|^2\right]\leq \mathscr{M}_1,
	\end{align}
	where $\mathscr{M}_1$ depends only on $\tau$, and is independent of $\epsilon$, $n$ and ${D}$. 
	
	Moreover, Lemmas~\ref{UPe4-lem4.1} and~\ref{UPe4-lem4.2} yield
	 $\mathcal{N}_2=\mathcal{N}_2(\tau,D)\in \mathbb{N}$ and $\mathscr{M}_2:=\mathscr{M}_2(\tau)>0$ independent of $\epsilon$, $n$ and ${D}$ such that for all $n\geq \mathcal{N}_2$,
	 {\small
	\begin{align}\label{NSE-disghjmea5.9}
		\begin{split}
			&\int_{\tau-2}^\tau \mathbb{E}\left[|v(s,\tau-2,v(\tau-2,\tau-t_n,\zeta^n))|^2\|v(s,\tau-2,v(\tau-2,\tau-t_n,\zeta^n))\|^2\right]ds\\
			&+\int_{\tau-2}^\tau \mathbb{E}\left[|\partial_{z}v(s,\tau-2,v(\tau-2,\tau-t_n,\zeta^n))|^2\|\partial_{z}v(s,\tau-2,v(\tau-2,\tau-t_n,\zeta^n))\|^2\right]ds\\
			&\leq e^{2\kappa}\int_{\tau-2}^\tau e^{-\kappa(\tau-s)}\mathbb{E}\left[|v(s,\tau-2,v(\tau-2,\tau-t_n,\zeta^n))|^2\|v(s,\tau-2,v(\tau-2,\tau-t_n,\zeta^n))\|^2\right]ds\\
			&+e^{2\kappa}\int_{\tau-2}^\tau e^{-\kappa(\tau-s)}\mathbb{E}\left[|\partial_{z}v(s,\tau-2,v(\tau-2,\tau-t_n,\zeta^n))|^2\|\partial_{z}v(s,\tau-2,v(\tau-2,\tau-t_n,\zeta^n))\|^2\right]ds\\
			&\leq \mathscr{M}_2.
		\end{split}
	\end{align}}
	
%	For any $R>0$, let $\mathbb{B}_{V}(R):=\{u:\|u\|_{V}\leq R\}$. Since $V$ is compactly embedded in $H$, the set $\mathbb{B}_{V}(R)$ is compact in $H$.
	Markov's inequality together with \eqref{NSE-disghjmea5.8}--\eqref{NSE-disghjmea5.9} shows that, with $\mathcal N_3=\max\{\mathcal N_1,\mathcal N_2\}$, for every $n\geq\mathcal N_3$ and $R>2$, 
	\begin{align*}%\label{ojodfkpdfkp}
		%\begin{split}
		&~~\mathbb{P}\left(\|v(\tau,\tau-t_n,\zeta^n)\|^2>R\right)=\mathbb{P}\left(\|v(\tau,\tau-2,v(\tau-2,\tau-t_n,\zeta^n))\|^2>{R}\right)\notag \\
		&\leq  \mathbb{P}\left(\Theta (\tau,\tau-2,v(\tau-2, \tau-t_n, \zeta^n))\|v(\tau,\tau-2,v(\tau-2,\tau-t_n,\zeta^n))\|^2>{R}^{1/2}\right)\notag \\
		&+\mathbb{P}\left(\Theta^{-1}(\tau,\tau-2,v(\tau-2, \tau-t_n, \zeta^n))>{R}^{1/2}\right)\notag \\
		&\leq \frac{\mathbb{E}\left(\Theta(\tau,\tau-2,v(\tau-2, \tau-t_n, \zeta^n))\|v(\tau,\tau-2,v(\tau-2,\tau-t_n,\zeta^n))\|^2\right)}{R^{1/2}}\notag \\
		&+\mathbb{P}\bigg(\int^\tau_{\tau-2}\big(|v(s,\tau-2,v(\tau-2,\tau-t_n,\zeta^n))|^2\|v(s,\tau-2,v(\tau-2,\tau-t_n,\zeta^n))\|^2\notag \\
		&+|\partial_{z}v(s,\tau-2,v(\tau-2,\tau-t_n,\zeta^n))|^2\|\partial_{z}v(s,\tau-2,v(\tau-2,\tau-t_n,\zeta^n))\|^2\big)ds>\frac{\ln{R}}{2\alpha_2}\bigg)\notag \\
		&\leq \frac{\mathbb{E}\left[\Theta(\tau,\tau-2,v(\tau-2, \tau-t_n, \zeta^n))\|v(\tau,\tau-2,v(\tau-2, \tau-t_n, \zeta^n))\|_V^2\right]}{R^{1/2}}\notag \\
		& +\frac{2\alpha_2}{\ln{R}}\bigg(\int_{\tau-2}^\tau \mathbb{E}\left[|v(s,\tau-2,v(\tau-2,\tau-t_n,\zeta^n))|^2\|v(s,\tau-2,v(\tau-2,\tau-t_n,\zeta^n))\|^2\right]ds\notag \\
		&+\int_{\tau-2}^\tau \mathbb{E}\left[|\partial_{z}v(s,\tau-2,v(\tau-2,\tau-t_n,\zeta^n))|^2\|\partial_{z}v(s,\tau-2,v(\tau-2,\tau-t_n,\zeta^n))\|^2\right]ds\bigg)\notag \\
		&\leq \frac{\mathscr{M}_1}{R^{1/2}}+\frac{2\alpha_2\mathscr{M}_2}{\ln{R}},
		%\end{split}
	\end{align*}
	Hence, for each fixed $\tau\in\mathbb R$ and every $n\geq\mathcal N_3$,
	$$
	\lim\limits_{R\to \infty}\mathbb{P}\left(\|v(\tau,\tau-t_n,\zeta^n)\|^2>R\right)=0.
	$$
	
	Therefore, for any $\tau\in\mathbb R$ and $\varepsilon>0$, there exists $\widetilde R=\widetilde R(\tau,\varepsilon)>2$ such that for every
	$\zeta^n\in L^4(\Omega,\mathscr F_{\tau-t_n};\mathcal H)$ with $\mathscr L(\zeta^n)\in D(\tau-t_n)$ and $n\ge\mathcal N_3$,
	$$
	\mathbb{P}\left(\|v(\tau,\tau-t_n,\zeta^n)\|^2>\widetilde{R}\right)< \varepsilon.
	$$
	Since the closed $V$-ball is compact in $H$, the preceding estimate proves tightness of the terminal laws
	\[
	\hat{\mu}_n:=S_\epsilon(\tau,\tau-t_n)\mu_n.
	\]
	Let $\hat{\mu}_{n_k}\to\eta$ weakly in $\mathcal P(H)$. The pullback estimates in Lemmas~\ref{UPe4-lem4.1} and~\ref{UPe4-lem4.2} also give, after discarding finitely many indices,
	\[
	\sup_k\int_H\bigl(|u|^4+\Phi_z(u)\bigr)\hat{\mu}_{n_k}(du)<\infty.
	\]
	Lemma~\ref{lem-moment-lsc} therefore implies $\eta\in\mathcal P_{4,z}(H)\subset\mathcal P_4(H)$. Thus the limit remains in the domain of the measure evolution, as required for the subsequent $\omega$-limit construction.
	 This completes the proof.
\end{proof}

The following theorem is proved directly in the ambient Polish space $\mathcal P(H)$. This avoids assuming that the admissible class $\mathcal P_{4,z}(H)$ is complete in the weak topology.
\begin{theorem}\label{sDHD-th6.10}
	Suppose Hypotheses \ref{PEeqAssum-2.2}, \ref{PEeqAssum-2.3}, \ref{PEeqAssum-2.4} and \eqref{PETNs-3.01} hold. Then, for every $\epsilon\in(0,\epsilon_0)$, the measure evolution process $S_\epsilon$ on $\mathcal P_{4,z}(H)$ has a unique $\mathfrak D$-pullback measure attractor $\mathscr A_\epsilon$. For every $\tau\in\mathbb R$,
	\begin{align}\label{attractor-omega-formula}
		\mathscr A_\epsilon(\tau)
		=\bigcap_{s\ge0}
		\overline{\bigcup_{t\ge s}
			S_\epsilon(\tau,\tau-t)\mathscr K(\tau-t)}.
	\end{align}
	Moreover, $\mathscr A_\epsilon(\tau)\subset\mathscr K(\tau)$; in particular, every attractor section is a compact subset of $\mathcal P_{4,z}(H)$ in the ambient weak topology.
\end{theorem}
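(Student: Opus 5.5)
The plan is the standard omega-limit construction, carried out by hand in the Polish space $(\mathcal P(H),d_{\mathcal P(H)})$, with Lemma~\ref{lem-moment-lsc} used whenever a limit law must be shown admissible. Fix $\epsilon\in(0,\epsilon_0)$ and define $\mathscr A_\epsilon(\tau)$ by \eqref{attractor-omega-formula}.

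The first step is a characterization of this set. A law $\eta$ lies in $\mathscr A_\epsilon(\tau)$ if and only if there are $t_n\to\infty$ and $\mu_n\in\mathscr K(\tau-t_n)$ with $S_\epsilon(\tau,\tau-t_n)\mu_n\to\eta$ weakly. This is a routine diagonal argument that uses only the definition of closures.

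Nonemptiness and compactness then follow from Lemma~\ref{UPe4-lemcompatight} applied with $D=\mathscr K\in\mathfrak D$ (Lemma~\ref{UPe4-lemABs}). Any sequence in $\mathscr A_\epsilon(\tau)$ can be approximated, within $1/n$, by elements $S_\epsilon(\tau,\tau-t_n)\mu_n$ with $t_n\ge n$. That approximating sequence has a convergent subsequence, so $\mathscr A_\epsilon(\tau)$ is sequentially compact, hence compact in the metric space $\mathcal P(H)$.

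For the inclusion $\mathscr A_\epsilon(\tau)\subset\mathscr K(\tau)$, absorption gives $S_\epsilon(\tau,\tau-t_n)\mu_n\in\mathscr K(\tau)$ for large $n$. Since $\mathscr K(\tau)$ is closed by Lemma~\ref{lem-moment-lsc}, the limit $\eta$ also lies in $\mathscr K(\tau)$. This shows at once that $\eta$ is admissible and that $\mathscr A_\epsilon\in\mathfrak D$.

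Attraction is proved by contradiction. Suppose that for some $D\in\mathfrak D$ there are $\delta>0$, $r_n\to\infty$ and $\mu_n\in D(\tau-r_n)$ with $d_{\mathcal P(H)}\bigl(S_\epsilon(\tau,\tau-r_n)\mu_n,\mathscr A_\epsilon(\tau)\bigr)\ge\delta$. Choose $s_n\to\infty$ with $r_n-s_n\to\infty$; absorption gives $\tilde\mu_n:=S_\epsilon(\tau-s_n,\tau-r_n)\mu_n\in\mathscr K(\tau-s_n)$. By the cocycle property of Lemma~\ref{UPe4-lem-NADY}, $S_\epsilon(\tau,\tau-r_n)\mu_n=S_\epsilon(\tau,\tau-s_n)\tilde\mu_n$. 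Asymptotic compactness then produces a subsequential limit, which belongs to $\mathscr A_\epsilon(\tau)$ by the characterization above. This contradicts the $\delta$-separation.

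Invariance $S_\epsilon(t,\tau)\mathscr A_\epsilon(\tau)=\mathscr A_\epsilon(t)$ is the main obstacle, because continuity of $S_\epsilon(t,\tau)$ is only known on $\mathcal P_{4,z}(H)$. That is why admissibility of limits must be secured first.

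\emph{Inclusion $\subset$.} Take $\eta=\lim S_\epsilon(\tau,\tau-t_n)\mu_n$. All terms and $\eta$ lie in $\mathcal P_{4,z}(H)$, so Lemma~\ref{UPe4-lem-NADY}(iii) gives $S_\epsilon(t,\tau)\eta=\lim S_\epsilon(t,t-(t-\tau+t_n))\mu_n$. Here $\mu_n\in\mathscr K\bigl(t-(t-\tau+t_n)\bigr)$, so this limit belongs to $\mathscr A_\epsilon(t)$.

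\emph{Inclusion $\supset$.} Given $\eta=\lim S_\epsilon(t,t-t_n)\mu_n$, write it as $S_\epsilon(t,\tau)S_\epsilon(\tau,t-t_n)\mu_n$. By asymptotic compactness at time $\tau$ (with pullback times $t_n-(t-\tau)\to\infty$), a subsequence of $S_\epsilon(\tau,t-t_n)\mu_n$ converges to some $\xi\in\mathscr A_\epsilon(\tau)\subset\mathcal P_{4,z}(H)$. Continuity then gives $S_\epsilon(t,\tau)\xi=\eta$.

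Uniqueness follows from the usual argument. If $\mathscr A'$ is another attractor in $\mathfrak D$, invariance gives $\mathscr A'(\tau)=S_\epsilon(\tau,\tau-r)\mathscr A'(\tau-r)$. Attraction by $\mathscr A_\epsilon$ then yields $\operatorname{dist}_{w,H}(\mathscr A'(\tau),\mathscr A_\epsilon(\tau))=0$. Both sections are closed, so $\mathscr A'(\tau)\subset\mathscr A_\epsilon(\tau)$, and the symmetric argument gives equality.
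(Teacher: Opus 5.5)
Your proposal is correct and follows essentially the same route as the paper: the $\omega$-limit set over $\mathscr K$, nonemptiness and compactness from Lemma~\ref{UPe4-lemcompatight} with $D=\mathscr K$, the inclusion $\mathscr A_\epsilon(\tau)\subset\mathscr K(\tau)$ from absorption plus closedness (Lemma~\ref{lem-moment-lsc}), attraction by contradiction, invariance via continuity of $S_\epsilon(t,\tau)$ on $\mathcal P_{4,z}(H)$, and the standard uniqueness argument.

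The attraction step needs tightening. The absorption time at the terminal time $\tau-s_n$ grows as $s_n\to\infty$, so $r_n-s_n\to\infty$ alone does not give $\tilde\mu_n\in\mathscr K(\tau-s_n)$. You can fix this in either of two ways. One is to choose $s_n$ slowly enough that $r_n-s_n\ge T(\tau-s_n,D)$; a diagonal choice always works, since each $T(\tau-k,D)$ is finite. The other is the paper's route: fix $s$ and note that $S_\epsilon(\tau,\tau-r_n)\mu_n\in S_\epsilon(\tau,\tau-s)\mathscr K(\tau-s)$ for large $n$, so the limit lies in every closed set $\overline{\bigcup_{t\ge s}S_\epsilon(\tau,\tau-t)\mathscr K(\tau-t)}$.
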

\begin{proof}
	Fix $\tau\in\mathbb R$ and write
	\[
	\mathscr{C}_s(\tau):=
	\overline{\bigcup_{t\ge s}S_\epsilon(\tau,\tau-t)
		\mathscr K(\tau-t)}.
	\]
	The sets $\mathscr{C}_s(\tau)$ are closed and decreasing in $s$. Choose $t_n\ge n$ and $\mu_n\in\mathscr K(\tau-t_n)$. Lemma~\ref{UPe4-lemcompatight} gives a subsequence of $S_\epsilon(\tau,\tau-t_n)\mu_n$ converging in $\mathcal P(H)$. Its limit belongs to every $\mathscr{C}_s(\tau)$, so the set in \eqref{attractor-omega-formula} is nonempty.
	
	Since $\mathscr K\in\mathfrak D$, the absorbing property applied to $D=\mathscr K$ gives $T_\tau>0$ such that
	\[
	S_\epsilon(\tau,\tau-t)\mathscr K(\tau-t)
	\subset\mathscr K(\tau),\qquad t\ge T_\tau.
	\]
	The set $\mathscr K(\tau)$ is closed in $\mathcal P(H)$; hence
	$\mathscr A_\epsilon(\tau)\subset\mathscr K(\tau)$. In particular, the attractor candidates are admissible and the family $\mathscr A_\epsilon$ belongs to $\mathfrak D$.
	
	We next prove compactness. Let $\upsilon_n\in\mathscr A_\epsilon(\tau)$. From the definition of $\mathscr{C}_n(\tau)$, select $t_n\ge n$ and $\mu_n\in\mathscr K(\tau-t_n)$ such that
	\[
	d_{\mathcal P(H)}
	\bigl(\upsilon_n,S_\epsilon(\tau,\tau-t_n)\mu_n\bigr)<n^{-1}.
	\]
	Pullback asymptotic compactness gives a convergent subsequence of the second terms $S_\epsilon(\tau,\tau-t_n)\mu_n$, and therefore the corresponding subsequence of $\upsilon_n$ converges. Since $\mathscr A_\epsilon(\tau)$ is closed, it is compact.
	
	To prove pullback attraction, suppose to the contrary that for some $D\in\mathfrak D$, $\delta>0$, $t_n\to\infty$, and $\mu_n\in D(\tau-t_n)$,
	\[
	d_{\mathcal P(H)}\bigl(S_\epsilon(\tau,\tau-t_n)\mu_n,
	\mathscr A_\epsilon(\tau)\bigr)\ge\delta.
	\]
	By Lemma~\ref{UPe4-lemcompatight}, after taking a subsequence,
	$S_\epsilon(\tau,\tau-t_n)\mu_n\to \upsilon$ in $\mathcal P(H)$. Fix $s>0$. For all sufficiently large $n$, the absorbing property at time $\tau-s$ gives
	\[
	S_\epsilon(\tau-s,\tau-t_n)\mu_n\in\mathscr K(\tau-s).
	\]
	By the process identity,
	\[
	S_\epsilon(\tau,\tau-t_n)\mu_n
	\in S_\epsilon(\tau,\tau-s)\mathscr K(\tau-s)
	\subset \mathscr{C}_s(\tau).
	\]
	Thus $\upsilon\in \mathscr{C}_s(\tau)$ for every $s>0$, and hence $\upsilon\in\mathscr A_\epsilon(\tau)$, a contradiction.
	
	We finally establish invariance. Let $r\ge\tau$. If $\upsilon\in\mathscr A_\epsilon(\tau)$, choose $t_n\to\infty$ and $\mu_n\in\mathscr K(\tau-t_n)$ such that
	$S_\epsilon(\tau,\tau-t_n)\mu_n\to \upsilon$. By continuity and the process identity,
	\[
	S_\epsilon(r,\tau)\upsilon
	=\lim_{n\to\infty}S_\epsilon(r,\tau-t_n)\mu_n
	\in\mathscr A_\epsilon(r),
	\]
	so $S_\epsilon(r,\tau)\mathscr A_\epsilon(\tau)
	\subset\mathscr A_\epsilon(r)$. Conversely, let $\hat{\upsilon}\in\mathscr A_\epsilon(r)$ and choose $t_n\to\infty$, with $r-t_n<\tau$, and $\mathfrak{a}_n\in\mathscr K(r-t_n)$ such that $S_\epsilon(r,r-t_n)\mathfrak{a}_n\to \hat{\upsilon}$. Set
	\[
	\upsilon_n:=S_\epsilon(\tau,r-t_n)\mathfrak{a}_n.
	\]
	Pullback asymptotic compactness at time $\tau$ gives a subsequence $\upsilon_n\to \upsilon\in\mathscr A_\epsilon(\tau)$. Continuity then yields $S_\epsilon(r,\tau)\upsilon=\hat{\upsilon}$, proving the reverse inclusion.
	
	If $\widetilde{\mathscr A}$ is another $\mathfrak D$-pullback measure attractor, its attraction of $\mathscr K$ and closedness imply $\mathscr A_\epsilon(\tau)\subset\widetilde{\mathscr A}(\tau)$. Conversely, invariance of $\widetilde{\mathscr A}$ and pullback attraction by $\mathscr A_\epsilon$ give
	\[
	\operatorname{dist}_{w,H}
	\bigl(\widetilde{\mathscr A}(\tau),\mathscr A_\epsilon(\tau)\bigr)
	=\lim_{t\to\infty}
	\operatorname{dist}_{w,H}
	\bigl(S_\epsilon(\tau,\tau-t)\widetilde{\mathscr A}(\tau-t),
	\mathscr A_\epsilon(\tau)\bigr)=0.
	\]
	Since $\mathscr A_\epsilon(\tau)$ is closed, the reverse inclusion follows. This proves uniqueness and, together with Definition~\ref{def-admissible-pullback-attractor}, completes the proof.
\end{proof}

\section{Zero-noise limit of pullback measure attractors}\label{USC-PEs4}
In this section, we study the upper semicontinuity of the pullback measure attractors as the Gaussian and jump intensities vanish simultaneously. 
To this end, we denote by $v^{\epsilon}(t,\tau,\zeta)$ the solution of \eqref{PEeq-2.15}. When $\epsilon=0$, the equation \eqref{PEeq-2.15} reduces to the deterministic nonautonomous primitive equation
\begin{equation}\label{PEeq-USCdet}
	\frac{dv^0}{dt}+Av^0+B(v^0,v^0)=g(t),
	\qquad v^0(\tau)=\zeta\in\mathcal H.
\end{equation}
%Let $U_0(t,\tau)\zeta=v^0(t,\tau,\zeta)$ and define the induced process on measures by
%\begin{equation}\label{PEeq-USCS0}
%	S_0(t,\tau)\mu=(U_0(t,\tau))_\#\mu,
%	\qquad \mu\in\mathcal P_{4}(H).
%\end{equation}
\begin{remark}\label{deter1-dd}
	For the deterministic problem \eqref{PEeq-USCdet}, the existing deterministic well-posedness theory (see \cite{Petcu-CPAA-2004,Bresch-SIMA-2004}) yields a unique global solution \(v^0\) for every $\zeta\in\mathcal H$. Moreover, for any $\tau\in\mathbb R$ and $T>0$,
	\begin{align}\label{ee-v05}
	\sup_{\tau\le t\le\tau+T}
	\left(|v^0(t)|^2+|\partial_zv^0(t)|^2\right)
	+\int_\tau^{\tau+T}
	\left(\|v^0(t)\|^2+\|\partial_zv^0(t)\|^2\right)dt
	\le C_T,
	\end{align}
	where $C_T$ depends on $T$, $|\zeta|$, $|\partial_z\zeta|$, and the corresponding norms of $g$ and $\partial_zg$. No deterministic $H$-to-$\mathcal H$ regularization statement is used.
	
%	we know from  that problem \eqref{PEeq-USCdet} admits a unique global solution \( v^0  \in C([0,T]; H) \cap L^2([0,T]; V) \). Moreover, for any $\tau\in\mathbb R$ and $T>0$, there exists
%	\[
%	C_T=C\!\left(T,|\zeta|,|\partial_z\zeta|,
%	\|g\|_{L^2([0,T];H)},
%	\|\partial_zg\|_{L^2([0,T];H)}\right)>0
%	\]
%	such that
%	\begin{align}\label{ee-v05}
%		\sup_{\tau\leq t\leq \tau+T}\left(|v^0(t)|^{2}+|\partial_z v^0(t)|^{2}\right)+\int^{\tau+T}_{\tau}\left(\|v^0(t)\|^{2}+\|\partial_z v^0(t)\|^{2}\right)dt\leq C_{T}.
%	\end{align}

\end{remark}

To highlight the parameter dependence, let $S_{\epsilon}$ and $S_0$ be the measure evolution processes generated by \eqref{PEeq-2.15} and \eqref{PEeq-USCdet}, respectively, and denote by $\mathscr A_\epsilon$ and $\mathscr A_0$ their $\mathfrak{D}$-pullback measure attractors. Since the conclusions of Section~\ref{PMA-PEs3} apply also to $\epsilon=0$, $S_0$ admits a unique $\mathfrak D$-pullback measure attractor $\mathscr A_0$. Furthermore, the constants in Lemmas~\ref{UPe4-lem4.1}--\ref{UPe4-lem3.5} may be chosen uniformly in $\epsilon\in[0,\epsilon_0)$; hence the family $\mathscr K$ from Lemma~\ref{UPe4-lemABs} serves as a common closed pullback absorbing family for all these processes.

We first prove the uniform convergence on finite time intervals of solutions of \eqref{PEeq-2.15} to those of \eqref{PEeq-USCdet} as \(\epsilon \to 0\).

The deterministic counterpart of the estimates in Theorem~\ref{PEeqthe-2.5} will be used repeatedly. In particular, for every $R,T>0$,
\begin{align}\label{det-uniform-fourth-estimate}
	\sup_{\mathbb E[|\zeta|^4+|\partial_z\zeta|^4]\le R}
	\mathbb E\Bigg[&\sup_{t\in[\tau,\tau+T]}
	\bigl(|v^0(t)|^4+|\partial_zv^0(t)|^4\bigr)\\
	&+\left(\int_\tau^{\tau+T}
	\bigl(\|v^0(t)\|^2+\|\partial_zv^0(t)\|^2\bigr)dt\right)^2\Bigg]
	\le C_{R,T}.
\end{align}
Indeed, this is obtained by repeating the proof of \eqref{ee-5}-\eqref{ee-6} with the stochastic terms deleted. No additional regularization assumption is involved.

\begin{lemma}\label{PEeq-USCfinite}
		Suppose Hypotheses \ref{PEeqAssum-2.2}, \ref{PEeqAssum-2.3}, \ref{PEeqAssum-2.4} and \eqref{PETNs-3.01} hold. Let $\tau\in\mathbb{R}$, $T>0$, and let $\mathfrak{B}(\tau)>0$ be a given constant. Then, for every $\zeta\in L^4(\Omega,\mathscr F_\tau;\mathcal H)$ satisfying $\mathbb E[|\zeta|^4+|\partial_z\zeta|^4]\leq\mathfrak B^4(\tau)$,
	\begin{align}\label{PEeq-USCconv}
		\lim_{\epsilon\to0}\sup_{\substack{\mathbb E\left[|\zeta|^4+|\partial_z\zeta|^4\right]\leq \mathfrak{B}^4(\tau)}}
		\mathbb E\left[\sup_{t\in[\tau,\tau+T]}
		|v^\epsilon(t,\tau,\zeta)-v^0(t,\tau,\zeta)|^2\right]=0.
	\end{align}
	Consequently, for the admissible moment ball
	\[
	\mathbb B_{4,z}(\mathfrak B(\tau))
	:=
	\left\{\mu\in\mathcal P_{4,z}(H):
	\int_H\bigl(|\zeta|^4+|\partial_z\zeta|^4\bigr)\mu(d\zeta)
	\le\mathfrak B^4(\tau)\right\},
	\]
	and every $t\in[\tau,\tau+T]$,
	\begin{align}\label{PEeq-USCmeasureconv}
	\lim_{\epsilon\to0}
	\sup_{\mu\in\mathbb B_{4,z}(\mathfrak B(\tau))}
	d_{\mathcal P(H)}
	\bigl(S_\epsilon(t,\tau)\mu,S_0(t,\tau)\mu\bigr)=0.
	\end{align}
\end{lemma}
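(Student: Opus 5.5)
We compare $v^\epsilon=v^\epsilon(t,\tau,\zeta)$ and $v^0=v^0(t,\tau,\zeta)$ driven by the same initial datum. The difference $\bar v:=v^\epsilon-v^0$ solves
\[
d\bar v+A\bar v\,dt+\bigl(B(v^\epsilon,\bar v)+B(\bar v,v^0)\bigr)dt
=\sqrt\epsilon\,\sigma(t,v^\epsilon)dW+\epsilon\int_E h(v^\epsilon(t-),\xi)\widetilde N^{\epsilon^{-1}}(dt,d\xi),
\qquad \bar v(\tau)=0 .
\]
The bilinear difference is split as in the proof of Lemma~\ref{UPe400-fm-3.6}, so that the first term cancels against $\bar v$. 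Then \eqref{PEeq-2.103} and Young's inequality give
\[
2|\langle B(\bar v,v^0),\bar v\rangle|\le \tfrac12\|\bar v\|^2+C\bigl(\|v^0\|^2+|\partial_zv^0|^4\bigr)|\bar v|^2 .
\]
The weight therefore depends only on the deterministic solution. Set
\[
y(t)=\exp\Bigl\{-\alpha\int_\tau^t\bigl(\|v^0\|^2+|\partial_zv^0|^4\bigr)ds\Bigr\}
\]
and apply It\^o's formula with jumps to $y(t)|\bar v(t)|^2$. The Itô correction terms are
\[
\epsilon\|\sigma(s,v^\epsilon)\|^2_{\mathcal L_2(U;H)}
\quad\text{and}\quad
\epsilon^2\int_E|h(v^\epsilon,\xi)|^2N^{\epsilon^{-1}}(ds,d\xi),
\]
whose compensator is $\epsilon\int_E|h|^2\nu\,ds$. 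By \textbf{(A.1)} and \textbf{(B.1)} both are bounded by $C\epsilon(1+|v^\epsilon|^2)$.

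The martingale terms are handled by BDG. The Wiener part is bounded by $\sqrt\epsilon\,\mathbb E(\int y^2|\bar v|^2\|\sigma\|^2ds)^{1/2}$, and the jump part by $\epsilon\,\mathbb E(\int\int y^2|\bar v|^2|h|^2N^{\epsilon^{-1}})^{1/2}$. We split these with Young's inequality into $\tfrac14\mathbb E\sup y|\bar v|^2$ plus $C\epsilon\,\mathbb E\int(1+|v^\epsilon|^2)ds$. Using \eqref{ee-5} with $p=1$, this should give
\[
\mathbb E\sup_{[\tau,\tau+T]}y|\bar v|^2\le C_T\,\epsilon\,\bigl(1+\mathbb E|\zeta|^2\bigr)\le C_{T}\,\epsilon\,(1+\mathfrak B^2(\tau)),
\]
uniformly over the admissible ball.

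The main obstacle is removing the random weight $y$, since $\inf y$ is not bounded below. We follow the stopping argument of Lemma~\ref{UPe400-fm-3.6}, which controls the event $\{\tau_R<\tau+T\}$ but only in probability. The lemma requires convergence in mean square, so we interpolate. Take
\[
\tau_R=\inf\Bigl\{t:\int_\tau^t\bigl(\|v^0\|^2+|\partial_zv^0|^4\bigr)ds>R\Bigr\}.
\]
Then
\[
\mathbb E\sup|\bar v(\cdot\wedge\tau_R)|^2\le e^{\alpha R}C_T\epsilon(1+\mathfrak B^2).
\]
On $\{\tau_R<\tau+T\}$ we use Cauchy--Schwarz:
\[
\mathbb E\bigl[\sup|\bar v|^2\mathbf 1_{\{\tau_R<\tau+T\}}\bigr]\le\bigl(\mathbb E\sup|\bar v|^4\bigr)^{1/2}\,\mathbb P(\tau_R<\tau+T)^{1/2}.
\]
The fourth moment is bounded by $C_{\mathfrak B,T}$ through \eqref{ee-5} with $p=2$ and \eqref{det-uniform-fourth-estimate}. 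Chebyshev's inequality together with \eqref{det-uniform-fourth-estimate} (using $\int|\partial_zv^0|^4\le T\sup|\partial_zv^0|^4$) gives
\[
\mathbb P(\tau_R<\tau+T)\le C_{\mathfrak B,T}/R .
\]
All constants are uniform over $\mathbb E[|\zeta|^4+|\partial_z\zeta|^4]\le\mathfrak B^4(\tau)$. Choosing $R$ large and then $\epsilon$ small yields \eqref{PEeq-USCconv}.

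For \eqref{PEeq-USCmeasureconv}, take $\phi\in L_b(H)$ with $\|\phi\|_{L_b}\le1$ and realize $\mu$ as the law of an $\mathscr F_\tau$-measurable $\zeta$. Then
\[
|(\phi,S_\epsilon(t,\tau)\mu)-(\phi,S_0(t,\tau)\mu)|\le\mathbb E|\bar v(t)|\le\bigl(\mathbb E\sup|\bar v|^2\bigr)^{1/2}.
\]
Taking the supremum over $\phi$ and over $\mu\in\mathbb B_{4,z}(\mathfrak B(\tau))$, the result follows from \eqref{PEeq-USCconv}.
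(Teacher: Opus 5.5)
Your proposal is correct and follows essentially the same route as the paper. Both arguments use the splitting $B(v^\epsilon,\bar v)+B(\bar v,v^0)$ together with \eqref{PEeq-2.103}, an exponential weight depending only on $v^0$, and a stopping time on $\int(\|v^0\|^2+|\partial_zv^0|^4)$ controlled by Markov's inequality. The stopping is then removed by Cauchy--Schwarz against the uniform fourth moments, and the result is lifted to $d_{\mathcal P(H)}$ through Lipschitz test functions. The only minor difference is that you treat the jump correction $\epsilon^2\int_E|h|^2N^{\epsilon^{-1}}$ as an increasing process through its compensator. This is slightly more economical than the paper's compensated BDG bound, which involves $\int_E|h|^4\,d\nu$.
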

\begin{proof}
%	Fix a deterministic initial value $\zeta\in\mathcal H$ satisfying
%	$|\zeta|^4+|\partial_z\zeta|^4\le R$, and 
 We	write
	$Z^\epsilon(t):=v^\epsilon(t,\tau,\zeta)-v^0(t,\tau,\zeta)$.
	Then $Z^\epsilon$ satisfies
	\begin{align}\label{PEeq-USCdiff}
		dZ^\epsilon+AZ^\epsilon dt
		+\bigl(B(v^\epsilon,v^\epsilon)-B(v^0,v^0)\bigr)dt
		=\sqrt\epsilon\,\sigma(t,v^\epsilon)dW(t)+\epsilon\int_E h(v^\epsilon(t-),\xi)
		\widetilde N^{\epsilon^{-1}}(dt,d\xi)
	\end{align}
	with zero initial value.
	Applying \eqref{PEeq-2.103} with $\widetilde v=v^0$ and $\widehat v=v^\epsilon$, together with Young's inequality, yields
	\begin{align}\label{PEeq-USCnonlinear}
		2\left|\left\langle B(v^\epsilon,v^\epsilon)-B(v^0,v^0),Z^\epsilon\right\rangle\right|
		\le \frac12\|Z^\epsilon\|^2
		+C\bigl(\|v^0\|^2+|\partial_zv^0|^4\bigr)|Z^\epsilon|^2.
	\end{align}
	Choose $C_0$ larger than the constant in \eqref{PEeq-USCnonlinear}. Applying It\^{o}'s formula for jump processes directly to
	\[
	e^{-C_0\int_\tau^t
		\bigl(1+\|v^0(r)\|^2+|\partial_zv^0(r)|^4\bigr)dr}|Z^\epsilon(t)|^2
	\]
	and using \textbf{(A.1)} and \textbf{(B.1)}, we get
	\begin{align}\label{PEeq-USCweighted}
		&~~e^{-C_0\int_\tau^t(1+\|v^0(r)\|^2+|\partial_zv^0(r)|^4)dr}|Z^\epsilon(t)|^2\notag+\frac12\int_\tau^t
		e^{-C_0\int_\tau^s(1+\|v^0(r)\|^2+|\partial_zv^0(r)|^4)dr}
		\|Z^\epsilon(s)\|^2ds\notag\\
		&\le C\epsilon\int_\tau^t
		e^{-C_0\int_\tau^s(1+\|v^0(r)\|^2+|\partial_zv^0(r)|^4)dr}
		\bigl(1+|v^\epsilon(s)|^2\bigr)ds\notag\\
		&+2\sqrt\epsilon\int_\tau^t
		e^{-C_0\int_\tau^s(1+\|v^0(r)\|^2+|\partial_zv^0(r)|^4)dr}
		\bigl(Z^\epsilon(s),\sigma(s,v^\epsilon(s))dW(s)\bigr)\\
		&+\int_\tau^t\int_E
		e^{-C_0\int_\tau^s(1+\|v^0(r)\|^2+|\partial_zv^0(r)|^4)dr}
		\Bigl[2\epsilon\bigl(Z^\epsilon(s-),h(v^\epsilon(s-),\xi)\bigr)
		+\epsilon^2|h(v^\epsilon(s-),\xi)|^2\Bigr]
		\widetilde N^{\epsilon^{-1}}(ds,d\xi),\notag
	\end{align}
where the drift term in the first integral on the right-hand side consists of two contributions, namely,
	\[
	\epsilon\|\sigma(s,v^\epsilon(s))\|_{\mathcal L_2(U;H)}^2
	\quad\text{and}\quad
	\epsilon\int_E|h(v^\epsilon(s),\xi)|^2\nu(d\xi).
	\]
	
	For $\Bbbk>0$, we introduce the stopping time
	\begin{equation}\label{PEeq-USCstop}
		\tau_\Bbbk=\inf\left\{t\ge\tau:
		\int_\tau^t\bigl(1+\|v^0(s)\|^2+|\partial_zv^0(s)|^4\bigr)ds>\Bbbk\right\}\wedge (\tau+T).
	\end{equation}
	For the second term on the right-hand side of \eqref{PEeq-USCweighted}, the BDG and Young inequalities give
	\begin{align*}
		&~~2\sqrt\epsilon\,\mathbb E\left[\sup_{\tau\le r\le t\wedge\tau_\Bbbk}
		\left|\int_\tau^r
		e^{-C_0\int_\tau^s(1+\|v^0(\rho)\|^2+|\partial_zv^0(\rho)|^4)d\rho}
		\bigl(Z^\epsilon(s),\sigma(s,v^\epsilon(s))dW(s)\bigr)\right|\right]\\
		&\le\frac18\mathbb E\left[\sup_{\tau\le r\le t\wedge\tau_\Bbbk}
		e^{-C_0\int_\tau^r(1+\|v^0(\rho)\|^2+|\partial_zv^0(\rho)|^4)d\rho}|Z^\epsilon(r)|^2\right]
		+C\epsilon\mathbb E\left[\int_\tau^{\tau+T}(1+|v^\epsilon(s)|^2)ds\right].
	\end{align*}
	For the third term on the right-hand side of \eqref{PEeq-USCweighted}, the BDG inequality for Poisson integrals yields
	{\footnotesize
		\begin{align*}
			&\mathbb E\left[\sup_{\tau\le r\le t\wedge\tau_\Bbbk}\left|\int_\tau^r\int_E
			e^{-C_0\int_\tau^s(1+\|v^0(\rho)\|^2+|\partial_zv^0(\rho)|^4)d\rho}
			\Bigl[2\epsilon(Z^\epsilon(s-),h(v^\epsilon(s-),\xi))
			+\epsilon^2|h(v^\epsilon(s-),\xi)|^2\Bigr]
			\widetilde N^{\epsilon^{-1}}(ds,d\xi)\right|\right]\\
			&\quad\le C\mathbb E\left[\left(\int_\tau^{t\wedge\tau_\Bbbk}\int_E
			e^{-2C_0\int_\tau^s(1+\|v^0(\rho)\|^2+|\partial_zv^0(\rho)|^4)d\rho}
			\Bigl(\epsilon|Z^\epsilon(s)|^2|h(v^\epsilon(s),\xi)|^2
			+\epsilon^3|h(v^\epsilon(s),\xi)|^4\Bigr)\nu(d\xi)ds\right)^{1/2}\right]\\
			&\quad\le\frac{1}{8}\mathbb E\left[\sup_{\tau\le r\le t\wedge\tau_\Bbbk}
			e^{-C_0\int_\tau^r(1+\|v^0(\rho)\|^2+|\partial_zv^0(\rho)|^4)d\rho}|Z^\epsilon(r)|^2\right]\\
			&\qquad+C\epsilon\mathbb E\left[\int_\tau^{\tau+T}(1+|v^\epsilon(s)|^2)ds\right]
			+C\epsilon^{3/2}\mathbb E\left[\left(\int_\tau^{\tau+T}\int_E|h(v^\epsilon(s),\xi)|^4\nu(d\xi)ds\right)^{1/2}\right]\\
			&\quad\le\frac{1}{8}\mathbb E\left[\sup_{\tau\le r\le t\wedge\tau_\Bbbk}
			e^{-C_0\int_\tau^r(1+\|v^0(\rho)\|^2+|\partial_zv^0(\rho)|^4)d\rho}|Z^\epsilon(r)|^2\right]
			+C_{\mathfrak{B}(\tau),T}\epsilon.
	\end{align*}}
	where the last inequality uses $C_h\in L^2(\nu)\cap L^4(\nu)$, $\epsilon\le 1$, and the fourth-moment estimate from Theorem \ref{PEeqthe-2.5}. Substituting the above estimates into \eqref{PEeq-USCweighted} yields
	\begin{align}\label{PEeq-USCstopped}
		&~~\mathbb E\left[\sup_{\tau\le r\le t\wedge\tau_\Bbbk}
		e^{-C_0\int_\tau^r(1+\|v^0(\rho)\|^2+|\partial_zv^0(\rho)|^4)d\rho}|Z^\epsilon(r)|^2\right]\notag\\
		&\quad+\mathbb E\left[\int_\tau^{t\wedge\tau_\Bbbk}
		e^{-C_0\int_\tau^s(1+\|v^0(\rho)\|^2+|\partial_zv^0(\rho)|^4)d\rho}
		\|Z^\epsilon(s)\|^2ds\right]\notag\\
		&\le\frac12\mathbb E\left[\sup_{\tau\le r\le t\wedge\tau_\Bbbk}
		e^{-C_0\int_\tau^r(1+\|v^0(\rho)\|^2+|\partial_zv^0(\rho)|^4)d\rho}|Z^\epsilon(r)|^2\right]
		+C_{\mathfrak{B}(\tau),T}\epsilon.
	\end{align}
	Since the exponential factor is bounded below by $e^{-C_0\Bbbk}$ before $\tau_\Bbbk$, we infer
	\begin{equation}\label{PEeq-USCstopped2}
		\sup_{\mathbb E\left[|\zeta|^4+|\partial_z\zeta|^4\right]\leq \mathfrak{B}^4(\tau)}
		\mathbb E\left[\sup_{r\in[\tau,\tau+T]}
		|Z^\epsilon(r\wedge\tau_\Bbbk)|^2\right]
		\le C_{\Bbbk,\mathfrak{B}(\tau),T}\epsilon.
	\end{equation}
	
We next remove the stopping time. By \eqref{det-uniform-fourth-estimate}, we have
\[
\sup_{\mathbb E(|\zeta|^4+|\partial_z\zeta|^4)\le\mathfrak B^4(\tau)}
\mathbb E\left[\int_\tau^{\tau+T}
\bigl(1+\|v^0(s)\|^2+|\partial_zv^0(s)|^4\bigr)ds\right]
\le C_{\mathfrak B(\tau),T}.
\]
Applying Markov's inequality gives
\begin{align}\label{PEeq-USCstoptail}
	\begin{split}
		&~~\sup_{\mathbb E\left[|\zeta|^4+|\partial_z\zeta|^4\right]
		\leq \mathfrak{B}^4(\tau)}
		\mathbb P(\tau_\Bbbk<\tau+T)\\
		&\le\frac{1}{\Bbbk}\sup_{\substack{\mathbb E(|\zeta|^4+|\partial_z\zeta|^4)\le\mathfrak B^4(\tau)}}\mathbb E\left[\int_\tau^{\tau+T}
		\bigl(1+\|v^0(s)\|^2+|\partial_zv^0(s)|^4\bigr)ds\right]
		\le\frac{C_{\mathfrak{B}(\tau),T}}{\Bbbk}.
	\end{split}
\end{align}
Moreover, $|v^\epsilon-v^0|^4\le8(|v^\epsilon|^4+|v^0|^4)$, and Theorem~\ref{PEeqthe-2.5} together with \eqref{det-uniform-fourth-estimate} implies
	\begin{equation}\label{PEeq-USCdiff4}
		\sup_{\epsilon\in[0,\epsilon_0)}
		\sup_{\mathbb E\left[|\zeta|^4+|\partial_z\zeta|^4\right]\leq \mathfrak{B}^4(\tau)}
		\mathbb E\left[\sup_{t\in[\tau,\tau+T]}|Z^\epsilon(t)|^4\right]
		\le C_{\mathfrak{B}(\tau),T}.
	\end{equation}
	Therefore, by Cauchy-Schwarz we have
	\begin{align*}
		\mathbb E\left[\sup_{t\in[\tau,\tau+T]}|Z^\epsilon(t)|^2\right]
		&\le\mathbb E\left[\sup_{t\in[\tau,\tau+T]}|Z^\epsilon(t\wedge\tau_\Bbbk)|^2\right]+\left(\mathbb E\left[\sup_{t\in[\tau,\tau+T]}|Z^\epsilon(t)|^4\right]\right)^{1/2}
		\mathbb P(\tau_\Bbbk<{\tau+T})^{1/2}.
	\end{align*}
	Combining \eqref{PEeq-USCstopped2}-\eqref{PEeq-USCdiff4}, and passing to the limits first $\epsilon\to0$ and then $\Bbbk\to\infty$, gives \eqref{PEeq-USCconv}.
	
	We now lift this convergence to probability measures. For $t\in [\tau,\tau+T]$,
	\[
	\begin{aligned}
		&\sup_{\mathbb{E}\left[|\zeta|^4+|\partial_z\zeta|^4\right] \leq \mathfrak{B}^4(\tau)} 
		\sup_{\substack{\varphi \in L_b(H) \\ \|\varphi\|_{L_b} \leq 1}} 
		\left|\mathbb{E}\left[\varphi(v^\varepsilon(t, \tau, \zeta))\right]
		- \mathbb{E}\left[\varphi(v^0(t, \tau, \zeta))\right]\right| \\
		&\leq \sup_{\mathbb{E}\left[|\zeta|^4+|\partial_z\zeta|^4\right] \leq \mathfrak{B}^4(\tau)} 
		\sup_{\substack{\varphi \in L_b(H) \\ \|\varphi\|_{L_b} \leq 1}} 
		\left|\mathbb{E}\left[\varphi(v^\varepsilon(t, \tau, \zeta))
		- \varphi(v^0(t, \tau, \zeta))\right]\right| \\
		&\leq \sup_{\mathbb{E}\left[|\zeta|^4+|\partial_z\zeta|^4\right] \leq \mathfrak{B}^4(\tau)} 
		\mathbb{E}\left[|v^\varepsilon(t, \tau, \zeta) - v^0(t, \tau, \zeta)|\right] \\
		&\leq \left( \sup_{\mathbb{E}\left[|\zeta|^4+|\partial_z\zeta|^4\right] \leq \mathfrak{B}^4(\tau)} 
		\mathbb{E}\left[|v^\varepsilon(t, \tau, \zeta) - v^0(t, \tau, \zeta)|^2\right]\right)^{1/2}.
	\end{aligned}
	\]
	For $\mu\in\mathbb B_{4,z}(\mathfrak B(\tau))$, choose an $\mathcal H$-valued random variable $\zeta$ with law $\mu$, independent of the future Wiener process and Poisson random measure, and drive $v^\epsilon$ and $v^0$ by this same initial variable. Then
	$\mathbb E(|\zeta|^4+|\partial_z\zeta|^4)\le\mathfrak B^4(\tau)$, so the preceding estimate applies uniformly in $\mu$. Taking the supremum over $\mu$ gives \eqref{PEeq-USCmeasureconv}. This completes the proof.
\end{proof}

The convergence below is upper semicontinuity in the weak-measure semidistance \eqref{weak-Hausdorff-distance}; it is not a fourth-order Wasserstein convergence.
\begin{theorem}\label{PEeq-USCmain}
	Suppose Hypotheses \ref{PEeqAssum-2.2}, \ref{PEeqAssum-2.3}, \ref{PEeqAssum-2.4} and \eqref{PETNs-3.01} hold. Then, for every $\tau\in\mathbb R$,
	\begin{align}\label{PEeq-USCresult}
		\lim_{\epsilon\to0}
		\operatorname{dist}_{w,H}
		\bigl(\mathscr A_\epsilon(\tau),\mathscr A_0(\tau)\bigr)=0.
	\end{align}
\end{theorem}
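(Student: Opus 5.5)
The argument goes by contradiction, combining invariance of $\mathscr A_\epsilon$, the common absorbing family $\mathscr K$, finite-time convergence (Lemma~\ref{PEeq-USCfinite}) and pullback attraction of $\mathscr A_0$. Fix $\tau\in\mathbb R$ and $\delta>0$. The target is an $\epsilon_1>0$ such that $d_{\mathcal P(H)}(\mu,\mathscr A_0(\tau))<\delta$ for every $\epsilon\in(0,\epsilon_1)$ and every $\mu\in\mathscr A_\epsilon(\tau)$.

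\textbf{Step 1 (deterministic attraction).} By Remark~\ref{rem:common-absorbing}, $\mathscr K\in\mathfrak D$ and $\mathscr A_0$ pullback attracts $\mathscr K$. Hence there is $t_\delta>0$ with
\[
\operatorname{dist}_{w,H}\bigl(S_0(\tau,\tau-t_\delta)\mathscr K(\tau-t_\delta),\mathscr A_0(\tau)\bigr)<\delta/2 .
\]

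\textbf{Step 2 (invariance and the common absorbing family).} By Theorem~\ref{sDHD-th6.10}, $\mathscr A_\epsilon(\tau-t_\delta)\subset\mathscr K(\tau-t_\delta)$ for every $\epsilon\in(0,\epsilon_0)$. Invariance gives $\mathscr A_\epsilon(\tau)=S_\epsilon(\tau,\tau-t_\delta)\mathscr A_\epsilon(\tau-t_\delta)$. So every $\mu\in\mathscr A_\epsilon(\tau)$ can be written $\mu=S_\epsilon(\tau,\tau-t_\delta)\mu_0$ with $\mu_0\in\mathscr K(\tau-t_\delta)$. From the definition of $\mathscr K$,
\[
\int_H\bigl(|\zeta|^4+|\partial_z\zeta|^4\bigr)\mu_0(d\zeta)\le\mathscr M_0(\tau-t_\delta)+\mathscr M_z(\tau-t_\delta)=:\mathfrak B^4 ,
\]
so $\mu_0$ lies in the admissible ball $\mathbb B_{4,z}(\mathfrak B)$ at initial time $\tau-t_\delta$. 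This bound does not depend on $\epsilon$.

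\textbf{Step 3 (triangle estimate).} Apply Lemma~\ref{PEeq-USCfinite} with initial time $\tau-t_\delta$, horizon $T=t_\delta$, evaluated at $t=\tau$. This gives $\epsilon_1\in(0,\epsilon_0)$ such that
\[
\sup_{\mu_0\in\mathbb B_{4,z}(\mathfrak B)}d_{\mathcal P(H)}\bigl(S_\epsilon(\tau,\tau-t_\delta)\mu_0,S_0(\tau,\tau-t_\delta)\mu_0\bigr)<\delta/2
\quad\text{for all }\epsilon<\epsilon_1 .
\]
Since $d_{\mathcal P(H)}(\cdot,\mathscr A_0(\tau))$ is $1$-Lipschitz, for every $\mu\in\mathscr A_\epsilon(\tau)$ we get
\[
d_{\mathcal P(H)}(\mu,\mathscr A_0(\tau))\le d_{\mathcal P(H)}\bigl(\mu,S_0(\tau,\tau-t_\delta)\mu_0\bigr)+d_{\mathcal P(H)}\bigl(S_0(\tau,\tau-t_\delta)\mu_0,\mathscr A_0(\tau)\bigr)<\delta .
\]
Taking the supremum over $\mu$ proves \eqref{PEeq-USCresult}.

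\textbf{Main obstacle.} The delicate point is Step 2. We need $\mathscr A_\epsilon(\tau-t_\delta)$ to lie in a moment ball whose radius is independent of $\epsilon$. Otherwise Lemma~\ref{PEeq-USCfinite} cannot be applied uniformly, because it is uniform only on fixed admissible balls. This requires checking that the constants $\mathcal R_1,\mathcal R_3$, and hence the set $\mathscr K$, are genuinely independent of $\epsilon\in[0,\epsilon_0)$; by Lemmas~\ref{UPe4-lem4.1}--\ref{UPe4-lem4.2} and Remark~\ref{rem:common-absorbing} they are. It also requires that $\mathscr A_0$ pullback attracts this same $\mathscr K$, which holds since $\mathscr K\in\mathfrak D$. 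A second point to check is that Lemma~\ref{PEeq-USCfinite} applies at initial time $\tau-t_\delta$ with a ball depending only on $\tau$ and $\delta$. This holds because the lemma is stated for an arbitrary initial time and an arbitrary bound $\mathfrak B$.
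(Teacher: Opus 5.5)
Your proposal is correct and follows essentially the same route as the paper's proof: deterministic pullback attraction of the common absorbing family $\mathscr K$ fixes a time $T$, invariance together with $\mathscr A_\epsilon(\tau-T)\subset\mathscr K(\tau-T)\subset\mathbb B_{4,z}\bigl((\mathscr M_0(\tau-T)+\mathscr M_z(\tau-T))^{1/4}\bigr)$ brings in Lemma~\ref{PEeq-USCfinite}, and a triangle inequality closes the estimate. One small point: your opening sentence announces a proof by contradiction, but the argument you actually write is direct, exactly as in the paper, so that phrase should simply be dropped.
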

\begin{proof}
	By Lemma \ref{UPe4-lemABs}, the family
	\[
	\mathscr K=\{\mathscr K(\tau):\tau\in\mathbb R\}\in\mathfrak D
	\]
	is a common closed $\mathfrak D$-pullback absorbing family for $S_\epsilon$, uniformly for $\epsilon\in[0,\epsilon_0)$. More precisely, for every $D=\{D(t):t\in\mathbb R\}\in\mathfrak D$ and $\tau\in\mathbb R$, there exists $T=T(\tau,D)>0$, independent of $\epsilon$, such that
	\begin{align}\label{PEeq-USCcommonabs}
	\bigcup_{t\ge T}\ \bigcup_{\epsilon\in[0,\epsilon_0)}
	S_\epsilon(\tau,\tau-t)D(\tau-t)
	\subseteq \mathscr K(\tau).
	\end{align}
The direct construction in Theorem~\ref{sDHD-th6.10} already gives
\begin{align}\label{PEeq-USCattractorK}
	\mathscr A_\epsilon(\tau)\subset\mathscr K(\tau),
	\qquad \tau\in\mathbb R,\quad\epsilon\in[0,\epsilon_0).
\end{align}
This inclusion does not require an absorption time uniform with respect to the varying family $\mathscr A_\epsilon$; it follows sectionwise from the $\omega$-limit formula and the fact that $\mathscr K$ absorbs itself.
	
	Let $\delta>0$ be arbitrary. Since $\mathscr K\in\mathfrak D$ and $\mathscr A_0$ is the $\mathfrak D$-pullback measure attractor of $S_0$, there exists $T=T(\delta,\tau,\mathscr K)>0$ such that
	\begin{align}\label{PEeq-USCdetatt}
	\sup_{\mu\in\mathscr K(\tau-T)}
	d_{\mathcal P(H)}
	\bigl(S_0(\tau,\tau-T)\mu,\mathscr A_0(\tau)\bigr)
	<\frac{\delta}{2}.
	\end{align}
	For this fixed $T$, Lemma \ref{PEeq-USCfinite}, applied at the initial time $\tau-T$, gives an $\epsilon_\delta\in(0,\epsilon_0)$ such that, whenever $0<\epsilon<\epsilon_\delta$,
	\begin{align}\label{PEeq-USCunifprocess}
	\sup_{\mu\in\mathscr K(\tau-T)}
	d_{\mathcal P(H)}
	\bigl(S_\epsilon(\tau,\tau-T)\mu,
	S_0(\tau,\tau-T)\mu\bigr)
	<\frac{\delta}{2}.
	\end{align}
	Here we have used that
	\[
	\mathscr K(\tau-T)\subset
	\mathbb B_{4,z}\!\left(
	\bigl(\mathscr M_0(\tau-T)+\mathscr M_z(\tau-T)\bigr)^{1/4}
	\right),
	\]
	so that the uniform finite-time convergence in Lemma~\ref{PEeq-USCfinite}
	applies on $\mathscr K(\tau-T)$.
	
	By the invariance of $\mathscr A_\epsilon$ and \eqref{PEeq-USCattractorK},
	\[
	\mathscr A_\epsilon(\tau)
	=S_\epsilon(\tau,\tau-T)\mathscr A_\epsilon(\tau-T),
	\qquad
	\mathscr A_\epsilon(\tau-T)\subseteq\mathscr K(\tau-T).
	\]
	Therefore, for every $0<\epsilon<\epsilon_\delta$, the triangle inequality, \eqref{PEeq-USCdetatt}, and \eqref{PEeq-USCunifprocess} imply
	\begin{align*}
	\operatorname{dist}_{w,H}
	\bigl(\mathscr A_\epsilon(\tau),\mathscr A_0(\tau)\bigr)
	&=\sup_{\mu\in\mathscr A_\epsilon(\tau-T)}
	d_{\mathcal P(H)}
	\bigl(S_\epsilon(\tau,\tau-T)\mu,
	\mathscr A_0(\tau)\bigr)\\
	&\le
	\sup_{\mu\in\mathscr A_\epsilon(\tau-T)}
	d_{\mathcal P(H)}
	\bigl(S_\epsilon(\tau,\tau-T)\mu,
	S_0(\tau,\tau-T)\mu\bigr)\\
	&\quad+
	\sup_{\mu\in\mathscr A_\epsilon(\tau-T)}
	d_{\mathcal P(H)}
	\bigl(S_0(\tau,\tau-T)\mu,
	\mathscr A_0(\tau)\bigr)\\
	&\le
	\sup_{\mu\in\mathscr K(\tau-T)}
	d_{\mathcal P(H)}
	\bigl(S_\epsilon(\tau,\tau-T)\mu,
	S_0(\tau,\tau-T)\mu\bigr)\\
	&\quad+
	\sup_{\mu\in\mathscr K(\tau-T)}
	d_{\mathcal P(H)}
	\bigl(S_0(\tau,\tau-T)\mu,
	\mathscr A_0(\tau)\bigr)
	<\delta.
	\end{align*}
	Since $\delta>0$ is arbitrary, \eqref{PEeq-USCresult} follows. This completes the proof.
\end{proof}

\section{Moderate deviation principle}\label{MDP-PEs4}
In this section, we consider the MDP for the following stochastic primitive equation with L\'{e}vy noise:
\begin{eqnarray}\label{PEeq-MDP2.15}
	\left\{
	\begin{array}{ll}
		dv^{\epsilon}(t)+Av^{\epsilon}(t)dt+B(v^{\epsilon}(t),v^{\epsilon}(t))dt=g(t)dt+{\epsilon}\int_{E}h(v^{\epsilon}(t-),\xi)\widetilde{N}^{\epsilon^{-1}}(dt,d\xi),\\
		v^{\epsilon}(0)=v_0\in \mathcal{H},
	\end{array}
	\right.
\end{eqnarray}
As $\epsilon\to0$, the deterministic limit of \eqref{PEeq-MDP2.15} is the following primitive equation:
\begin{eqnarray}\label{PEeq-MDPdet2.15}
	\left\{
	\begin{array}{ll}
		dv^{0}(t)+Av^{0}(t)dt+B(v^{0}(t),v^{0}(t))dt=g(t)dt,\\
		v^{0}(0)=v_0\in \mathcal{H},
	\end{array}
	\right.
\end{eqnarray}
We study the asymptotic behavior of the rescaled fluctuation \eqref{PEeq-MDPdetdec4.3}. Under suitable conditions on $h$ and the assumption \eqref{PEeq-MDPdetdec4.4}, we establish a MDP for the solutions of \eqref{PEeq-MDP2.15}.

In the purely jump equation \eqref{PEeq-MDP2.15} we have $\sigma\equiv0$, so Hypothesis~\ref{PEeqAssum-2.2} is trivially satisfied. Hence, under Hypothesis~\ref{PEeqAssum-2.3}, Theorem~\ref{PEeqthe-2.5} (see also \cite{SunGao-2013,ZRR-2021}) yields a unique probabilistically strong solution $v^{\epsilon}\in \mathcal{D}([0,T],H)\cap L^2([0,T],V)$ of \eqref{PEeq-MDP2.15}. In particular, $\mathcal{Y}^{\epsilon}$ satisfies
\begin{align}
	\begin{split}
		&d\mathcal{Y}^{\epsilon}(t)+A\mathcal{Y}^{\epsilon}(t)dt+B(\mathcal{Y}^{\epsilon}(t),v^{0}(t))dt+B(a(\epsilon)\mathcal{Y}^{\epsilon}(t)+v^0(t),\mathcal{Y}^{\epsilon}(t))dt\\
		&=\frac{\epsilon}{a(\epsilon)}\int_{E}h(a(\epsilon)\mathcal{Y}^{\epsilon}(t-)+v^0(t-),\xi)\widetilde{N}^{\epsilon^{-1}}(dt,d\xi),
	\end{split}
\end{align}
with initial value $\mathcal{Y}^{\epsilon}(0)=0$. Lemma~\ref{lem:measurable-solution-maps} below constructs the Borel solution map
$\mathscr G^\epsilon:\mathfrak M\to\mathfrak U$ required by the variational representation.

Throughout this section, we use the state space
\begin{align}\label{PEeq-MDP-state-space}
	\mathfrak U:=\mathcal D([0,T],H)\cap L^2(0,T;V)
\end{align}
with metric
\begin{align}\label{PEeq-MDP-state-metric}
	d_{\mathfrak U}(u_1,u_2)
	:=d_{J_1,H}(u_1,u_2)+\|u_1-u_2\|_{L^2(0,T;V)}.
\end{align}
\begin{lemma}\label{lem:MDP-path-space-Polish}
	The metric space $(\mathfrak U,d_{\mathfrak U})$ is Polish.
\end{lemma}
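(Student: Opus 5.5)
The plan is to realize $\mathfrak U$ as a closed subset of a product of Polish spaces. Consider the map
\[
\iota:\mathfrak U\to \mathcal D([0,T];H)\times L^2(0,T;V),\qquad \iota(u)=(u,u).
\]
The product space is Polish. For $\mathcal D([0,T];H)$ we use the Skorokhod $J_1$ topology, which the paper recalls is Polish via \cite[Chapter 2]{Mtivier-1988}. For $L^2(0,T;V)$ separability holds because $V$ is a separable Hilbert space. We equip the product with the sum metric $d_{J_1,H}+\|\cdot\|_{L^2(0,T;V)}$. By construction $d_{\mathfrak U}(u_1,u_2)$ equals the product distance between $\iota(u_1)$ and $\iota(u_2)$, so $\iota$ is an isometry onto the diagonal-type set
\[
\Delta:=\{(u,w):u=w\ \text{a.e. on }[0,T]\}.
\]
Here $L^2(0,T;V)$ consists of equivalence classes, so "$u\in L^2(0,T;V)$" means that the c\`adl\`ag path $u$ has a $V$-valued a.e. representative, which we identify with its class.

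Since a closed subset of a Polish space is Polish under the restricted complete metric, and $\iota$ is an isometric bijection onto $\Delta$, it suffices to show that $\Delta$ is closed. Separability of $\mathfrak U$ then follows as a subspace of a separable metric space, and completeness of $d_{\mathfrak U}$ follows from closedness.

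\textbf{Closedness of $\Delta$ (the main point).} Take $(u_n,u_n)\in\Delta$ with $u_n\to u$ in $J_1$ and $u_n\to w$ in $L^2(0,T;V)$. I would argue in three steps.
\begin{enumerate}
\item $J_1$ convergence implies $u_n(t)\to u(t)$ in $H$ at every continuity point $t$ of $u$. These points form a set of full Lebesgue measure, since a c\`adl\`ag path has at most countably many jumps.
\item $J_1$ convergence also gives $\sup_n\sup_t|u_n(t)|<\infty$, so dominated convergence yields $u_n\to u$ in $L^2(0,T;H)$.
\item The continuous embedding $V\hookrightarrow H$ (Poincar\'e) gives $u_n\to w$ in $L^2(0,T;H)$.
\end{enumerate}
By uniqueness of $L^2(0,T;H)$ limits, $u=w$ a.e. Hence $u\in\mathfrak U$ and $(u,w)\in\Delta$.

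The only delicate points are the identification of the two limits and the bookkeeping of equivalence classes versus c\`adl\`ag representatives. The c\`adl\`ag representative is unique because c\`adl\`ag functions agreeing a.e. agree everywhere, so $\iota$ is well defined and injective. I expect no further obstacle beyond this.
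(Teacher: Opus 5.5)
Your proposal is correct and is essentially the paper's proof: you embed $\mathfrak U$ diagonally into $\mathcal D([0,T];H)\times L^2(0,T;V)$ and show the image is closed by checking that the $J_1$ limit and the $L^2(0,T;V)$ limit coincide a.e. The only variation is that you get $u_n\to u$ in $L^2(0,T;H)$ by dominated convergence, using the uniform bound supplied by $J_1$ convergence, whereas the paper extracts an a.e.-convergent subsequence from the $L^2(0,T;H)$ convergence and compares it with the pointwise $J_1$ limit at continuity points of $u$; also note that your claim that $d_{\mathfrak U}$ itself is complete, as opposed to the topology being Polish, requires $d_{J_1,H}$ to be the complete (Billingsley-type) Skorokhod metric.
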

\begin{proof}
	Both $\mathcal D([0,T];H)$ with the $J_1$ metric and $L^2(0,T;V)$ are Polish. Embed $\mathfrak U$ diagonally into their product. Suppose that $u_n\to u$ in the $J_1$ topology of $\mathcal D([0,T];H)$ and that $u_n\to v$ strongly in $L^2(0,T;V)$. By the continuous embedding $V\hookrightarrow H$, the second convergence also holds strongly in $L^2(0,T;H)$. Hence, after passing to a subsequence, 
	\[
	 u_n(t)\longrightarrow v(t)\quad\text{in }H\quad\text{for a.e. }t\in[0,T].
	\]
	On the other hand, $J_1$ convergence implies $u_n(t)\to u(t)$ in $H$ at every continuity point of the c\`adl\`ag path $u$. Since such a path has at most countably many discontinuities, the two pointwise limits agree for a.e.~$t$, and therefore $u=v$ in $L^2(0,T;H)$. Thus the diagonal image of $\mathfrak U$ is closed in $\mathcal D([0,T];H)\times L^2(0,T;V)$. A closed subspace of a Polish product is Polish, which proves the claim.
\end{proof}

For the measurable reconstruction in Subsection~\ref{Cond4.1-2}, we also use the enhanced path space
\[
\mathfrak U_z
:=
\left\{u\in\mathfrak U:\partial_z u\in\mathfrak U\right\},
\qquad
d_{\mathfrak U_z}(u_1,u_2)
:=
d_{\mathfrak U}(u_1,u_2)
+d_{\mathfrak U}(\partial_z u_1,\partial_z u_2),
\]
where $\partial_z$ is understood in the distributional sense. The distributional vertical derivative is a closed operator. Hence the map
$u\mapsto(u,\partial_z u)$ identifies $\mathfrak U_z$ with a closed subset of
$\mathfrak U\times\mathfrak U$, and therefore $(\mathfrak U_z,d_{\mathfrak U_z})$ is Polish. This enhanced space is used only for the auxiliary residual reconstruction; the MDP itself is still formulated in $\mathfrak U$.

Unless explicitly stated otherwise, the convergences in Condition~\ref{PEeq-MDPMDPcond-4.1} and Theorem~\ref{PEeq-MDPmainresu-4.7} are taken in the topology of $\mathfrak U$; the topology of $\mathfrak U_z$ is used for the auxiliary residual construction.

For \( \delta > 0 \), define
\[
\mathcal{H}^{\delta} = \left\{f : E \to \mathbb{R} :  \forall \mho \in \mathcal{B}(E) \text{~with~} \nu(\mho)<\infty, {\int_{\mho} e^{\delta f^2(\xi)}  \nu(d\xi) } < \infty \right\}
\]

$\bullet$ (\emph{A variational representation}) Define the mapping \(\hbar : \mathbb{R}^+ \to \mathbb{R}^+\) by $
\hbar(x) = x \log x - x + 1$.
For any \(\phi \in \overline{\mathfrak{R}}_+\) and $t\in [0,T]$, the quantity
\[
\mathfrak{L}_{t}(\phi) = \int_{E\times [0,t]} \hbar(\phi(s, x, \omega)) \, \nu(dx)ds
\]
is well defined as a \([0, \infty]\)-valued random variable. Let
$\{\mathfrak C_n\}_{n=1}^{\infty}$ be an increasing sequence of compact subsets of $E$ with \(\bigcup_{n=1}^\infty \mathfrak{C}_n = E\). For every \(n\in \mathbb{N}\), set
\[
\overline{\mathfrak{R}}_{b,n} := \left\{ \phi \in \overline{\mathfrak{R}}_+ : \phi(t, x, \omega) \in 
\begin{cases} 
	\left[ \frac{1}{n}, n \right], & \text{if } x \in \mathfrak{C}_n \\
	\{1\}, & \text{if } x \in \mathfrak{C}_n^c 
\end{cases} 
~~ \text{for all } (t,\omega)\in[0,T]\times\overline{\mathfrak M}\right\},
\]
and let \(\overline{\mathfrak{R}}_b = \bigcup_{n=1}^\infty \overline{\mathfrak{R}}_{b,n}\).

$\bullet$ (\emph{A general moderate deviation result}) We recall the following sufficient criterion for an MDP from \cite{Budhiraja-AOP-2016}. Suppose that $a(\epsilon)$ satisfies \eqref{PEeq-MDPdetdec4.4}. For $\epsilon > 0$, let $\mathscr{G}^{\epsilon}$ be a measurable map from $\mathfrak{M}$ to $\mathfrak{U}$, where $\mathfrak{M}$ is as defined in Section~\ref{PEeq-Pre2.3} and $\mathfrak{U}$ is a Polish space.
The criterion gives sufficient conditions for the LDP to hold for the mapping $\mathscr{G}^\epsilon(\epsilon N^{\epsilon^{-1}})$ as $\epsilon \to 0$, with small logarithmic scale $\epsilon/a^2(\epsilon)$ (equivalently, conventional speed $a^2(\epsilon)/\epsilon$) and a rate function given by a suitable quadratic form. This is the MDP considered below. For any \(\epsilon > 0\) and \(M\in (0,\infty)\), denote
\begin{align}\label{PEeq-MDPdetdec4.5}
\begin{split}
	\mathcal{S}_{+, \epsilon}^M := &\{\phi : E \times [0, T] \to \mathbb{R}^+ \mid \mathfrak{L}_T(\phi) \leq M a^2(\epsilon)\},\\
	\mathcal{S}_\epsilon^M := \{\psi :& E \times [0, T] \to \mathbb{R} \mid \psi = (\phi - 1)/a(\epsilon), \ \phi \in \mathcal S_{+, \epsilon}^M\},	
\end{split}
\end{align}
and
\begin{align}\label{PEeq-MDPdetdec4.6}
	\begin{split}
\mathcal{U}_{+, \epsilon}^M &:= \{\phi \in \overline{\mathfrak{R}}_b : \phi(\cdot, \cdot, \omega) \in \mathcal{S}_{+, \epsilon}^M,\ \overline{\mathbb P}\text{-a.s.}\},\\
\mathcal{U}_\epsilon^M &:= \{\psi \in \overline{\mathfrak{R}} : \psi(\cdot, \cdot, \omega) \in \mathcal{S}_\epsilon^M,\ \overline{\mathbb P}\text{-a.s.}\}.
\end{split}
\end{align}

Given a mapping $\mathscr{G}^{0}:L^2(\nu_T)\rightarrow \mathfrak{U}$ and $\eta \in \mathfrak{U}$, we set 
$$
\aleph_{\eta}^{0}=\left\{\psi\in L^2(\nu_T): \eta= \mathscr{G}^{0}(\psi)\right\},
$$
and define $I$ by 
\begin{align}\label{PEeq-MDPrefun-4.1}
	I(\eta)=\inf_{\psi \in \aleph_{\eta}^{0}}\left\{\frac{1}{2}\|\psi\|_{L^2(\nu_T)}^2\right\}
\end{align}
with the convention $\inf \emptyset=+\infty$.

If \( \phi \in \mathcal{S}_{+, \epsilon}^M \), then \cite[Lemma 3.2]{Budhiraja-AOP-2016} gives \( \kappa_2(1) \in (0, \infty) \) independent of \( \epsilon \) such that \( \psi \mathbf{1}_{\{|\psi| \leq  1/a(\epsilon)\}} \in \mathbb{B}_{\nu}(\sqrt{M\kappa_2(1)}) \), where \( \psi = (\phi - 1)/a(\epsilon) \). Moreover, $\kappa_2$ is a nondecreasing function from $(0,\infty)$ to $(0,\infty)$ such that for each $\beta>0$, 
\[
|x-1|^2 \leq \kappa_2(\beta) \hbar(x) \quad \text{for } |x-1| < \beta \text{ and } x \geq 0.
\]

To establish the main theorem of this section via the abstract criterion \cite[Theorem 2.3]{Budhiraja-AOP-2016}, it suffices to verify the following two claims.

\begin{condition}\label{PEeq-MDPMDPcond-4.1}
	Let $\mathscr{G}^{0}:L^2(\nu_T)\to\mathfrak U$ be measurable. Assume the following two conditions:
	
	$(A_1)$ Given \( M \in (0, \infty) \), suppose that \( q^\epsilon, q \in \mathbb{B}_\nu(M) \) and \(q^\epsilon\rightharpoonup q\) in the weak topology of $L^2(\nu_T)$. Then
	\[
	\mathscr G^0(q^\epsilon)\to\mathscr G^0(q).
	\]
	
	$(A_2)$ Given \( M \in (0, \infty) \), let \( \{\phi^\epsilon\}_{\epsilon > 0} \) be such that for every \( \epsilon > 0 \), \( \phi^\epsilon \in \mathcal{U}_{+, \epsilon}^M \) and for some \( \beta \in (0, 1] \), \( \psi^\epsilon \mathbf{1}_{\{|\psi^\epsilon| \leq \beta / a(\epsilon)\}} \xrightarrow{d} \psi \) in \( \mathbb{B}_\nu((M \kappa_2(1))^{1/2}) \) where \( \psi^\epsilon = (\phi^\epsilon - 1)/a(\epsilon) \). Then 
	\[
	\mathscr{G}^\epsilon (\epsilon N^{\epsilon^{-1} \phi^\epsilon}) \xrightarrow{d} \mathscr{G}^{0}(\psi) 
	\quad\text{in }(\mathfrak U,d_{\mathfrak U}).
	\]
	Here $\xrightarrow{d}$ denotes convergence in distribution.
\end{condition}

We use the following abstract criterion from \cite[Theorem 2.3]{Budhiraja-AOP-2016}.
\begin{theorem}\label{PEeq-MDPMDPthe-4.1}
	Suppose that the functionals \( \mathscr G^\epsilon \) and \(\mathscr G^0\) satisfy Condition \ref{PEeq-MDPMDPcond-4.1}.
	Then $I$, defined by \eqref{PEeq-MDPrefun-4.1}, is a good rate function, and \(\{\mathcal{Y}^\epsilon := \mathscr{G}^\epsilon (\epsilon N^{\epsilon^{-1}})\}_{\epsilon>0}\) satisfies a large deviation principle at logarithmic scale \(b(\epsilon)=\frac{\epsilon}{a^2(\epsilon)}\) (equivalently, with conventional speed $a^2(\epsilon)/\epsilon$) and rate function \( I \).
\end{theorem}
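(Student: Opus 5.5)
This theorem is quoted from \cite[Theorem 2.3]{Budhiraja-AOP-2016}, so the plan is to check that our setting fits that abstract framework rather than to reprove it. The underlying tool is the variational representation for functionals of Poisson random measures. For bounded measurable $F:\mathfrak M\to\mathbb R$ it gives
\[
-\log\overline{\mathbb E}\,e^{-F(\epsilon N^{\epsilon^{-1}})}
=\inf_{\phi\in\overline{\mathfrak R}_b}\overline{\mathbb E}\Big[\epsilon^{-1}\mathfrak L_T(\phi)+F(\epsilon N^{\epsilon^{-1}\phi})\Big].
\]
Applying it with $F=\frac{a^2(\epsilon)}{\epsilon}\,G\circ\mathscr G^\epsilon$, where $G$ is bounded and continuous on $\mathfrak U$, and dividing by $a^2(\epsilon)/\epsilon$, the cost term becomes $\mathfrak L_T(\phi)/a^2(\epsilon)$. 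This is exactly why the control classes $\mathcal U^M_{+,\epsilon}$ are defined with the budget $Ma^2(\epsilon)$. We then prove the Laplace principle, which is equivalent to the LDP on the Polish space $\mathfrak U$ (Lemma~\ref{lem:MDP-path-space-Polish}) once $I$ is shown to be a good rate function.

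\textbf{Goodness of $I$.} Fix $L<\infty$. Any $\eta$ with $I(\eta)\le L$ satisfies $\eta=\mathscr G^0(\psi_n)$ with $\frac12\|\psi_n\|^2\le L+1/n$. Since $\mathbb B_\nu(\sqrt{2L+2})$ is weakly compact and metrizable, a subsequence $\psi_n$ converges weakly to some $\psi$ with $\frac12\|\psi\|^2\le L$, by weak lower semicontinuity of the norm. By $(A_1)$, $\eta=\mathscr G^0(\psi)$. The same argument, applied to a sequence $\eta_k$ in the level set, shows the level set is compact.

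\textbf{Upper bound of the Laplace principle.} This is the lower bound on $-\log\overline{\mathbb E}$. Choose nearly optimal controls $\phi^\epsilon$. Since $G$ is bounded, we may restrict to $\mathcal U^M_{+,\epsilon}$ with $M$ depending only on $\|G\|_\infty$. Decompose $\psi^\epsilon=(\phi^\epsilon-1)/a(\epsilon)$ into the part on $\{|\psi^\epsilon|\le\beta/a(\epsilon)\}$ and the remainder. By \cite[Lemma 3.2]{Budhiraja-AOP-2016}, the truncated part lies in $\mathbb B_\nu((M\kappa_2(1))^{1/2})$, which is weakly compact, so along a subsequence it converges in distribution to some $\psi$. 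The pointwise bound $|x-1|^2\le\kappa_2(\beta)\hbar(x)$ then yields
\[
\liminf\frac{\mathfrak L_T(\phi^\epsilon)}{a^2(\epsilon)}\ge\frac12\,\overline{\mathbb E}\|\psi\|^2 .
\]
The constant $\frac12$ requires a second-order expansion of $\hbar$ near $1$, namely $\hbar(1+a\psi)\approx\frac12a^2\psi^2$, so one lets $\beta\to0$. Combining this with $(A_2)$, which gives convergence of the $G$-term, produces the bound $\inf_\eta\{I(\eta)+G(\eta)\}$.

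\textbf{Lower bound of the Laplace principle.} Pick $\eta$ and $\psi$ that are near-optimal, with $\mathscr G^0(\psi)=\eta$. Approximate $\psi$ by a bounded function supported on a compact set; by $(A_1)$ this changes the value only slightly. Then use the deterministic controls $\phi^\epsilon=1+a(\epsilon)\psi$, which belong to $\overline{\mathfrak R}_b$ for small $\epsilon$. For these, $\mathfrak L_T(\phi^\epsilon)/a^2(\epsilon)\to\frac12\|\psi\|^2$, and $(A_2)$ gives $\mathscr G^\epsilon(\epsilon N^{\epsilon^{-1}\phi^\epsilon})\to\eta$.

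The main obstacle is the tail part on $\{|\psi^\epsilon|>\beta/a(\epsilon)\}$, which has to be controlled using superlinearity of $\hbar$ in the upper bound. Here I would follow \cite{Budhiraja-AOP-2016} verbatim, since none of these steps uses the structure of the primitive equations; all model-specific content is concentrated in Condition~\ref{PEeq-MDPMDPcond-4.1}.
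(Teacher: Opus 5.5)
Your proposal agrees with the paper, which gives no independent proof and simply invokes \cite[Theorem~2.3]{Budhiraja-AOP-2016}. Your outline of that argument is the standard one: the variational representation, goodness of $I$ from $(A_1)$ and weak compactness of $\mathbb B_\nu$, and the two Laplace bounds via near-optimal controls, their truncations and $(A_2)$. This holds modulo routine points such as the stopping-time localization needed to pass from an expectation bound on $\mathfrak L_T(\phi^\epsilon)/a^2(\epsilon)$ to the almost-sure constraint defining $\mathcal U^M_{+,\epsilon}$ (so $M$ depends on the tolerance as well as $\|G\|_\infty$).

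One correction to your closing remark: the tail $\{|\psi^\epsilon|>\beta/a(\epsilon)\}$ is not an obstacle in this abstract step. Since $\hbar\ge0$ you can discard it from the cost, and its effect on the dynamics is absorbed into $(A_2)$. Superlinearity of $\hbar$ is needed only when $(A_2)$ is verified for the concrete model, via Lemmas~\ref{PEeq-MDPMDPlem-4.7}--\ref{PEeq-MDPMDPlem-4.9}.
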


To establish the main results of this section, we impose only the exponential integrability assumption needed for the entropy estimates.
\begin{hypothesis}\label{PEeqAssum-2.5}
	Assume that $C_h$, $L_h$, and $\widetilde C_h$ from Hypothesis~\ref{PEeqAssum-2.3} belong to $\mathcal H^\delta$ for some $\delta>0$.
\end{hypothesis}

\begin{remark}\label{PEeq-assumption-scope}
	Hypothesis~\ref{PEeqAssum-2.5} is imposed only for the MDP analysis. None of the $\mathfrak{D}$-pullback measure attractor or zero-noise results in Sections~\ref{PMA-PEs3} and \ref{USC-PEs4} use Hypothesis~\ref{PEeqAssum-2.5}.
\end{remark}

\begin{theorem}[Main result]\label{PEeq-MDPmainresu-4.7}
	Suppose Hypotheses \ref{PEeqAssum-2.3} and \ref{PEeqAssum-2.5} hold.
	Let $v_0\in\mathcal H$, let $g,\partial_zg\in L^4(0,T;H)$, and assume that $a(\epsilon)$ satisfies \eqref{PEeq-MDPdetdec4.4}.
	Then $\{\mathcal{Y}^\epsilon\}$ satisfies the moderate deviation principle in the Polish space $(\mathfrak U,d_{\mathfrak U})$ defined by \eqref{PEeq-MDP-state-space}-\eqref{PEeq-MDP-state-metric}; equivalently, it satisfies an LDP at logarithmic scale $b(\epsilon)=\epsilon/a^2(\epsilon)$ (or conventional speed $a^2(\epsilon)/\epsilon$), with good rate function
	\[
	I(\eta) = \inf_{\psi} \left\{ \frac{1}{2} \|\psi\|_{L^2(\nu_T)}^2 \right\},
	\]
	where the infimum is taken over all $\psi \in L^2(\nu_T)$ such that $(\eta, \psi)$ satisfy the following skeleton equation
	\begin{align}\label{PEeq-MDPskeequ-4.8}
		\frac{d}{dt} \eta(t) = -A\eta(t) - B(\eta(t), v^0(t)) - B(v^0(t), \eta(t)) + \int_{E}  h(v^0(t), \xi)\psi(\xi, t) \nu(d\xi),
	\end{align}
	with initial value $\eta(0) = 0$.
\end{theorem}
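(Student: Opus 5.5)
The plan is to apply the abstract criterion Theorem~\ref{PEeq-MDPMDPthe-4.1}. First I construct the maps $\mathscr G^\epsilon$ and $\mathscr G^0$, and then I verify $(A_1)$ and $(A_2)$ of Condition~\ref{PEeq-MDPMDPcond-4.1} in $(\mathfrak U,d_{\mathfrak U})$, which is Polish by Lemma~\ref{lem:MDP-path-space-Polish}.

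\emph{Construction of the maps.} For each $\psi\in L^2(\nu_T)$, the skeleton equation \eqref{PEeq-MDPskeequ-4.8} is linear in $\eta$, with coefficients depending on $v^0$. By \eqref{ee-v05}, $v^0$ and $\partial_zv^0$ lie in $L^\infty(0,T;H)\cap L^2(0,T;V)$. The forcing satisfies
\[
\Bigl|\int_E h(v^0,\xi)\psi\,\nu(d\xi)\Bigr|\le C(1+|v^0|)\|C_h\|_{L^2(\nu)}|\psi(t)|_{L^2(\nu)},
\]
which lies in $L^2(0,T;H)$. A Galerkin scheme then gives existence, and uniqueness follows from the energy identity together with \eqref{PEeq-2.11}; the term $B(\eta,v^0)$ is controlled through $|\partial_zv^0|$. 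I also plan to differentiate in $z$, using (B.3), to get bounds on $\partial_z\eta$; the term $B(\eta,v^0)$ requires $\partial_z v^0\in L^2(0,T;V)$. This defines $\mathscr G^0(\psi)=\eta$. For $\mathscr G^\epsilon$, I use Theorem~\ref{PEeqthe-2.5} and a Yamada--Watanabe/measurable-selection argument on the canonical space $\mathfrak M$ to write $\mathcal Y^\epsilon=\mathscr G^\epsilon(\epsilon N^{\epsilon^{-1}})$. By Girsanov for Poisson random measures, $\mathscr G^\epsilon(\epsilon N^{\epsilon^{-1}\phi^\epsilon})$ is then the unique solution of the controlled equation, in which the compensated measure is $\widetilde N^{\epsilon^{-1}\phi^\epsilon}$ and an extra drift $\int_E h(v^0+a\mathcal Y,\xi)\psi^\epsilon\,\nu(d\xi)$ appears (using $(\phi^\epsilon-1)/a=\psi^\epsilon$). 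Well-posedness of this controlled equation comes from stopped Galerkin approximation and a localized Gronwall argument.

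\emph{$(A_1)$.} Take $q^\epsilon\rightharpoonup q$ in $\mathbb B_\nu(M)$. First I derive uniform bounds for $\eta^\epsilon=\mathscr G^0(q^\epsilon)$ in $C([0,T];H)\cap L^2(0,T;V)$, and for $\partial_z\eta^\epsilon$. Next I bound $\frac{d}{dt}\eta^\epsilon$ in $L^{4/3}(0,T;V')$, using \eqref{PEeq-2.0013}. Lemma~\ref{PEeq-MDPCpctness-4.1} then gives strong compactness in $L^2(0,T;H)$. The limit is identified by pairing $\int_E h(v^0,\xi)q^\epsilon\,\nu(d\xi)$ with fixed test functions $\varphi(t)e_j$, where weak convergence of $q^\epsilon$ applies directly. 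To upgrade the convergence, I write the energy identity for $\eta^\epsilon-\eta$. The control term $\int_0^t\!\int_E (h(v^0,\xi)(q^\epsilon-q),\eta^\epsilon-\eta)\,\nu\,ds$ is bounded by $C\sup\|q^\epsilon-q\|_{L^2(\nu_T)}\,\|\eta^\epsilon-\eta\|_{L^2(0,T;H)}$, which tends to $0$. Gronwall then yields convergence in $C([0,T];H)\cap L^2(0,T;V)$, and hence in $\mathfrak U$.

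\emph{$(A_2)$, the main obstacle.} Let $\mathcal Y^\epsilon$ solve the controlled equation, and split $\psi^\epsilon=\psi^\epsilon\mathbf 1_{\{|\psi^\epsilon|\le\beta/a\}}+\psi^\epsilon\mathbf 1_{\{|\psi^\epsilon|>\beta/a\}}$. The tail part is handled by the entropy bound $\mathfrak L_T(\phi^\epsilon)\le Ma^2$ and Hypothesis~\ref{PEeqAssum-2.5}, which give, for example,
\[
\int_{\{|\psi^\epsilon|>\beta/a\}}C_h|\psi^\epsilon|\,d\nu_T\le C_{M}\,a(\epsilon)
\]
as in \cite{Budhiraja-AOP-2016}. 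With these, I prove uniform bounds for $\mathbb E\sup|\mathcal Y^\epsilon|^2+\mathbb E\int\|\mathcal Y^\epsilon\|^2$, and likewise for $\partial_z\mathcal Y^\epsilon$. The martingale contribution has quadratic variation of order $\epsilon/a^2\to0$ (by BDG), so it vanishes. I then compare $\mathcal Y^\epsilon$ with $\widetilde\eta^\epsilon:=\mathscr G^0(\psi^\epsilon\mathbf 1_{\{|\psi^\epsilon|\le\beta/a\}})$ and show $\mathcal Y^\epsilon-\widetilde\eta^\epsilon\to0$ in probability in $\mathfrak U$. Combined with $(A_1)$ and Skorokhod representation (the truncated controls converge in law to $\psi$), this gives the claim.

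The difference $\mathcal Y^\epsilon-\widetilde\eta^\epsilon$ contains the following terms:
\begin{itemize}
\item[(i)] the term $a\,B(\mathcal Y^\epsilon,\mathcal Y^\epsilon)$, which is small by the uniform bounds;
\item[(ii)] the term $\int_E\bigl(h(v^0+a\mathcal Y^\epsilon)-h(v^0)\bigr)\psi^\epsilon\mathbf 1\,\nu$, which is $O(a)$ by (B.2);
\item[(iii)] the tail part;
\item[(iv)] the martingale.
\end{itemize}
The delicate point is the vertical estimate needed to control the anisotropic convection in the difference equation. Here I test the truncated-control remainder against $-\partial_{zz}$ of the difference and integrate by parts, so that only (B.2) on $H$ and the deterministic bound $\|\psi^\epsilon\mathbf 1\|_{L^2(\nu_T)}^2\le M\kappa_2(1)$ are needed, never a Lipschitz bound on $\partial_zh$. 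The estimate is closed by stopping times, with the stopped probabilities controlled by the uniform moments. Once $(A_1)$ and $(A_2)$ hold, Theorem~\ref{PEeq-MDPMDPthe-4.1} gives the MDP with rate function $I$.
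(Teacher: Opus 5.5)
Your construction of $\mathscr G^0,\mathscr G^\epsilon$ and your verification of $(A_1)$ coincide with the paper. Both use uniform $H$ and $\partial_z$ skeleton bounds, compactness via Lemma~\ref{PEeq-MDPCpctness-4.1}, identification of the limit by pairing the weakly convergent controls with fixed test functions, and an upgrade step in which $\|q^\epsilon-q\|_{L^2(\nu_T)}\le 2R$ is paired with $\|\eta^\epsilon-\eta\|_{L^2(0,T;H)}\to0$.

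For $(A_2)$ you take a genuinely different and lighter route. The paper proceeds as follows:
\begin{itemize}
\item it subtracts three linear Stokes pieces $\mathscr X^\epsilon,\mathscr H^\epsilon,\mathscr S^\epsilon$ (martingale, tail, Lipschitz remainder) and proves $\mathscr Q^\epsilon\to0$ in probability in the enhanced space $\mathfrak U_z$;
\item it applies Skorokhod to $(\mathscr Q^\epsilon,\psi^\epsilon\mathbf 1_{\{|\psi^\epsilon|\le\beta/a(\epsilon)\}})$ and rebuilds the residual through the Borel map $\mathscr R^\epsilon$;
\item it finishes with a pathwise Gronwall comparison against $\mathscr G^0(\widetilde\psi)$.
\end{itemize}
You instead compare $\mathcal Y^\epsilon$ directly with $\widetilde\eta^\epsilon=\mathscr G^0(\psi^\epsilon\mathbf 1_{\{|\psi^\epsilon|\le\beta/a(\epsilon)\}})$ on the original space. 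This process is adapted because the skeleton flow is causal. You then conclude from the continuity of $\mathscr G^0$ on the weakly topologized ball together with the converging-together lemma, so the Skorokhod step is not even needed.

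This works. In the It\^o identity for $D=\mathcal Y^\epsilon-\widetilde\eta^\epsilon$ one has $b(v^0,D,D)=0$ and $b(\mathcal Y^\epsilon,\mathcal Y^\epsilon,D)=b(\mathcal Y^\epsilon,\widetilde\eta^\epsilon,\mathcal Y^\epsilon)$. By \eqref{PEeq-2.11}, $a(\epsilon)$ times the latter is controlled by $H$-level bounds on $\mathcal Y^\epsilon$ together with $\|\widetilde\eta^\epsilon\|_{L^2(0,T;V)}$ and $\sup_t|\partial_z\widetilde\eta^\epsilon|$. Both of these are deterministically bounded by the skeleton estimate on $\mathbb B_\nu((M\kappa_2(1))^{1/2})$. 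The term $b(D,v^0,D)$ needs only $\|v^0\|^2+|\partial_zv^0|^4$, and the truncated remainder needs only \textbf{(B.2)} in $H$. Hence every Gronwall coefficient has a deterministically bounded time integral.

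Consequently, the vertical estimate of the difference that you single out as the delicate point is unnecessary in your route. So are uniform bounds on $\partial_z\mathcal Y^\epsilon$: vertical regularity at fixed $\epsilon$ is used only to justify It\^o's formula. The paper needs the $-\partial_{zz}\mathscr S^\epsilon$ test and $\mathfrak U_z$ because, once the linear pieces are removed, terms such as $a(\epsilon)B(\widetilde\Phi^\epsilon+\widetilde{\mathscr Q}^\epsilon+\widetilde{\mathscr N}^\epsilon,\cdot)$ no longer cancel against the residual. The paper's route buys a purely deterministic final step at the cost of this machinery and the measurable residual map; yours keeps the martingale inside one energy identity but avoids all of it.

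A minor correction: with the unbounded weight $C_h$, the tail integral $\int_{\{|\psi^\epsilon|>\beta/a\}}C_h|\psi^\epsilon|\,d\nu_T$ is only $o(1)$ uniformly over $\mathcal S^M_\epsilon$ (Lemma~\ref{PEeq-MDPMDPlem-4.9}), not $O(a(\epsilon))$ in general. However, $o(1)$ is all your argument uses.
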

\begin{proof}
	By Theorem~\ref{PEeq-MDPMDPthe-4.1}, it remains to verify Condition \ref{PEeq-MDPMDPcond-4.1}. Conditions $(A_1)$ and $(A_2)$ are verified in Subsections~\ref{Cond4.1} and \ref{Cond4.1-2}, respectively. The conclusion therefore follows from Theorem~\ref{PEeq-MDPMDPthe-4.1}.
\end{proof}

\begin{remark}\label{PEeq-MDPderem-4.8}
By Remark~\ref{deter1-dd}, \eqref{PEeq-MDPdet2.15} admits a unique solution \(v^0=(v^0(t))_{t\in[0,T]}\in C([0,T];H)\cap L^2(0,T;V)\), which satisfies
\begin{align}\label{PEeq-MDPderm-4.10}
	\sup_{t \in [0,T]} \left(|v^0(t)|^2+|\partial_{z}v^0(t)|^2\right) + \int_0^T \left(\|v^0(t)\|^2 +\|\partial_{z}v^0(t)\|^2\right)dt \leq C_{T}.
\end{align}
%and
%\begin{align}\label{PEeq-MDPderm-4.11}
%	\sup_{t \in [0,T]} |\partial_{z}v^0(t)|^2 + \int_0^T \|\partial_{z}v^0(t)\|^2 \, dt \leq C_{T}.
%\end{align}
%\begin{align}\label{PEeq-MDPderm-4.10}
%	\sup_{t \in [0,T]} |v^0(t)|^2 + \int_0^T \|v^0(t)\|^2 \, dt \leq C_{T}(|v_0|^2+1),
%\end{align}
%and
%\begin{align}\label{PEeq-MDPderm-4.11}
%	\sup_{t \in [0,T]} |\partial_{z}v^0(t)|^2 + \int_0^T \|\partial_{z}v^0(t)\|^2 \, dt \leq C_{T}(|\partial_{z}v_0|^2+1)
%\end{align}
\end{remark}

\subsection[Compactness of the skeleton map]{Verification of $(A_1)$ in Condition \ref{PEeq-MDPMDPcond-4.1}}\label{Cond4.1}
We first verify $(A_1)$ in Condition~\ref{PEeq-MDPMDPcond-4.1}. The following well-posedness result for the skeleton equation \eqref{PEeq-MDPskeequ-4.8} follows by the same argument as \cite[Theorem 4.3]{ZRR-2021}.
\begin{proposition}\label{PEeq-MDPskeequpro-4.9}
	Suppose Hypothesis~\ref{PEeqAssum-2.3} holds. Then, for every $\psi\in L^2(\nu_T)$, the skeleton equation \eqref{PEeq-MDPskeequ-4.8} has a unique solution $\eta = (\eta(t))_{t \in [0,T]} \in C([0,T]; H) \cap L^2([0,T]; V)$.
\end{proposition}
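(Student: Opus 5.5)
The proof proceeds by Galerkin approximation, uniform energy estimates that use the vertical regularity of $v^0$ from \eqref{PEeq-MDPderm-4.10}, compactness, and a Gronwall argument for uniqueness. We first control the forcing term. Set $f_\psi(t):=\int_E h(v^0(t),\xi)\psi(t,\xi)\nu(d\xi)$. By \textbf{(B.1)} and Cauchy--Schwarz,
\[
|f_\psi(t)|\le (1+|v^0(t)|)\,\|C_h\|_{L^2(\nu)}\Big(\int_E|\psi(t,\xi)|^2\nu(d\xi)\Big)^{1/2}.
\]
Together with $\sup_t|v^0(t)|\le C_T$, this gives $f_\psi\in L^2(0,T;H)$ with $\|f_\psi\|_{L^2(0,T;H)}\le C_T\|\psi\|_{L^2(\nu_T)}$. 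The skeleton equation is therefore a linear equation in $\eta$ with $L^2(0,T;H)$ forcing and variable coefficients given by $v^0$.

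\textbf{Step 1: Galerkin scheme and a priori estimate.} Project onto $\mathrm{span}\{e_1,\dots,e_m\}$ via $P_m$. This gives a linear ODE system with coefficients integrable in time, so it has a unique global solution $\eta_m$. Test with $\eta_m$. The term $b(v^0,\eta_m,\eta_m)$ vanishes by Lemma~\ref{PEeq-lem2.1}. The remaining term $b(\eta_m,v^0,\eta_m)=\langle B(\eta_m,v^0),\eta_m\rangle$ is controlled in the same way as in the Feller proof, where \eqref{PEeq-2.103} led to
\[
2|\langle B(\eta_m,v^0),\eta_m\rangle|\le \tfrac12\|\eta_m\|^2+C\big(\|v^0\|^2+|\partial_zv^0|^4\big)|\eta_m|^2 .
\]
For this we would use \eqref{PEeq-2.103} with $\widehat v=0$ and $\widetilde v$ replaced suitably, or directly \eqref{PEeq-2.11}, whose anisotropic term $|\partial_x\eta||\partial_z v^0||\eta|^{1/2}\|\eta\|^{1/2}$ is handled by Young's inequality. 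Bounding the forcing by $|f_\psi||\eta_m|$, Gronwall's inequality yields
\[
\sup_{t\le T}|\eta_m|^2+\int_0^T\|\eta_m\|^2\,dt\le \big(1+\|f_\psi\|^2_{L^2(0,T;H)}\big)\exp\Big(C\int_0^T\big(1+\|v^0\|^2+|\partial_zv^0|^4\big)dt\Big).
\]
The exponent is finite by \eqref{PEeq-MDPderm-4.10}, since $\int_0^T|\partial_zv^0|^4\,dt\le T\sup_t|\partial_z v^0|^4$.

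\textbf{Step 2: time regularity and passage to the limit.} Next we bound $\frac{d}{dt}\eta_m$ in $L^{4/3}(0,T;V')$ or $L^1(0,T;V')$. For $B(v^0,\eta_m)$ we use \eqref{PEeq-2.11} with test functions in $V$. For $B(\eta_m,v^0)$ we use the $\partial_z v^0\in L^2(0,T;V)$ integrability, via \eqref{PEeq-2.0013} or directly from the trilinear bound. Lemma~\ref{PEeq-MDPCpctness-4.1} (Aubin--Lions) then gives $\eta_m\to\eta$ strongly in $L^2(0,T;H)$, weakly in $L^2(0,T;V)$, and weak-star in $L^\infty(0,T;H)$. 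Because the equation is linear in $\eta$, the limit passage is immediate: each term is a continuous linear functional of $\eta$ when paired with a fixed test function in $\mathcal V$. Continuity $\eta\in C([0,T];H)$ follows from the Lions--Magenes lemma once $\partial_t\eta\in L^2(0,T;V')$ is established. \textbf{This is the main obstacle.} The term $B(\eta,v^0)$ contains $\mathcal W(\eta)\partial_zv^0$, and its $V'$ norm requires $\partial_z v^0\in V$ with square-integrability in time. I would try to get it from $\langle \mathcal W(\eta)\partial_zv^0,\phi\rangle\le C\|\eta\|\,|\partial_zv^0|^{1/2}\|\partial_zv^0\|^{1/2}|\phi|^{1/2}\|\phi\|^{1/2}$. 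This is in $L^2$ in time since $\|\eta\|\in L^2$ while the $v^0$ factor is in $L^4$. If the exponent bookkeeping fails, I would instead derive continuity from the energy equality obtained on the Galerkin level together with weak continuity.

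\textbf{Step 3: uniqueness.} The equation is linear, so the difference $\bar\eta$ of two solutions solves the homogeneous equation with $\bar\eta(0)=0$. The estimate of Step 1 with $f=0$ gives $\bar\eta\equiv0$, which completes the proof.
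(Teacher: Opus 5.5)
\textbf{Verdict: genuine gap.} Your $H$-level estimate and the weak passage to the limit are fine; they coincide with \eqref{PEeq-MDPskees-4.14}--\eqref{PEeq-MDPskees-4.17}. The gap sits exactly at the point you flagged, and it also undermines Step~3.

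\textbf{Where the argument fails.}
\begin{itemize}
\item The exponent count is wrong. $\|\eta\|\in L^2(0,T)$ times $|\partial_zv^0|^{1/2}\|\partial_zv^0\|^{1/2}\in L^4(0,T)$ lies only in $L^{4/3}(0,T)$. That particular term is in fact harmless: the anisotropic part of \eqref{PEeq-2.11} gives $\|\mathcal W(\eta)\partial_zv^0\|_{V'}\le C\|\eta\|\,|\partial_zv^0|\in L^2(0,T)$, since $\sup_t|\partial_zv^0|<\infty$.
\item The term that actually blocks $\partial_t\eta\in L^2(0,T;V')$ is $B(v^0,\eta)$, in particular $\mathcal W(v^0)\partial_z\eta$. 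With only $\eta\in L^\infty(0,T;H)\cap L^2(0,T;V)$, \eqref{PEeq-2.11} and antisymmetry give $\|B(v^0,\eta)\|_{V'}\le C\|v^0\|\,|\eta|^{1/2}\|\eta\|^{1/2}$. This is again only $L^{4/3}$ in time, because for $v_0\in\mathcal H$ you know no more than $\|v^0\|\in L^2(0,T)$.
\item Likewise, \eqref{PEeq-2.0013}, which you invoke for $B(\eta_m,v^0)$, contains $|\partial_z\eta_m|\,|\partial_xv^0|$. It therefore presupposes a bound on $\partial_z\eta_m$ that Step~1 does not provide.
\item Your fallback does not close the gap. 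The Galerkin energy equality survives the limit only as an inequality, by lower semicontinuity. Upgrading it to an equality for $\eta$ requires exactly the chain rule you are missing.
\item Step~3 applies a Galerkin-level computation to the difference $\bar\eta$ of two arbitrary solutions. That needs the identity $\frac{d}{dt}|\bar\eta|^2=2\langle\bar\eta',\bar\eta\rangle$, i.e.\ the same Lions--Magenes hypothesis $\bar\eta'\in L^2(0,T;V')$.
\end{itemize}

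\textbf{The missing ingredient} is a vertical energy estimate for $\eta$ itself. This is where \textbf{(B.3)} enters; your argument never uses it, which is a symptom of the gap.
\begin{itemize}
\item Test the Galerkin system with $-\partial_{zz}\eta_m$. Integrate by parts in $z$ and use $b(v^0,\partial_z\eta_m,\partial_z\eta_m)=0$. The contribution of $B(v^0,\eta_m)$ then reduces to $(\partial_xv^0\,\partial_z\eta_m,\partial_z\eta_m)-(\partial_zv^0\,\partial_x\eta_m,\partial_z\eta_m)$.
\item The contribution of $B(\eta_m,v^0)$ is handled similarly using $\partial_zv^0\in L^2(0,T;V)$. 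Both are controlled by \eqref{PEeq-MDPderm-4.10} and your $H$-level bound.
\item By \textbf{(B.3)}, the forcing satisfies $|\partial_zf_\psi(t)|\le\|\widetilde C_h\|_{L^2(\nu)}(1+|\partial_zv^0(t)|)\|\psi(t,\cdot)\|_{L^2(\nu)}$.
\item Gronwall then yields $\partial_z\eta\in L^\infty(0,T;H)\cap L^2(0,T;V)$. This is precisely the bound \eqref{PEeq-MDPskees-4.18}--\eqref{PEeq-MDPskees-4.23} the paper derives in Lemma~\ref{PEeq-MDPCOnd1lem-4.10}, following the argument of \cite{ZRR-2021} that it cites for this proposition.
\end{itemize}
With this bound, \eqref{PEeq-2.0013} places both $B(\eta,v^0)$ and $B(v^0,\eta)$ in $L^2(0,T;V')$. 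Lions--Magenes then gives $\eta\in C([0,T];H)$ together with the energy identity. Your Gronwall argument then proves uniqueness among solutions that have this vertical regularity.
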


By Proposition \ref{PEeq-MDPskeequpro-4.9}, we define the solution map
$\mathscr{G}^{0}: L^2(\nu_{T}) \rightarrow C([0,T];H)\cap L^2([0,T];V)$ by
\begin{align}\label{PEeq-MDPmap0-4.12}
	\mathscr{G}^{0}(\psi)=\eta \text{~~for~} \psi \in  L^2(\nu_{T}), \text{~where~} (\eta,\psi) \text{~solves~} \eqref{PEeq-MDPskeequ-4.8}.
\end{align}
Its continuity, and hence Borel measurability, will follow from Lemma~\ref{PEeq-MDPCOnd1lem-4.10}.

\begin{lemma}\label{PEeq-MDPCOnd1lem-4.10}
	Suppose Hypotheses \ref{PEeqAssum-2.3} and \ref{PEeqAssum-2.5} hold. Let $R>0$ and let $q^{\epsilon},q\in\mathbb B_\nu(R)$ satisfy $q^{\epsilon}\rightharpoonup q$ weakly in $L^2(\nu_T)$. Then $\mathscr G^{0}(q^{\epsilon})\to\mathscr G^{0}(q)$ in $C([0,T];H)\cap L^2(0,T;V)$.
\end{lemma}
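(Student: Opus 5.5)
Write $\eta^\epsilon=\mathscr G^0(q^\epsilon)$ and $\eta=\mathscr G^0(q)$. The plan follows the strategy announced in the introduction: uniform bounds, then compactness, then identification of the limit, then an upgrade to strong convergence through the difference energy identity.

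\textbf{Step 1 (uniform bounds).} Test \eqref{PEeq-MDPskeequ-4.8} with $\eta^\epsilon$ and use $b(v^0,\eta^\epsilon,\eta^\epsilon)=0$. For the term $b(\eta^\epsilon,v^0,\eta^\epsilon)$, use \eqref{PEeq-2.103}-type bounds, which give
\[
|b(\eta^\epsilon,v^0,\eta^\epsilon)|\le \tfrac14\|\eta^\epsilon\|^2+C(\|v^0\|^2+|\partial_zv^0|^4)|\eta^\epsilon|^2 .
\]
For the control term, (B.1) and Cauchy--Schwarz in $\xi$ give
\[
\Bigl|\int_E(h(v^0,\xi),\eta^\epsilon)q^\epsilon\,\nu(d\xi)\Bigr|\le \|C_h\|_{L^2(\nu)}(1+|v^0|)|q^\epsilon(t)|_{L^2(\nu)}|\eta^\epsilon|.
\]
Gronwall together with \eqref{PEeq-MDPderm-4.10} then yields
\[
\sup_\epsilon\Bigl(\sup_t|\eta^\epsilon|^2+\int_0^T\|\eta^\epsilon\|^2\Bigr)\le C_{R,T}.
\]
The weight $\int(\|v^0\|^2+|\partial_zv^0|^4)\,dt$ is finite because $\sup_t|\partial_zv^0|^2\int|\partial_zv^0|^2<\infty$.

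\textbf{Step 2 (time regularity and compactness).} Estimate $\frac{d}{dt}\eta^\epsilon$ in $L^{4/3}(0,T;V')$, or if needed in the dual of a smoother space such as $D(A)'$. The bilinear terms are controlled using \eqref{PEeq-2.11}, where the term $\mathcal W(\eta^\epsilon)\partial_z v^0$ is handled by the bound $|\partial_x\eta^\epsilon||\partial_zv^0|^{1/2}\|\partial_zv^0\|^{1/2}$-type estimates. The control term is bounded in $L^2(0,T;H)$. This gives a uniform $W^{\alpha,p}(0,T;V')$ bound, so Lemma~\ref{PEeq-MDPCpctness-4.1} yields relative compactness of $\{\eta^\epsilon\}$ in $L^2(0,T;H)$. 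Along a subsequence, $\eta^\epsilon\to\tilde\eta$ strongly in $L^2(0,T;H)$, weakly in $L^2(0,T;V)$, and weak-$*$ in $L^\infty(0,T;H)$.

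\textbf{Step 3 (identification of the limit).} Pass to the limit in the weak formulation with test functions $\varphi\in\mathcal V$. The linear-in-$\eta$ convection terms converge by the strong $L^2H$ and weak $L^2V$ convergence. For the control term, fix $\varphi$ and observe that $(t,\xi)\mapsto (h(v^0(t),\xi),\varphi)\mathbf 1_{[0,s]}(t)$ lies in $L^2(\nu_T)$ by (B.1). Weak convergence $q^\epsilon\rightharpoonup q$ therefore gives convergence of this term. Uniqueness in Proposition~\ref{PEeq-MDPskeequpro-4.9} then gives $\tilde\eta=\eta$, and so the whole sequence converges.

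\textbf{Step 4 (upgrade to strong convergence).} This is the main obstacle, and here I will use the difference energy identity. Set $Z^\epsilon=\eta^\epsilon-\eta$. Then
\[
|Z^\epsilon(t)|^2+2\int_0^t\|Z^\epsilon\|^2=-2\int_0^t\bigl(b(Z^\epsilon,v^0,Z^\epsilon)\bigr)ds+2\int_0^t\!\!\int_E(h(v^0,\xi),Z^\epsilon)(q^\epsilon-q)\,\nu(d\xi)ds .
\]
The first term is absorbed by the same Gronwall weight as in Step 1. The second term is bounded by
\[
C\sup_t(1+|v^0|)\,\|C_h\|_{L^2(\nu)}\,\|q^\epsilon-q\|_{L^2(\nu_T)}\,\|Z^\epsilon\|_{L^2(0,T;H)}\le C_R\|Z^\epsilon\|_{L^2(0,T;H)}\to0,
\]
uniformly in $t$, by Steps 2--3. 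The key point is that we never need $\|q^\epsilon-q\|\to0$, only its boundedness paired with strong $L^2H$ convergence. Gronwall then gives
\[
\sup_t|Z^\epsilon|^2+\int_0^T\|Z^\epsilon\|^2\to0,
\]
that is, convergence in $C([0,T];H)\cap L^2(0,T;V)$.

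The delicate points are controlling $b(Z^\epsilon,v^0,Z^\epsilon)$ with only $\partial_zv^0\in L^\infty H\cap L^2V$, and the time-derivative bound for the vertical convection term in Step 2.
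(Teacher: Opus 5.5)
\textbf{Gap in the identification step (your Step~3).} Your overall plan is the paper's: uniform bounds, compactness in $L^2(0,T;H)$ via Lemma~\ref{PEeq-MDPCpctness-4.1}, identification by pairing $q^\epsilon$ with fixed $L^2(\nu_T)$ functions, and the difference energy identity. That last step uses only $\|q^\epsilon-q\|_{L^2(\nu_T)}\le 2R$ together with $\|Z^\epsilon\|_{L^2(0,T;H)}\to0$, and your Step~4 is essentially the paper's Step~4.

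The one departure is that you stop at $H$-level bounds. For compactness alone this works. Using \eqref{PEeq-2.11} and the antisymmetry of $b$, both linearized convection terms are uniformly bounded in $L^{4/3}(0,T;V')$; the worst contributions are of the form $\|\eta^\epsilon\|\,\|v^0\|^{1/2}$ and $\|v^0\|\,\|\eta^\epsilon\|^{1/2}$. This yields a $W^{\alpha,2}(0,T;V')$ bound for $\alpha<1/4$.

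The problem is Step~3. Your compactness limit $\tilde\eta$ is only known to lie in $L^\infty(0,T;H)\cap L^2(0,T;V)$ with $\tilde\eta'\in L^{4/3}(0,T;V')$. As far as your bounds show, it is only weakly continuous in $H$. So it is not in the class $C([0,T];H)\cap L^2(0,T;V)$ where Proposition~\ref{PEeq-MDPskeequpro-4.9} gives uniqueness.

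You also cannot show $\tilde\eta=\eta$ by an energy argument on $w=\tilde\eta-\eta$. The pairing $\langle B(v^0,w),w\rangle$ contains the vertical transport $\mathcal W(v^0)\partial_z w$. It is controlled only by $\|v^0\|\,\|w\|^{3/2}$ (times a power of $\sup_t|w|$), which lies in $L^{4/5}(0,T)$ but not necessarily in $L^1(0,T)$. Hence the Lions--Magenes chain rule is unavailable. Since Step~4 needs $\|Z^\epsilon\|_{L^2(0,T;H)}\to0$, which is exactly this identification, the gap carries through to the conclusion.

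This is what the paper's vertical estimate \eqref{PEeq-MDPskees-4.23} supplies, even though the paper does not spell out this role. The estimate passes to the limit, so $\partial_z\hat\eta\in L^\infty(0,T;H)\cap L^2(0,T;V)$. Then \eqref{PEeq-2.0013} puts $\hat\eta'$ in $L^2(0,T;V')$, and $\hat\eta$ lies in $C([0,T];H)$ with the energy identity. With that regularity the appeal to uniqueness is legitimate.

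You can repair your argument in either of two ways:
\begin{itemize}
\item[(i)] Add the paper's estimate: test with $-\partial_{zz}\eta^\epsilon$ and use \textbf{(B.3)} for the control term, as in \eqref{PEeq-MDPskees-4.18}--\eqref{PEeq-MDPskees-4.23}.
\item[(ii)] More cheaply, bypass uniqueness in a weak class. The skeleton equation is linear in $(\eta,\psi)$ with $\eta(0)=0$. So, by the uniqueness in Proposition~\ref{PEeq-MDPskeequpro-4.9}, $\mathscr G^0$ is a linear map. Your Step~1 shows it is bounded from $L^2(\nu_T)$ into $L^2(0,T;V)$. A bounded linear operator is weak-to-weak continuous, so $\eta^\epsilon\rightharpoonup\eta$ in $L^2(0,T;V)$ directly. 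Your compactness then forces every strong $L^2(0,T;H)$ limit point to equal $\eta$.
\end{itemize}
With fix~(ii), your $H$-level-only route is complete and shorter than the paper's. It never needs vertical bounds on $\eta^\epsilon$ or on the limit.
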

\begin{proof}
	Let $\eta^{\epsilon}:=\mathscr{G}^{0}(q^{\epsilon})$ and $\eta:=\mathscr{G}^{0}(q)$. 
	
\textbf{Step 1.} We first establish the following estimates.
%		\begin{align}\label{PEeq-MDPskees-4.13}
%		\sup_{t\in [0,T]}|\eta^{\epsilon}(t)|^2+\int_{0}^{T}\|\eta^{\epsilon}(s)\|^2ds \leq C_{R,T}.
%	\end{align}
	\begin{align}\label{PEeq-MDPskees-4.13}
		\sup_{t\in [0,T]}\left(|\eta^{\epsilon}(t)|^2+|\partial_{z}\eta^{\epsilon}(t)|^2\right)+\int_{0}^{T}\left(\|\eta^{\epsilon}(s)\|^2+\|\partial_{z}\eta^{\epsilon}(s)\|^2\right)ds \leq C_{R,T}.
	\end{align}
%	where $C_{R,T}$ is a positive constant.
Replacing \( \eta(t) \) by \( \eta^{\epsilon}(t) \) in \eqref{PEeq-MDPskeequ-4.8} and then taking the inner product of the resulting equation with \( \eta^{\epsilon}(t) \) in \( H \), we obtain
\begin{align}\label{PEeq-MDPskees-4.14}
	\begin{split}
		&~~|\eta^{\epsilon}(t)|^2+2\int_{0}^{t}\|\eta^{\epsilon}(s)\|^2ds=-2\int_{0}^{t} \left(B(\eta^{\epsilon}(s),v^0(s)),\eta^{\epsilon}(s)\right)ds\\
		&
		+2\int_{0}^{t} \int_{E} \left(h(v^0(s),\xi)q^\epsilon(\xi,s),\eta^{\epsilon}(s)\right)\nu(d\xi)ds:=\mathscr{T}_1(t)+\mathscr{T}_2(t),
	\end{split}
\end{align}	
	where we have used the cancellation property \eqref{PEeq-2.13}. Estimate~\eqref{PEeq-2.11}, Young's inequality, and \eqref{PEeq-MDPderm-4.10} give, for every $t\in[0,T]$,
\begin{align}\label{PEeq-MDPskees-4.15}
	\begin{split}
		|\mathscr{T}_1(t)|
		&\leq C\int_{0}^{t}\left(|\eta^{\epsilon}(s)|\|\eta^{\epsilon}(s)\|\|v^{0}(s)\|+|\partial_{x} \eta^{\epsilon}(s)||\partial_{z}v^{0}(s)|\|\eta^{\epsilon}(s)\|^{1/2}|\eta^{\epsilon}(s)|^{1/2}\right)ds\\
		&\leq \int_{0}^{t}\|\eta^{\epsilon}(s)\|^2ds
		+C\int_{0}^{t} |\eta^{\epsilon}(s)|^2\|v^{0}(s)\|^2ds+C\int_{0}^{t}|\partial_{z}v^{0}(s)|^4 |\eta^{\epsilon}(s)|^2ds\\
		&\leq \int_{0}^{t}\|\eta^{\epsilon}(s)\|^2ds
		+C\int_{0}^{t} |\eta^{\epsilon}(s)|^2\|v^{0}(s)\|^2ds+C\sup_{t\in [0,T]}|\partial_{z}v^{0}(t)|^4\int_{0}^{t} |\eta^{\epsilon}(s)|^2ds\\
		&\leq \int_{0}^{t}\|\eta^{\epsilon}(s)\|^2ds+C_{T}\int_{0}^{t} |\eta^{\epsilon}(s)|^2(1+\|v^{0}(s)\|^2)ds.
	\end{split}
\end{align}
	Condition \textbf{(B.1)} in Hypothesis~\ref{PEeqAssum-2.3}, Young's inequality, and \eqref{PEeq-MDPderm-4.10} yield, for every $t\in[0,T]$,
	\begin{align}\label{PEeq-MDPskees-4.16}
%		\begin{split}
			|\mathscr{T}_2(t)|
			&\leq C\int_{0}^{t}\int_{E} C_{h}(\xi)\left(1+|v^{0}(s)|\right)|q^\epsilon(\xi,s)||\eta^{\epsilon}(s)|\nu(d\xi)ds \notag \\
			&\leq C\int_{0}^{t}\int_{E} \left(\frac{5}{4}+|v^{0}(s)|^2\right)\left(C_{h}^2(\xi)+|q^\epsilon(\xi,s)|^2\right)\left(\frac{1}{4}+|\eta^{\epsilon}(s)|^2\right)\nu(d\xi)ds \notag \\
			&\leq C\sup_{t\in [0,T]}\left(\frac{5}{4}+|v^{0}(t)|^2\right)\int_{0}^{t}\int_{E} \left(C_{h}^2(\xi)+|q^\epsilon(\xi,s)|^2\right)\left(1+|\eta^{\epsilon}(s)|^2\right)\nu(d\xi)ds \notag \\
			&\leq C_{T}\int_{0}^{T}\int_{E}C_{h}^2(\xi)\nu(d\xi)ds+C_{T}\int_{0}^{T}\int_{E}|q^\epsilon(\xi,s)|^2\nu(d\xi)ds \notag \\
			&+C_{T}\int_{E}C_{h}^2(\xi)\nu(d\xi)\int_{0}^{t}|\eta^{\epsilon}(s)|^2ds+C_{T}\int_{0}^{t}\int_{E}|q^\epsilon(\xi,s)|^2|\eta^{\epsilon}(s)|^2\nu(d\xi)ds \notag \\
			&= C_{T}\left(T\|C_{h}\|_{L^2(\nu)}^{2}+\|q^{\epsilon}\|_{L^2(\nu_{T})}^{2}\right)+C_{T}\int_{0}^{t}|\eta^{\epsilon}(s)|^2\left(\|C_{h}\|_{L^2(\nu)}^{2}+\int_{E}|q^\epsilon(\xi,s)|^2\nu(d\xi)\right)ds \notag \\
			&\leq C_{T,R}+C_{T}\int_{0}^{t}|\eta^{\epsilon}(s)|^2\left(1+\int_{E}|q^\epsilon(\xi,s)|^2\nu(d\xi)\right)ds.
%		\end{split}
	\end{align}
	Combining \eqref{PEeq-MDPskees-4.13}--\eqref{PEeq-MDPskees-4.16} yields
	\begin{align*}
			|\eta^{\epsilon}(t)|^2+\int_{0}^{t}\|\eta^{\epsilon}(s)\|^2ds\leq C_{T,R}+C_{T}\int_{0}^{t}|\eta^{\epsilon}(s)|^2\left(1+\int_{E}|q^\epsilon(\xi,s)|^2\nu(d\xi)+\|v^{0}(s)\|^2\right)ds
	\end{align*}
	Applying Gronwall's inequality and \eqref{PEeq-MDPderm-4.10} gives %\eqref{PEeq-MDPskees-4.13}.
	\begin{align}\label{PEeq-MDPskees-4.17}
		\sup_{t\in [0,T]}|\eta^{\epsilon}(t)|^2+\int_{0}^{T}\|\eta^{\epsilon}(s)\|^2ds \leq C_{R,T}. 
	\end{align}
	
	Moreover, we replace $\eta(t)$ by $\eta^{\epsilon}(t)$ in \eqref{PEeq-MDPskeequ-4.8} and take the inner product of the resulting equation with $\partial_{zz} \eta^{\epsilon}(t)$ in $H$. Using integration by parts together with the cancellation property \eqref{PEeq-2.13}, we obtain
	\begin{align}\label{PEeq-MDPskees-4.18}
			&~~|\partial_{z}\eta^{\epsilon}(t)|^2+2\int_{0}^{t}\|\partial_{z}\eta^{\epsilon}(s)\|^2ds=-2\int_{0}^{t} \left(B(\eta^{\epsilon}(s),v^0(s)),\partial_{zz}\eta^{\epsilon}(s)\right)ds\\ \notag
			&-2\int_{0}^{t} \left(B(v^0(s),\eta^{\epsilon}(s)),\partial_{zz}\eta^{\epsilon}(s)\right)ds
			+2\int_{0}^{t} \int_{E} \left(h(v^0(s),\xi)q^\epsilon(\xi,s),\partial_{zz}\eta^{\epsilon}(s)\right)\nu(d\xi)ds:=\sum_{j=3}^{5}\mathscr{T}_{j}(t),
	\end{align}	
For $\mathscr{T}_{3}(t)$, by H\"{o}lder's inequality and interpolation inequality, we have
	\begin{align}\label{PEeq-MDPskees-4.19}
    &|\mathscr{T}_{3}(t)|\leq C\int_{0}^{t}\int_{\mathcal{M}} |\partial_{z}\eta^{\epsilon}(s)| |\partial_{x}v^{0}(s)| |\partial_{z}\eta^{\epsilon}(s)|dxdz ds+C\int_{0}^{t}\int_{\mathcal{M}} |\eta^{\epsilon}(s)| |\partial_{xz}v^{0}(s)| |\partial_{z}\eta^{\epsilon}(s)|dxdz ds\notag\\
    &+C\int_{0}^{t}\left|\int_{\mathcal{M}} \mathcal{W}(\eta^{\epsilon})(s)\left(\partial_{z}(v^0(s)\partial_{z}\eta^{\epsilon}(s))-\partial_{z}v^0(s)\partial_{z}\eta^{\epsilon}(s)\right)dxdz\right| ds\notag\\
    &\leq C\int_{0}^{t}|\partial_{z}\eta^{\epsilon}(s)| \|v^{0}(s)\| \|\partial_{z}\eta^{\epsilon}(s)\| ds+C\int_{0}^{t} |\eta^{\epsilon}(s)|^{1/2} \|\eta^{\epsilon}(s)\|^{1/2} \|\partial_{z}v^{0}(s)\| |\partial_{z}\eta^{\epsilon}(s)|^{1/2} \|\partial_{z}\eta^{\epsilon}(s)\|^{1/2} ds\notag\\
    &+C\int_{0}^{t}\int_{\mathcal{M}} |\partial_{x}\eta^{\epsilon}(s)| |v^0(s)||\partial_{z}\eta^{\epsilon}(s)|dxdz ds+C\int_{0}^{t}\int_{\mathcal{M}} |\mathcal{W}(\eta^{\epsilon})(s)| |\partial_{z}v^0(s)||\partial_{z}\eta^{\epsilon}(s)|dxdz ds\notag\\
    &\leq C\int_{0}^{t}|\partial_{z}\eta^{\epsilon}(s)| \|v^{0}(s)\| \|\partial_{z}\eta^{\epsilon}(s)\| ds+C\int_{0}^{t} |\eta^{\epsilon}(s)|^{1/2} \|\eta^{\epsilon}(s)\|^{1/2} \|\partial_{z}v^{0}(s)\| |\partial_{z}\eta^{\epsilon}(s)|^{1/2} \|\partial_{z}\eta^{\epsilon}(s)\|^{1/2} ds\notag\\
    &+C\int_{0}^{t} \|\eta^{\epsilon}(s)\| |v^0(s)|^{1/2}\|v^0(s)\|^{1/2}|\partial_{z}\eta^{\epsilon}(s)|^{1/2}\|\partial_{z}\eta^{\epsilon}(s)\|^{1/2} ds\notag\\
    &+C\int_{0}^{t}\|\eta^{\epsilon}(s)\| |\partial_{z}v^0(s)|^{1/2}\|\partial_{z}v^0(s)\|^{1/2}|\partial_{z}\eta^{\epsilon}(s)|^{1/2}\|\partial_{z}\eta^{\epsilon}(s)\|^{1/2} ds:=\sum_{j=6}^{9}\mathscr{T}_{j}(t).
	\end{align}	
Using Young's inequality yields
\begin{align*}
%	\begin{split}
		&\mathscr{T}_{6}(t)\leq C\int_{0}^{t}|\partial_{z}\eta^{\epsilon}(s)|^2 \|v^{0}(s)\|^2  ds+\frac{1}{8}\int_{0}^{t}\|\partial_{z}\eta^{\epsilon}(s)\|^2 ds,\\
		&\mathscr{T}_{7}(t)\leq C\int_{0}^{t}    |\partial_{z}\eta^{\epsilon}(s)|^{2}\|\eta^{\epsilon}(s)\|^{2}  ds
		+C\int_{0}^{t}  |\eta^{\epsilon}(s)| \|\partial_{z}v^{0}(s)\|^{2}  ds
		+\frac{1}{4}\int_{0}^{t}\|\partial_{z}\eta^{\epsilon}(s)\|^2 ds,\\
		&\mathscr{T}_{8}(t)\leq C\int_{0}^{t} |\partial_{z}\eta^{\epsilon}(s)|^{2}\|v^0(s)\|^{2} ds+C\int_{0}^{t} |v^0(s)|\|\eta^{\epsilon}(s)\|^{2} ds+\frac{1}{4}\int_{0}^{t}\|\partial_{z}\eta^{\epsilon}(s)\|^2 ds,\\
		&\mathscr{T}_{9}(t)\leq C\int_{0}^{t}|\partial_{z}\eta^{\epsilon}(s)|^{2}\|\partial_{z}v^0(s)\|^{2}ds+C\int_{0}^{t}|\partial_{z}v^0(s)| \|\eta^{\epsilon}(s)\|^{2}ds+\frac{1}{4}\int_{0}^{t}\|\partial_{z}\eta^{\epsilon}(s)\|^2 ds,
%	\end{split}
\end{align*}	
Together with \eqref{PEeq-MDPskees-4.19}, \eqref{PEeq-MDPderm-4.10}, and \eqref{PEeq-MDPskees-4.17}, this yields, for every $t\in [0,T]$,
\begin{align}\label{PEeq-MDPskees-4.20}
	\begin{split}
		|\mathscr{T}_{3}(t)|&\leq C\int_{0}^{t}|\partial_{z}\eta^{\epsilon}(s)|^2 \left(\|v^{0}(s)\|^2+\|\eta^{\epsilon}(s)\|^{2}+\|\partial_{z}v^0(s)\|^{2}\right)  ds+\frac{7}{8}\int_{0}^{t}\|\partial_{z}\eta^{\epsilon}(s)\|^2 ds\\
		&+C\left(\sup_{t \in [0,T]}(|\eta^{\epsilon}(t)|+|v^{0}(t)|+|\partial_{z}v^0(t)|)\right)\int_{0}^{t}  \left(\|\partial_{z}v^{0}(s)\|^{2}+\|\eta^{\epsilon}(s)\|^{2}\right)  ds\\
		&\leq C\int_{0}^{t}|\partial_{z}\eta^{\epsilon}(s)|^2 \left(\|v^{0}(s)\|^2+\|\eta^{\epsilon}(s)\|^{2}+\|\partial_{z}v^0(s)\|^{2}\right)  ds+\frac{7}{8}\int_{0}^{t}\|\partial_{z}\eta^{\epsilon}(s)\|^2 ds+C_{R,T}.
	\end{split}
\end{align}	
Similarly, for every $t\in[0,T]$, $\mathscr T_4(t)$ satisfies
\begin{align}\label{PEeq-MDPskees-4.21}
	|\mathscr{T}_{4}(t)|&\leq C\int_{0}^{t}\int_{\mathcal{M}} |\partial_{z}v^{0}(s)| |\partial_{x}\eta^{\epsilon}(s)| |\partial_{z}\eta^{\epsilon}(s)|dxdz ds+C\int_{0}^{t}\int_{\mathcal{M}} |v^{0}(s)| |\partial_{xz}\eta^{\epsilon}(s)| |\partial_{z}\eta^{\epsilon}(s)|dxdz ds\notag\\
	&+C\int_{0}^{t}\left|\frac{1}{2}\int_{\mathcal{M}} \mathcal{W}(v^{0})(s)\partial_{z}(\partial_{z}\eta^{\epsilon}(s))^2dxdz\right| ds\notag\\
	&\leq C\int_{0}^{t}|\partial_{z}v^{0}(s)|^{1/2}\|\partial_{z}v^{0}(s)\|^{1/2} \|\eta^{\epsilon}(s)\| |\partial_{z}\eta^{\epsilon}(s)|^{1/2}\|\partial_{z}\eta^{\epsilon}(s)\|^{1/2} ds\notag\\
	&+C\int_{0}^{t}|v^{0}(s)|^{1/2} \|v^{0}(s)\|^{1/2} \|\partial_{z}\eta^{\epsilon}(s)\| |\partial_{z}\eta^{\epsilon}(s)|^{1/2} \|\partial_{z}\eta^{\epsilon}(s)\|^{1/2}ds\notag\\
	&+C\int_{0}^{t} |\partial_{z}\eta^{\epsilon}(s)| \|\partial_{z}\eta^{\epsilon}(s)\| \|v^{0}(s)\|ds\notag\\
	&\leq C\int_{0}^{t} |\partial_{z}\eta^{\epsilon}(s)|^2\left((|v^{0}(s)|^2+1)\|v^{0}(s)\|^{2}+\|\partial_{z}v^{0}(s)\|^{2}\right)ds+\frac{5}{8}\int_{0}^{t}\|\partial_{z}\eta^{\epsilon}(s)\|^2 ds\notag\\
	&
	+C\sup_{t \in [0,T]}|\partial_{z}v^{0}(t)|\int_{0}^{t}\|\eta^{\epsilon}(s)\|^2ds\notag\\
	&\leq C\int_{0}^{t} |\partial_{z}\eta^{\epsilon}(s)|^2\left((|v^{0}(s)|^2+1)\|v^{0}(s)\|^{2}+\|\partial_{z}v^{0}(s)\|^{2}\right)ds+\frac{5}{8}\int_{0}^{t}\|\partial_{z}\eta^{\epsilon}(s)\|^2 ds+C_{R,T}.
\end{align}	
It remains to handle $\mathscr{T}_{5}(t)$, by 
 condition \textbf{(B.3)} in Hypothesis \ref{PEeqAssum-2.3}, Young's inequality and \eqref{PEeq-MDPderm-4.10}, we deduce that for any $t\in [0,T]$,
\begin{align}\label{PEeq-MDPskees-4.22}
		|\mathscr{T}_{5}(t)|
		&\leq C\int_{0}^{t} \int_{E} \left|\left(\partial_{z}h(v^0(s),\xi)q^\epsilon(\xi,s),\partial_{z}\eta^{\epsilon}(s)\right)\right|\nu(d\xi)ds\notag\\
		&\leq C\int_{0}^{t}\int_{E} \widetilde{C}_{h}(\xi)\left(1+|\partial_{z}v^{0}(s)|\right)|q^\epsilon(\xi,s)||\partial_{z}\eta^{\epsilon}(s)|\nu(d\xi)ds\notag\\
%		&\leq C\int_{0}^{t}\int_{E} \left(\frac{5}{4}+|v^{0}(s)|^2\right)\left(C_{h}^2(\xi)+|q^\epsilon(\xi,s)|^2\right)\left(\frac{1}{4}+|\eta^{\epsilon}(s)|^2\right)\nu(d\xi)ds\\
		&\leq C\sup_{t\in [0,T]}\left(1+|\partial_{z}v^{0}(t)|^2\right)\int_{0}^{t}\int_{E} \left(\widetilde{C}_{h}^2(\xi)+|q^\epsilon(\xi,s)|^2\right)\left(1+|\partial_{z}\eta^{\epsilon}(s)|^2\right)\nu(d\xi)ds\notag\\
		&\leq C_{T}\int_{E}\widetilde{C}_{h}^2(\xi)\nu(d\xi)+C_{T}\int_{0}^{T}\int_{E}|q^\epsilon(\xi,s)|^2\nu(d\xi)ds\notag\\
		&+C_{T}\int_{E}\widetilde{C}_{h}^2(\xi)\nu(d\xi)\int_{0}^{t}|\partial_{z}\eta^{\epsilon}(s)|^2ds+C_{T}\int_{0}^{t}\int_{E}|q^\epsilon(\xi,s)|^2|\partial_{z}\eta^{\epsilon}(s)|^2\nu(d\xi)ds\notag\\
%		&= C_{T}\left(T\|C_{h}\|_{L^2(\nu)}^{2}+\|q^{\epsilon}\|_{L^2(\nu_{T})}^{2}\right)+C_{T}\int_{0}^{t}|\eta^{\epsilon}(s)|^2\left(\|C_{h}\|_{L^2(\nu)}^{2}+\int_{E}|q^\epsilon(\xi,s)|^2\nu(d\xi)\right)ds\\
		&\leq C_{R,T}+C_{T}\int_{0}^{t}|\partial_{z}\eta^{\epsilon}(s)|^2\left(1+\int_{E}|q^\epsilon(\xi,s)|^2\nu(d\xi)\right)ds.
\end{align}
By \eqref{PEeq-MDPskees-4.18}, \eqref{PEeq-MDPskees-4.20}, \eqref{PEeq-MDPskees-4.21} and \eqref{PEeq-MDPskees-4.22}, we have
\begin{align*}
		&~~|\partial_{z}\eta^{\epsilon}(t)|^2+\frac{1}{2}\int_{0}^{t}\|\partial_{z}\eta^{\epsilon}(s)\|^2ds\\
		&\leq C\int_{0}^{t} |\partial_{z}\eta^{\epsilon}(s)|^2\left((|v^{0}(s)|^2+1)\|v^{0}(s)\|^{2}+\|\eta^{\epsilon}(s)\|^2+\|\partial_{z}v^{0}(s)\|^{2}+\int_{E}|q^\epsilon(\xi,s)|^2\nu(d\xi)+1\right)ds+C_{R,T},
\end{align*}	
Together with Gronwall's inequality, \eqref{PEeq-MDPderm-4.10}, \eqref{PEeq-MDPskees-4.17}, and $q^\epsilon \in \mathbb{B}_{\nu}(R)$, this yields
\begin{align}\label{PEeq-MDPskees-4.23}
	\sup_{t \in [0,T]}|\partial_{z}\eta^{\epsilon}(t)|^2+\int_{0}^{T}\|\partial_{z}\eta^{\epsilon}(s)\|^2ds\leq C_{R,T}.
\end{align}
Thus, by \eqref{PEeq-MDPskees-4.17} and \eqref{PEeq-MDPskees-4.23} we obtain \eqref{PEeq-MDPskees-4.13}.

%-------------------------------------------------------------sole lemma in above

\textbf{Step 2.} We next prove that there exist $C_{R,T,\widetilde{\alpha}}, C_{R,T,\widehat{\alpha}} >0$ such that for $ \widetilde{\alpha}\in (0,\frac{1}{2})$,
\begin{align}\label{PEeq-MDPskeesr-4.24}
	\|\eta^{\epsilon}\|^2_{W^{\widetilde{\alpha},2}([0,T];V')}\leq C_{R,T,\widetilde{\alpha}},
\end{align}
and for $ \widehat{\alpha}\in (0,\frac{1}{4})$,
\begin{align}\label{PEeq-MDPskeesr-4.25}
	\|\partial_{z}\eta^{\epsilon}\|^2_{W^{\widehat{\alpha},2}([0,T];V')}\leq C_{R,T,\widehat{\alpha}}.
\end{align}

From \eqref{PEeq-MDPskeequ-4.8}, we have 
\begin{align}\label{PEeq-MDPskeesr-4.26}
	\begin{split}
		\eta^{\epsilon}(t)&=-\int_{0}^{t } A\eta^{\epsilon}(s)ds-\int_{0}^{t } B(\eta^{\epsilon}(s),v^0(s))ds-\int_{0}^{t }B(v^0(s),\eta^{\epsilon}(s))ds\\
		&+\int_{0}^{t}\int_{E}h(v^0(s),\xi)q^\epsilon(\xi,s) \nu(d\xi)ds:=\sum_{j=1}^{4}\mathfrak{J}_{j}(t), 
	\end{split}
\end{align}	
Since $\|A\eta^{\epsilon}\|_{V'}^2\leq C\|\eta^{\epsilon}\|^2$, Minkowski's and H\"{o}lder's inequalities give, for $0\leq s<t\leq T$,
\begin{align*}%\label{PEeq-MDPskeesr-4.27}
%	\begin{split}
		\|\mathfrak{J}_{1}(t)-\mathfrak{J}_{1}(s)\|_{V'}^{2}&=\|\int_{s}^{t } A\eta^{\epsilon}(r)dr\|_{V'}^{2}\leq \left(\int_{s}^{t } \|A\eta^{\epsilon}(r)\|_{V'}dr\right)^{2}\\
		&\leq  (t-s)\int_{s}^{t } \|A\eta^{\epsilon}(r)\|_{V'}^{2}dr \leq C(t-s)\int_{s}^{t } \|\eta^{\epsilon}(r)\|^2dr,
%	\end{split}
\end{align*}
Combining this estimate with \eqref{PEeq-MDPskees-4.13} yields, for every $\widetilde{\alpha}\in (0,\frac{1}{2})$,
\begin{align}\label{PEeq-MDPskeesr-4.27}
	\begin{split}
		\|\mathfrak{J}_{1}\|^2_{W^{\widetilde{\alpha},2}([0,T];V')}
		&=\int_{0}^{T}\|\mathfrak{J}_{1}(t)\|_{V'}^2dt+\int_{0}^{T}\int_{0}^{T} \frac{\|\mathfrak{J}_{1}(t)-\mathfrak{J}_{1}(s)\|_{V'}^{2}}{|t-s|^{1+2\widetilde{\alpha}}} dtds\\
		&\leq C_{T}\int_{0}^{T} \|\eta^{\epsilon}(s)\|^2ds+C\int_{0}^{T} \|\eta^{\epsilon}(s)\|^2ds \int_{0}^{T}\int_{0}^{T} \frac{1}{|t-s|^{2\widetilde{\alpha}}} dtds\leq C_{R,T,\widetilde{\alpha}}.
	\end{split}
\end{align}

Estimate~\eqref{PEeq-2.0013} and H\"{o}lder's inequality give
\begin{align*}
	&~~\|\mathfrak{J}_{2}(t)-\mathfrak{J}_{2}(s)\|_{V'}^{2}
	\leq \left(\int_{s}^{t } \|B(\eta^{\epsilon}(r),v^0(r))\|_{V'}dr\right)^{2}\notag\\
	&\leq C\left(\int_{s}^{t } \left(\left(|\eta^{\epsilon}(r)|+|\partial_{z} \eta^{\epsilon}(r)|\right)\|v^0(r)\|+|\eta^{\epsilon}(r)|\|\partial_{z}v^0(r)\|+|\eta^{\epsilon}(r)||\partial_{z}v^0(r)|^{1/2}\|\partial_{z}v^0(r)\|^{1/2}\right)dr\right)^2\notag\\
	&\leq C(t-s)\sup_{t \in [0,T]}\left(|\eta^{\epsilon}(t)|^2+|\partial_{z} \eta^{\epsilon}(t)|^2\right)\int_{s}^{t}\left(\|v^0(r)\|^2+\|\partial_{z}v^0(r)\|^2\right)dr\notag\\
	&
	+C\left(\int_{s}^{t }|\eta^{\epsilon}(r)|(|\partial_{z}v^0(r)|+\|\partial_{z}v^0(r)\|)dr\right)^{2}\notag\\
	&\leq C(t-s)\sup_{t \in [0,T]}\left(|\eta^{\epsilon}(t)|^2+|\partial_{z} \eta^{\epsilon}(t)|^2\right)\int_{s}^{t}\left(\|v^0(r)\|^2+\|\partial_{z}v^0(r)\|^2\right)dr\notag\\
	&+C_{T}(t-s)(\sup_{t \in [0,T]}|\eta^{\epsilon}(t)|^2)(\sup_{t \in [0,T]}|\partial_{z}v^0(t)|^2),
\end{align*}
Combining this estimate with \eqref{PEeq-MDPderm-4.10} and \eqref{PEeq-MDPskees-4.13} gives
\begin{align}\label{PEeq-MDPskeesr-4.28}
	\|\mathfrak{J}_{2}\|^2_{W^{\widetilde{\alpha},2}([0,T];V')}\leq C_{R,T,\widetilde{\alpha}}.
\end{align}
Similarly, for $\mathfrak J_3$ and all $s,t\in[0,T]$,
\begin{align*}
	\|\mathfrak{J}_{3}(t)-\mathfrak{J}_{3}(s)\|_{V'}^{2}
%	\leq \left(\int_{s}^{t } \|B(v^0(r),\eta^{\epsilon}(r))\|_{V'}dr\right)^{2}\notag\\
%	&\leq C\left(\int_{s}^{t } \left(\left(|v^0(r)|+|\partial_{z} v^0(r)|\right)\|\eta^{\epsilon}(r)\|+|v^0(r)|\|\partial_{z}\eta^{\epsilon}(r)\|+|v^0(r)||\partial_{z}\eta^{\epsilon}(r)|^{1/2}\|\partial_{z}\eta^{\epsilon}(r)\|^{1/2}\right)dr\right)^2\notag\\
%	&\leq C(t-s)\sup_{t \in [0,T]}\left(|v^0(t)|^2+|\partial_{z} v^0(t)|^2\right)\int_{s}^{t}\left(\|\eta^{\epsilon}(r)\|^2+\|\partial_{z}\eta^{\epsilon}(r)\|^2\right)dr\notag\\
%	&
%	+C\left(\int_{s}^{t }|v^0(r)|(|\partial_{z}\eta^{\epsilon}(r)|+\|\partial_{z}\eta^{\epsilon}(r)\|)dr\right)^{2}\notag\\
	&\leq C(t-s)\sup_{t \in [0,T]}\left(|v^0(t)|^2+|\partial_{z} v^0(t)|^2\right)\int_{s}^{t}\left(\|\eta^{\epsilon}(r)\|^2+\|\partial_{z}\eta^{\epsilon}(r)\|^2\right)dr\notag\\
	&+C_{T}(t-s)(\sup_{t \in [0,T]}|v^0(t)|^2)(\sup_{t \in [0,T]}|\partial_{z}\eta^{\epsilon}(t)|^2)\leq C_{R,T}(t-s),
\end{align*}
and hence
\begin{align}\label{PEeq-MDPskeesr-4.29}
	\|\mathfrak{J}_{3}\|^2_{W^{\widetilde{\alpha},2}([0,T];V')}\leq C_{R,T,\widetilde{\alpha}}.
\end{align}

To prove \eqref{PEeq-MDPskeesr-4.24}, it remains to estimate $\mathfrak J_4$. An $H$-norm bound is sufficient. Condition \textbf{(B.1)} in Hypothesis~\ref{PEeqAssum-2.3} and \eqref{PEeq-MDPderm-4.10} give, for all $s,t\in[0,T]$,
\begin{align*}
	&~~|\mathfrak{J}_{4}(t)-\mathfrak{J}_{4}(s)|^2=|\int_{s}^{t} \int_{E}h(v^0(r),\xi)q^\epsilon(\xi,r) \nu(d\xi)dr|^2\notag\\
	&\leq C\left(\int_{s}^{t} \int_{E}|h(v^0(r),\xi)||q^\epsilon(\xi,r)| \nu(d\xi)dr\right)^{2}\notag\\
	&\leq C\left(\int_{s}^{t} \int_{E}C_{h}(\xi)\left(1+|v^0(r)|\right)|q^\epsilon(\xi,r)| \nu(d\xi)dr\right)^{2}\notag\\
	&\leq C\left(\int_{s}^{t} \int_{E}C_{h}^2(\xi)\left(1+|v^0(r)|^2\right) \nu(d\xi)dr\right)\left(\int_{s}^{t} \int_{E}|q^\epsilon(\xi,r)|^2 \nu(d\xi)dr\right)\notag\\
	&\leq C\sup_{t\in [0,T]}\left(1+|v^0(t)|^2\right)\left(\int_{s}^{t} \int_{E}C_{h}^2(\xi)\nu(d\xi)dr\right) \left(\int_{s}^{t} \int_{E}|q^\epsilon(\xi,r)|^2 \nu(d\xi)dr\right)\notag\\
	&= C(t-s)\sup_{t\in [0,T]}\left(1+|v^0(t)|^2\right)\|C_{h}\|_{L^2({\nu})}^2\|q^\epsilon\|_{L^2({\nu_T})}^2\leq C_{R,T}(t-s).
\end{align*}
It follows from $\|\cdot\|_{V'}^2\leq C|\cdot|^2$ that 
\begin{align}\label{PEeq-MDPskeesr-4.30}
	\|\mathfrak{J}_{4}\|^2_{W^{\widetilde{\alpha},2}([0,T];V')}\leq C_{R,T,\widetilde{\alpha}}.
\end{align}
Combining \eqref{PEeq-MDPskeesr-4.26}--\eqref{PEeq-MDPskeesr-4.30} gives \eqref{PEeq-MDPskeesr-4.24}.

We next prove \eqref{PEeq-MDPskeesr-4.25}. Differentiating \eqref{PEeq-MDPskeequ-4.8} with respect to $z$ gives 
\begin{align}\label{PEeq-MDPskeesr-4.31}
	\begin{split}
		\partial_{z}\eta^{\epsilon}(t)&=-\int_{0}^{t } A\partial_{z}\eta^{\epsilon}(s)ds-\int_{0}^{t } \partial_{z}B(\eta^{\epsilon}(s),v^0(s))ds-\int_{0}^{t }\partial_{z}B(v^0(s),\eta^{\epsilon}(s))ds\\
		&+\int_{0}^{t}\int_{E}\partial_{z}h(v^0(s),\xi)q^\epsilon(\xi,s) \nu(d\xi)ds:=\sum_{j=5}^{8}\mathfrak{J}_{j}(t), 
	\end{split}
\end{align}	
For $\mathfrak J_5$ and $\mathfrak J_8$, the arguments used for \eqref{PEeq-MDPskeesr-4.27} and \eqref{PEeq-MDPskeesr-4.30}, together with \eqref{PEeq-MDPskees-4.13}, condition \textbf{(B.3)} in Hypothesis~\ref{PEeqAssum-2.3}, and \eqref{PEeq-MDPderm-4.10}, give, for $\widehat\alpha\in(0,1/4)$,
\begin{align}\label{PEeq-MDPskeesr-4.32}
	\|\mathfrak{J}_{5}\|^2_{W^{\widehat{\alpha},2}([0,T];V')}+\|\mathfrak{J}_{8}\|^2_{W^{\widehat{\alpha},2}([0,T];V')}\leq C_{R,T,\widehat{\alpha}}.
\end{align}

We still need to handle $\mathfrak{J}_{6}$ and $\mathfrak{J}_{7}$. From \eqref{PEeq-2.11}, we have
$$
\|B(\partial_{z}\eta^{\epsilon},v^0)\|_{V'}\leq C(|\partial_{z}\eta^{\epsilon}|^{1/2}\|\partial_{z}\eta^{\epsilon}\|^{1/2}\|v^0\|+\|\partial_{z}\eta^{\epsilon}\||\partial_{z}v^0|),
$$
and further, by \eqref{PEeq-2.13}, we have
$$
\|B(\eta^{\epsilon},\partial_{z}v^0)\|_{V'}\leq C(|\eta^{\epsilon}|^{1/2}\|\eta^{\epsilon}\|^{1/2}|\partial_{z}v^0|^{1/2}\|\partial_{z}v^0\|^{1/2}+\|\eta^{\epsilon}\||\partial_{z}v^0|^{1/2}\|\partial_{z}v^0\|^{1/2}).
$$
For $\mathfrak{J}_{6}$, by H\"{o}lder's inequality, \eqref{PEeq-MDPderm-4.10} and \eqref{PEeq-MDPskees-4.13},  we deduce that for any $s,t\in [0,T]$,
\begin{align}\label{PEeq-MDPskeesr-4.33}
	&~~\|\mathfrak{J}_{6}(t)-\mathfrak{J}_{6}(s)\|_{V'}^2
	\leq \left(\int_{s}^{t}\left(\|B(\partial_{z}\eta^{\epsilon}(r),v^0(r))\|_{V'}+\|B(\eta^{\epsilon}(r),\partial_{z}v^0(r))\|_{V'}\right)dr\right)^{2} \notag\\
	&\leq C\left(\int_{s}^{t}(|\partial_{z}\eta^{\epsilon}(r)|^{1/2}\|\partial_{z}\eta^{\epsilon}(r)\|^{1/2}\|v^0(r)\|+\|\partial_{z}\eta^{\epsilon}(r)\||\partial_{z}v^0(r)|)dr\right)^2\notag\\
	&+C\left(\int_{s}^{t} (|\eta^{\epsilon}(r)|^{1/2}\|\eta^{\epsilon}(r)\|^{1/2}|\partial_{z}v^0(r)|^{1/2}\|\partial_{z}v^0(r)\|^{1/2}+\|\eta^{\epsilon}(r)\||\partial_{z}v^0(r)|^{1/2}\|\partial_{z}v^0(r)\|^{1/2}) dr\right)^2\notag\\
	&\leq C(t-s)^{1/2}\left(\sup_{t\in [0,T]}|\partial_{z}\eta^{\epsilon}(t)|\int_{s}^{t}\|\partial_{z}\eta^{\epsilon}(r)\|^{2}dr\right)^{1/2}\int_{s}^{t} \|v^0(r)\|^2dr\notag\\
	&+C(t-s)\sup_{t\in [0,T]}|\partial_{z}v^0(t)|^2 \int_{s}^{t}\|\partial_{z}\eta^{\epsilon}(r)\|^2dr\notag\\
	&+C(t-s)\left(\sup_{t\in [0,T]}|\eta^{\epsilon}(t)|\int_{s}^{t}\|\eta^{\epsilon}(r)\|^{2}dr\right)^{1/2}\left(\sup_{t\in [0,T]}|\partial_{z}v^0(t)|\int_{s}^{t}\|\partial_{z}v^0(r)\|^{2}dr\right)^{1/2}\notag\\
	&+C(t-s)^{1/2}\left(\sup_{t\in [0,T]}|\partial_{z}v^0(t)|\int_{s}^{t}\|\partial_{z}v^0(r)\|^{2}dr\right)^{1/2}\int_{s}^{t} \|\eta^{\epsilon}(r)\|^2dr\notag\\
	&\leq C_{R,T}\left((t-s)^{1/2}+(t-s)\right),
\end{align}
and hence
\begin{align}\label{PEeq-MDPskeesr-4.34}
	\|\mathfrak{J}_{6}\|^2_{W^{\widehat{\alpha},2}([0,T];V')}\leq C_{R,T,\widehat{\alpha}}.
\end{align}
Applying the same argument as for \eqref{PEeq-MDPskeesr-4.33} to $\mathfrak J_7$ gives, for all $s,t\in[0,T]$,
\begin{align}\label{PEeq-MDPskeesr-4.35}
	\|\mathfrak{J}_{7}\|^2_{W^{\widehat{\alpha},2}([0,T];V')}\leq C_{R,T,\widehat{\alpha}}.
\end{align}
Combining \eqref{PEeq-MDPskeesr-4.31}--\eqref{PEeq-MDPskeesr-4.32} with \eqref{PEeq-MDPskeesr-4.33}--\eqref{PEeq-MDPskeesr-4.34} yields \eqref{PEeq-MDPskeesr-4.25}.

\textbf{Step 3.} By \eqref{PEeq-MDPskees-4.13}, \eqref{PEeq-MDPskeesr-4.24} and \eqref{PEeq-MDPskeesr-4.25}, together with Lemma~\ref{PEeq-MDPCpctness-4.1}, the solution $\eta^{\epsilon}$ of \eqref{PEeq-MDPskeequ-4.8} is relatively compact in $L^{2}([0,T];H)$. Hence there exist an element $\widehat{\eta}$ and a subsequence $\eta^{\epsilon_k}$ such that, as $k\to \infty$, 

\begin{enumerate}
	\item[(i)] $\widehat{\eta}, \partial_{z}\widehat{\eta} \in  L^\infty([0,T];H) \cap L^2([0,T];V)$,
	\item[(ii)] $\eta^{\epsilon_k} \to \widehat{\eta}$ weakly-* in $L^\infty([0,T];H)$,
	\item[(iii)] $\eta^{\epsilon_k} \to \widehat{\eta}$ weakly in $L^2([0,T];V)$, $\; \eta^{\epsilon_k} \to \widehat{\eta}$ strongly in $L^2([0,T];H)$,
	\item[(iv)] $\partial_{z}\eta^{\epsilon_k} \to \partial_{z}\widehat{\eta}$ weakly-* in $L^\infty([0,T];H)$,
	\item[(v)] $\partial_{z}\eta^{\epsilon_k} \to \partial_{z}\widehat{\eta}$ weakly in $L^2([0,T];V)$, $\; \partial_{z}\eta^{\epsilon_k} \to \partial_{z}\widehat{\eta}$ strongly in $L^2([0,T];H)$.
\end{enumerate}

It remains to identify the limit. Let $\phi\in V$. The strong convergence in $L^2(0,T;H)$, the weak convergence in $L^2(0,T;V)$, and estimate \eqref{PEeq-2.0013} imply
\[
\int_0^T\langle B(\eta^{\epsilon_k},v^0)-B(\widehat\eta,v^0),\phi\rangle dt\to0,
\qquad
\int_0^T\langle B(v^0,\eta^{\epsilon_k})-B(v^0,\widehat\eta),\phi\rangle dt\to0.
\]
Moreover, since $q^{\epsilon_k}\rightharpoonup q$ in $L^2(\nu_T)$ and
$(t,\xi)\mapsto(h(v^0(t),\xi),\phi)$ belongs to $L^2(\nu_T)$, the controlled forcing terms converge weakly to the forcing generated by $q$. Passing to the limit in the variational formulation of \eqref{PEeq-MDPskeequ-4.8} shows that $(\widehat\eta,q)$ satisfies the same skeleton equation as $(\eta,q)$. The uniqueness in Proposition \ref{PEeq-MDPskeequpro-4.9} gives $\widehat\eta=\eta$. Since every convergent subsequence has the same limit, the full sequence converges in the stated weak and strong topologies.
%It remains to show that $\widehat{\eta}=\eta$. Since the argument is standard, we omit it for brevity and refer the reader to \cite[Proposition 5.1]{ZRR-2021} for details.

\textbf{Step 4.}  We upgrade the convergence to $C([0,T];H)\cap L^2(0,T;V)$ without assuming strong convergence of the controls.
Let $\varpi^{\epsilon_k}=\eta^{\epsilon_k} -{\eta}$. Then, by \eqref{PEeq-MDPskeequ-4.8} we have
\begin{align}\label{PEeq-MDPskeesr-4.36}
	\begin{split}
		\frac{d}{dt} \varpi^{\epsilon_k}(t) &= -A\varpi^{\epsilon_k}(t) - B(\varpi^{\epsilon_k}(t), v^0(t)) - B(v^0(t), \varpi^{\epsilon_k}(t))\\
		& + \int_{E}  h(v^0(t), \xi)\left(q^{\epsilon_k}(\xi, t)-q(\xi, t)\right) \nu(d\xi),
	\end{split}
\end{align}
	with initial value $\varpi^{\epsilon_k}(0) = 0$.

Taking the $H$-inner product with $\varpi^{\epsilon_k}$ and using the cancellation of $B(v^0,\varpi^{\epsilon_k})$ gives
\begin{align}\label{PEeq-MDPskeesr-4.37}
	\begin{split}
		|\varpi^{\epsilon_k}(t)|^2+2\int_0^{t}\|\varpi^{\epsilon_k}(s)\|^2ds
		&=-2\int_0^{t} \langle B(\varpi^{\epsilon_k}(s), v^0(s)),\varpi^{\epsilon_k}(s)\rangle ds\\
		&~~+2\int_0^{t}\int_{E}  \left(h(v^0(s), \xi)\left(q^{\epsilon_k}(\xi, s)-q(\xi, s)\right),\varpi^{\epsilon_k}(s)\right) \nu(d\xi) ds.
	\end{split}
\end{align}
%\ref{PEeq-lem2.1} and \eqref{PEeq-2.13}
%\begin{align}
%	\begin{split}
%		&~~\left|2\int_0^{t} \langle B(\varpi^{\epsilon_k}(s), v^0(s)),\varpi^{\epsilon_k}(s)\rangle ds\right|\leq 2\int_0^{t} \left|\langle B(\varpi^{\epsilon_k}(s), \varpi^{\epsilon_k}(s)),v^0(s)\rangle\right| ds\\
%		&\leq C\int_{0}^{t} (|\varpi^{\epsilon_k}(s)|^{1/2}\|\varpi^{\epsilon_k}(s)\|^{3/2}|v^0(s)|^{1/2}\|v^0(s)\|^{1/2} + \|\varpi^{\epsilon_k}(s)\| |\partial_{z} \varpi^{\epsilon_k}(s)| |v^0(s)|^{1/2}\|v^0(s)\|^{1/2})ds\\
%		&\leq \int_0^{t}\|\varpi^{\epsilon_k}(s)\|^2ds+
%	\end{split}
%\end{align}
By \eqref{PEeq-2.11}, for any $t\in [0,T]$,
\begin{align}\label{PEeq-MDPskeesr-4.38}
	\begin{split}
		&~~\left|2\int_0^{t} \langle B(\varpi^{\epsilon_k}(s), v^0(s)),\varpi^{\epsilon_k}(s)\rangle ds\right|\leq 2\int_0^{t} \left|\langle B(\varpi^{\epsilon_k}(s), v^0(s)),\varpi^{\epsilon_k}(s)\rangle\right| ds\\
		&\leq C\int_{0}^{t} (|\varpi^{\epsilon_k}(s)|\|\varpi^{\epsilon_k}(s)\|\|v^0(s)\| +  |\partial_{z} v^0(s)| |\varpi^{\epsilon_k}(s)|^{1/2}\|\varpi^{\epsilon_k}(s)\|^{3/2})ds\\
		&\leq \int_0^{t}\|\varpi^{\epsilon_k}(s)\|^2ds+C\int_{0}^{t} \left(\|v^0(s)\|^2+|\partial_{z} v^0(s)|^{4}\right)|\varpi^{\epsilon_k}(s)|^2ds.
	\end{split}
\end{align}
On the other hand, condition \textbf{(B.1)} in Hypothesis
\ref{PEeqAssum-2.3}, H\"older's inequality, and
$q^{\epsilon_k},q\in\mathbb B_\nu(R)$ imply
\begin{align}\label{PEeq-MDPskeesr-4.39}
	&~~\left|2\int_0^{t}\int_{E}  \left(h(v^0(s), \xi)\left(q^{\epsilon_k}(\xi, s)-q(\xi, s)\right),\varpi^{\epsilon_k}(s)\right) \nu(d\xi) ds\right| \notag \\
	&\leq 2\int_0^{t}\int_{E} \left| \left(h(v^0(s), \xi)\left(q^{\epsilon_k}(\xi, s)-q(\xi, s)\right),\varpi^{\epsilon_k}(s)\right) \right|\nu(d\xi) ds \notag \\
	&\leq 2\sup_{t \in [0,T]}(1+|v^0(t)|)\int_0^t \int_{E} C_{h}(\xi)|q^{\epsilon_k}(\xi, s)-q(\xi, s)|
	|\varpi^{\epsilon_k}(s)|\nu(d\xi) ds\notag \\
	&\leq
	2\|C_h\|_{L^2(\nu)}\sup_{t \in [0,T]}(1+|v^0(t)|)
	\left(\int_0^T\int_E|q^{\epsilon_k}(\xi,s)-q(\xi,s)|^2
	\,\nu(d\xi)\,ds\right)^{1/2}
	\left(\int_0^T|\varpi^{\epsilon_k}(s)|^2\,ds\right)^{1/2}\notag\\
	&\leq
	4R\|C_h\|_{L^2(\nu)}\sup_{t \in [0,T]}(1+|v^0(t)|)
	\|\varpi^{\epsilon_k}\|_{L^2(0,T;H)}\longrightarrow0,
\end{align}
since \textbf{Step~3} gives $\varpi^{\epsilon_k}\to0$ strongly in $L^2(0,T;H)$.
Combining \eqref{PEeq-MDPskeesr-4.37}-\eqref{PEeq-MDPskeesr-4.39} and applying
Gronwall's inequality, we obtain
\begin{align}\label{PEeq-MDPMDPlinm4.400-04.7}
\sup_{0\leq t\leq T}|\varpi^{\epsilon_k}(t)|^2\longrightarrow0.
\end{align}
Moreover, \eqref{PEeq-MDPderm-4.10} gives
\[
\int_0^T\bigl(\|v^0(s)\|^2+|\partial_zv^0(s)|^4\bigr)\,ds<\infty.
\]
Returning to \eqref{PEeq-MDPskeesr-4.37} and using
\eqref{PEeq-MDPskeesr-4.38}--\eqref{PEeq-MDPskeesr-4.39}, we further obtain
\[
\int_0^T\|\varpi^{\epsilon_k}(s)\|^2\,ds\longrightarrow0.
\]
Therefore,
\[
\eta^{\epsilon_k}\longrightarrow\eta
\quad\text{in }C([0,T];H)\cap L^2(0,T;V).
\]
Since every convergent subsequence has the same limit, the whole sequence converges, and condition $(A_1)$ follows. Moreover, norm convergence in $L^2(\nu_T)$ implies boundedness and weak convergence, so the same result shows that $\mathscr G^0$ is norm-continuous, hence Borel measurable. This completes the proof.
%Combining with \eqref{PEeq-MDPskeesr-4.37}-\eqref{PEeq-MDPskeesr-4.39} yields
%\begin{align*}
%	&~~|\varpi^{\epsilon_k}(t)|^2+\int_0^{t}\|\varpi^{\epsilon_k}(s)\|^2ds\\
%	&\leq C_{T}\int_{0}^{t} \left(1+\|v^0(s)\|^2+|\partial_{z} v^0(s)|^{4}\right)|\varpi^{\epsilon_k}(s)|^2ds+ C_{T}\int_0^{T}\int_{E}|q^{\epsilon_k}(\xi, s)-q(\xi, s)|^2\nu(d\xi) ds.
%\end{align*}
%Using Gronwall's inequality, we obtain that for any $t\in [0,T]$,
%\begin{align*}
%%	\begin{split}
%		&~~|\varpi^{\epsilon_k}(t)|^2+\int_0^{t}\|\varpi^{\epsilon_k}(s)\|^2ds\\
%		&\leq C_{T}e^{C_T\int_0^{t}\left(1+\|v^0(s)\|^2+|\partial_{z} v^0(s)|^{4}\right)ds}\int_0^{t}\int_{E}|q^{\epsilon_k}(\xi, s)-q(\xi, s)|^2\nu(d\xi) ds\\
%		&\leq C_{T}e^{C_T\left(T+\int_{0}^{T}\|v^0(t)\|^2dt+\sup\limits_{t\in [0,T]}|\partial_{z} v^0(t)|^{4} \right)}\int_0^{T}\int_{E}|q^{\epsilon_k}(\xi, s)-q(\xi, s)|^2\nu(d\xi) ds
%%	\end{split}
%\end{align*}
%Therefore, by \eqref{PEeq-MDPderm-4.10} and $q^{\epsilon_k}\rightarrow q$ in $\mathbb{B}_{\nu}(R)$ we have
%\begin{align}\label{PEeq-MDPMDPlinm4.400-04.7}
%	\begin{split}
%		&~~\lim_{k\rightarrow \infty}\left(\sup_{t\in [0,T]}|\varpi^{\epsilon_k}(t)|^2+\int_0^{T}\|\varpi^{\epsilon_k}(s)\|^2ds\right)\\
%		&\leq C_{T}\lim_{k\rightarrow \infty}\int_0^{T}\int_{E}|q^{\epsilon_k}(\xi, s)-q(\xi, s)|^2\nu(d\xi) ds=0.
%	\end{split}
%\end{align}
%This completes the proof.
\end{proof}

\subsection{Verification of \texorpdfstring{$(A_2)$}{(A2)} in Condition \ref{PEeq-MDPMDPcond-4.1}}\label{Cond4.1-2}

To verify part $(A_2)$ of Condition \ref{PEeq-MDPMDPcond-4.1}, we need the following results. The detailed proofs of Lemmas \ref{PEeq-MDPMDPlem-4.8} and \ref{PEeq-MDPMDPlem-4.9} are given in \cite[Lemmas 4.3 and 4.4]{Budhiraja-AOP-2016}.
\begin{lemma}\label{PEeq-MDPMDPlem-4.7}
% Let \( f \in L^2(\nu) \cap \mathcal{H}^{\delta} \) for some $\delta>0$, and let \( \Lambda \) be a measurable subset of \([0, T]\). Let \( M \in (0,\infty) \). Then there exists \( \textbf{c}_{f} > 0 \) such that for all \( \epsilon > 0 \),
Let $f\in L^2(\nu)\cap\mathcal H^\delta$ for some $\delta>0$, let
$\Lambda\in\mathcal B([0,T])$, and fix $M\in(0,\infty)$. Then there exists a constant
$\textbf{c}_{f}>0$, independent of $\epsilon$, $\Lambda$, and
$\phi\in\mathcal S_{+,\epsilon}^M$, such that
\[
\sup_{\phi \in \mathcal{S}_{+,\epsilon}^M} \int_{E \times \Lambda} f^2(\xi) \phi(\xi, s) \nu(d\xi) ds \leq \textbf{c}_{f} \big( a^2(\epsilon) + \lambda_T(\Lambda) \big).
\]
\end{lemma}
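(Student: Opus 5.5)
The bound follows from the Young-type inequality for $\hbar$ combined with the exponential integrability in $\mathcal H^\delta$, after splitting $E$ into a set of finite $\nu$-measure and its complement. The basic inequality is
\[
ab\le e^{\sigma a}+\tfrac1\sigma\hbar(b),\qquad a\ge0,\ b\ge0,\ \sigma\ge1,
\]
which follows from the Legendre duality between $e^{x}-1$ and $\hbar$. We apply it with $a=\delta f^2(\xi)$, $b=\phi(\xi,s)$, after a suitable truncation.

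\textbf{Step 1 (large values of $\phi$).} Split the integrand according to whether $\phi\le\beta_0$ or $\phi>\beta_0$, for some fixed $\beta_0>1$, say $\beta_0=2$.

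On $\{\phi>\beta_0\}$ the function $\hbar(\phi)$ is comparable to $\phi\log\phi$. We split further according to whether $f^2\le \delta^{-1}\log\phi$ or not.
\begin{itemize}
\item Where $f^2\le\delta^{-1}\log\phi$, we have $f^2\phi\le C\delta^{-1}\hbar(\phi)$. Integrating over $E\times\Lambda$ gives at most $C\mathfrak L_T(\phi)\le CMa^2(\epsilon)$.
\item Where $f^2>\delta^{-1}\log\phi$, we have $\phi<e^{\delta f^2}$. On the set $\{f^2\ge c\}$ this gives $f^2\phi\le f^2e^{\delta f^2}$, and we control it by integrability. Since $f\in L^2(\nu)$, the set $\{|f|\ge c\}$ has finite $\nu$-measure, and $\mathcal H^\delta$ applied with $\delta/2$ bounds $\int_{\{|f|\ge c\}} f^2e^{\delta f^2/2}\,d\nu$. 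We may shrink $\delta$, since $\mathcal H^\delta\subset\mathcal H^{\delta'}$ for $\delta'<\delta$ on finite-measure sets. This contributes $C\lambda_T(\Lambda)$.
\item On the set $\{f^2<c\}$ where also $\phi>\beta_0$, we have $f^2\phi\le c\,\phi\le C\hbar(\phi)$, since $\phi\le C\hbar(\phi)$ for $\phi>\beta_0$. This contributes $CMa^2(\epsilon)$.
\end{itemize}

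\textbf{Step 2 (bounded values $\phi\le\beta_0$).} Write $\phi=1+(\phi-1)$. The part with $\phi\equiv1$ contributes exactly $\|f\|_{L^2(\nu)}^2\lambda_T(\Lambda)$. For the remainder, Cauchy--Schwarz together with $|\phi-1|^2\le\kappa_2(\beta_0)\hbar(\phi)$ gives
\[
\int_{E\times\Lambda}f^2|\phi-1|\mathbf 1_{\{\phi\le\beta_0\}}
\le\|f\|^2_{L^2(\nu)}\lambda_T(\Lambda)+\int_{E\times\Lambda}f^2|\phi-1|^2\mathbf 1_{\{\phi\le\beta_0\}}.
\]
The last integrand carries the weight $f^2$, so the bound $|\phi-1|^2\le\kappa_2\hbar(\phi)$ cannot be used directly. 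Instead we split again according to $f^2\le c$ or $f^2>c$.
\begin{itemize}
\item On $\{f^2\le c\}$ the term is bounded by $c\,\kappa_2(\beta_0)\,\mathfrak L_T(\phi)\le CMa^2(\epsilon)$.
\item On $\{f^2>c\}$, which has finite $\nu$-measure, we use $|\phi-1|\le\beta_0$ to bound the term by $\beta_0\int_{\{f^2>c\}}f^2\,d\nu\cdot\lambda_T(\Lambda)$.
\end{itemize}

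\textbf{Constants and main obstacle.} Collecting the pieces yields the stated bound. The constant $\mathbf c_f$ depends only on $\|f\|_{L^2(\nu)}$, on the $\mathcal H^\delta$ integral over $\{|f|\ge c\}$, on $\delta$, $M$, and on $\kappa_2(\beta_0)$. In particular it is independent of $\epsilon$, $\Lambda$, and $\phi\in\mathcal S^M_{+,\epsilon}$.

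The main obstacle is the region where both $f$ and $\phi$ are large. There one must verify carefully that the exponential-moment condition only needs to hold on finite-measure sets, which is exactly what the definition of $\mathcal H^\delta$ provides. The choice of the threshold $c$ must make $\nu(\{|f|\ge c\})<\infty$, and this is guaranteed by $f\in L^2(\nu)$. As a fallback, I would cite \cite[Lemma 3.4]{Budhiraja-AOP-2016}, which this argument reproduces.
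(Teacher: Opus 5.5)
Your argument is correct in substance, but it takes a longer route than the paper.

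\textbf{The paper's route.} It first shows that $\int_E(e^{\delta f^2}-1)\,\nu(d\xi)<\infty$. On $\{|f|\le1\}$ it uses $e^{\delta r}-1\le\delta e^{\delta}r$ together with $f\in L^2(\nu)$. On $\{|f|>1\}$, which has finite $\nu$-measure by Chebyshev, it uses the definition of $\mathcal H^\delta$ directly. It then applies the sharp Fenchel--Young inequality $yr\le\hbar(r)+e^{y}-1$ pointwise, with $y=\delta f^2(\xi)$ and $r=\phi(\xi,s)$, and integrates once over $E\times\Lambda$. This yields $\textbf{c}_f=\delta^{-1}\max\{M,\int_E(e^{\delta f^2}-1)\,d\nu\}$ with no case analysis on $\phi$.

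\textbf{Why your route is longer.} The $-1$ in $e^{y}-1$ is exactly what makes the exponential term integrable over the infinite-measure space $E$. The form $ab\le e^{\sigma a}+\sigma^{-1}\hbar(b)$ that you quote discards it. That is why you are forced to split both $E$ and the range of $\phi$, and in the end you never actually invoke that inequality. Your case analysis does close. What it buys is transparency: it shows that only the region where both $f$ and $\phi$ are large consumes the exponential moment. The cost is considerably more bookkeeping and a less explicit constant.

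\textbf{Two points to tidy.}

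First, in the region $\{\phi>2,\ f^2>\delta^{-1}\log\phi\}$ you bound $f^2\phi$ by $f^2e^{\delta f^2}$. The definition of $\mathcal H^\delta$ does not control this quantity. Make the split at level $\delta/2$ instead, i.e.\ compare $f^2$ with $2\delta^{-1}\log\phi$. Then on the bad region $\phi<e^{\delta f^2/2}$, and $f^2e^{\delta f^2/2}\le\frac{2}{e\delta}e^{\delta f^2}$. The right-hand side is integrable on the finite-measure set $\{f^2\ge c\}$. The operative fact here is this pointwise domination, not the inclusion $\mathcal H^\delta\subset\mathcal H^{\delta'}$. The complementary piece $f^2\le 2\delta^{-1}\log\phi$ is handled as before, via $\phi\log\phi\le C\hbar(\phi)$ for $\phi>2$.

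Second, your Step 2 is needlessly elaborate. On $\{\phi\le\beta_0\}$ one has $f^2\phi\le\beta_0 f^2$, so that part is at most $\beta_0\|f\|_{L^2(\nu)}^2\lambda_T(\Lambda)$ directly. You need neither $\kappa_2$ nor a further split in $f$.
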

\begin{proof}
	We first show that
	\begin{align}\label{PEeq-MDPMDPlem-4.7-exp}
	\int_E\bigl(e^{\delta f^2(\xi)}-1\bigr)\,\nu(d\xi)<\infty.
	\end{align}
	Set $E_0=\{\xi:|f(\xi)|\le1\}$ and $E_1=\{\xi:|f(\xi)|>1\}$.
	Since $e^{\delta r}-1\le \delta e^\delta r$ for $0\le r\le1$,
	\[
	\int_{E_0}\bigl(e^{\delta f^2(\xi)}-1\bigr)d\nu
	\le \delta e^\delta\int_{E_0} f^2d\nu(\xi)<\infty.
	\]
	Moreover,
	\[
	\nu(E_1)\le\int_{E_1}f^2(\xi)d\nu(\xi)\le\|f\|_{L^2(\nu)}^2<\infty.
	\]
	The definition of $\mathcal H^\delta$, applied to the finite-measure set $E_1$, gives
	\[
	\int_{E_1}e^{\delta f^2(\xi)}d\nu(\xi)<\infty,
	\]
	and hence \eqref{PEeq-MDPMDPlem-4.7-exp} holds.
	
	The convex conjugacy between $\hbar(r)=r\log r-r+1$ and $e^y-1$ yields
	\begin{align}\label{PEeq-MDPMDPlem-4.7-Young}
	yr\le \hbar(r)+e^y-1,
	\qquad y\ge0,\quad r\ge0.
	\end{align}
	Taking $y=\delta f^2(\xi)$ and $r=\phi(\xi,s)$ in
	\eqref{PEeq-MDPMDPlem-4.7-Young}, integrating over $E\times\Lambda$, and using
	$\mathfrak L_T(\phi)\le Ma^2(\epsilon)$, we obtain
	\begin{align*}
	\int_{E\times\Lambda} f^2(\xi)\phi(\xi,s)\,\nu(d\xi)ds
	&\le \frac1\delta\int_{E\times\Lambda}\hbar(\phi(\xi,s))\,\nu(d\xi)ds+\frac{\lambda_T(\Lambda)}\delta
	\int_E\bigl(e^{\delta f^2(\xi)}-1\bigr)\,\nu(d\xi)\\
&\le \frac{M}{\delta}a^2(\epsilon)
	+\frac{\lambda_T(\Lambda)}\delta
	\int_E\bigl(e^{\delta f^2(\xi)}-1\bigr)\,\nu(d\xi).
	\end{align*}
	Thus the desired conclusion holds with
	\[
	\textbf{c}_{f}
	=\frac1\delta\max\left\{M,
	\int_E\bigl(e^{\delta f^2(\xi)}-1\bigr)\nu(d\xi)\right\}.
	\]
	This completes the proof.
\end{proof}

\begin{lemma}\label{PEeq-MDPMDPlem-4.8}
	Let $f\in L^2(\nu)\cap\mathcal H^\delta$ for some $\delta>0$, let $\Lambda\subset[0,T]$ be measurable, and fix $M<\infty$. Then there exist maps $\ell_f,\rho_f,\varsigma_f:(0,\infty)\to(0,\infty)$ such that
	\[
	\ell_f(\beta)\longrightarrow0\quad \text{as~~} \beta\to\infty,
	\qquad
	\varsigma_f(\epsilon)\longrightarrow0\quad \text{as~~} \epsilon\to 0,
	\]
	and, for every $\epsilon,\beta\in (0,\infty)$,
	\[
	\sup_{\psi\in\mathcal S_\epsilon^M}
	\int_{E\times\Lambda}|f(\xi)\psi(\xi,s)|
	\mathbf1_{\{|\psi|\ge\beta/a(\epsilon)\}}\,\nu(d\xi)ds
	\le \ell_f(\beta)\sqrt{a(\epsilon)}
	+\varsigma_f(\epsilon)\bigl(1+\lambda_T(\Lambda)\bigr),
	\]
	and
	\[
	\sup_{\psi\in\mathcal S_\epsilon^M}
	\int_{E\times\Lambda}|f(\xi)\psi(\xi,s)|\,\nu(d\xi)ds
	\le \rho_f(\beta)\sqrt{\lambda_T(\Lambda)}
	+\ell_f(\beta)\sqrt{a(\epsilon)}
	+\varsigma_f(\epsilon)\bigl(1+\lambda_T(\Lambda)\bigr).
	\]
\end{lemma}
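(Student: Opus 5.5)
The plan is the elementary route of \cite[Lemma 3.4]{Budhiraja-AOP-2016}. Write $\psi=(\phi-1)/a(\epsilon)$ with $\phi\in\mathcal S^M_{+,\epsilon}$, so that $\mathbf 1_{\{|\psi|\ge\beta/a(\epsilon)\}}=\mathbf 1_{\{|\phi-1|\ge\beta\}}$. Two elementary facts about $\hbar$ will be used.

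\emph{Linear growth away from $1$.} There is a function $\kappa_1(\beta)$ with
\[
|x-1|\le\kappa_1(\beta)\hbar(x)\quad\text{whenever }|x-1|\ge\beta,\ x\ge0,\qquad \kappa_1(\beta)\to0\text{ as }\beta\to\infty .
\]
For $\beta\ge1$ only $x\ge1+\beta$ is relevant, and $|x-1|/\hbar(x)\sim 1/\log x$ there.

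\emph{Quadratic bound near $1$.} The $\kappa_2$-bound recalled before Condition~\ref{PEeq-MDPMDPcond-4.1} holds: $|x-1|^2\le\kappa_2(\beta)\hbar(x)$ for $|x-1|<\beta$.

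For the first inequality, split $E$ with the $\epsilon$-dependent threshold $L_\epsilon:=a(\epsilon)^{-1/2}$.

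On $\{|f|\le L_\epsilon\}$, the linear-growth bound and $\mathfrak L_T(\phi)\le Ma^2(\epsilon)$ give
\[
\int_{E\times\Lambda}\frac{|f||\phi-1|}{a(\epsilon)}\mathbf 1_{\{|\phi-1|\ge\beta\}}\mathbf 1_{\{|f|\le L_\epsilon\}}\,d\nu\,ds
\le \frac{L_\epsilon\kappa_1(\beta)}{a(\epsilon)}Ma^2(\epsilon)=M\kappa_1(\beta)\sqrt{a(\epsilon)} .
\]
This yields $\ell_f(\beta):=M\kappa_1(\beta)$.

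On $\{|f|>L_\epsilon\}$, bound $|\phi-1|\le\phi+1$ and treat the two pieces separately.

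\emph{The constant piece.} The term $a^{-1}\int_{\Lambda}\int_{\{|f|>L_\epsilon\}}|f|\,d\nu\,ds$ is controlled by
\[
|f|\le e^{-\delta L_\epsilon^2/2}\,|f|e^{\delta f^2/2}.
\]
The integrability $\int_{\{|f|>1\}}e^{\delta f^2}d\nu<\infty$ was shown in the proof of Lemma~\ref{PEeq-MDPMDPlem-4.7}, where $\nu(\{|f|>1\})<\infty$. This gives a bound $C_f\lambda_T(\Lambda)a^{-1}e^{-\delta/(2a)}$.

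\emph{The $\phi$-piece.} Use $|f|\le f^2/L_\epsilon$ and then the Young inequality \eqref{PEeq-MDPMDPlem-4.7-Young} with $y=\tfrac{\delta}{2}f^2$:
\[
\frac{1}{a L_\epsilon}\int f^2\phi\,\mathbf 1_{\{|f|>L_\epsilon\}}
\le\frac{2}{\delta\sqrt{a}}\Bigl(Ma^2+\lambda_T(\Lambda)\int_{\{|f|>L_\epsilon\}}\bigl(e^{\delta f^2/2}-1\bigr)d\nu\Bigr).
\]
The last integral is again at most $e^{-\delta/(4a)}$ times a finite constant. Hence all contributions from $\{|f|>L_\epsilon\}$ are at most
\[
\varsigma_f(\epsilon)\bigl(1+\lambda_T(\Lambda)\bigr),\qquad
\varsigma_f(\epsilon):=C\Bigl(a^{3/2}+a^{-1}e^{-\delta/(4a)}\Bigr)\to0 .
\]

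For the second inequality, split into $\{|\psi|\ge\beta/a\}$, which the first inequality already handles, and $\{|\phi-1|<\beta\}$. On the latter, Cauchy--Schwarz and the quadratic bound give
\[
\int_{E\times\Lambda}|f\psi|\mathbf 1_{\{|\phi-1|<\beta\}}
\le\|f\|_{L^2(\nu)}\sqrt{\lambda_T(\Lambda)}\Bigl(\frac{\kappa_2(\beta)}{a^2}\,Ma^2\Bigr)^{1/2}.
\]
This yields $\rho_f(\beta):=\|f\|_{L^2(\nu)}\sqrt{M\kappa_2(\beta)}$.

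All constants are uniform in $\phi$ and $\Lambda$. The main delicacy is the region $\{|f|>L_\epsilon\}$: the threshold must make the good set produce exactly $\sqrt{a}$, while the bad set is killed by the exponential moment. Halving $\delta$ in the Young step is what turns the tail $\int_{\{|f|>L\}}e^{\delta f^2/2}$ into a super-polynomially small factor in $a(\epsilon)$.
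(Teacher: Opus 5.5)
Your argument is correct. It is essentially the standard Budhiraja--Dupuis--Ganguly proof to which the paper defers, since the paper gives no proof of this lemma itself: the $\kappa_1$ bound on $\{|f|\le a(\epsilon)^{-1/2}\}$, exponential integrability of $f$ on the tail, and Cauchy--Schwarz with $\kappa_2$ on $\{|\phi-1|<\beta\}$. The only implicit restriction is $a(\epsilon)\le 1$, which you use so that $\{|f|>L_\epsilon\}\subset\{|f|>1\}$; for the remaining $\epsilon$, the same estimates still give finite bounds that can be absorbed into $\varsigma_f(\epsilon)$, using $\nu(\{|f|>L_\epsilon\})\le\|f\|_{L^2(\nu)}^2L_\epsilon^{-2}$ and $\int_E(e^{\delta f^2/2}-1)\,d\nu<\infty$.
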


\begin{lemma}\label{PEeq-MDPMDPlem-4.9}
	 Let \( f \in L^2(\nu) \cap \mathcal{H}^{\delta} \) for some $\delta>0$, and suppose $f\geq 0$.  Then for any \( \beta \in (0,\infty) \) and $M\in \mathbb{N}$,
	\[
	\lim_{\epsilon \to 0} \sup_{\psi \in \mathcal{S}_{\epsilon}^M} \int_{E \times [0, T]} |f(\xi) \psi(\xi, s)| \mathbf{1}_{\{|\psi| > \beta/a(\epsilon)\}} \nu(d\xi) ds = 0.
	\]
\end{lemma}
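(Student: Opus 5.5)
The plan is to reduce the statement to the two estimates already available: the entropy bound of Lemma~\ref{PEeq-MDPMDPlem-4.7} and the first tail bound of Lemma~\ref{PEeq-MDPMDPlem-4.8}. Fix $\beta>0$ and $M$, and let $\psi=(\phi-1)/a(\epsilon)$ with $\phi\in\mathcal S_{+,\epsilon}^M$. The goal is a bound that is uniform over such $\psi$ and tends to zero with $\epsilon$.

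On the set $\{|\psi|>\beta/a(\epsilon)\}$ we have $|\phi-1|>\beta$. For $\epsilon$ small, namely $a(\epsilon)<\beta$, the branch $\psi<-\beta/a(\epsilon)$ is impossible, because $\psi\ge -1/a(\epsilon)$. So on this set $\phi>1+\beta$, and therefore $|\psi|\le \phi/a(\epsilon)$. I would then introduce a large parameter $L\ge1+\beta$ and split the set into $\{1+\beta<\phi\le L\}$ and $\{\phi>L\}$.

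On the first part, I would use $\hbar(x)\ge c_{\beta,L}>0$ for $x\in[1+\beta,L]$. This gives
\[
\nu_T\bigl(\{1+\beta<\phi\le L\}\bigr)\le \frac{Ma^2(\epsilon)}{c_{\beta,L}}.
\]
Since $|\psi|\le L/a(\epsilon)$ there, Cauchy--Schwarz yields
\[
\frac{L}{a(\epsilon)}\|f\|_{L^2(\nu)}\,T^{1/2}\left(\frac{M}{c_{\beta,L}}\right)^{1/2}a(\epsilon)
\]
as a bound for this contribution. This is bounded in $\epsilon$ but does not go to zero, so the crude Cauchy--Schwarz step must be sharpened. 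Instead, write $\int f\,\mathbf 1\le \bigl(\int f^2\mathbf 1\bigr)^{1/2}\nu_T(\cdot)^{1/2}$. Since $\nu_T(\cdot)\to0$ and $f^2\in L^1(\nu)$ with $T$ finite, absolute continuity of the integral gives $\int_{A}f^2\,d\nu_T\to0$ uniformly over sets with $\nu_T(A)\le Ca^2(\epsilon)$. The product is then $o(a(\epsilon))\cdot a(\epsilon)/a(\epsilon)\to0$. This uniform-integrability step is the delicate point of the argument.

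On $\{\phi>L\}$ I would use $f\phi\le \delta^{-1}\bigl(\hbar(\phi)+e^{\delta f^2}-1\bigr)$ only on the region where $f^2$ is large. The cleaner route is the superlinear growth $\phi\le \hbar(\phi)/(\log L-1)$ for $\phi>L$, with $L>e^2$. Combined with Young's inequality $f\phi\le \phi\,\delta^{-1}\log\phi+\delta^{-1}e^{\delta f^2}\mathbf 1_{\{\phi>L\}}$, or alternatively by splitting into $f\le K$ and $f>K$, this gives
\[
\frac1{a(\epsilon)}\int f\phi\,\mathbf 1_{\{\phi>L\}}\le \frac{K}{a(\epsilon)}\cdot\frac{Ma^2(\epsilon)}{\log L-1}+\frac1{a(\epsilon)}\int_{\{f>K\}}f\phi .
\]
The last term I would control via Lemma~\ref{PEeq-MDPMDPlem-4.7} applied to $f\mathbf 1_{\{f>K\}}$ and the Young inequality with $y=\delta f^2$. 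This gives at most $a(\epsilon)^{-1}\bigl(Ma^2(\epsilon)/\delta+T\int_{\{f>K\}}(e^{\delta f^2}-1)\,d\nu\bigr)$. The troublesome $a(\epsilon)^{-1}$ times a constant tail is exactly the obstacle.

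Two fixes are possible. The first is to rescale $y=\delta f^2\cdot$ suitably, or apply the bound with $y=\log(1/a(\epsilon))$ on $\{\phi>1/a(\epsilon)\}$. The second, which I would try first, is to invoke Lemma~\ref{PEeq-MDPMDPlem-4.8}. With $\Lambda=[0,T]$, it already gives the bound $\ell_f(\beta)\sqrt{a(\epsilon)}+\varsigma_f(\epsilon)(1+T)$, and both terms vanish as $\epsilon\to0$ for fixed $\beta$, since $a(\epsilon)\to0$ and $\varsigma_f(\epsilon)\to0$. This makes the statement an immediate corollary, and the above splitting serves only as an independent check. So the proof I would present is to take $\Lambda=[0,T]$ in Lemma~\ref{PEeq-MDPMDPlem-4.8}, take the supremum, and let $\epsilon\to0$. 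Nonnegativity of $f$ is used only to drop the absolute value, and $M\in\mathbb N$ is a special case of $M<\infty$.
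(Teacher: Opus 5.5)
Your main argument is correct. Take $\Lambda=[0,T]$ in the first inequality of Lemma~\ref{PEeq-MDPMDPlem-4.8}. It bounds the $\{|\psi|\ge\beta/a(\epsilon)\}$ tail, which pointwise dominates the $\{|\psi|>\beta/a(\epsilon)\}$ tail, by $\ell_f(\beta)\sqrt{a(\epsilon)}+\varsigma_f(\epsilon)(1+T)$. For fixed $\beta$ this tends to $0$. Neither $f\ge 0$ nor $M\in\mathbb N$ is needed.

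The paper does not argue this way. It gives no proof and simply cites this lemma, together with Lemma~\ref{PEeq-MDPMDPlem-4.8}, from \cite{Budhiraja-AOP-2016}. Your deduction shows that, in the form the paper states Lemma~\ref{PEeq-MDPMDPlem-4.8}, the present lemma is a one-line corollary and logically redundant. The key feature of that form is a remainder $\varsigma_f(\epsilon)\to0$ that does not depend on $\beta$. The price is that your proof is exactly as strong as that formulation, which is itself only imported. If the remainder there depended on $\beta$, or did not vanish with $\epsilon$, your argument would not close. A separate citation, or a direct entropy argument, would be unaffected.

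Do not present your splitting sketch as an independent check in its current form; it has two problems.

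First, your claim that $\psi<-\beta/a(\epsilon)$ is impossible once $a(\epsilon)<\beta$ is false for $\beta<1$. That branch is $\{\phi<1-\beta\}$, which can occur for every $\epsilon$ (e.g.\ where $\phi=0$). It is harmless but must be treated: there $|\psi|\le 1/a(\epsilon)$ and $\nu_T(\{\phi<1-\beta\})\le Ma^2(\epsilon)/\hbar(1-\beta)$, so your absolute-continuity argument applies.

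Second, the region $\{\phi>L\}$ is left open, and the Young-inequality route you describe does stall there. A split that closes it without introducing $L$ is the following.

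On $\{\phi>1+\beta\}\cap\{\phi\ge e^{\delta f^2/4}\}$ one has $f\le 2\sqrt{\delta^{-1}\log\phi}$. Since $\hbar(x)\ge c_\beta\, x\log x$ for $x\ge 1+\beta$, this gives $f\phi\le C_{\beta,\delta}\hbar(\phi)$. After dividing by $a(\epsilon)$, the contribution is at most $C_{\beta,\delta}Ma(\epsilon)$.

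On $\{1+\beta<\phi<e^{\delta f^2/4}\}$ one has $f\phi\le fe^{\delta f^2/4}$. Moreover $f^2e^{\delta f^2/2}\in L^1(\nu)$ because $f\in L^2(\nu)\cap\mathcal H^\delta$. Apply Cauchy--Schwarz, use $\nu_T(\{\phi>1+\beta\})\le Ma^2(\epsilon)/\hbar(1+\beta)$, and use absolute continuity of the integral. After dividing by $a(\epsilon)$, this contribution is $o(1)$.
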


Recall $\mathcal{U}_{+,\epsilon}^M$ from \eqref{PEeq-MDPdetdec4.6}. For fixed $M>0$ and $\phi^\epsilon\in\mathcal U_{+,\epsilon}^M$, consider the controlled equation below. Proposition~\ref{prop-controlled-wp}, based on a Galerkin approximation and Lemma~\ref{PEeq-MDPMDPlem-4.14}, gives its existence and pathwise uniqueness. Girsanov's theorem then identifies this solution with the equation driven by the controlled Poisson random measure in the variational representation.
\begin{align}\label{PEeq-MDPMDPequ-4.40}
	\begin{split}
		&~~du^{\epsilon}(t)+Au^{\epsilon}(t)dt+B(u^{\epsilon}(t),u^{\epsilon}(t))dt\\
		&=g(t)dt+\epsilon\int_{E}h(u^{\epsilon}(t-),\xi)\widetilde{N}^{\epsilon^{-1}\phi^\epsilon}(dt,d\xi)+\int_{E} h(u^{\epsilon}(t),\xi)(\phi^\epsilon(\xi,t)-1)\nu(d\xi)dt,
	\end{split}
\end{align} 
with initial value $u^{\epsilon}(0)=v_0$.

\begin{lemma}\label{PEeq-MDPMDPlem-4.14}
Suppose Hypotheses \ref{PEeqAssum-2.3} and \ref{PEeqAssum-2.5} hold. Let $v_0\in \mathcal{H}$ and $g, \partial_{z}g\in L^4([0,T];H)$. Then there exists $\epsilon_{0}>0$ such that the following estimates hold:
\begin{align}\label{PEeq-MDPMDPequ-4.41}
	\sup_{\epsilon\in (0,\epsilon_{0})}\left(\mathbb{E}\left[\sup_{t\in [0,T]}|u^{\epsilon}(t)|^2\right]+\int_{0}^{T}\mathbb{E}\left[\|u^{\epsilon}(t)\|^2\right]dt\right)\leq C_{\epsilon_{0},T},
\end{align}
and
\begin{align}\label{PEeq-MDPMDPequ-4.42}
	\sup_{\epsilon\in (0,\epsilon_{0})}\left(\mathbb{E}\left[\sup_{t\in [0,T]}|\partial_{z}u^{\epsilon}(t)|^2\right]+\int_{0}^{T}\mathbb{E}\left[\|\partial_{z}u^{\epsilon}(t)\|^2\right]dt\right)\leq C_{\epsilon_{0},T},
\end{align}
where $C_{\epsilon_{0},T} > 0$ is a constant that depends on $\epsilon_{0}$ and $T$.
\end{lemma}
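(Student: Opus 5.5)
The plan is to derive both bounds from It\^{o}'s formula with jumps, applied to $|u^\epsilon|^2$ and to $|\partial_z u^\epsilon|^2$ for the controlled equation \eqref{PEeq-MDPMDPequ-4.40}. All computations are first done on stopped Galerkin approximations, so that every quantity is finite. We then remove the stopping times with Fatou's lemma; the bounds do not depend on the Galerkin index.

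It is convenient to write the compensator of $N^{\epsilon^{-1}\phi^\epsilon}$ as $\epsilon^{-1}\phi^\epsilon\,\nu(d\xi)dt$. For $|u^\epsilon|^2$, the convection term vanishes because $b(u,u,u)=0$, and the forcing is handled by Young's inequality and Poincar\'e. Two terms need care. The first is the control drift
\[
2\int_0^t\int_E\bigl(h(u^\epsilon,\xi)(\phi^\epsilon-1),u^\epsilon\bigr)\nu(d\xi)ds .
\]
By \textbf{(B.1)} it is bounded by
\[
2\int_0^t(1+|u^\epsilon|^2)\int_E C_h(\xi)|\phi^\epsilon-1|\,\nu(d\xi)\,ds .
\]
We split the control as $\phi^\epsilon-1=a(\epsilon)\psi^\epsilon$ on $\{|\psi^\epsilon|\le\beta/a(\epsilon)\}$ plus a tail part. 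On the bounded part, $\psi^\epsilon\mathbf1_{\{|\psi^\epsilon|\le\beta/a(\epsilon)\}}$ lies in $\mathbb B_\nu(\sqrt{M\kappa_2(\beta)})$ deterministically, so $\int_E C_h|\phi^\epsilon-1|\nu(d\xi)$ is bounded by a deterministic function of $s$ that is square integrable in time, with $L^2$ norm at most $C\,a(\epsilon)\sqrt{M\kappa_2}$. On the tail, where $\phi^\epsilon\ge 1+\beta$, we have $|\phi^\epsilon-1|\le C_\beta\phi^\epsilon$, and Lemma~\ref{PEeq-MDPMDPlem-4.7} (with $f=C_h\in\mathcal H^\delta$ by Hypothesis~\ref{PEeqAssum-2.5}) gives $\int_0^T\int_E C_h^2\phi^\epsilon\,d\nu\,ds\le \textbf{c}_f(a^2(\epsilon)+T)$. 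Alternatively, Lemma~\ref{PEeq-MDPMDPlem-4.8} with $\Lambda=[0,T]$ bounds $\int_0^T\int_E C_h|\phi^\epsilon-1|\,d\nu\,ds$ directly by a deterministic constant. In either case we obtain a deterministic weight $G^\epsilon(s)\ge0$ with $\sup_\epsilon\int_0^T G^\epsilon\,ds\le C_M$, and the drift is bounded by $\int_0^t G^\epsilon(1+|u^\epsilon|^2)\,ds$.

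The second delicate term is the jump quadratic variation $\epsilon^2\int|h|^2 N^{\epsilon^{-1}\phi^\epsilon}$. Its compensator is $\epsilon\int_E C_h^2(1+|u^\epsilon|^2)\phi^\epsilon\,d\nu$, which is again controlled through Lemma~\ref{PEeq-MDPMDPlem-4.7}. Because the weight $\int_E C_h^2\phi^\epsilon d\nu$ is not pointwise deterministic, we apply that lemma on short intervals $\Lambda=[t_i,t_{i+1}]$. The resulting pathwise deterministic bound $\textbf{c}(a^2(\epsilon)+|\Lambda|)$ lets us run a Gronwall argument interval by interval: choose the $|\Lambda|$ small and $\epsilon_0$ small so that $a(\epsilon)$ is small, and absorb the resulting terms. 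We then estimate $\sup_t$ of the martingale parts by BDG: the compensated jump integral has bracket $\epsilon\int|u^\epsilon|^2C_h^2(1+|u^\epsilon|^2)\phi^\epsilon$, so BDG and Young give $\tfrac12\mathbb E\sup|u^\epsilon|^2$ plus terms of the same type. Gronwall with the integrable deterministic weight $1+G^\epsilon$ then yields \eqref{PEeq-MDPMDPequ-4.41}.

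For \eqref{PEeq-MDPMDPequ-4.42} we test with $-\partial_{zz}u^\epsilon$ at the Galerkin level. The convection term vanishes by \eqref{PEeq-2.13}, and $(g,-\partial_{zz}u)=(\partial_zg,\partial_zu)$. The control drift becomes $\int(\partial_z h(u,\xi)(\phi^\epsilon-1),\partial_zu)$, which by \textbf{(B.3)} has exactly the structure of the first step with $\widetilde C_h$ and $|\partial_zu^\epsilon|$ in place of $C_h$ and $|u^\epsilon|$; the jump and martingale terms are treated the same way with $\widetilde C_h\in L^2\cap L^4\cap\mathcal H^\delta$. No coupling to the $H$-estimate is needed beyond the growth bound, so Gronwall closes as before. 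The main obstacle is the random control weight $\int_E C_h|\phi^\epsilon-1|d\nu$ in the Gronwall argument. I will first try to handle it through the deterministic a.s. bounds of Lemmas~\ref{PEeq-MDPMDPlem-4.7}--\ref{PEeq-MDPMDPlem-4.8}, which are uniform over $\mathcal S^M_{+,\epsilon}$ and hence hold pathwise, so that a pathwise Gronwall applies before expectations are taken. If that fails, the fallback is a stopping-time localization on $\int_0^t(1+G^\epsilon)ds$.
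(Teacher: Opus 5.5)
Your route is essentially the paper's: a Galerkin-level It\^o formula for $|\partial_z u^\epsilon|^2$, the cancellation \eqref{PEeq-2.13}, \textbf{(B.3)}, Lemma~\ref{PEeq-MDPMDPlem-4.8} on $[0,T]$ for the control drift, and then BDG, the compensator identity and Lemma~\ref{PEeq-MDPMDPlem-4.7} for the jump terms. The paper handles \eqref{PEeq-MDPMDPequ-4.41} with the same scheme, without details.

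However, one step is stated incorrectly, and in the order you describe it the argument would not close. The weight $G^\epsilon(s)=\int_E C_h(\xi)|\phi^\epsilon(\xi,s)-1|\,\nu(d\xi)$ is random, and no deterministic function of $s$ dominates it. Membership in $\mathcal U^M_{+,\epsilon}$ only gives $\mathfrak L_T(\phi^\epsilon)\le Ma^2(\epsilon)$ almost surely, so the control may concentrate on a random time set. Likewise, the truncated $\psi^\epsilon$ has a deterministic bound only on its $L^2(\nu_T)$-norm. What Lemmas~\ref{PEeq-MDPMDPlem-4.7}--\ref{PEeq-MDPMDPlem-4.8} give is a deterministic almost-sure bound on $\int_0^T G^\epsilon\,ds$. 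So the sequence ``BDG, absorb $\frac12\mathbb E\sup_t|u^\epsilon|^2$, then Gronwall in expectation with weight $1+G^\epsilon$'' is not legitimate: $\mathbb E\int_0^t G^\epsilon|u^\epsilon|^2ds$ cannot be replaced by $\int_0^t G^\epsilon\,\mathbb E|u^\epsilon|^2ds$.

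The correct order is the one in your last sentence, and it is exactly the paper's.
\begin{itemize}
\item Apply Gronwall pathwise, keeping $\sup_t|\text{martingale}|$ and the quadratic-variation term on the right as random constants.
\item Replace $e^{\int_0^T G^\epsilon ds}$ by its deterministic bound, and only then take expectations and use BDG.
\item For the absorption to close, keep the exponent small. Do not put a $1$ into the weight; absorb the forcing into the dissipation via Poincar\'e. Then the exponent is $a(\epsilon)\int_0^T\!\int_E\widetilde C_h|\psi^\epsilon|\,d\nu\,ds\le\overline\Gamma_\epsilon\to0$.
\item With $e^{\overline\Gamma_\epsilon}\le 4/3$, the BDG contribution $\frac12$ and the $O(\epsilon)$ compensator contributions give a total coefficient $5/6<1$.
\end{itemize}
If you keep a large exponent $K$ instead, you need a Young parameter of order $e^{-K}$, compensated by the factor $\epsilon$ in front of the martingale.

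Your interval-by-interval Gronwall for the quadratic variation is unnecessary. Its compensator already carries the factor $\epsilon$, and Lemma~\ref{PEeq-MDPMDPlem-4.7} with $\Lambda=[0,T]$ bounds it by $2\epsilon\,\mathbf{c}_{\widetilde C_h}(a^2(\epsilon)+T)\,\mathbb E(1+\sup_t|\partial_z u^\epsilon|^2)$, which is absorbed for small $\epsilon$.

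Finally, for $\beta<1$ the tail set $\{|\psi^\epsilon|>\beta/a(\epsilon)\}$ also contains $\{\phi^\epsilon<1-\beta\}$, which your bounded/tail split ignores. Using Lemma~\ref{PEeq-MDPMDPlem-4.8} directly avoids this.
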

\begin{proof}
	The computation is initially performed on the finite-dimensional Galerkin system obtained by projecting \eqref{PEeq-MDPMDPequ-4.40} onto the span of the first $n$ eigenfunctions of $A$. In this setting, all stochastic integrals are defined in finite dimensions, and It\^{o}'s formula may be applied without further justification. The constants appearing in the estimates below are chosen independently of $n$; consequently, the estimates extend to the limiting solution by virtue of weak lower semicontinuity. Making use of the skew-symmetry relation $\left(B(u_n^{\epsilon},u_n^{\epsilon}),u_n^{\epsilon}\right)=0$, together with the standard $H$-energy estimate, the compensator identity, the BDG inequality, and Lemma~\ref{PEeq-MDPMDPlem-4.8}, we obtain \eqref{PEeq-MDPMDPequ-4.41} uniformly in $n$. We further provide an additional estimate for the vertical velocity, which is particular to the primitive-equation setting. To ease notation, we omit the subscript $n$ in what follows.

	Applying It\^{o}'s formula to \eqref{PEeq-MDPMDPequ-4.40} yields
\begin{align}\label{PEeq-MDPMDPequ-4.43}
	\begin{split}
		|\partial_{z}u^{\epsilon}(t)|^2+2\int_0^t \|\partial_{z}u^{\epsilon}(s)\|^2ds
		&=|\partial_{z}v_0|^2+ 2\int_0^t \left(g(s),\partial_{zz}u^{\epsilon}(s)\right)ds\\
		&+2\epsilon\int_0^t\int_{E}\left(\partial_{z}h(u^{\epsilon}(s-),\xi),\partial_{z}u^{\epsilon}(s)\right)\widetilde{N}^{\epsilon^{-1}\phi^\epsilon}(ds,d\xi)\\
		&+2\int_0^t\int_{E} \left(\partial_{z}h(u^{\epsilon}(s),\xi)(\phi^\epsilon(\xi,s)-1),\partial_{z}u^{\epsilon}(s)\right)\nu(d\xi)ds\\
		&+
		\epsilon^2\int_0^t\int_{E} |\partial_{z}h(u^{\epsilon}(s-),\xi)|^2{N}^{\epsilon^{-1}\phi^\epsilon}(ds,d\xi)=:\Upsilon_{1}+\sum_{j=2}^{5}\Upsilon_{j}(t).
	\end{split}
\end{align}
where we have used the cancellation property $\int_0^t \left(B(u^{\epsilon}(s),u^{\epsilon}(s)),\partial_{zz}u^{\epsilon}(s)\right) ds=0$, as given in \eqref{PEeq-2.13}.
Using Young's inequality yields
\begin{align}\label{PEeq-MDPMDPequ-4.45}
	\left|\Upsilon_{2}(t)\right|
	\leq \frac{\textbf{c}_0}{2}\int_0^t |\partial_{z}u^{\epsilon}(s)|^2ds
	+\frac{2}{\textbf{c}_0}\int_0^{t} |\partial_{z}g(s)|^2ds.
\end{align}
Let $\psi^\epsilon(\xi,s)=\frac{\phi^\epsilon(\xi,s)-1}{a(\epsilon)} \in \mathcal{U}_{\epsilon}^M$. Then condition \textbf{(B.3)} in Hypothesis~\ref{PEeqAssum-2.3} gives
\begin{align}\label{PEeq-MDPMDPequ-4.46}
		\left|\Upsilon_{4}(t)\right|
		&\leq 2a(\epsilon)\int_0^t |\partial_{z}u^{\epsilon}(s)|\int_{E}|\partial_{z}h(u^{\epsilon}(s),\xi)||\psi^\epsilon(\xi,s)|\nu(d\xi)ds\notag \\
		&\leq 2a(\epsilon)\int_0^t |\partial_{z}u^{\epsilon}(s)|(1+|\partial_{z}u^{\epsilon}(s)|)\int_{E}\widetilde{C}_{h}(\xi)|\psi^\epsilon(\xi,s)|\nu(d\xi)ds\notag \\
		&\leq 2a(\epsilon)\int_0^t (1+|\partial_{z}u^{\epsilon}(s)|)^2\int_{E}\widetilde{C}_{h}(\xi)|\psi^\epsilon(\xi,s)|\nu(d\xi)ds\notag \\
		&\leq 4a(\epsilon)\int_0^{t}\int_{E}\widetilde{C}_{h}(\xi)|\psi^\epsilon(\xi,s)|\nu(d\xi)ds+4a(\epsilon)\int_0^t |\partial_{z}u^{\epsilon}(s)|^2\int_{E}\widetilde{C}_{h}(\xi)|\psi^\epsilon(\xi,s)|\nu(d\xi)ds\notag \\
			&=:\Upsilon_{6,\epsilon}(t)+\int_0^t |\partial_{z}u^{\epsilon}(s)|^2d\Upsilon_{6,\epsilon}(s),
\end{align}	
where 
$$
\Upsilon_{6,\epsilon}(t):=4a(\epsilon)\int_0^{t}\int_{E}\widetilde{C}_{h}(\xi)|\psi^\epsilon(\xi,s)|\nu(d\xi)ds.
$$
Combining \eqref{PEeq-MDPMDPequ-4.43}--\eqref{PEeq-MDPMDPequ-4.46}, we obtain, for every $t\in [0,T]$,
 \begin{align}\label{PEeq-MDPMDPequ-4.47}
 	\begin{split}
 		&~|\partial_{z}u^{\epsilon}(t)|^2+\frac{3}{2}\int_0^t \|\partial_{z}u^{\epsilon}(s)\|^2ds
 		\leq \Upsilon_{1}+ \frac{2}{\textbf{c}_0}\int_0^{t} |\partial_{z}g(s)|^2ds+\sup_{t\in [0,T]}\left|\Upsilon_{3}(t)\right|\\
 		&+\Upsilon_{5}(T)+\Upsilon_{6,\epsilon}(T)+4a(\epsilon)\int_0^t |\partial_{z}u^{\epsilon}(s)|^2\int_{E}\widetilde{C}_{h}(\xi)|\psi^\epsilon(\xi,s)|\nu(d\xi)ds.
 	\end{split}
 \end{align}
Gronwall's inequality gives
\begin{align}
	\begin{split}
		&~~|\partial_{z}u^{\epsilon}(t)|^2+\int_0^t \|\partial_{z}u^{\epsilon}(s)\|^2ds\\
		&\leq \left(\Upsilon_{1}+\frac{2}{\textbf{c}_0}\left(\frac{T}{2}+\frac{1}{2}\int_0^{T} |\partial_{z}g(s)|^4ds\right)+\sup_{t\in [0,T]}\left|\Upsilon_{3}(t)\right|+\Upsilon_{5}(T)+\Upsilon_{6,\epsilon}(T)\right) e^{\Upsilon_{6,\epsilon}(T)}.
	\end{split}
\end{align}
It follows from Lemma \ref{PEeq-MDPMDPlem-4.8} that
\[\Upsilon_{\epsilon,6}(T)\leq 4a(\epsilon)\left(\rho_{\widetilde{C}_{h}}(\beta) \sqrt{T}+\ell_{\widetilde{C}_{h}}(\beta)\sqrt{a(\epsilon)} + \varsigma_{\widetilde{C}_{h}}(\epsilon) \big( 1 + T \big)\right):=\overline{\Gamma}_{\epsilon},\]
and hence
\begin{align*}
	&~~\Upsilon_{1}+\frac{2}{\textbf{c}_0}\left(\frac{T}{2}+\frac{1}{2}\int_0^{T} |\partial_{z}g(s)|^4ds\right)+\Upsilon_{6,\epsilon}(T)\\
	&\leq C_T+4a(\epsilon)\left(\rho_{\widetilde{C}_{h}}(\beta) \sqrt{T}+\ell_{\widetilde{C}_{h}}(\beta)\sqrt{a(\epsilon)} + \varsigma_{\widetilde{C}_{h}}(\epsilon) \big( 1 + T \big)\right).
\end{align*}
Therefore, by \eqref{PEeq-MDPMDPequ-4.47} we obtain
 \begin{align}\label{PEeq-MDPMDPequ-4.48}
	\begin{split}
		&~\mathbb{E}\left[\sup_{t\in [0,T]}|\partial_{z}u^{\epsilon}(t)|^2\right]+\int_0^{T} \mathbb{E}\left[\|\partial_{z}u^{\epsilon}(t)\|^2\right]dt\\
		&\leq \left(C_T+4a(\epsilon)\left(\rho_{\widetilde{C}_{h}}(\beta) \sqrt{T}+\ell_{\widetilde{C}_{h}}(\beta)\sqrt{a(\epsilon)} + \varsigma_{\widetilde{C}_{h}}(\epsilon) \big( 1 + T \big)\right)+\mathbb{E}\left[\sup_{t\in [0,T]}\left|\Upsilon_{3}(t)\right|+\Upsilon_{5}(T)\right]\right)\\
		&\quad \times e^{4a(\epsilon)\left(\rho_{\widetilde{C}_{h}}(\beta) \sqrt{T}+\ell_{\widetilde{C}_{h}}(\beta)\sqrt{a(\epsilon)} + \varsigma_{\widetilde{C}_{h}}(\epsilon) \big( 1 + T \big)\right)}.
	\end{split}
\end{align}
By the BDG inequality, condition \textbf{(B.3)} in Hypothesis \ref{PEeqAssum-2.3}, and Lemma \ref{PEeq-MDPMDPlem-4.7} we have
\begin{align}\label{PEeq-MDPMDPequ-4.49}
	\mathbb{E}\left[\sup_{t\in [0,T]}\left|\Upsilon_{3}(t)\right|\right]
	&\leq 2\epsilon \mathbb{E}\left[\left(\int_0^T\int_{E}|\partial_{z}u^{\epsilon}(t)|^2|\partial_{z}h(u^{\epsilon}(t-),\xi)|^2{N}^{\epsilon^{-1}\phi^\epsilon}(dt,d\xi)\right)^{1/2}\right]\notag \\
	&\leq 2\epsilon \mathbb{E}\left[\sup_{t\in [0,T]}|\partial_{z}u^{\epsilon}(t)| \left(\int_0^T\int_{E}|\partial_{z}h(u^{\epsilon}(t-),\xi)|^2{N}^{\epsilon^{-1}\phi^\epsilon}(dt,d\xi)\right)^{1/2}\right]\notag \\
	&\leq \frac{1}{2}\mathbb{E}\left[\sup_{t\in [0,T]}|\partial_{z}u^{\epsilon}(t)|^2\right]+8\epsilon \mathbb{E}\left[\left(1+\sup_{t\in [0,T]}|\partial_{z}u^{\epsilon}(t)|^2\right)\int_0^T\int_{E}\widetilde{C}_{h}^2(\xi)\phi^\epsilon(\xi,t)\nu(d\xi)dt\right]\notag \\
	&\leq \frac{1}{2}\mathbb{E}\left[\sup_{t\in [0,T]}|\partial_{z}u^{\epsilon}(t)|^2\right]+ 8\epsilon\textbf{c}_{\widetilde{C}_{h}} \big( a^2(\epsilon) + T \big) \mathbb{E}\left[\left(1+\sup_{t\in [0,T]}|\partial_{z}u^{\epsilon}(t)|^2\right)\right],
\end{align}
and
\begin{align}\label{PEeq-MDPMDPequ-4.50}
\mathbb{E}\left[\Upsilon_{5}(T)\right]&\leq 2\epsilon\mathbb{E}\left[\left(1+\sup_{t\in [0,T]}|\partial_{z}u^{\epsilon}(t)|^2\right)\int_0^T\int_{E}\widetilde{C}_{h}^2(\xi)\phi^\epsilon(\xi,t)\nu(d\xi)dt\right]\notag \\
&\leq 	2\epsilon\textbf{c}_{\widetilde{C}_{h}} \big( a^2(\epsilon) + T \big) \mathbb{E}\left[\left(1+\sup_{t\in [0,T]}|\partial_{z}u^{\epsilon}(t)|^2\right)\right].
\end{align}
Since the deterministic upper bound $\overline \Gamma_\epsilon$ displayed above converges to zero, we reduce $\epsilon_0$, if necessary, so that
\[
\sup_{0<\epsilon\le\epsilon_0}e^{\overline \Gamma_\epsilon}\le\frac{4}{3},
\qquad
10\epsilon_0 c_{\widetilde C_h}
\left(T+\sup_{0<\epsilon\le\epsilon_0}a^2(\epsilon)\right)\le\frac{1}{8}.
\]
Substituting \eqref{PEeq-MDPMDPequ-4.49}--\eqref{PEeq-MDPMDPequ-4.50} into \eqref{PEeq-MDPMDPequ-4.48} gives
\begin{align*}
	\mathbb{E}\left[\sup_{t\in [0,T]}|\partial_{z}u^{\epsilon}(t)|^2\right]+\int_0^{T} \mathbb{E}\left[\|\partial_{z}u^{\epsilon}(t)\|^2\right]dt 
	&\le C_{\epsilon_0,T}+\frac{4}{3}\left(\frac12+\frac18\right)\mathbb{E}\left[\sup_{t\in [0,T]}|\partial_{z}u^{\epsilon}(t)|^2\right]\\
	&=C_{\epsilon_0,T}+\frac{5}{6}\mathbb{E}\left[\sup_{t\in [0,T]}|\partial_{z}u^{\epsilon}(t)|^2\right].
\end{align*}
which implies \eqref{PEeq-MDPMDPequ-4.42}. This completes the proof.
\end{proof}

\begin{proposition}\label{prop-controlled-wp}
		Suppose Hypotheses~\ref{PEeqAssum-2.3} and~\ref{PEeqAssum-2.5} hold, $v_0\in\mathcal H$, and $g,\partial_zg\in L^4(0,T;H)$.  For every $M>0$ there exists $\epsilon_0>0$ such that, for $0<\epsilon<\epsilon_0$ and every predictable $\phi^\epsilon\in\mathcal U_{+,\epsilon}^M$, equation~\eqref{PEeq-MDPMDPequ-4.40} has a unique adapted c\`adl\`ag solution
		\[
		u^\epsilon\in\mathcal D([0,T];H)\cap L^2(0,T;V),
		\qquad
		\partial_z u^\epsilon\in L^\infty(0,T;H)\cap L^2(0,T;V),
		\quad\mathbb P\text{-a.s.}
		\]
		The estimates \eqref{PEeq-MDPMDPequ-4.41}-\eqref{PEeq-MDPMDPequ-4.42} hold uniformly over the admissible controls.
\end{proposition}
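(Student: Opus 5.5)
The plan is to build the solution of \eqref{PEeq-MDPMDPequ-4.40} by a stopped Galerkin scheme, pass to the limit by a martingale/compactness argument, and then recover a strong solution from pathwise uniqueness via Yamada--Watanabe.

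\textbf{Galerkin scheme.} Let $P_n$ be the spectral projection onto $\mathrm{span}\{e_1,\dots,e_n\}$, and consider
\[
du_n+Au_n\,dt+P_nB(u_n,u_n)\,dt
=P_ng\,dt+\epsilon\int_E P_nh(u_n(t-),\xi)\widetilde N^{\epsilon^{-1}\phi^\epsilon}(dt,d\xi)
+\int_E P_nh(u_n,\xi)(\phi^\epsilon-1)\,\nu(d\xi)\,dt ,
\]
with $u_n(0)=P_nv_0$. Since $\phi^\epsilon\in\overline{\mathfrak R}_{b,m}$ for some $m$, the intensity $\epsilon^{-1}\phi^\epsilon\nu$ is bounded by $\epsilon^{-1}m\,\nu$ on $\mathfrak C_m$ and equals $\epsilon^{-1}\nu$ off $\mathfrak C_m$. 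The drift is therefore locally Lipschitz, and the jump coefficient is Lipschitz by \textbf{(B.2)} with $L_h\in L^2(\nu)$. Standard finite-dimensional SDE theory gives a unique solution up to the stopping time $\tau_{n,R}=\inf\{t:|u_n(t)|\ge R\}$.

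\textbf{Uniform bounds and nonexplosion.} The Itô computations behind Lemma~\ref{PEeq-MDPMDPlem-4.14} hold at the Galerkin level, with constants independent of $n$ and $R$. This relies on the cancellation $(B(u_n,u_n),u_n)=0$ and on \eqref{PEeq-2.13}, which survives because $P_n$ commutes with $\partial_{zz}$ on the eigenbasis when tested against $u_n\in D(A)$. The bounds control $\mathbb E\sup_t(|u_n|^2+|\partial_zu_n|^2)$ and $\mathbb E\int_0^T(\|u_n\|^2+\|\partial_zu_n\|^2)\,dt$. Letting $R\to\infty$ then shows nonexplosion. The control term is handled as in Lemma~\ref{PEeq-MDPMDPlem-4.14}: bounded by $a(\epsilon)\int_E C_h|\psi^\epsilon|\,\nu(d\xi)$, which is estimated by Lemma~\ref{PEeq-MDPMDPlem-4.8}. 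These deterministic bounds are uniform in $\omega$, which is why $\epsilon_0$ can be chosen depending only on $M$.

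\textbf{Tightness and identification.} I would derive a fractional-in-time bound in $W^{\alpha,2}(0,T;V')$ by the same splitting as in Step~2 of Lemma~\ref{PEeq-MDPCOnd1lem-4.10}, and an Aldous condition for the jump martingale. This gives tightness of the laws of $(u_n,\partial_zu_n)$ in $L^2(0,T;H)\cap\mathcal D([0,T];V')$, via Lemma~\ref{PEeq-MDPCpctness-4.1}.

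After a Skorokhod representation (including the driving measure $\overline N$ and the control), I would identify the limit as a martingale solution. The nonlinear terms pass to the limit using strong $L^2(0,T;H)$ convergence together with \eqref{PEeq-2.0013}. The control drift passes by \textbf{(B.2)}, since it is bounded by $\int_E L_h|\phi-1|\,\nu(d\xi)\,|u_n-u|$, which Lemma~\ref{PEeq-MDPMDPlem-4.7} makes $L^2$-integrable in time. The jump martingale is identified through its compensator convergence. The regularity $\partial_zu\in L^\infty(0,T;H)\cap L^2(0,T;V)$ follows by lower semicontinuity.

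\textbf{Pathwise uniqueness.} This is the step I expect to be the main obstacle. Let $w=u^1-u^2$ be the difference of two solutions. Using \eqref{PEeq-2.103} oriented as in Lemma~\ref{UPe400-fm-3.6}, I would apply Itô's formula to
\[
y(t)\,|w(t)|^2,\qquad y(t)=\exp\Big(-C\int_0^t\big(\|u^2\|^2+|\partial_zu^2|^4+\textstyle\int_E L_h|\phi^\epsilon-1|\,d\nu+1\big)\,ds\Big).
\]
The weight absorbs both the convection term and the Lipschitz control term. The jump terms are controlled by $\epsilon^2\int_E L_h^2\phi^\epsilon\,\nu(d\xi)$, whose time integral is finite by Lemma~\ref{PEeq-MDPMDPlem-4.7}. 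Localizing with stopping times on $\sup|\partial_zu^2|$ and $\int_0^t\|u^2\|^2$, a stochastic Gronwall argument gives $w=0$ up to each stopping time, and hence everywhere.

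\textbf{Conclusion.} Yamada--Watanabe then yields a unique strong adapted càdlàg solution. The estimates \eqref{PEeq-MDPMDPequ-4.41}--\eqref{PEeq-MDPMDPequ-4.42} transfer to it by Fatou's lemma, uniformly over admissible controls.
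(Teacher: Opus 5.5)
Your proposal is correct and follows essentially the same route as the paper's proof. Both arguments use a stopped spectral Galerkin scheme with the $n$- and $R$-uniform bounds of Lemma~\ref{PEeq-MDPMDPlem-4.14}, and both get tightness in $\mathcal D([0,T];V')\cap L^2(0,T;H)$ from Aldous' criterion and a $W^{\alpha,2}(0,T;V')$ bound. Both then take a Skorokhod representation jointly with $(\overline N,\phi^\epsilon)$ and prove pathwise uniqueness from the oriented estimate \eqref{PEeq-2.103}, using a Gronwall rate built from $\|u^2\|^2+|\partial_zu^2|^4$ and the Lipschitz control and jump terms, before concluding by Yamada--Watanabe.

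The paper is more explicit on two points you gloss over. After Skorokhod representation, the control drift also needs the splitting $\phi_n^\epsilon\to\phi^\epsilon$ paired against a fixed coefficient. And because the control is part of the input, the paper uses Kurtz's version of Yamada--Watanabe and checks temporal compatibility of the limit pair. A minor slip: the compensated jump contribution in your uniqueness estimate is $\epsilon\int_E L_h^2\phi^\epsilon\,d\nu$, not $\epsilon^2\int_E L_h^2\phi^\epsilon\,d\nu$.
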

\begin{proof}
	We distinguish the compactness topology used for existence from the stronger MDP topology $\mathfrak U$.

	\textbf{Step 1.}
	Let $u_n^\epsilon$ be the spectral Galerkin solution of \eqref{PEeq-MDPMDPequ-4.40}, stopped when
	\[
	|u_n^\epsilon|^2+|\partial_zu_n^\epsilon|^2+
	\int_0^t(\|u_n^\epsilon\|^2+\|\partial_zu_n^\epsilon\|^2)ds
	\]
	reaches $R$. The estimates of Lemma~\ref{PEeq-MDPMDPlem-4.14}, together with the entropy inequality and Lemma~\ref{PEeq-MDPMDPlem-4.8}, are uniform in $n$ and $R$ and yield
	\eqref{PEeq-MDPMDPequ-4.41}--\eqref{PEeq-MDPMDPequ-4.42}. Hence the explosion times tend to $T$ almost surely.

	\textbf{Step 2.}
	Set
	\[
	\hat{\mathfrak Z}=\mathcal D([0,T];V^*)\cap L^2(0,T;H),\qquad
	d_{\hat{\mathfrak Z}}=d_{J_1,V^*}+\|\cdot\|_{L^2(0,T;H)}.
	\]
	As in Lemma~\ref{lem:MDP-path-space-Polish}, $\hat{\mathfrak Z}$ is Polish. Fix the control $\phi^\epsilon$. Since $\phi^\epsilon\in\overline{\mathfrak R}_b$, choose $m$ such that $\phi^\epsilon\in\overline{\mathfrak R}_{b,m}$. Put $K_m=[0,T]\times\mathfrak C_m$ and view $\phi^\epsilon|_{K_m}$ as a random element of the closed ball
	\[
	\mathbb B_m^{\rm ctrl}:=
	\left\{q\in L^2(K_m,\nu_T):\|q\|_{L^2(K_m,\nu_T)}\le m\sqrt{T\nu(\mathfrak C_m)}\right\},
	\]
	endowed with the weak $L^2$ topology. This ball is compact and metrizable, hence Polish; moreover $\phi^\epsilon=1$ on $K_m^c$, so its restriction determines the full control. Write the Galerkin equation as
	\[
	u_n^\epsilon(t)=P_nv_0+\int_0^tF_n^\epsilon(s)\,ds+M_n^\epsilon(t),
	\]
	where $F_n^\epsilon$ contains $-Au_n^\epsilon$, the nonlinear drift, $g$, and the controlled compensator, while $M_n^\epsilon$ is the projected compensated Poisson integral. Estimate~\eqref{PEeq-2.0013}, the bounds of Step~1, and the fact that $\phi^\epsilon-1$ is bounded and supported in $\mathfrak C_m$ imply
	\[
	\sup_n\mathbb E\left[\int_0^T\|F_n^\epsilon(s)\|_{V^*}^2ds\right]<\infty.
	\]
	Indeed, every term in \eqref{PEeq-2.0013} is square integrable in time by the $L^\infty(0,T;H)$ bounds for $u_n^\epsilon$ and $\partial_zu_n^\epsilon$ and the $L^2(0,T;V)$ bounds for both quantities.

	For stopping times $\tau_n\le T$, the isometry for compensated Poisson integrals gives
	\[
	\mathbb E\left[\left|M_n^\epsilon((\tau_n+\theta)\wedge T)-M_n^\epsilon(\tau_n)\right|_{V^*}^2\right]
	\le C_{m,\epsilon}\theta.
	\]
	The finite-variation part satisfies the analogous estimate by Cauchy--Schwarz. Hence Aldous' condition holds in $V^*$. Moreover, the same increment estimates integrated over deterministic pairs $(s,t)$ yield, for every $\alpha\in(0,1/2)$,
	\[
	\sup_n\mathbb E\left[\|u_n^\epsilon\|_{W^{\alpha,2}(0,T;V^*)}^2\right]<\infty.
	\]
	Applying Lemma~\ref{PEeq-MDPCpctness-4.1} with $B_0=V$, $B=H$, and $B_1=V^*$ gives tightness in $L^2(0,T;H)$, whereas the uniform $L^\infty(0,T;H)$ bound, the compact embedding $H\hookrightarrow V^*$, and Aldous' criterion give tightness in $\mathcal D([0,T];V^*)$. Since $\hat{\mathfrak Z}$ is the closed diagonal subspace of the product of these two Polish spaces, the laws of $u_n^\epsilon$ are tight in $\hat{\mathfrak Z}$.

	Let $\overline N$ be the canonical Poisson random measure used in the thinning construction \eqref{Possine}. The joint law of
	\[
	\mathsf Y^\epsilon:=(\overline N,\phi^\epsilon|_{K_m})
	\]
	on the Polish input space $\overline{\mathfrak M}\times\mathbb B_m^{\rm ctrl}$ is independent of the Galerkin index $n$. Consequently, the joint laws of $(u_n^\epsilon,\mathsf Y^\epsilon)$ are tight in
	$\hat{\mathfrak Z}\times\overline{\mathfrak M}\times\mathbb B_m^{\rm ctrl}$. On a Skorokhod representation space, along a subsequence,
	\[
	(u_n^\epsilon,\overline N_n,\phi_n^\epsilon)
	\longrightarrow(u^\epsilon,\overline N,\phi^\epsilon)
	\]
	almost surely, where the convergence is strong in $\hat{\mathfrak Z}$ for the solution, vague in $\overline{\mathfrak M}$ for the canonical Poisson input, and weak in $L^2(K_m,\nu_T)$ for the control. In particular, $u_n^\epsilon\to u^\epsilon$ strongly in $L^2(0,T;H)$. The uniform energy estimates give, after a further subsequence,
	\[
	u_n^\epsilon\rightharpoonup u^\epsilon
	\quad\text{in }L^2(\widetilde\Omega\times(0,T);V),
	\qquad
	\partial_zu_n^\epsilon\rightharpoonup\partial_zu^\epsilon
	\quad\text{in }L^2(\widetilde\Omega\times(0,T);V),
	\]
	where the second limit is identified by the closedness of the distributional vertical derivative.

	For the controlled drift, fix $e\in V$ and split the difference of the compensator pairings as follows:
	\begin{align*}
	&\int_0^T\!\int_{\mathfrak C_m}
	\bigl(h(u_n^\epsilon(s),\xi)-h(u^\epsilon(s),\xi),e\bigr)
	(\phi_n^\epsilon-1)\,\nu(d\xi)ds\\
	&\quad+
	\int_0^T\!\int_{\mathfrak C_m}
	\bigl(h(u^\epsilon(s),\xi),e\bigr)
	(\phi_n^\epsilon-\phi^\epsilon)\,\nu(d\xi)ds.
	\end{align*}
		The first term tends to zero by \textbf{(B.2)}, the strong $L^2(0,T;H)$ convergence, and the uniform $L^2(K_m,\nu_T)$ bound for the controls. The second tends to zero by the weak convergence of $\phi_n^\epsilon$ because $(s,\xi)\mapsto(h(u^\epsilon(s),\xi),e)$ belongs to $L^2(K_m,\nu_T)$ by \textbf{(B.1)}. Hence the controlled compensator is identified without requiring pointwise convergence of the controls. The nonlinear term is identified by \eqref{PEeq-2.0013}.

		For two
		eigenvectors $e_i,e_j$ and the scalar martingales
		$M_n^{\epsilon,i}=(M_n^\epsilon,e_i)$, the predictable covariation is
		\begin{align}\label{controlled-martingale-bracket}
		 \langle M_n^{\epsilon,i},M_n^{\epsilon,j}\rangle_t
		 =\epsilon\int_0^t\!\int_E
		 &(P_nh(u_n^\epsilon(s),\xi),e_i)
		 (P_nh(u_n^\epsilon(s),\xi),e_j)\notag\\[-2mm]
		 &\hspace{34mm}\times\phi_n^\epsilon(\xi,s)\,\nu(d\xi)ds.
		\end{align}
		On $K_m$, split the difference from the candidate limit by first replacing
		$u_n^\epsilon$ with $u^\epsilon$ and then replacing
		$\phi_n^\epsilon$ with $\phi^\epsilon$.  The first difference converges to
		zero by \textbf{(B.1)}--\textbf{(B.2)}, strong $L^2(0,T;H)$ convergence,
		and $1/m\leq\phi_n^\epsilon\leq m$.  For the second difference, the fixed
		coefficient
		\[
		 (s,\xi)\longmapsto
		 (h(u^\epsilon(s),\xi),e_i)(h(u^\epsilon(s),\xi),e_j)
		\]
		belongs to $L^2(K_m,\nu_T)$: this follows from
		$C_h\in L^4(\nu)$ and the almost-sure $L^\infty(0,T;H)$ energy bound.
		It may therefore be paired with the weakly convergent controls.  On
		$K_m^c$ the controls equal one, and the strong convergence of the states
		treats the remaining difference directly.  Thus
		\eqref{controlled-martingale-bracket} converges to the required bracket;
		the same argument identifies the compensators of the jump
		characteristics.  The martingale characterization then identifies the
		compensated Poisson integral. Thus the limit solves
		\eqref{PEeq-MDPMDPequ-4.40}. Lower semicontinuity yields
		\eqref{PEeq-MDPMDPequ-4.41}--\eqref{PEeq-MDPMDPequ-4.42}, and the equation
		provides an $H$-valued c\`adl\`ag modification.

		Finally, the Galerkin output is temporally compatible with the input
		$\mathsf Y^\epsilon$.  Indeed, $u_n^\epsilon$ is adapted to the canonical
		filtration and $\phi^\epsilon$ is predictable.  If
		$\mathscr F_t^{u_n,\mathsf Y}$ and $\mathscr F_t^{\mathsf Y}$ denote the
		$\sigma$-fields generated up to time $t$, then, for every bounded Borel
		function $F$ of the full input,
		\[
		 \mathbb E\!\left[F(\mathsf Y^\epsilon)\mid
		 \mathscr F_t^{u_n,\mathsf Y}\right]
		 =\mathbb E\!\left[F(\mathsf Y^\epsilon)\mid
		 \mathscr F_t^{\mathsf Y}\right].
		\]
		These are the compatibility identities of
		\cite[Section~2]{Kurtz-ECP-2014}.  For bounded continuous cylinder
		functions and continuity times $t$ they pass to the joint weak limit.
		Right continuity extends the identities to every $t$, and a monotone-class
		argument extends them to all bounded Borel $F$. Hence the limiting pair
		$(u^\epsilon,\mathsf Y^\epsilon)$ is temporally compatible.

	\textbf{Step 3.}
	For two solutions driven by the same controlled random measure, let
	$\widehat u=u_1^\epsilon-u_2^\epsilon$. Using
	\[
	B(u_1^\epsilon,u_1^\epsilon)-B(u_2^\epsilon,u_2^\epsilon)
	=B(u_1^\epsilon,\widehat u)+B(\widehat u,u_2^\epsilon),
	\]
	the first term cancels, while It\^{o}'s formula, \eqref{PEeq-2.103}, and \textbf{(B.2)} give after localization
	\[
	d|\widehat u|^2+\|\widehat u\|^2dt
	\le\Gamma_\epsilon(t)|\widehat u|^2dt+dM_t,
	\]
	with
	\[
	\Gamma_\epsilon(t)=C(1+\|u_2^\epsilon\|^2+|\partial_zu_2^\epsilon|^4)
	+Ca(\epsilon)\!\int_E L_h|\psi^\epsilon|\,d\nu
	+C\epsilon\!\int_E L_h^2\phi^\epsilon\,d\nu .
	\]
	\textbf{Step 1} and the entropy estimates imply $\Gamma_\epsilon\in L^1(0,T)$ almost surely; moreover
	\[
	\int_0^T|\partial_zu_2^\epsilon|^4dt
	\le C\sup_{t\le T}|\partial_zu_2^\epsilon(t)|^2
	\int_0^T\|\partial_zu_2^\epsilon(t)\|^2dt<\infty .
	\]
	The BDG inequality and the stochastic Gronwall lemma therefore give pathwise uniqueness.

	\textbf{Step 4.}
	Let $\Pi_{\mathsf Y^\epsilon}:=\mathscr L(\mathsf Y^\epsilon)$, and let $\mathcal S_{\Gamma,\mathcal C,\Pi_{\mathsf Y^\epsilon}}$ be the set of joint laws of output--input pairs $(u,\mathsf Y^\epsilon)$ satisfying the variational form of \eqref{PEeq-MDPMDPequ-4.40}, the fixed input law $\Pi_{\mathsf Y^\epsilon}$, and the temporal compatibility identities from \textbf{Step 2}. By \textbf{Step 2} and \textbf{3}, this set is nonempty and jointly compatible solutions driven by the same input are pathwise unique. Hence \cite[Theorem~1.5 and Lemma~2.10]{Kurtz-ECP-2014} give a strong solution and joint uniqueness in law. Consequently, the solution is represented by a Borel function of $(\overline N,\phi^\epsilon)$, whose evaluation on the original stochastic basis yields the required probabilistically strong solution.
\end{proof}

\begin{lemma}\label{lem:measurable-solution-maps}
	For every fixed $\epsilon\in(0,\epsilon_0)$, there exist Borel maps with the following properties:
	\begin{enumerate}
		\item[$(1)$] A map $\mathscr G^\epsilon:\mathfrak M\to\mathfrak U$ represents the uncontrolled fluctuation equation:
		\[
		\mathcal Y^\epsilon=\mathscr G^\epsilon(\epsilon N^{\epsilon^{-1}})
		\quad\mathbb P\text{-a.s.}
		\]
		Moreover, for every bounded predictable control
		$\phi^\epsilon\in\mathcal U_{+,\epsilon}^M$, the same map evaluated at
		$\epsilon N^{\epsilon^{-1}\phi^\epsilon}$ represents the unique controlled fluctuation associated with \eqref{PEeq-MDPMDPequ-4.40}.
		\item[$(2)$] There is a Borel residual map
		\[
		\mathscr R^\epsilon:\mathfrak U_z\times
		\mathbb B_\nu((M\kappa_2(1))^{1/2})
		\longrightarrow\mathfrak U_z
		\]
		such that $\mathscr R^\epsilon(Q,q)$ is the unique solution of the residual equation obtained from \eqref{PEeq-MDPMDPequ-4.76} by replacing
		$(\mathscr Q^\epsilon,
		\psi^\epsilon\mathbf1_{\{|\psi^\epsilon|\le\beta/a(\epsilon)\}})$
		with $(Q,q)$.
	\end{enumerate}
\end{lemma}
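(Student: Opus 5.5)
The plan is to obtain both maps from the Yamada--Watanabe--Kurtz machinery already used in Step~4 of Proposition~\ref{prop-controlled-wp}, combined with a measurable-selection and restriction argument.

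\emph{Part (1).} For the uncontrolled equation \eqref{PEeq-MDP2.15}, Theorem~\ref{PEeqthe-2.5} gives pathwise uniqueness and existence of a probabilistically strong solution. Kurtz's theorem \cite[Theorem~1.5]{Kurtz-ECP-2014} then yields a Borel map $\Phi^\epsilon$ from the input space $\mathfrak M$ to $\mathcal D([0,T];H)$ with $v^\epsilon=\Phi^\epsilon(\epsilon N^{\epsilon^{-1}})$ almost surely. The solution also lies in $L^2(0,T;V)$ almost surely, so we modify $\Phi^\epsilon$ on a null Borel set (for instance, assigning a fixed value where the $L^2(0,T;V)$ norm, a Borel functional on $\mathcal D([0,T];H)$, is infinite). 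This gives a Borel map into $\mathfrak U$, and we set
\[
\mathscr G^\epsilon(m):=\bigl(\Phi^\epsilon(m)-v^0\bigr)/a(\epsilon).
\]
Subtracting the fixed deterministic path $v^0\in C([0,T];H)\cap L^2(0,T;V)$ and rescaling are continuous on $(\mathfrak U,d_{\mathfrak U})$, which is Polish by Lemma~\ref{lem:MDP-path-space-Polish}.

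For the controlled statement, the key point is the Girsanov identification. For $\phi^\epsilon\in\mathcal U_{+,\epsilon}^M\subset\overline{\mathfrak R}_b$, the law of $\epsilon N^{\epsilon^{-1}\phi^\epsilon}$ is equivalent to that of $\epsilon N^{\epsilon^{-1}}$, because $\phi^\epsilon$ is bounded away from $0$ and $\infty$ on $\mathfrak C_m$ and equals $1$ off it. The change of measure \cite{Budhiraja-AOP-2016} turns the equation satisfied by $\Phi^\epsilon(\epsilon N^{\epsilon^{-1}})$ into \eqref{PEeq-MDPMDPequ-4.40}: the extra drift $\int_E h(\phi^\epsilon-1)\,d\nu$ arises from rewriting $\widetilde N^{\epsilon^{-1}}$ through $\widetilde N^{\epsilon^{-1}\phi^\epsilon}$. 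Since null sets are preserved under equivalent laws, $\Phi^\epsilon(\epsilon N^{\epsilon^{-1}\phi^\epsilon})$ solves \eqref{PEeq-MDPMDPequ-4.40} almost surely. By the pathwise uniqueness of Proposition~\ref{prop-controlled-wp}, Step~3, it coincides with the unique controlled solution $u^\epsilon$, so $\mathscr G^\epsilon(\epsilon N^{\epsilon^{-1}\phi^\epsilon})=(u^\epsilon-v^0)/a(\epsilon)$.

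\emph{Part (2).} The residual equation is, I expect, a deterministic-input equation of skeleton type. Its forcing is $Q\in\mathfrak U_z$ (standing in for the martingale and remainder terms $\mathscr Q^\epsilon$) together with a control $q$ in the weakly compact ball $\mathbb B_\nu((M\kappa_2(1))^{1/2})$, and it is linearized (or mildly nonlinear, via $a(\epsilon)$) around $v^0$. The plan has three steps.
\begin{enumerate}
\item[(a)] Prove existence and uniqueness for each $(Q,q)$ by Galerkin approximation, using the $H$-estimate and the $\partial_{zz}$-tested vertical estimate, exactly as in Step~1 of Lemma~\ref{PEeq-MDPCOnd1lem-4.10}. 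The cancellation \eqref{PEeq-2.13} is used there, and $Q,\partial_zQ\in\mathfrak U$ supply the vertical regularity needed for $\mathscr R^\epsilon(Q,q)\in\mathfrak U_z$.
\item[(b)] Show that the graph $\{((Q,q),R):R \text{ solves the equation}\}$ is closed in $\mathfrak U_z\times\mathbb B_\nu\times\mathfrak U_z$, by passing to the limit in the weak formulation. This goes as in Step~3 of Lemma~\ref{PEeq-MDPCOnd1lem-4.10}: controls are paired with fixed test functions, and $Q$-terms converge in $d_{\mathfrak U_z}$.
\item[(c)] Conclude Borel measurability. A Borel-graph map between Polish spaces is Borel (Souslin / Lusin theorem), so uniqueness plus the closed graph gives a Borel $\mathscr R^\epsilon$. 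In fact, the Step~4 Gronwall argument of Lemma~\ref{PEeq-MDPCOnd1lem-4.10}, with strong $L^2(0,T;H)$ compactness from Lemma~\ref{PEeq-MDPCpctness-4.1}, should even give sequential continuity on bounded sets.
\end{enumerate}

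The main obstacle is step (a)--(b) in the $\mathfrak U_z$ topology. The vertical estimate requires $\partial_z$ of the forcing, and in the $J_1$ topology for the cadlag $Q$ one must handle jumps. I would first subtract $Q$, setting $R=Q+\widetilde R$, so that $\widetilde R$ solves a continuous-in-time equation with $L^2(0,T;V')$ forcing built from $Q$. Closedness is then checked in $C([0,T];H)\cap L^2(0,T;V)$ for $\widetilde R$ and for $\partial_z\widetilde R$, where $J_1$ and uniform convergence agree on continuous limits.
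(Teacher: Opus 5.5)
Your proposal follows essentially the same route as the paper. For part~(1), Kurtz's Yamada--Watanabe theorem gives a Borel solution map for the uncontrolled equation, and the Poisson--Girsanov change of measure together with the pathwise uniqueness of Proposition~\ref{prop-controlled-wp} identifies its value at $\epsilon N^{\epsilon^{-1}\phi^\epsilon}$. For part~(2), Galerkin well-posedness plus a Borel graph (closed, in your version) and the Lusin--Souslin/Lusin--Novikov theorem yield $\mathscr R^\epsilon$.

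One small correction: in \eqref{PEeq-MDPMDPequ-4.76} the path $Q$ enters only through drift coefficients. So $\mathscr R^\epsilon(Q,q)$ is already continuous in time and is exactly your $\widetilde R$, not $Q+\widetilde R$, and no separate treatment of jumps is needed.
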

\begin{proof}
	For part~$(1)$, we apply \cite[Theorem~1.5]{Kurtz-ECP-2014} to the
	uncontrolled equation, with the canonical Poisson random measure as input
	and the output taking values in the Polish space $\mathfrak U$.
	Proposition~\ref{prop-controlled-wp}, with $\phi^\epsilon\equiv1$, gives
	compatible weak existence and pathwise uniqueness among jointly compatible
	solutions. Consequently, the strong solution is a Borel function of the
	input. Composing the resulting solution map with
	$u\mapsto(u-v^0)/a(\epsilon)$ gives $\mathscr G^\epsilon$.
	If $\phi^\epsilon\in\mathcal U_{+,\epsilon}^M$, then
	$\phi^\epsilon\in\overline{\mathfrak R}_{b,m}$ for some $m$. The
	Poisson--Girsanov change of measure shows that the same map, evaluated at
	$\epsilon N^{\epsilon^{-1}\phi^\epsilon}$, gives a solution of the
	controlled equation. The required identification then follows from the
	uniqueness in Proposition~\ref{prop-controlled-wp}.
	
	For part~$(2)$, fix
	\[
	(Q,q)\in\mathfrak U_z\times
	\mathbb B_\nu((M\kappa_2(1))^{1/2}).
	\]
	The corresponding residual equation is a deterministic primitive equation
	with prescribed path $Q$ and forcing
	\[
	F_q(t)=\int_E h(v^0(t),\xi)q(\xi,t)\,\nu(d\xi).
	\]
	By \textbf{(B.1)} and \textbf{(B.3)}, $F_q$ has the required $H$ and
	vertical regularity. A standard spectral Galerkin argument, based on the
	nonlinear estimates and cancellations in Lemma~\ref{PEeq-lem2.1}, gives a
	unique solution $Y\in\mathfrak U_z$. Here the vertical regularity is obtained
	by testing the original Galerkin equation with $-\partial_{zz}Y_n$; no
	separate $H$-valued equation or vertical-average condition is imposed on
	$\partial_zY$.
	It remains to verify measurability. The graph of the solution relation is
	Borel. Indeed, the variational equation may be tested against a countable
	dense subset of $V$ at rational times; by the continuity of its integral
	form in $V'$, these identities determine the equation at every time. The
	dependence on $Q$ is Borel in the strong path topology, while the dependence
	on $q$ is continuous on the weakly topologized control ball. Since the
	residual equation has a unique solution for every $(Q,q)$, the
	Lusin--Novikov theorem yields a Borel map
	$
	\mathscr R^\epsilon:
	\mathfrak U_z\times
	\mathbb B_\nu((M\kappa_2(1))^{1/2})
	\longrightarrow\mathfrak U_z$.
	This proves part~$(2)$.
\end{proof}

\begin{lemma}\label{PEeq-MDPMDPlem-4.15}
	Suppose Hypotheses \ref{PEeqAssum-2.3} and \ref{PEeqAssum-2.5} hold. Let $u^{\epsilon}$ and $v^0$ solve \eqref{PEeq-MDPMDPequ-4.40} and \eqref{PEeq-MDPdet2.15}, respectively. Then
	\begin{align}\label{PEeq-MDPMDPequ-4.51}
		\lim\limits_{\epsilon\rightarrow 0}\left(\mathbb{E}\left[\sup_{t\in [0,T]}|u^{\epsilon}(t)-v^0(t)|^2\right]+\int_{0}^{T}\mathbb{E}\left[\|u^{\epsilon}(t)-v^0(t)\|^2\right]dt\right)=0.
	\end{align}
\end{lemma}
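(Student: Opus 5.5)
We set $Z^\epsilon:=u^\epsilon-v^0$, which starts from zero and satisfies
\[
dZ^\epsilon+AZ^\epsilon dt+\bigl(B(u^\epsilon,u^\epsilon)-B(v^0,v^0)\bigr)dt
=\epsilon\int_E h(u^\epsilon(t-),\xi)\widetilde N^{\epsilon^{-1}\phi^\epsilon}(dt,d\xi)
+a(\epsilon)\int_E h(u^\epsilon(t),\xi)\psi^\epsilon(\xi,t)\nu(d\xi)dt,
\]
where $\psi^\epsilon=(\phi^\epsilon-1)/a(\epsilon)\in\mathcal U^M_\epsilon$. The plan follows the proof of Lemma~\ref{PEeq-USCfinite}. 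The main change is that the control drift and the controlled compensator are handled with Lemmas~\ref{PEeq-MDPMDPlem-4.7} and \ref{PEeq-MDPMDPlem-4.8} instead of the fixed intensity $\epsilon^{-1}\nu$.

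First, we apply It\^o's formula to $e^{-C_0\int_0^t(1+\|v^0\|^2+|\partial_zv^0|^4)dr}|Z^\epsilon(t)|^2$. Here $C_0$ is taken larger than the constant in \eqref{PEeq-USCnonlinear}; that estimate follows from \eqref{PEeq-2.103} with $\widetilde v=v^0$. Because $v^0$ is deterministic, \eqref{PEeq-MDPderm-4.10} bounds the weight below by a deterministic constant $e^{-C_T}$. No stopping time is needed, unlike in Section~\ref{USC-PEs4}.

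The control drift is estimated with \textbf{(B.1)} by
\[
2a(\epsilon)\int_0^t|Z^\epsilon|(1+|u^\epsilon|)\int_E C_h|\psi^\epsilon|\,d\nu\,ds
\le a(\epsilon)\sup_{s\le T}(1+|u^\epsilon(s)|^2+|Z^\epsilon(s)|^2)\int_0^T\!\!\int_E C_h|\psi^\epsilon|\,d\nu\,ds .
\]
By Lemma~\ref{PEeq-MDPMDPlem-4.8}, the last integral is bounded by a deterministic constant $\rho_{C_h}(\beta)\sqrt T+\ell_{C_h}(\beta)\sqrt{a(\epsilon)}+\varsigma_{C_h}(\epsilon)(1+T)$, which is uniform over $\mathcal S^M_\epsilon$. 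After taking expectations and using \eqref{PEeq-MDPMDPequ-4.41}, this term is therefore $O(a(\epsilon))\to0$.

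The jump quadratic-variation term has expectation
\[
\epsilon\,\mathbb E\int_0^T\!\!\int_E |h(u^\epsilon,\xi)|^2\phi^\epsilon\,d\nu\,ds
\le \epsilon\,\mathbf c_{C_h}(a^2(\epsilon)+T)\,\mathbb E\Bigl[1+\sup_{t\le T}|u^\epsilon(t)|^2\Bigr],
\]
by Lemma~\ref{PEeq-MDPMDPlem-4.7}, and this is $O(\epsilon)$.

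For the martingale term we use the BDG inequality in the same way as in \eqref{PEeq-MDPMDPequ-4.49}. This gives $\frac14\mathbb E\sup|Z^\epsilon|^2$ plus a $C\epsilon(a^2(\epsilon)+T)(1+\mathbb E\sup|u^\epsilon|^2)$ remainder. Absorbing the first piece on the left yields
\[
\mathbb E\Bigl[\sup_{t\le T}|Z^\epsilon(t)|^2\Bigr]+\mathbb E\int_0^T\|Z^\epsilon\|^2dt
\le C_T\bigl(a(\epsilon)+\epsilon\bigr)\to0 .
\]

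The step I expect to be delicate is the control drift. Pulling out $\sup_s$ produces $\mathbb E\sup|u^\epsilon|^2$ and $\mathbb E\sup|Z^\epsilon|^2$, so it is important that Lemma~\ref{PEeq-MDPMDPlem-4.8} gives a pathwise deterministic bound on $\int\!\int C_h|\psi^\epsilon|$. With that bound, the product $a(\epsilon)\cdot\mathrm{const}$ is small enough, for small $\epsilon$, to absorb the $\sup|Z^\epsilon|^2$ part into the left-hand side. If the absorption is awkward, the alternative is to keep that term as $\int_0^t|Z^\epsilon|^2\,d\Upsilon(s)$, with $\Upsilon$ a deterministically bounded increasing process, and apply a pathwise Gronwall argument before taking expectations, as was done in \eqref{PEeq-MDPMDPequ-4.47}.
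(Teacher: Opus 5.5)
Your proposal is correct and follows the paper's proof in all essentials. Both arguments apply It\^o's formula to $|u^\epsilon-v^0|^2$, use the cancellation together with \eqref{PEeq-2.11}/\eqref{PEeq-2.103} for the convection term, use Lemma~\ref{PEeq-MDPMDPlem-4.8} for the control drift, and use Lemma~\ref{PEeq-MDPMDPlem-4.7} with the BDG inequality and \eqref{PEeq-MDPMDPequ-4.41} for the jump terms.

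The deviations are only technical. You absorb the $v^0$-dependent terms with a deterministic exponential weight, whereas the paper uses a pathwise Gronwall argument. You also bound the control drift through \textbf{(B.1)} applied to $h(u^\epsilon,\xi)$ plus the moment bound \eqref{PEeq-MDPMDPequ-4.41}; the paper instead splits off $h(u^\epsilon,\xi)-h(v^0,\xi)$ by \textbf{(B.2)} and puts the $L_h$-part into the Gronwall exponent. Both variants work. In yours, $\mathbb E\sup_t|Z^\epsilon(t)|^2$ is already bounded by \eqref{PEeq-MDPMDPequ-4.41} and \eqref{PEeq-MDPderm-4.10}, so the absorption step you were worried about is not even needed.
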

\begin{proof}
	Set $\widehat{y}(t)=u^{\epsilon}(t)-v^0(t)$. Then $\widehat{y}$ satisfies
	\begin{align}\label{PEeq-MDPMDPequ-4.52}
		\begin{split}
			&~~d\widehat{y}(t)+A\widehat{y}(t)dt+B(u^{\epsilon}(t),\widehat{y}(t))dt+B(\widehat{y}(t),v^0(t))dt\\
			&=\epsilon\int_{E}h(u^{\epsilon}(t-),\xi)\widetilde{N}^{\epsilon^{-1}\phi^\epsilon}(dt,d\xi)+\int_{E} h(u^{\epsilon}(t),\xi)(\phi^\epsilon(\xi,t)-1)\nu(d\xi)dt
		\end{split}
	\end{align}
	with initial value $\widehat{y}(0)=0$.
	
	Applying It\^{o}'s formula to $|\widehat{y}(t)|^2$ and using \eqref{PEeq-MDPMDPequ-4.52} and \eqref{PEeq-2.13} gives
%	\begin{align}\label{PEeq-MDPMDPequ-4.53}
%		\begin{split}
%			&~|\widehat{y}(t)|^2+2\int_0^t \|\widehat{y}(s)\|^2ds\\
%			&=\underbrace{-2\int_0^t \left(B(u^{\epsilon}(s),\widehat{y}(s)),\widehat{y}(s)\right)ds}_{=0}-2\int_0^t\left(B(\widehat{y}(s),v^0(s)),\widehat{y}(s)\right)ds\\
%			&+2\epsilon\int_0^t\int_{E}\left(h(u^{\epsilon}(s-),\xi),\widehat{y}(s)\right)\widetilde{N}^{\epsilon^{-1}\phi^\epsilon}(ds,d\xi)+\epsilon^2\int_0^t\int_{E}|h(u^{\epsilon}(s-),\xi)|^2{N}^{\epsilon^{-1}\phi^\epsilon}(ds,d\xi)\\
%			&+2\int_0^t\int_{E} \left(h(u^{\epsilon}(s),\xi)(\phi^\epsilon(\xi,s)-1),\widehat{y}(s)\right)\nu(d\xi)ds.
%		\end{split}
%	\end{align}
	\begin{align}\label{PEeq-MDPMDPequ-4.53}
	\begin{split}
		&~|\widehat{y}(t)|^2+2\int_0^t \|\widehat{y}(s)\|^2ds\\
		&=-2\int_0^t\left(B(\widehat{y}(s),v^0(s)),\widehat{y}(s)\right)ds+2\int_0^t\int_{E} \left(h(u^{\epsilon}(s),\xi)(\phi^\epsilon(\xi,s)-1),\widehat{y}(s)\right)\nu(d\xi)ds\\
		&+2\epsilon\int_0^t\int_{E}\left(h(u^{\epsilon}(s-),\xi),\widehat{y}(s)\right)\widetilde{N}^{\epsilon^{-1}\phi^\epsilon}(ds,d\xi)+\epsilon^2\int_0^t\int_{E}|h(u^{\epsilon}(s-),\xi)|^2{N}^{\epsilon^{-1}\phi^\epsilon}(ds,d\xi)\\
		&:= \sum_{j=7}^{10}\Upsilon_{j}(t).
	\end{split}
\end{align}	
	For $\Upsilon_{7}(t)$, by \eqref{PEeq-2.11} we get
	\begin{align}\label{PEeq-MDPMDPequ-4.54}
		\begin{split}
			|\Upsilon_{7}(t)|
			&\leq C\int_0^t \left(|\widehat{y}(s)|\|v^0(s)\|\|\widehat{y}(s)\|+|\partial_{z}v^0(s)||\widehat{y}(s)|^{1/2}\|\widehat{y}(s)\|^{3/2}\right)ds\\
			&\leq \frac{1}{2}\int_0^t\|\widehat{y}(s)\|^2 ds + C\int_0^t \left(\|v^0(s)\|^2+|\partial_{z}v^0(s)|^4\right)|\widehat{y}(s)|^2ds.
		\end{split}
	\end{align}
	Let $\psi^\epsilon(\xi,s)=\frac{\phi^\epsilon(\xi,s)-1}{a(\epsilon)} \in \mathcal{U}_{\epsilon}^M$. Then, for $\Upsilon_{8}(t)$, by conditions \textbf{(B.1)} and \textbf{(B.2)} in Hypothesis \ref{PEeqAssum-2.3} we have
	\begin{align}\label{PEeq-MDPMDPequ-4.55}
%		\begin{split}
			\left|\Upsilon_{8}(t)\right|
			&\leq 2a(\epsilon)\int_0^t |\widehat{y}(s)|\int_{E}|h(u^{\epsilon}(s),\xi)-h(v^{0}(s),\xi)||\psi^\epsilon(\xi,s)|\nu(d\xi)ds\notag \\
			&+2a(\epsilon)\int_0^t |\widehat{y}(s)|\int_{E}|h(v^{0}(s),\xi)||\psi^\epsilon(\xi,s)|\nu(d\xi)ds\notag \\
			&\leq 2a(\epsilon)\int_0^t |\widehat{y}(s)|^2\int_{E}L_{h}(\xi)|\psi^\epsilon(\xi,s)|\nu(d\xi)ds\notag \\
			&+2a(\epsilon)\int_0^t |\widehat{y}(s)|(1+|v^{0}(s)|)\int_{E}C_{h}(\xi)|\psi^\epsilon(\xi,s)|\nu(d\xi)ds\notag \\
			&\leq 2a(\epsilon)\int_0^t |\widehat{y}(s)|^2\int_{E}L_{h}(\xi)|\psi^\epsilon(\xi,s)|\nu(d\xi)ds\notag \\
			&+a(\epsilon)\int_0^t (1+|\widehat{y}(s)|^2)(1+|v^{0}(s)|)\int_{E}C_{h}(\xi)|\psi^\epsilon(\xi,s)|\nu(d\xi)ds\notag \\
			&\leq a(\epsilon)\int_0^t |\widehat{y}(s)|^2 \left(2\int_{E}L_{h}(\xi)|\psi^\epsilon(\xi,s)|\nu(d\xi)+\left(1+\sup_{t\in [0,T]}|v^{0}(t)|\right)\int_{E}C_{h}(\xi)|\psi^\epsilon(\xi,s)|\nu(d\xi)\right)ds\notag \\
			&+\underbrace{a(\epsilon)\left(1+\sup_{t\in [0,T]}|v^{0}(t)|\right)\int_0^t \int_{E}C_{h}(\xi)|\psi^\epsilon(\xi,s)|\nu(d\xi)ds}_{\Upsilon_{11}(t)},
%		\end{split}
	\end{align}	
	where the third inequality uses $2|\widehat{y}(s)|\leq 1+|\widehat{y}(s)|^2$ in the third inequality.

Set
\begin{align*}
	\mathfrak{Z}(s)&=a(\epsilon)\left(2\int_{E}L_{h}(\xi)|\psi^\epsilon(\xi,s)|\nu(d\xi)+\left(1+\sup_{t\in [0,T]}|v^{0}(t)|\right)\int_{E}C_{h}(\xi)|\psi^\epsilon(\xi,s)|\nu(d\xi)\right)\\
	&+C(\|v^0(s)\|^2+|\partial_{z}v^0(s)|^4).
\end{align*}
Combining \eqref{PEeq-MDPMDPequ-4.53}-\eqref{PEeq-MDPMDPequ-4.55} derives
\begin{align*}
|\widehat{y}(t)|^2+\frac{3}{2}\int_0^t \|\widehat{y}(s)\|^2ds
\leq \sup_{t\in [0,T]}\left(\left|\Upsilon_{9}(t)\right| +\Upsilon_{10}(t)+\Upsilon_{11}(t)\right)
+\int_0^t \mathfrak{Z}(s)|\widehat{y}(s)|^2ds,
\end{align*}
which, together with Gronwall's inequality, yields
\begin{align}\label{PEeq-MDPMDPequ-4.56}
|\widehat{y}(t)|^2+\int_0^t \|\widehat{y}(s)\|^2ds\leq \left(\sup_{t\in [0,T]}\left|\Upsilon_{9}(t)\right|+\Upsilon_{10}(T)+\Upsilon_{11}(T) \right)e^{\int_0^{T}\mathfrak{Z}(t)dt}.
\end{align}
By \eqref{PEeq-MDPderm-4.10} and Lemma \ref{PEeq-MDPMDPlem-4.8}, we obtain
{\small
\begin{align*}
{\int_0^{T}\mathfrak{Z}(t)dt}
&\leq {a(\epsilon)\left(2\int_0^{T}\int_{E}L_{h}(\xi)|\psi^\epsilon(\xi,t)|\nu(d\xi)dt+\left(1+\sup_{t\in [0,T]}|v^{0}(t)|\right)\int_0^{T}\int_{E}C_{h}(\xi)|\psi^\epsilon(\xi,t)|\nu(d\xi)dt\right)}\\
& +{C\left(\int_0^{T}\|v^0(t)\|^2dt+\sup_{t\in [0,T]}|\partial_{z}v^0(t)|^2\int_0^{T}\|\partial_{z}v^0(t)\|^2dt\right)}\\
&\leq C_Ta(\epsilon)\left((\rho_{{C}_{h}}(\beta)+\rho_{{L}_{h}}(\beta)) \sqrt{T}+(\ell_{{C}_{h}}(\beta)+\ell_{L_{h}}(\beta))\sqrt{a(\epsilon)} + (\varsigma_{{C}_{h}}(\epsilon)+\varsigma_{L_{h}}(\epsilon)) \big( 1 + T \big)\right)+C_T.
\end{align*}
}
Hence,
\begin{align}\label{PEeq-MDPMDPequ-4.57}
e^{\int_0^{T}\mathfrak{Z}(t)dt} \leq C_{T}.
\end{align}

	As in \eqref{PEeq-MDPMDPequ-4.49} and \eqref{PEeq-MDPMDPequ-4.50}, by the BDG inequality and \eqref{PEeq-MDPMDPequ-4.41} we derive
	\begin{align}\label{PEeq-MDPMDPequ-4.58}
		\begin{split}
			\mathbb{E}\left[\sup_{t\in [0,T]}\left|\Upsilon_{9}(t)\right|\right]
			&\leq \frac{1}{2}\mathbb{E}\left[\sup_{t\in [0,T]}|\widehat{y}(t)|^2\right]+ 8\epsilon\textbf{c}_{{C}_{h}} \big( a^2(\epsilon) + T \big) \mathbb{E}\left[\left(1+\sup_{t\in [0,T]}|u^{\epsilon}(t)|^2\right)\right]\\
			&\leq \frac{1}{2}\mathbb{E}\left[\sup_{t\in [0,T]}|\widehat{y}(t)|^2\right]+ C_{T}\textbf{c}_{{C}_{h}} \big( a^2(\epsilon) + T \big)\epsilon,
		\end{split}
	\end{align}
	and
	\begin{align}\label{PEeq-MDPMDPequ-4.59}
		\mathbb{E}\left[\Upsilon_{10}(T)\right]\leq C_{T}\textbf{c}_{{C}_{h}} \big( a^2(\epsilon) + T \big)\epsilon.
	\end{align}
	By Lemma \ref{PEeq-MDPMDPlem-4.8} and \eqref{PEeq-MDPderm-4.10} we have
	\begin{align}\label{PEeq-MDPMDPequ-4.60}
			\mathbb{E}\left[\Upsilon_{11}(T)\right]\leq C_{T}a(\epsilon)\left(\rho_{{C}_{h}}(\beta) \sqrt{T}+\ell_{{C}_{h}}(\beta)\sqrt{a(\epsilon)} + \varsigma_{{C}_{h}}(\epsilon) \big( 1 + T \big)\right)
	\end{align}
	
	Together with \eqref{PEeq-MDPMDPequ-4.56}-\eqref{PEeq-MDPMDPequ-4.60}, we have
	\begin{align*}
		&~~\mathbb{E}\left[\sup_{t \in [0,T]}|\widehat{y}(t)|^2\right]
		+\int_0^T \mathbb{E}\left[\|\widehat{y}(t)\|^2\right] dt\\
		&\leq C_{T}\left(\textbf{c}_{C_{h}} \big( a^2(\epsilon) + T \big)+\left(\rho_{C_{h}}(\beta) \sqrt{T}+\ell_{C_{h}}(\beta)\sqrt{a(\epsilon)} + \varsigma_{{C}_{h}}(\epsilon) \big( 1 + T \big)\right)\right)\left(\epsilon+a(\epsilon)\right)\rightarrow 0 \text{~~as~~} \epsilon \rightarrow 0,
	\end{align*}
	as desired. This completes the proof.
\end{proof}

Denote
$$
\mathscr{G}^{\epsilon}(\epsilon N^{\epsilon^{-1}\phi^{\epsilon}}):=\mathscr{Z}^{\epsilon}=\frac{u^{\epsilon}-v^0}{a(\epsilon)},
$$
then $\mathscr{Z}^{\epsilon}$ satisfies 
\begin{align}\label{PEeq-MDPMDPequ-4.62}
	\begin{split}
		&~~d\mathscr{Z}^{\epsilon}(t)+A\mathscr{Z}^{\epsilon}(t)dt+B(\mathscr{Z}^{\epsilon}(t),v^{0}(t))dt+B(a(\epsilon)\mathscr{Z}^{\epsilon}(t)+v^0(t),\mathscr{Z}^{\epsilon}(t))dt\\
		&=\frac{\epsilon}{a(\epsilon)}\int_{E}h(a(\epsilon)\mathscr{Z}^{\epsilon}(t-)+v^0(t-),\xi)\widetilde{N}^{\epsilon^{-1}\phi^{\epsilon}}(dt,d\xi)\\
		&+\frac{1}{a(\epsilon)}\int_{E}h(a(\epsilon)\mathscr{Z}^{\epsilon}(t)+v^0(t),\xi)(\phi^{\epsilon}(\xi,t)-1)\nu(d\xi)dt,
	\end{split}
\end{align}
with initial value $\mathscr{Z}^{\epsilon}(0)=0$.

\begin{lemma}\label{PEeq-MDP-controlled-convergence}
	Fix \( M \in (0,\infty) \), and let \( \{\phi^\epsilon\}_{\epsilon>0} \) be such that \( \phi^\epsilon \in \mathcal{U}_{+, \epsilon}^M \) for every \(\epsilon > 0 \). Let  
	\[
	\psi^\epsilon = (\phi^\epsilon - 1)/a(\epsilon) \quad \text{and} \quad \beta \in (0, 1].
	\]
	Then the family 
	$\left\{ \mathscr{Z}^{\epsilon}, \, \psi^\epsilon \mathbf{1}_{\{|\psi^\epsilon| \leq \beta/a(\epsilon)\}} \right\}_{\epsilon>0}$
	is tight in 
	$\left(\mathcal{D}([0, T], H)\cap L^2([0,T],V)\right) \times \mathbb{B}_{\nu}((M \kappa_2(1))^{1/2})$,
	and any limit point \( (\mathscr{Z}, \psi) \) solves equation \eqref{PEeq-MDPskeequ-4.8}.
\end{lemma}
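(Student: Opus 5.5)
The plan is to combine a martingale/residual decomposition with uniform vertical estimates, and then to upgrade weak limits to strong convergence in $\mathfrak U$ through a difference energy estimate, in the same way as Step~4 of Lemma~\ref{PEeq-MDPCOnd1lem-4.10}. Write $\psi^\epsilon_\beta:=\psi^\epsilon\mathbf1_{\{|\psi^\epsilon|\le\beta/a(\epsilon)\}}$, and note that $\psi^\epsilon_\beta\in\mathbb B_\nu((M\kappa_2(1))^{1/2})$ almost surely by \cite[Lemma 3.2]{Budhiraja-AOP-2016}. The controlled drift in \eqref{PEeq-MDPMDPequ-4.62} equals $\int_E h(u^\epsilon,\xi)\psi^\epsilon\,\nu(d\xi)$, which I split as
\[
\int_E h(v^0,\xi)\psi^\epsilon_\beta\,d\nu
+\int_E\bigl(h(u^\epsilon,\xi)-h(v^0,\xi)\bigr)\psi^\epsilon_\beta\,d\nu
+\int_E h(u^\epsilon,\xi)\psi^\epsilon\mathbf1_{\{|\psi^\epsilon|>\beta/a(\epsilon)\}}\,d\nu .
\]
By \textbf{(B.2)}, the middle term is bounded by $a(\epsilon)|\mathscr Z^\epsilon|\int_E L_h|\psi^\epsilon_\beta|\,d\nu$. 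The tail term is controlled through \textbf{(B.1)}, Lemma~\ref{PEeq-MDPMDPlem-4.9}, and the moment bound \eqref{PEeq-MDPMDPequ-4.41}. I also introduce $\mathscr Q^\epsilon$, the solution of the linear equation $d\mathscr Q^\epsilon+A\mathscr Q^\epsilon dt=\frac{\epsilon}{a(\epsilon)}\int_E h(u^\epsilon(t-),\xi)\widetilde N^{\epsilon^{-1}\phi^\epsilon}(dt,d\xi)$ with $\mathscr Q^\epsilon(0)=0$.

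The first step is to show that $\mathscr Q^\epsilon\to0$ in $\mathfrak U_z$ in probability. Apply It\^o's formula to $|\mathscr Q^\epsilon|^2$ and to $|\partial_z\mathscr Q^\epsilon|^2$ (the latter by testing with $-\partial_{zz}$, at the Galerkin level), then use the BDG inequality, \textbf{(B.1)}, \textbf{(B.3)}, and Lemma~\ref{PEeq-MDPMDPlem-4.7}. This gives
\[
\mathbb E\Bigl[\sup_t|\mathscr Q^\epsilon|_{\mathcal H}^2+\int_0^T\bigl(\|\mathscr Q^\epsilon\|^2+\|\partial_z\mathscr Q^\epsilon\|^2\bigr)dt\Bigr]\le C\,\frac{\epsilon}{a^2(\epsilon)}\bigl(a^2(\epsilon)+T\bigr)\Bigl(1+\mathbb E\sup_t|u^\epsilon|_{\mathcal H}^2\Bigr),
\]
which tends to $0$ by \eqref{PEeq-MDPdetdec4.4} and Lemma~\ref{PEeq-MDPMDPlem-4.14}.

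The second step derives uniform bounds for the residual $Y^\epsilon:=\mathscr Z^\epsilon-\mathscr Q^\epsilon$, which solves the pathwise equation encoded by $\mathscr R^\epsilon(\mathscr Q^\epsilon,\psi^\epsilon_\beta)$ in Lemma~\ref{lem:measurable-solution-maps}, up to the small remainders above. The goal is
\[
\sup_t\bigl(|Y^\epsilon|^2+|\partial_zY^\epsilon|^2\bigr)+\int_0^T\bigl(\|Y^\epsilon\|^2+\|\partial_zY^\epsilon\|^2\bigr)dt=O_{\mathbb P}(1).
\]
For the $H$-estimate, the terms $B(\cdot,v^0)$ are handled by \eqref{PEeq-2.11}, with Gronwall weight $\|v^0\|^2+|\partial_zv^0|^4$. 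The term $a(\epsilon)B(\mathscr Z^\epsilon,\mathscr Z^\epsilon)$ is handled by \eqref{PEeq-2.103} together with the smallness of $\mathscr Q^\epsilon$. For the vertical estimate I test with $-\partial_{zz}Y^\epsilon$, exactly as in \eqref{PEeq-MDPskees-4.18}--\eqref{PEeq-MDPskees-4.22}. The key point is the Lipschitz remainder: rather than differentiating it, I bound it directly by
\[
a(\epsilon)|\mathscr Z^\epsilon|\,\|\partial_zY^\epsilon\|\,\|L_h\|_{L^2(\nu)}\|\psi^\epsilon_\beta(\cdot,s)\|_{L^2(\nu)}.
\]
Young's inequality then absorbs $\|\partial_zY^\epsilon\|^2$. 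The leftover factor $a^2(\epsilon)|\mathscr Z^\epsilon|^2\|\psi^\epsilon_\beta(s)\|^2_{L^2(\nu)}$ is integrable in time because $\psi^\epsilon_\beta$ lies in the control ball. Consequently no Lipschitz condition on $\partial_zh$ is needed. The tail terms enter the Gronwall exponent only through quantities that vanish by Lemma~\ref{PEeq-MDPMDPlem-4.8} and Lemma~\ref{PEeq-MDPMDPlem-4.9}.

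The third step establishes tightness. Fractional time-regularity bounds in $W^{\alpha,2}(0,T;V')$ for $Y^\epsilon$ and $\partial_zY^\epsilon$, obtained as in Step~2 of Lemma~\ref{PEeq-MDPCOnd1lem-4.10} on the sets where the above bounds hold, combined with Lemma~\ref{PEeq-MDPCpctness-4.1}, give tightness of $Y^\epsilon$ in $L^2(0,T;H)\cap C([0,T];V')$. The control ball is compact, and $\mathscr Q^\epsilon\to0$. By the Skorokhod representation theorem applied to $(Y^\epsilon,\mathscr Q^\epsilon,\psi^\epsilon_\beta)$, I obtain almost sure convergence along a subsequence. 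The limit is identified as the solution of the skeleton equation \eqref{PEeq-MDPskeequ-4.8} with control $\psi$ as in Step~3 of Lemma~\ref{PEeq-MDPCOnd1lem-4.10}: the forcing term $\int_E h(v^0,\xi)\psi^\epsilon_\beta\,d\nu$ is paired against fixed test functions, and the remainders vanish.

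The main obstacle is upgrading to convergence in $\mathcal D([0,T];H)\cap L^2(0,T;V)$ rather than in the weaker compactness topology. For this I take the difference $Y^\epsilon-\eta$, where $\eta=\mathscr G^0(\psi)$, and repeat the energy identity \eqref{PEeq-MDPskeesr-4.37}. The control-pairing term is bounded by
\[
\|C_h\|_{L^2(\nu)}\,\|\psi^\epsilon_\beta-\psi\|_{L^2(\nu_T)}\,\|Y^\epsilon-\eta\|_{L^2(0,T;H)},
\]
which tends to zero by the strong $L^2(0,T;H)$ convergence. The term $a(\epsilon)B$ is treated by \eqref{PEeq-2.103}, using the uniform vertical bounds from the second step. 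Gronwall's inequality then yields convergence of $\sup_t|Y^\epsilon-\eta|^2+\int_0^T\|Y^\epsilon-\eta\|^2dt$ to $0$ almost surely on the representation space. Adding back $\mathscr Q^\epsilon\to0$ in $\mathfrak U$ gives tightness of $(\mathscr Z^\epsilon,\psi^\epsilon_\beta)$ in the stated product space, and every limit point $(\mathscr Z,\psi)$ satisfies $\mathscr Z=\mathscr G^0(\psi)$. The delicate points I expect to require the most care are the measurability needed to transfer these statements through $\mathscr R^\epsilon$, and the localization needed because the Gronwall weights are only almost surely finite.
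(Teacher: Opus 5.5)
Your first step is the paper's estimate \eqref{PEeq-MDPMDPequ-4.69} for the martingale part $\mathscr X^\epsilon$. Your bound of the Lipschitz remainder against $-\partial_{zz}$ is the same device the paper uses for $\mathscr S^\epsilon$. The gap is in the Skorokhod step.

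With $\mathscr Q^\epsilon$ equal to the martingale part only, the forcing in the equation for $Y^\epsilon=\mathscr Z^\epsilon-\mathscr Q^\epsilon$ contains three pieces besides the skeleton forcing. There is $\int_E h(v^0,\xi)\psi^\epsilon_\beta\,\nu(d\xi)$, the Lipschitz remainder, and the tail term $\int_E h(u^\epsilon,\xi)\psi^\epsilon\mathbf 1_{\{|\psi^\epsilon|>\beta/a(\epsilon)\}}\,\nu(d\xi)$. The Lipschitz remainder is a functional of $(Y^\epsilon,\mathscr Q^\epsilon,\psi^\epsilon_\beta)$ through $u^\epsilon=v^0+a(\epsilon)(Y^\epsilon+\mathscr Q^\epsilon)$. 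The tail term, however, depends on the untruncated control and is not determined by the triple you represent.

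So on the Skorokhod space you have no equation for $\widetilde Y^\epsilon$. The difference energy identity for $\widetilde Y^\epsilon-\eta$, which you yourself single out as the main step, cannot be written down. Appealing to $\mathscr R^\epsilon$ does not help. Its first argument is the full remainder $\mathscr X^\epsilon+\mathscr H^\epsilon+\mathscr S^\epsilon$, and it solves \eqref{PEeq-MDPMDPequ-4.76}, whose forcing is exactly $\int_E h(v^0,\xi)q\,\nu(d\xi)$; hence $Y^\epsilon\neq\mathscr R^\epsilon(\mathscr Q^\epsilon,\psi^\epsilon_\beta)$. Relatedly, your residual bounds are only $O_{\mathbb P}(1)$ and transfer only in law for each fixed $\epsilon$. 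On the representation space you would therefore get convergence in probability, not almost surely; that suffices, but should be said.

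This is exactly why the paper puts all three small pieces into $\mathscr Q^\epsilon=\mathscr X^\epsilon+\mathscr H^\epsilon+\mathscr S^\epsilon$ and proves each tends to zero in $\mathfrak U_z$. The $-\partial_{zz}\mathscr S^\epsilon$ test is used there, not in a residual estimate. The residual is then a Borel function of $(\mathscr Q^\epsilon,\psi^\epsilon_\beta)$ and can be rebuilt exactly after Skorokhod. Splitting it into the linear controlled part $\widetilde{\mathscr N}^\epsilon$ (handled as in Lemma~\ref{PEeq-MDPCOnd1lem-4.10}) and the uncontrolled nonlinear part $\widetilde\Phi^\epsilon$ makes the final Gronwall argument pathwise, with no compactness of the residual needed.

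You can repair your route in any of three ways.
\begin{itemize}
\item Adopt the paper's decomposition.
\item Carry the remainder forcing as an extra Skorokhod coordinate in $L^1(0,T;H)$. Its norm tends to zero in probability by \textbf{(B.1)}, \textbf{(B.2)}, \eqref{PEeq-MDPMDPequ-4.41} and Lemmas~\ref{PEeq-MDPMDPlem-4.9} and \ref{PEeq-MDPMDPlem-4.15}.
\item Most economically, compare $Y^\epsilon$ on the original space with $\mathscr G^0(\psi^\epsilon_\beta)$. Their difference solves an equation with no control-difference term. It tends to zero in probability in $C([0,T];H)\cap L^2(0,T;V)$ by the same Gronwall estimate, using the deterministic bound \eqref{PEeq-MDPskees-4.13} for $\mathscr G^0(\psi^\epsilon_\beta)$. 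Lemma~\ref{PEeq-MDPCOnd1lem-4.10} and the continuous mapping theorem then give $\mathscr G^0(\psi^\epsilon_\beta)\to\mathscr G^0(\psi)$ in distribution. This also makes your tightness step for $Y^\epsilon$ unnecessary.
\end{itemize}
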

\begin{proof}
	The proof is divided into the following two steps. The controlled fluctuation $\mathscr Z^\epsilon=(u^\epsilon-v^0)/a(\epsilon)$ satisfies \eqref{PEeq-MDPMDPequ-4.62}, which is equivalently written as
	\begin{align*}
	\begin{split}
	&~~d\mathscr Z^\epsilon+A\mathscr Z^\epsilon dt
	+B(\mathscr Z^\epsilon,v^0)dt+B(v^0,\mathscr Z^\epsilon)dt +a(\epsilon)B(\mathscr Z^\epsilon,\mathscr Z^\epsilon)dt\\
	&=\frac{\epsilon}{a(\epsilon)}\int_Eh(u^\epsilon(t-),\xi)
	\widetilde N^{\epsilon^{-1}\phi^\epsilon}(dt,d\xi) +\int_Eh(u^\epsilon(t),\xi)\psi^\epsilon(\xi,t)\nu(d\xi)dt.
	\end{split}
	\end{align*}
	with $\mathscr Z^\epsilon(0)=0$.
	
	\textbf{Step 1.} Let 
	$\mathscr{X}^{\epsilon}$, $\mathscr{H}^{\epsilon}$ and $\mathscr{S}^{\epsilon}$ be the solutions of the following equations, respectively:
	\begin{align}
		d\mathscr{X}^{\epsilon}(t)+A\mathscr{X}^{\epsilon}(t)&dt=\frac{\epsilon}{a(\epsilon)}\int_{E} h(u^{\epsilon}(t-),\xi)\widetilde{N}^{\epsilon^{-1}\phi^\epsilon}(dt,d\xi), \quad \mathscr{X}^{\epsilon}(0)=0,\label{PEeq-MDPMDPequ-4.63}\\
		d\mathscr{H}^{\epsilon}(t)+A\mathscr{H}^{\epsilon}(t)dt&=\int_{E} h(u^{\epsilon}(t),\xi)\psi^\epsilon(\xi,t)\mathbf{1}_{\{|\psi^\epsilon| > \beta/a(\epsilon)\}} \nu(d\xi) dt, \quad \mathscr{H}^{\epsilon}(0)=0,\label{PEeq-MDPMDPequ-4.64}
	\end{align} 
	and
	\begin{align}\label{PEeq-MDPMDPequ-4.65}
		d\mathscr{S}^{\epsilon}(t)+A\mathscr{S}^{\epsilon}(t)dt=\int_{E} (h(u^{\epsilon}(t),\xi)-h(v^{0}(t),\xi))\psi^\epsilon(\xi,t)\mathbf{1}_{\{|\psi^\epsilon| \leq  \beta/a(\epsilon)\}} \nu(d\xi) dt, \quad \mathscr{S}^{\epsilon}(0)=0.
	\end{align} 

Applying It\^{o}'s formula to $|\mathscr{X}^{\epsilon}(t)|^2+|\partial_{z}\mathscr{X}^{\epsilon}(t)|^2$ and using \eqref{PEeq-MDPMDPequ-4.63} gives
\begin{align}\label{PEeq-MDPMDPequ-4.66}
		&~\mathbb{E}\left[\sup_{t \in [0,T]} \left(|\mathscr{X}^{\epsilon}(t)|^2+|\partial_{z}\mathscr{X}^{\epsilon}(t)|^2\right)\right]+2\mathbb{E}\left[\int_{0}^{T} \|\mathscr{X}^{\epsilon}(s)\|^2+\|\partial_{z}\mathscr{X}^{\epsilon}(s)\|^2ds\right]\notag \\
		&\leq \frac{2\epsilon}{a(\epsilon)}\mathbb{E}\left[\sup_{t \in [0,T]}\left|\int_{0}^{t}\int_{E} \left(h(u^{\epsilon}(s-),\xi),\mathscr{X}^{\epsilon}(s)\right)\widetilde{N}^{\epsilon^{-1}\phi^\epsilon}(ds,d\xi)\right|\right]\notag \\
		&+\frac{2\epsilon}{a(\epsilon)}\mathbb{E}\left[\sup_{t \in [0,T]}\left|\int_{0}^{t}\int_{E} \left(h(u^{\epsilon}(s-),\xi),\partial_{zz}\mathscr{X}^{\epsilon}(s)\right)\widetilde{N}^{\epsilon^{-1}\phi^\epsilon}(ds,d\xi)\right|\right]\notag \\
		&+\frac{\epsilon^2}{a^2(\epsilon)}\mathbb{E}\left[\int_{0}^{T}\int_{E}|h(u^{\epsilon}(s),\xi)|^2{N}^{\epsilon^{-1}\phi^\epsilon}(ds,d\xi)\right]+\frac{\epsilon^2}{a^2(\epsilon)}\mathbb{E}\left[\int_{0}^{T}\int_{E}|\partial_{z}h(u^{\epsilon}(s),\xi)|^2{N}^{\epsilon^{-1}\phi^\epsilon}(ds,d\xi)\right]\notag \\
		&:=\sum_{j=12}^{15}\Upsilon_{j}.
\end{align}
	By the BDG inequality, Lemma \ref{PEeq-MDPMDPlem-4.7} and \eqref{PEeq-MDPMDPequ-4.41}-\eqref{PEeq-MDPMDPequ-4.42}, we infer
	\begin{align}\label{PEeq-MDPMDPequ-4.67}
		\begin{split}
			\Upsilon_{12}+\Upsilon_{13}
			&\leq \frac{1}{2}\mathbb{E}\left[\sup_{t \in [0,T]} \left(|\mathscr{X}^{\epsilon}(t)|^2+|\partial_{z}\mathscr{X}^{\epsilon}(t)|^2\right)\right]\\
			&+\frac{C\epsilon^2}{a^2(\epsilon)}\mathbb{E}\left[\int_{0}^{T}\int_{E} \left(|h(u^{\epsilon}(s),\xi)|^2+|\partial_{z}h(u^{\epsilon}(s),\xi)|^2\right){N}^{\epsilon^{-1}\phi^\epsilon}(ds,d\xi)\right]\\
			&\leq \frac{1}{2}\mathbb{E}\left[\sup_{t \in [0,T]} \left(|\mathscr{X}^{\epsilon}(t)|^2+|\partial_{z}\mathscr{X}^{\epsilon}(t)|^2\right)\right]
			+\frac{C_{\epsilon_{0},T}\epsilon}{a^2(\epsilon)}\left(\textbf{c}_{C_{h}}+\textbf{c}_{\widetilde{C}_{h}}\right) \big( a^2(\epsilon) + T \big),
		\end{split}
	\end{align}
and, by the compensator identity once more,
\begin{align}\label{PEeq-MDPMDPequ-4.68}
	\Upsilon_{14}+\Upsilon_{15}\leq \frac{C_{\epsilon_{0},T}\epsilon}{a^2(\epsilon)}\left(\textbf{c}_{C_{h}}+\textbf{c}_{\widetilde{C}_{h}}\right) \big( a^2(\epsilon) + T \big).
\end{align}
Combining \eqref{PEeq-MDPMDPequ-4.66}-\eqref{PEeq-MDPMDPequ-4.68} yields
	 \begin{align}\label{PEeq-MDPMDPequ-4.69}
	 	\begin{split}
	 		&~~\mathbb{E}\left[\sup_{t \in [0,T]} \left(|\mathscr{X}^{\epsilon}(t)|^2+|\partial_{z}\mathscr{X}^{\epsilon}(t)|^2\right)\right]+\int_{0}^{T}\mathbb{E}\left[ \|\mathscr{X}^{\epsilon}(t)\|^2+\|\partial_{z}\mathscr{X}^{\epsilon}(t)\|^2\right]dt \\
	 		&\leq \frac{C_{\epsilon_{0},T}\epsilon}{a^2(\epsilon)}\left(\textbf{c}_{C_{h}}+\textbf{c}_{\widetilde{C}_{h}}\right) \big( a^2(\epsilon) + T \big) \rightarrow 0 \text{~~as~~} \epsilon\rightarrow 0.
	 	\end{split}
	 \end{align}
	
Since $\psi^{\epsilon}={(\phi^{\epsilon}-1)}/{a(\epsilon)}$, testing \eqref{PEeq-MDPMDPequ-4.64} with $\mathscr H^\epsilon$ and, after vertical differentiation, with $\partial_z\mathscr H^\epsilon$ yields, for every $t\in[0,T]$, 
\begin{align*}
	&|\mathscr{H}^{\epsilon}(t)|^2+|\partial_{z}\mathscr{H}^{\epsilon}(t)|^2+2\int_{0}^{t} \left(\|\mathscr{H}^{\epsilon}(s)\|^2+\|\partial_{z}\mathscr{H}^{\epsilon}(s)\|^2\right)ds \notag \\
	&\leq 2\int_0^{T} \int_{E}|h(u^{\epsilon}(s),\xi)| |\psi^\epsilon(\xi,s)|\mathbf{1}_{\{|\psi^\epsilon| > \beta/a(\epsilon)\}}|\mathscr{H}^{\epsilon}(s)| \nu(d\xi) ds \notag \\
	&+2\int_0^{T} \int_{E}|\partial_{z}h(u^{\epsilon}(s),\xi)| |\psi^\epsilon(\xi,s)|\mathbf{1}_{\{|\psi^\epsilon| > \beta/a(\epsilon)\}}|\partial_{z}\mathscr{H}^{\epsilon}(s)| \nu(d\xi) ds \notag \\
	&\leq \frac{1}{2}\sup_{t \in [0,T]}\left(|\mathscr{H}^{\epsilon}(t)|^2+|\partial_{z}\mathscr{H}^{\epsilon}(t)|^2\right)
	+C_{\epsilon_{0},T}\left(\int_0^{T} \int_{E}\left(C_{h}(\xi)+\widetilde{C}_{h}(\xi)\right)|\psi^\epsilon(\xi,s)|\mathbf{1}_{\{|\psi^\epsilon| > \beta/a(\epsilon)\}}\nu(d\xi) ds\right)^2,
\end{align*}
Together with Lemma~\ref{PEeq-MDPMDPlem-4.9}, this yields
\begin{align}\label{PEeq-MDPMDPequ-4.70}
	\begin{split}
		&~~\mathbb{E}\left[\sup_{t \in [0,T]}\left(|\mathscr{H}^{\epsilon}(t)|^2+|\partial_{z}\mathscr{H}^{\epsilon}(t)|^2\right)\right]+\int_{0}^{T} \mathbb{E}\left[\|\mathscr{H}^{\epsilon}(t)\|^2+\|\partial_{z}\mathscr{H}^{\epsilon}(t)\|^2\right]dt\\
		&\leq 
		C_{\epsilon_{0},T}\left(\sup_{\psi^\epsilon\in \mathcal{S}_{\epsilon}^M}\int_0^{T} \int_{E}\left(C_{h}(\xi)+\widetilde{C}_{h}(\xi)\right)|\psi^\epsilon(\xi,t)|\mathbf{1}_{\{|\psi^\epsilon| > \beta/a(\epsilon)\}}\nu(d\xi) dt\right)^2 \rightarrow 0 \text{~~as~~} \epsilon\rightarrow 0.
	\end{split}
\end{align}

Similarly, by the chain rule, Lemma \ref{PEeq-MDPMDPlem-4.8}, Lemma \ref{PEeq-MDPMDPlem-4.15} and condition \textbf{(B.2)} in Hypothesis \ref{PEeqAssum-2.3}, we deduce
\begin{align}\label{PEeq-MDPMDPequ-4.71}
	\begin{split}
		&~~\mathbb{E}\left[\sup_{t \in [0,T]}|\mathscr{S}^{\epsilon}(t)|^2\right]+\int_{0}^{T} \mathbb{E}\left[\|\mathscr{S}^{\epsilon}(t)\|^2\right]dt\\
		&\leq 
		C_{\epsilon_{0},T}\mathbb{E}\left[\sup_{t\in [0,T]}|u^{\epsilon}(t)-v^0(t)|^2\right]\left(\sup_{\psi^\epsilon \in \mathcal{S}_{\epsilon}^M}\int_0^{T} \int_{E}L_{h}(\xi)|\psi^\epsilon(\xi,t)|\nu(d\xi) dt\right)^2 \rightarrow 0 \text{~~as~~} \epsilon\rightarrow 0.
	\end{split}
\end{align}
To estimate the vertical derivative, we work first at the Galerkin level and test the equation for $\mathscr S^\epsilon$ directly by $-\partial_{zz}\mathscr S^\epsilon$.  Passing to the limit by weak lower semicontinuity gives
\begin{align}\label{PEeq-MDPMDPequ-4.72}
	\begin{split}
		&~~\frac{1}{2}|\partial_{z}\mathscr{S}^{\epsilon}(t)|^2+\int_{0}^{t} \|\partial_{z}\mathscr{S}^{\epsilon}(s)\|^2ds\\
		&=\int_{0}^{t}\int_{E}\left(h(u^{\epsilon}(s),\xi)-h(v^{0}(s),\xi),\partial_{zz}\mathscr{S}^{\epsilon}(s)\right)\psi^\epsilon(\xi,s)\mathbf{1}_{\{|\psi^\epsilon| \leq  \beta/a(\epsilon)\}} \nu(d\xi) ds.
	\end{split}
\end{align}
Using \textbf{(B.2)}, the Cauchy-Schwarz inequality, and
$|\partial_{zz}w|\le C\|\partial_zw\|$, we estimate the right-hand side of \eqref{PEeq-MDPMDPequ-4.72},
\begin{align}\label{PEeq-MDPMDPequ-4.00073}
	\begin{split}
		&~~\left|\int_{E}\left(h(u^{\epsilon}(s),\xi)-h(v^{0}(s),\xi),\partial_{zz}\mathscr{S}^{\epsilon}(s)\right)\psi^\epsilon(\xi,s)\mathbf{1}_{\{|\psi^\epsilon| \leq  \beta/a(\epsilon)\}} \nu(d\xi)\right|\\
		&
		\le C|u^\epsilon(s)-v^0(s)|\|\partial_z\mathscr S^\epsilon(s)\| \int_{E} L_h(\xi)\left|\psi^\epsilon(\xi,s)\right| \mathbf{1}_{\{|\psi^\epsilon| \leq  \beta/a(\epsilon)\}} \nu(d\xi)\\
		&\le \frac12\|\partial_z\mathscr S^\epsilon(s)\|^2
		+C|u^\epsilon(s)-v^0(s)|^2\left(\int_{E} L_h(\xi)\left|\psi^\epsilon(\xi,s)\right| \mathbf{1}_{\{|\psi^\epsilon| \leq  \beta/a(\epsilon)\}} \nu(d\xi)\right)^2.
	\end{split}
\end{align}
This estimate is first justified for the Galerkin approximations, where $\partial_{zz}\mathscr S_n^\epsilon\in H$, and then passed to the limit by weak lower semicontinuity.  The truncated control estimate from \cite[Lemma 3.2]{Budhiraja-AOP-2016} yields pathwise
\begin{align}\label{PEeq-MDPMDPequ-4.73}
	\begin{split}
		&\int_0^T \left(\int_{E} L_h(\xi)\left|\psi^\epsilon(\xi,s)\right| \mathbf{1}_{\{|\psi^\epsilon| \leq  \beta/a(\epsilon)\}} \nu(d\xi)\right)^2ds \\
		&\leq \|L_h\|_{L^2(\nu)}^2
		\int_0^T\!\int_E|\psi^\epsilon(\xi,s)|^2
		\mathbf1_{\{|\psi^\epsilon|\le\beta/a(\epsilon)\}}\,d\nu ds\le M\kappa_2(1)\|L_h\|_{L^2(\nu)}^2 
	\end{split}
\end{align}

Combining \eqref{PEeq-MDPMDPequ-4.72}-\eqref{PEeq-MDPMDPequ-4.73} yields
\begin{align}\label{PEeq-MDPMDPequ-4.74}
		\sup_{t \in [0,T]}|\partial_z\mathscr S^\epsilon(t)|^2
		+\int_0^T\|\partial_z\mathscr S^\epsilon(s)\|^2ds
		\le C\sup_{t \in [0,T]}|u^\epsilon(t)-v^0(t)|^2.
\end{align}
Taking expectations and using Lemma~\ref{PEeq-MDPMDPlem-4.15}, then combining with \eqref{PEeq-MDPMDPequ-4.71}, gives
\begin{align}\label{PEeq-MDPMDPequ-4.75}
	\lim_{\epsilon \to 0} \left(\mathbb{E}\left[\sup_{t \in [0,T]}\left(|\mathscr{S}^{\epsilon}(t)|^2+|\partial_{z}\mathscr{S}^{\epsilon}(t)|^2\right)\right]+\int_{0}^{T} \mathbb{E}\left[\left(\|\mathscr{S}^{\epsilon}(t)\|^2+\|\partial_{z}\mathscr{S}^{\epsilon}(t)\|^2\right)\right]dt\right)=0.
\end{align}
	
	\textbf{Step 2.}
	Let
	$\mathscr Q^\epsilon=\mathscr X^\epsilon+\mathscr H^\epsilon+\mathscr S^\epsilon$
	and
	$\mathfrak Y^\epsilon=\mathscr Z^\epsilon-\mathscr Q^\epsilon$.
	By \eqref{PEeq-MDPMDPequ-4.62}--\eqref{PEeq-MDPMDPequ-4.65},
	\begin{align}\label{PEeq-MDPMDPequ-4.76}
		\begin{split}
			&d\mathfrak Y^\epsilon(t)+A\mathfrak Y^\epsilon(t)dt
			+B(\mathfrak Y^\epsilon(t)+\mathscr Q^\epsilon(t),v^0(t))dt\\
			&\quad
			+a(\epsilon)B(\mathfrak Y^\epsilon(t)+\mathscr Q^\epsilon(t),
			\mathfrak Y^\epsilon(t)+\mathscr Q^\epsilon(t))dt
			+B(v^0(t),\mathfrak Y^\epsilon(t)+\mathscr Q^\epsilon(t))dt\\
			&=\int_E h(v^0(t),\xi)
			\psi^\epsilon(\xi,t)
			\mathbf1_{\{|\psi^\epsilon|\le\beta/a(\epsilon)\}}
			\nu(d\xi)dt,
		\end{split}
	\end{align}
	with $\mathfrak Y^\epsilon(0)=0$.

	The processes appearing in
	\eqref{PEeq-MDPMDPequ-4.69},
	\eqref{PEeq-MDPMDPequ-4.70}, and
	\eqref{PEeq-MDPMDPequ-4.75} have c\`adl\`ag $H$-valued vertical derivatives. Since
	$d_{J_1,H}(f,0)\le\sup_{t\le T}|f(t)|$, these estimates and the triangle inequality imply
	\[
	\mathscr Q^\epsilon\longrightarrow0
	\quad\text{in probability in }\mathfrak U_z.
	\]
	It follows from \cite[Lemma~3.2]{Budhiraja-AOP-2016} that
	$\{\psi^\epsilon\mathbf1_{\{|\psi^\epsilon|\le\beta/a(\epsilon)\}}\}_{\epsilon>0}$
	is tight in
	$\mathbb B_\nu((M\kappa_2(1))^{1/2})$ endowed with its weak topology.
	Hence the family
	$
	\left(
	\mathscr Q^\epsilon,
	\psi^\epsilon\mathbf1_{\{|\psi^\epsilon|\le\beta/a(\epsilon)\}}
	\right)$
	is tight in the Polish space
	$
	\mathfrak U_z\times
	\mathbb B_\nu((M\kappa_2(1))^{1/2})$.

	Take an arbitrary sequence $\epsilon_k\to0$ and, after extraction, let
	$(0,\psi)$ be a limit point. The Skorokhod representation theorem yields a probability space
	$(\widetilde\Omega,\widetilde{\mathscr F},
	\widetilde{\mathbb P})$ and random variables
	$(\widetilde{\mathscr Q}^{\epsilon_k},
	\widetilde\psi^{\epsilon_k})$ and
	$(0,\widetilde\psi)$ such that
	\[
	(\widetilde{\mathscr Q}^{\epsilon_k},
	\widetilde\psi^{\epsilon_k})
	\stackrel{d}{=}
	\left(
	\mathscr Q^{\epsilon_k},
	\psi^{\epsilon_k}
	\mathbf1_{\{|\psi^{\epsilon_k}|\le\beta/a(\epsilon_k)\}}
	\right),
	\]
	$(0,\widetilde\psi)\stackrel d=(0,\psi)$, and
	\[
	(\widetilde{\mathscr Q}^{\epsilon_k},
	\widetilde\psi^{\epsilon_k})
	\longrightarrow(0,\widetilde\psi)
	\quad\widetilde{\mathbb P}\text{-a.s. in }
	\mathfrak U_z\times
	\mathbb B_\nu((M\kappa_2(1))^{1/2}).
	\]
	For readability, the subsequence index $k$ is suppressed below.

	Set
	\[
	\widetilde{\mathfrak Y}^{\epsilon}
	:=\mathscr R^\epsilon(
	\widetilde{\mathscr Q}^{\epsilon},
	\widetilde\psi^\epsilon),
	\qquad
	\widetilde{\mathscr Z}^{\epsilon}
	:=\widetilde{\mathscr Q}^{\epsilon}
	+\widetilde{\mathfrak Y}^{\epsilon}.
	\]
	Because $\mathscr R^\epsilon$ is Borel on
	$\mathfrak U_z\times
	\mathbb B_\nu((M\kappa_2(1))^{1/2})$,
	the mapping theorem and equality in law show that
	$\widetilde{\mathscr Z}^{\epsilon}$ has the same law as
	$\mathscr Z^\epsilon$. Likewise,
	$\widetilde{\mathscr Z}:=\mathscr G^0(\widetilde\psi)$
	has the same law as
	$\mathscr Z=\mathscr G^0(\psi)$.

In view of the above arguments, the proof of the lemma is reduced to establishing the following claim:
	\begin{align}\label{PEeq-MDPMDPequ-4.77}
		\sup_{t \in [0,T]} |\widetilde{\mathscr{Z}}^{\epsilon}(t)-\widetilde{\mathscr{Z}}(t)|^2
		+\int_{0}^{T} \|\widetilde{\mathscr{Z}}^{\epsilon}(t)-\widetilde{\mathscr{Z}}(t)\|^2dt \rightarrow 0, ~~\widetilde{\mathbb{P}}-\text{a.s., ~as ~} \epsilon \rightarrow 0. 
	\end{align}
   Accordingly, we devote the rest of this proof to establishing \eqref{PEeq-MDPMDPequ-4.77}. Consider the following equations:
   \begin{align}\label{PEeq-MDPMDPequ-4.78}
   	d\widetilde{\mathscr{N}}^{\epsilon}(t)+A\widetilde{\mathscr{N}}^{\epsilon}(t)dt=\int_{E} h(v^0(t),\xi)\widetilde{\psi}^{\epsilon}(\xi,t)\nu(d\xi)dt, ~~~\widetilde{\mathscr{N}}^{\epsilon}(0)=0,
   \end{align}
   and
	\begin{align*}
		d\widetilde{\mathscr{N}}(t)+A\widetilde{\mathscr{N}}(t)dt=\int_{E} h(v^0(t),\xi)\widetilde{\psi}(\xi,t)\nu(d\xi)dt, ~~~\widetilde{\mathscr{N}}(0)=0.
	\end{align*}
	By an argument completely analogous to that used in \eqref{PEeq-MDPMDPlinm4.400-04.7}, we obtain 
	 \begin{align}\label{PEeq-MDPMDPequ-4.79}
	 	\lim_{\epsilon \to 0}\left(\sup_{t \in [0,T]} |\widetilde{\mathscr{N}}^{\epsilon}(t)-\widetilde{\mathscr{N}}(t)|^2+\int_0^T \|\widetilde{\mathscr{N}}^{\epsilon}(t)-\widetilde{\mathscr{N}}(t)\|^2dt\right)=0.
	 \end{align}
	Moreover, applying the chain rule to 
	$|\widetilde{\mathscr{N}}^{\epsilon}(t)|^2 + |\partial_{z} \widetilde{\mathscr{N}}^{\epsilon}(t)|^2$, 
	and invoking Lemma \ref{PEeq-MDPMDPlem-4.8}, conditions \textbf{(B.1)} and \textbf{(B.3)} 
	in Hypothesis~\ref{PEeqAssum-2.3}, together with \eqref{PEeq-MDPderm-4.10}, give, for each fixed $\widetilde\omega\in\widetilde\Omega$ and every $t\in[0,T]$,
	{\small
	\begin{align*}
		&~|\widetilde{\mathscr{N}}^{\epsilon}(t)|^2 + |\partial_{z} \widetilde{\mathscr{N}}^{\epsilon}(t)|^2
		+2\int_0^{t} \left(\|\widetilde{\mathscr{N}}^{\epsilon}(s)\|^2 + \|\partial_{z} \widetilde{\mathscr{N}}^{\epsilon}(s)\|^2\right)ds\notag\\
		&= 2\int_0^{t}\int_{E} \left(h(v^0(s),\xi), \widetilde{\mathscr{N}}^{\epsilon}(s)+\partial_{zz} \widetilde{\mathscr{N}}^{\epsilon}(s)\right)\widetilde{\psi}^{\epsilon}(\xi,s)\nu(d\xi)ds\notag\\
		&\leq C\sup_{t \in [0,T]}(|\widetilde{\mathscr{N}}^{\epsilon}(t)|+|\partial_{z} \widetilde{\mathscr{N}}^{\epsilon}(t)|)\sup_{t \in [0,T]}(1+|v^0(t)|+|\partial_{z} v^0(t)|)\int_0^{T}\int_{E}(C_{h}+\widetilde{C}_{h})|\widetilde{\psi}^{\epsilon}(\xi,t)|\nu(d\xi)dt\notag\\
		&\leq \frac{1}{2}\sup_{t\in[0,T]}\bigl(|\widetilde{\mathscr N}^{\epsilon}(t)|^2+|\partial_z\widetilde{\mathscr N}^{\epsilon}(t)|^2\bigr)\notag\\
		&\quad+\sup_{t\in[0,T]}\bigl(1+|v^0(t)|^2+|\partial_zv^0(t)|^2\bigr)\notag\\
		&\qquad\times\left(\sup_{\widetilde\psi^\epsilon\in\mathcal S_\epsilon^M}\int_0^T\int_E(C_h+\widetilde C_h)|\widetilde\psi^\epsilon(\xi,t)|\nu(d\xi)dt\right)^2,
	\end{align*}}
	and hence
	\begin{align}\label{PEeq-MDPMDPequ-4.80}
		\sup_{t \in [0,T]}(|\widetilde{\mathscr{N}}^{\epsilon}(t)|^2+|\partial_{z} \widetilde{\mathscr{N}}^{\epsilon}(t)|^2)
		+\int_0^{T} \left(\|\widetilde{\mathscr{N}}^{\epsilon}(t)\|^2 + \|\partial_{z} \widetilde{\mathscr{N}}^{\epsilon}(t)\|^2\right)dt\leq C(\widetilde{\omega})<\infty, ~~\widetilde{\mathbb{P}}-\text{a.s.}
	\end{align}
	From \eqref{PEeq-MDPMDPequ-4.79} and \eqref{PEeq-MDPMDPequ-4.80}, we have
	\begin{align}\label{PEeq-MDPMDPequ-4.080}
		\sup_{t \in [0,T]}(|\widetilde{\mathscr{N}}(t)|^2+|\partial_{z} \widetilde{\mathscr{N}}(t)|^2)
		+\int_0^{T} \left(\|\widetilde{\mathscr{N}}(t)\|^2 + \|\partial_{z} \widetilde{\mathscr{N}}(t)\|^2\right)dt\leq C(\widetilde{\omega})<\infty, ~~\widetilde{\mathbb{P}}-\text{a.s.}
	\end{align}
	
   Let $\widetilde{\Phi}=\widetilde{\mathscr{Z}}-\widetilde{\mathscr{N}}$ and $\widetilde{\Phi}^\epsilon=\widetilde{\mathscr{Z}}^\epsilon-\widetilde{\mathscr{Q}}^\epsilon-\widetilde{\mathscr{N}}^\epsilon$. Then $\widetilde{\Phi}$ and $\widetilde{\Phi}^\epsilon$ respectively satisfy the following equations:
   \begin{align}\label{PEeq-MDPMDPequ-4.81}
   	\left\{
   	\begin{aligned}
   		& d\widetilde{\Phi}(t)+A\widetilde{\Phi}(t)dt+B(\widetilde{\Phi}(t)+\widetilde{\mathscr{N}}(t),v^0(t))dt
   		+B(v^0(t),\widetilde{\Phi}(t)+\widetilde{\mathscr{N}}(t))dt=0,
   		\\
   		&\widetilde{\Phi}(0)=0,
   	\end{aligned}
   	\right.
   \end{align}
	and
	 \begin{align}\label{PEeq-MDPMDPequ-4.82}
		\left\{
		\begin{aligned}
				&d\widetilde{\Phi}^\epsilon(t)+A\widetilde{\Phi}^\epsilon(t)dt+B(\widetilde{\Phi}^\epsilon(t)+\widetilde{\mathscr{Q}}^\epsilon(t)+\widetilde{\mathscr{N}}^\epsilon(t),v^0(t))dt+B(v^0(t),\widetilde{\Phi}^\epsilon(t)+\widetilde{\mathscr{Q}}^\epsilon(t)+\widetilde{\mathscr{N}}^\epsilon(t))dt\\
			&+a(\epsilon)B(\widetilde{\Phi}^\epsilon(t)+\widetilde{\mathscr{Q}}^\epsilon(t)+\widetilde{\mathscr{N}}^\epsilon(t),\widetilde{\Phi}^\epsilon(t)+\widetilde{\mathscr{Q}}^\epsilon(t)+\widetilde{\mathscr{N}}^\epsilon(t))dt=0,
			\\
			&\widetilde{\Phi}^\epsilon(0)=0.
		\end{aligned}
		\right.
	\end{align}
	
The almost sure convergence
$\widetilde{\mathscr Q}^{\epsilon}\to0$ in $\mathfrak U_z$ obtained from the Skorokhod representation gives directly
\begin{align}\label{PEeq-MDPMDPequ-4.83}
	\lim_{\epsilon \to 0} \left(\sup_{t \in [0,T]}\left(|\widetilde{\mathscr{Q}}^{\epsilon}(t)|^2+|\partial_{z}\widetilde{\mathscr{Q}}^{\epsilon}(t)|^2\right)+\int_{0}^{T} \left(\|\widetilde{\mathscr{Q}}^{\epsilon}(t)\|^2+\|\partial_{z}\widetilde{\mathscr{Q}}^{\epsilon}(t)\|^2\right)dt\right)=0, \quad \widetilde{\mathbb{P}}\text{-a.s.}
\end{align}

In view of \eqref{PEeq-MDPMDPequ-4.83} and \eqref{PEeq-MDPMDPequ-4.79}, the proof of \eqref{PEeq-MDPMDPequ-4.77} reduces to establishing the following convergence:
\begin{align}\label{PEeq-MDPMDPequ-4.84}
	\sup_{t \in [0,T]} |\widetilde{\Phi}^{\epsilon}(t)-\widetilde{\Phi}(t)|^2
	+\int_{0}^{T} \|\widetilde{\Phi}^{\epsilon}(t)-\widetilde{\Phi}(t)\|^2dt \rightarrow 0, ~~\widetilde{\mathbb{P}}-\text{a.s., ~as ~} \epsilon \rightarrow 0. 
\end{align}	
	Using \eqref{PEeq-MDPderm-4.10}, \eqref{PEeq-MDPMDPequ-4.80}, and \eqref{PEeq-MDPMDPequ-4.83}, the same argument as in \eqref{PEeq-MDPskees-4.13} gives, for every fixed $\widetilde\omega\in\widetilde\Omega$,
	\begin{align}\label{PEeq-MDPMDPequ-4.085}
		\sup_{t \in [0,T]}(|\widetilde{\Phi}^{\epsilon}(t)|^2+|\partial_{z} \widetilde{\Phi}^{\epsilon}(t)|^2)
		+\int_0^{T} \left(\|\widetilde{\Phi}^{\epsilon}(t)\|^2 + \|\partial_{z} \widetilde{\Phi}^{\epsilon}(t)\|^2\right)dt\leq C(\widetilde{\omega})<\infty, ~~\widetilde{\mathbb{P}}-\text{a.s.}
	\end{align}
	
	Denote $\overline{\widetilde{\Phi}^\epsilon}={\widetilde{\Phi}}^\epsilon-{\widetilde{\Phi}}$ and $\overline{\widetilde{\mathscr{N}}^\epsilon}=\widetilde{\mathscr{N}}^\epsilon-\widetilde{\mathscr{N}}$. Then, by \eqref{PEeq-MDPMDPequ-4.81} and \eqref{PEeq-MDPMDPequ-4.82} we have
\begin{align}\label{PEeq-MDPMDPequ-4.86}
	\left\{
	\begin{aligned}
		&d\overline{\widetilde{\Phi}^\epsilon}(t)+A\overline{\widetilde{\Phi}^\epsilon}(t)dt+B(\overline{\widetilde{\Phi}^\epsilon}(t)+\overline{\widetilde{\mathscr{N}}^\epsilon}(t)+\widetilde{\mathscr{Q}}^\epsilon(t),v^0(t))dt+B(v^0(t),\overline{\widetilde{\Phi}^\epsilon}(t)+\overline{\widetilde{\mathscr{N}}^\epsilon}(t)+\widetilde{\mathscr{Q}}^\epsilon(t))dt\\
		&+a(\epsilon)B(\widetilde{\Phi}^\epsilon(t)+\widetilde{\mathscr{Q}}^\epsilon(t)+{\widetilde{\mathscr{N}}^\epsilon}(t),\widetilde{\Phi}^\epsilon(t)+\widetilde{\mathscr{Q}}^\epsilon(t)+\widetilde{\mathscr{N}}^\epsilon(t))dt=0,
		\\
		&\overline{\widetilde{\Phi}^\epsilon}(0)=0.
	\end{aligned}
	\right.
\end{align}
Applying the chain rule to $|\overline{\widetilde{\Phi}^\epsilon}(t)|^2$, we have
	\begin{align}\label{PEeq-MDPMDPequ-4.87}
		\begin{split}
			|\overline{\widetilde{\Phi}^\epsilon}(t)|^2&+2\int_0^t \|\overline{\widetilde{\Phi}^\epsilon}(s)\|^2ds=-2\int_0^t \left(B(\overline{\widetilde{\Phi}^\epsilon}(s)+\overline{\widetilde{\mathscr{N}}^\epsilon}(s)+\widetilde{\mathscr{Q}}^\epsilon(s),v^0(s)),\overline{\widetilde{\Phi}^\epsilon}(s)\right)ds\\
			&\qquad \qquad -2\int_0^t \left(B(v^0(s),\overline{\widetilde{\Phi}^\epsilon}(s)+\overline{\widetilde{\mathscr{N}}^\epsilon}(s)+\widetilde{\mathscr{Q}}^\epsilon(s)),\overline{\widetilde{\Phi}^\epsilon}(s)\right)ds\\
			&-2a(\epsilon)\int_0^t\left(B(\widetilde{\Phi}^\epsilon(s)+\widetilde{\mathscr{Q}}^\epsilon(s)+{\widetilde{\mathscr{N}}^\epsilon}(s),\widetilde{\Phi}^\epsilon(s)+\widetilde{\mathscr{Q}}^\epsilon(s)+\widetilde{\mathscr{N}}^\epsilon(s)),\overline{\widetilde{\Phi}^\epsilon}(s)\right)ds\\
			&\qquad \qquad\qquad \qquad:=\Upsilon_{16}(t)+\Upsilon_{17}(t)+\Upsilon_{18}(t).
		\end{split}
	\end{align}
Fix $\widetilde{\omega}\in \widetilde{\Omega}$. We now estimate the right-hand side of \eqref{PEeq-MDPMDPequ-4.87}. For $\Upsilon_{16}(t)$, using \eqref{PEeq-2.11} and Young's inequality, we obtain
\begingroup\small
\begin{align}\label{PEeq-MDPMDPequ-4.88}
	|\Upsilon_{16}(t)|
	&\leq 2\int_0^t \left(\left|\left(B(\overline{\widetilde{\Phi}^\epsilon}(s),v^0(s)),\overline{\widetilde{\Phi}^\epsilon}(s)\right)\right|+\left|\left(B(\overline{\widetilde{\mathscr{N}}^\epsilon}(s)+\widetilde{\mathscr{Q}}^\epsilon(s),v^0(s)),\overline{\widetilde{\Phi}^\epsilon}(s)\right)\right|\right)ds\notag\\
	&\leq C \int_0^t \left(|\overline{\widetilde{\Phi}^\epsilon}(s)| \|\overline{\widetilde{\Phi}^\epsilon}(s)\| \|v^0(s)\| + \|\overline{\widetilde{\Phi}^\epsilon}(s)\|^{3/2} |\partial_{z} v^0(s)| |\overline{\widetilde{\Phi}^\epsilon}(s)|^{1/2}\right)ds\notag\\
	&+C\int_0^t |\overline{\widetilde{\mathscr{N}}^\epsilon}(s)+\widetilde{\mathscr{Q}}^\epsilon(s)|^{1/2} \|\overline{\widetilde{\mathscr{N}}^\epsilon}(s)+\widetilde{\mathscr{Q}}^\epsilon(s)\|^{1/2} \|v^0(s)\| |\overline{\widetilde{\Phi}^\epsilon}(s)|^{1/2} \|\overline{\widetilde{\Phi}^\epsilon}(s)\|^{1/2} ds\notag\\
	&+C\int_0^t \|\overline{\widetilde{\mathscr{N}}^\epsilon}(s)+\widetilde{\mathscr{Q}}^\epsilon(s)\| |\partial_{z} v^0(s)| |\overline{\widetilde{\Phi}^\epsilon}(s)|^{1/2} \|\overline{\widetilde{\Phi}^\epsilon}(s)\|^{1/2} ds\notag \\
	&\leq \frac{1}{4}\int_0^t \|\overline{\widetilde{\Phi}^\epsilon}(s)\|^2ds
	+C \int_0^t \left(\|v^0(s)\|^2+|\partial_{z} v^0(s)|^4\right) |\overline{\widetilde{\Phi}^\epsilon}(s)|^2ds\notag\\
	&+\frac{1}{8}\int_0^t \|\overline{\widetilde{\Phi}^\epsilon}(s)\|^2ds
	+C\int_0^t \|v^0(s)\|^2|\overline{\widetilde{\Phi}^\epsilon}(s)|^2ds
	+C\int_0^t \|v^0(s)\| |\overline{\widetilde{\mathscr{N}}^\epsilon}(s)+\widetilde{\mathscr{Q}}^\epsilon(s)| \|\overline{\widetilde{\mathscr{N}}^\epsilon}(s)+\widetilde{\mathscr{Q}}^\epsilon(s)\|ds\notag\\
	&+\frac{1}{8}\int_0^t \|\overline{\widetilde{\Phi}^\epsilon}(s)\|^2ds
	+C\int_0^t  |\overline{\widetilde{\Phi}^\epsilon}(s)|^2ds
	+C \int_0^t \|\overline{\widetilde{\mathscr{N}}^\epsilon}(s)+\widetilde{\mathscr{Q}}^\epsilon(s)\|^2 |\partial_{z} v^0(s)|^2 ds\notag\\
	&\leq \frac{1}{2}\int_0^t \|\overline{\widetilde{\Phi}^\epsilon}(s)\|^2ds
	+C \int_0^t \left(1+\|v^0(s)\|^2+|\partial_{z} v^0(s)|^4\right) |\overline{\widetilde{\Phi}^\epsilon}(s)|^2ds\notag\\
	&+C\sup_{s\in[0,T]}|\overline{\widetilde{\mathscr N}^\epsilon}(s)+\widetilde{\mathscr Q}^\epsilon(s)|\left(\int_0^t\|v^0(s)\|^2ds\right)^{1/2}\left(\int_0^t\|\overline{\widetilde{\mathscr N}^\epsilon}(s)+\widetilde{\mathscr Q}^\epsilon(s)\|^2ds\right)^{1/2}\notag\\
	&+C\sup_{s \in [0,T]}|\partial_{z} v^0(s)|^2\int_0^t\|\overline{\widetilde{\mathscr{N}}^\epsilon}(s)+\widetilde{\mathscr{Q}}^\epsilon(s)\|^2ds.
\end{align}
\endgroup
For $\Upsilon_{17}(t)$, combining \eqref{PEeq-2.13}, an analogous computation gives
\begin{align}\label{PEeq-MDPMDPequ-4.90}
\left|\Upsilon_{17}(t)\right|
&\leq C	\int_0^t \left|\left(B(v^0(s),\overline{\widetilde{\mathscr{N}}^\epsilon}(s)+\widetilde{\mathscr{Q}}^\epsilon(s))),\overline{\widetilde{\Phi}^\epsilon}(s)\right)\right| ds\notag\\
&\leq C\int_0^t |v^0(s)|^{1/2}\|v^0(s)\|^{1/2} \|\overline{\widetilde{\mathscr{N}}^\epsilon}(s)+\widetilde{\mathscr{Q}}^\epsilon(s)\|  |\overline{\widetilde{\Phi}^\epsilon}(s)|^{1/2} \|\overline{\widetilde{\Phi}^\epsilon}(s)\|^{1/2}ds\notag\\
&+ C\int_0^t \|v^0(s)\| |\partial_{z}(\overline{\widetilde{\mathscr{N}}^\epsilon}(s)+\widetilde{\mathscr{Q}}^\epsilon(s))||\overline{\widetilde{\Phi}^\epsilon}(s)|^{1/2} \|\overline{\widetilde{\Phi}^\epsilon}(s)\|^{1/2}ds\notag\\
&\leq \frac{1}{8}\int_0^t \|\overline{\widetilde{\Phi}^\epsilon}(s)\|^2ds
+C\int_0^t \|v^0(s)\|^2 |\overline{\widetilde{\Phi}^\epsilon}(s)|^2ds
+C\int_0^t |v^0(s)|^2\|\overline{\widetilde{\mathscr{N}}^\epsilon}(s)+\widetilde{\mathscr{Q}}^\epsilon(s)\|^2ds\notag\\
&+\frac{1}{8}\int_0^t \|\overline{\widetilde{\Phi}^\epsilon}(s)\|^2ds
+C\int_0^t \|v^0(s)\|^2|\overline{\widetilde{\Phi}^\epsilon}(s)|^2ds
+C\int_0^t \|v^0(s)\| |\partial_{z}(\overline{\widetilde{\mathscr{N}}^\epsilon}(s)+\widetilde{\mathscr{Q}}^\epsilon(s))|^2ds\notag\\
&\leq \frac{1}{4}\int_0^t \|\overline{\widetilde{\Phi}^\epsilon}(s)\|^2ds
+C\int_0^t (1+\|v^0(s)\|^2) |\overline{\widetilde{\Phi}^\epsilon}(s)|^2ds
+C\sup_{s \in [0,T]} |v^0(s)|^2 \int_0^t \|\overline{\widetilde{\mathscr{N}}^\epsilon}(s)+\widetilde{\mathscr{Q}}^\epsilon(s)\|^2ds\notag\\
&+C\sup_{s\in [0,T]}|\partial_{z}(\overline{\widetilde{\mathscr{N}}^\epsilon}(s)+\widetilde{\mathscr{Q}}^\epsilon(s))| \left(\int_0^t \|v^0(s)\|^2 ds\right)^{1/2}
\left(\int_0^t \|(\overline{\widetilde{\mathscr{N}}^\epsilon}(s)+\widetilde{\mathscr{Q}}^\epsilon(s))\|^2 ds\right)^{1/2}.
\end{align}
It remains to deal with $\Upsilon_{18}(t)$. Using H\"{o}lder's inequality, \eqref{PEeq-2.0013}, Young's inequality and Poincar\'{e}'s inequality, we obtain
\begingroup\small
\begin{align}\label{PEeq-MDPMDPequ-4.91}
	\left|\Upsilon_{18}(t)\right|
	&\leq |a(\epsilon)|\int_0^{t} \|B(\widetilde{\Phi}^\epsilon(s)+\widetilde{\mathscr{Q}}^\epsilon(s)+{\widetilde{\mathscr{N}}^\epsilon}(s),\widetilde{\Phi}^\epsilon(s)+\widetilde{\mathscr{Q}}^\epsilon(s)+\widetilde{\mathscr{N}}^\epsilon(s))\|_{V'}\|\overline{\widetilde{\Phi}^\epsilon}(s)\|ds\notag\\
	&\leq C|a(\epsilon)|\int_0^{t}|\widetilde{\Phi}^\epsilon(s)+\widetilde{\mathscr{Q}}^\epsilon(s)+{\widetilde{\mathscr{N}}^\epsilon}(s)|\|\widetilde{\Phi}^\epsilon(s)+\widetilde{\mathscr{Q}}^\epsilon(s)+{\widetilde{\mathscr{N}}^\epsilon}(s)\| \|\overline{\widetilde{\Phi}^\epsilon}(s)\|ds\notag\\
	&+C|a(\epsilon)|\int_0^{t}|\partial_{z}(\widetilde{\Phi}^\epsilon(s)+\widetilde{\mathscr{Q}}^\epsilon(s)+{\widetilde{\mathscr{N}}^\epsilon}(s))|
	\|\widetilde{\Phi}^\epsilon(s)+\widetilde{\mathscr{Q}}^\epsilon(s)+{\widetilde{\mathscr{N}}^\epsilon}(s)\| \|\overline{\widetilde{\Phi}^\epsilon}(s)\|ds\notag\\
	&+C|a(\epsilon)|\int_0^{t} |\widetilde{\Phi}^\epsilon(s)+\widetilde{\mathscr{Q}}^\epsilon(s)+{\widetilde{\mathscr{N}}^\epsilon}(s)| \|\partial_{z}(\widetilde{\Phi}^\epsilon(s)+\widetilde{\mathscr{Q}}^\epsilon(s)+{\widetilde{\mathscr{N}}^\epsilon}(s))\|\|\overline{\widetilde{\Phi}^\epsilon}(s)\|ds\notag\\
	&+C|a(\epsilon)|\int_0^{t}|\widetilde{\Phi}^\epsilon(s)+\widetilde{\mathscr{Q}}^\epsilon(s)+{\widetilde{\mathscr{N}}^\epsilon}(s)| |\partial_{z}(\widetilde{\Phi}^\epsilon(s)+\widetilde{\mathscr{Q}}^\epsilon(s)+{\widetilde{\mathscr{N}}^\epsilon}(s))|^{1/2}\notag\\
	&\qquad \times \|\partial_{z}(\widetilde{\Phi}^\epsilon(s)+\widetilde{\mathscr{Q}}^\epsilon(s)+{\widetilde{\mathscr{N}}^\epsilon}(s))\|^{1/2}\|\overline{\widetilde{\Phi}^\epsilon}(s)\|ds\notag\\
	&\leq \frac{1}{4}\int_0^t \|\overline{\widetilde{\Phi}^\epsilon}(s)\|^2ds
	+C|a(\epsilon)|^2\sup_{s\in [0,T]}\left(|\widetilde{\Phi}^\epsilon(s)+\widetilde{\mathscr{Q}}^\epsilon(s)+{\widetilde{\mathscr{N}}^\epsilon}(s)|^2+|\partial_{z}(\widetilde{\Phi}^\epsilon(s)+\widetilde{\mathscr{Q}}^\epsilon(s)+{\widetilde{\mathscr{N}}^\epsilon}(s))|^2
\right)\notag\\
&\qquad \times \int_0^{t}\left(\|\widetilde{\Phi}^\epsilon(s)+\widetilde{\mathscr{Q}}^\epsilon(s)+{\widetilde{\mathscr{N}}^\epsilon}(s)\|^2+\|\partial_{z}(\widetilde{\Phi}^\epsilon(s)+\widetilde{\mathscr{Q}}^\epsilon(s)+{\widetilde{\mathscr{N}}^\epsilon}(s))\|^2
\right)ds\notag\\
&+C|a(\epsilon)|^2\sup_{s\in [0,T]}\left(|\widetilde{\Phi}^\epsilon(s)+\widetilde{\mathscr{Q}}^\epsilon(s)+{\widetilde{\mathscr{N}}^\epsilon}(s)|^2\right) \int_0^t |\partial_z(\widetilde\Phi^\epsilon+\widetilde{\mathscr Q}^\epsilon+\widetilde{\mathscr N}^\epsilon)(s)|
\,\|\partial_z(\widetilde\Phi^\epsilon+\widetilde{\mathscr Q}^\epsilon+\widetilde{\mathscr N}^\epsilon)(s)\|ds\notag\\
&\leq \frac{1}{4}\int_0^t \|\overline{\widetilde{\Phi}^\epsilon}(s)\|^2ds+C|a(\epsilon)|^2\sup_{s\in [0,T]}\Bigl(
|\widetilde{\Phi}^\epsilon(s)+\widetilde{\mathscr{Q}}^\epsilon(s)+{\widetilde{\mathscr{N}}^\epsilon}(s)|^2+|\partial_{z}(\widetilde{\Phi}^\epsilon(s)+\widetilde{\mathscr{Q}}^\epsilon(s)+{\widetilde{\mathscr{N}}^\epsilon}(s))|^2\Bigr)\notag\\
&\qquad\times \int_0^{t}\Bigl(
\|\widetilde{\Phi}^\epsilon(s)+\widetilde{\mathscr{Q}}^\epsilon(s)+{\widetilde{\mathscr{N}}^\epsilon}(s)\|^2+\|\partial_{z}(\widetilde{\Phi}^\epsilon(s)+\widetilde{\mathscr{Q}}^\epsilon(s)+{\widetilde{\mathscr{N}}^\epsilon}(s))\|^2\Bigr)ds.
\end{align}
\endgroup

By \eqref{PEeq-MDPMDPequ-4.87}--\eqref{PEeq-MDPMDPequ-4.91}, we obtain
\begin{align*}
	&~~|\overline{\widetilde{\Phi}^\epsilon}(t)|^2+\int_0^t \|\overline{\widetilde{\Phi}^\epsilon}(s)\|^2ds
	\leq C \int_0^t \left(1+\|v^0(s)\|^2+|\partial_{z} v^0(s)|^4\right) |\overline{\widetilde{\Phi}^\epsilon}(s)|^2ds\notag\\
	&+C\sup_{s \in [0,T]}\left(|\overline{\widetilde{\mathscr{N}}^\epsilon}(s)+\widetilde{\mathscr{Q}}^\epsilon(s)|+|\partial_{z}(\overline{\widetilde{\mathscr{N}}^\epsilon}(s)+\widetilde{\mathscr{Q}}^\epsilon(s))|\right) \left(\int_0^t\|v^0(s)\|^2ds\right)^{1/2}\notag\\
	&\quad \times\left(\int_0^t\|\overline{\widetilde{\mathscr{N}}^\epsilon}(s)+\widetilde{\mathscr{Q}}^\epsilon(s)\|^2ds\right)^{1/2}+C\sup_{s \in [0,T]}(|v^0(s)|^2+|\partial_{z} v^0(s)|^2)\int_0^t\|\overline{\widetilde{\mathscr{N}}^\epsilon}(s)+\widetilde{\mathscr{Q}}^\epsilon(s)\|^2ds\notag\\
	&+C|a(\epsilon)|^2\sup_{s\in [0,T]}\left(|\widetilde{\Phi}^\epsilon(s)+\widetilde{\mathscr{Q}}^\epsilon(s)+{\widetilde{\mathscr{N}}^\epsilon}(s)|^2+|\partial_{z}(\widetilde{\Phi}^\epsilon(s)+\widetilde{\mathscr{Q}}^\epsilon(s)+{\widetilde{\mathscr{N}}^\epsilon}(s))|^2
	\right)\notag\\
	&\quad \times \int_0^{t}\left(\|\widetilde{\Phi}^\epsilon(s)+\widetilde{\mathscr{Q}}^\epsilon(s)+{\widetilde{\mathscr{N}}^\epsilon}(s)\|^2+\|\partial_{z}(\widetilde{\Phi}^\epsilon(s)+\widetilde{\mathscr{Q}}^\epsilon(s)+{\widetilde{\mathscr{N}}^\epsilon}(s))\|^2
	\right)ds,
\end{align*}

By \eqref{PEeq-MDPMDPequ-4.87}-\eqref{PEeq-MDPMDPequ-4.91} we derive
\begin{align*}
&~~|\overline{\widetilde{\Phi}^\epsilon}(t)|^2+\int_0^t \|\overline{\widetilde{\Phi}^\epsilon}(s)\|^2ds
\leq C \int_0^t \left(1+\|v^0(s)\|^2+|\partial_{z} v^0(s)|^4\right) |\overline{\widetilde{\Phi}^\epsilon}(s)|^2ds\notag\\
&+C\sup_{s \in [0,T]}\left(|\overline{\widetilde{\mathscr{N}}^\epsilon}(s)+\widetilde{\mathscr{Q}}^\epsilon(s)|+|\partial_{z}(\overline{\widetilde{\mathscr{N}}^\epsilon}(s)+\widetilde{\mathscr{Q}}^\epsilon(s))|\right) \left(\int_0^t\|v^0(s)\|^2ds\right)^{1/2}\notag\\
&\quad \times\left(\int_0^t\|\overline{\widetilde{\mathscr{N}}^\epsilon}(s)+\widetilde{\mathscr{Q}}^\epsilon(s)\|^2ds\right)^{1/2}+C\sup_{s \in [0,T]}(|v^0(s)|^2+|\partial_{z} v^0(s)|^2)\int_0^t\|\overline{\widetilde{\mathscr{N}}^\epsilon}(s)+\widetilde{\mathscr{Q}}^\epsilon(s)\|^2ds\notag\\
&+C|a(\epsilon)|^2\sup_{s\in [0,T]}\left(|\widetilde{\Phi}^\epsilon(s)+\widetilde{\mathscr{Q}}^\epsilon(s)+{\widetilde{\mathscr{N}}^\epsilon}(s)|^2+|\partial_{z}(\widetilde{\Phi}^\epsilon(s)+\widetilde{\mathscr{Q}}^\epsilon(s)+{\widetilde{\mathscr{N}}^\epsilon}(s))|^2
\right)\notag\\
&\quad \times \int_0^{t}\left(\|\widetilde{\Phi}^\epsilon(s)+\widetilde{\mathscr{Q}}^\epsilon(s)+{\widetilde{\mathscr{N}}^\epsilon}(s)\|^2+\|\partial_{z}(\widetilde{\Phi}^\epsilon(s)+\widetilde{\mathscr{Q}}^\epsilon(s)+{\widetilde{\mathscr{N}}^\epsilon}(s))\|^2
\right)ds,
\end{align*}
which, together with \eqref{PEeq-MDPderm-4.10}, \eqref{PEeq-MDPMDPequ-4.80}-\eqref{PEeq-MDPMDPequ-4.080} and \eqref{PEeq-MDPMDPequ-4.085}, yields
{\small
\begin{align}\label{PEeq-MDPMDPequ-4.92}
&~~|\overline{\widetilde{\Phi}^\epsilon}(t)|^2+\int_0^t \|\overline{\widetilde{\Phi}^\epsilon}(s)\|^2ds
\leq C \int_0^t \left(1+\|v^0(s)\|^2+|\partial_{z} v^0(s)|^4\right) |\overline{\widetilde{\Phi}^\epsilon}(s)|^2ds\notag\\
&+C_TC(\widetilde{\omega})\sup_{t \in [0,T]}\left(1+|\widetilde{\mathscr{Q}}^\epsilon(t)|^2+|\partial_{z}\widetilde{\mathscr{Q}}^\epsilon(t)|^2\right) \left(\int_0^T\|\overline{\widetilde{\mathscr{N}}^\epsilon}(t)+\widetilde{\mathscr{Q}}^\epsilon(t)\|^2dt\right)^{1/2}+C_T\int_0^T\|\overline{\widetilde{\mathscr{N}}^\epsilon}(t)+\widetilde{\mathscr{Q}}^\epsilon(t)\|^2dt\notag\\
&+CC(\widetilde{\omega})|a(\epsilon)|^2\sup_{t\in [0,T]}\left(1+|\widetilde{\mathscr{Q}}^\epsilon(t)|^2+|\partial_{z}\widetilde{\mathscr{Q}}^\epsilon(t)|^2
\right) \int_0^{T}\left(\|\widetilde{\mathscr{Q}}^\epsilon(t)\|^2+\|\partial_{z}\widetilde{\mathscr{Q}}^\epsilon(t)\|^2
\right)dt,
\end{align}
}
where we used $|\overline{\widetilde{\mathscr{N}}^\epsilon}+\widetilde{\mathscr{Q}}^\epsilon|\leq |\overline{\widetilde{\mathscr{N}}^\epsilon}|+|\widetilde{\mathscr{Q}}^\epsilon|\leq |{\widetilde{\mathscr{N}}^\epsilon}|+|{\widetilde{\mathscr{N}}}|+|\widetilde{\mathscr{Q}}^\epsilon|$.

Moreover, it follows from \eqref{PEeq-MDPMDPequ-4.79} and \eqref{PEeq-MDPMDPequ-4.83} that
\[
\int_0^T\|\overline{\widetilde{\mathscr{N}}^\epsilon}(t)+\widetilde{\mathscr{Q}}^\epsilon(t)\|^2dt \rightarrow 0, \quad \widetilde{\mathbb{P}}\text{-a.s.},\quad \text{as } \epsilon \rightarrow 0.
\]
Since $a(\epsilon) \rightarrow 0$ as $\epsilon\rightarrow 0$, combining these facts with \eqref{PEeq-MDPderm-4.10}, \eqref{PEeq-MDPMDPequ-4.83}, \eqref{PEeq-MDPMDPequ-4.92} and Gronwall's inequality yields
$$
\lim_{\epsilon \to 0} \left(\sup_{t \in [0,T]}|\overline{\widetilde{\Phi}^\epsilon}(t)|^2+\int_0^T \|\overline{\widetilde{\Phi}^\epsilon}(t)\|^2dt\right)=0, \quad \widetilde{\mathbb{P}}\text{-a.s.}
$$
Together with \eqref{PEeq-MDPMDPequ-4.83}, this proves \eqref{PEeq-MDPMDPequ-4.77} along the represented subsequence. Since the original sequence $\epsilon_k\to0$ was arbitrary, the subsequence characterization of convergence in distribution gives
\[
\mathscr G^\epsilon(\epsilon N^{\epsilon^{-1}\phi^\epsilon})
\xrightarrow{d}\mathscr G^0(\psi)
\quad\text{in }(\mathfrak U,d_{\mathfrak U}).
\]
Therefore, Condition~$(A_2)$ holds in the stated strong path topology. This completes the proof.
\end{proof}

\subsection*{ Conflict of Interests}
The authors declare that they have no known competing financial interests or personal relationships that could have appeared to influence the
work reported in this paper.
\subsection*{ Author Contributions} All authors contributed to proving the main results, drafting the manuscript, and
reviewing the manuscript.
\subsection*{ Data Availability } 
No datasets were generated or analysed during the current study.

\end{document}